\newcommand{\IR}{\mathbb{R}}
\newcommand{\IC}{\mathbb{C}}
\newcommand{\IH}{\mathbb{H}}
\newcommand{\IO}{\mathbb{O}}
\newcommand{\IK}{\mathbb{K}}
\newcommand{\WW}{\mathcal{W}}
\newcommand{\CC}{\mathcal{C}}
\newcommand{\af}{\mathfrak{a}}
\newcommand{\pp}{\mathfrak{p}}
\newcommand{\nn}{\mathfrak{n}}
\newcommand{\ov}[1]{\overline{#1}}
\newcommand{\un}[1]{\underline{#1}}
\DeclareMathOperator{\Ric}{Ric}
\DeclareMathOperator{\Int}{Int}
\DeclareMathOperator{\tr}{tr}
\DeclareMathOperator{\id}{id}
\DeclareMathOperator{\dist}{dist}
\DeclareMathOperator{\Sym}{Sym}
\DeclareMathOperator{\vol}{vol}
\DeclareMathOperator{\supp}{supp}
\DeclareMathOperator{\pr}{pr}
\DeclareMathOperator{\im}{Im}
\DeclareMathOperator{\End}{End}
\DeclareMathOperator{\Stab}{Stab}
\DeclareMathOperator{\Rm}{Rm}
\DeclareMathOperator{\spann}{span}
\DeclareMathOperator{\ctanh}{ctanh}
\DeclareMathOperator{\DIV}{div}
\DeclareMathOperator{\proj}{proj}
\DeclareMathOperator{\sh}{sh}
\DeclareMathOperator{\ch}{ch}
\DeclareMathOperator{\newtanh}{th}
\DeclareMathOperator{\cth}{cth}
\DeclareMathOperator{\Ad}{Ad}
\renewcommand{\sinh}{\sh}
\renewcommand{\cosh}{\ch}
\renewcommand{\tanh}{\newtanh}
\renewcommand{\ctanh}{\cth}
\newcommand{\dotcup}{\ensuremath{\mathaccent\cdot\cup}}
\newcommand{\EMPTY}[1]{}
\newcommand{\TT}[3]{\Big( \begin{matrix} #1 \; #2 \\ #3 \end{matrix} \Big)}
\newcommand{\tTT}[3]{\big( \begin{smallmatrix} #1 \; #2 \\ #3 \end{smallmatrix} \big)}
\newtheorem{Theorem}{Theorem}[section]
\newtheorem{Question}[Theorem]{Question}
\newtheorem{Lemma}[Theorem]{Lemma}
\newtheorem{Corollary}[Theorem]{Corollary}
\newtheorem{Proposition}[Theorem]{Proposition}
\newtheorem{Definition}[Theorem]{Definition}
\numberwithin{equation}{section}
\title[Stability of symmetric spaces under Ricci flow]{Stability of symmetric spaces of \\ noncompact type under Ricci flow}
\author{Richard H Bamler}
\address{Stanford University, Department of Mathematics, Stanford, CA 94305}
\email{rbamler@math.stanford.edu}
\date{\today}
\begin{document}

\begin{abstract}
In this paper we establish stability results for symmetric spaces of noncompact type under Ricci flow, i.e. we show that any small perturbation of the symmetric metric is flown back to the original metric under an appropriately rescaled Ricci flow.

It will be important for us which smallness assumptions we have to impose on the initial perturbation.
We will find that as long as the symmetric space does not contain any hyperbolic or complex hyperbolic factor, we don't have to assume any decay on the perturbation.
Furthermore, in the hyperbolic and complex hyperbolic case, we show stability under a very weak assumption on the initial perturbation.
This will generalize a result obtained by Schulze, Schn\"urer and Simon (\cite{SSS}) in the hyperbolic case.

The proofs of these results make use of an improved $L^1$-decay estimate for the heat kernel in vector bundles over symmetric spaces, which is of independent interest.
\end{abstract}

\maketitle
\tableofcontents

\section{Introduction}
\subsection{Stability of symmetric spaces}
Consider a locally symmetric space $(M, \ov g)$, i.e. a Riemannian manifold which locally has a reflection symmetry at every point (for more details see section \ref{sec:symsp}).
By the de Rham Decomposition Theorem, its universal cover $\tilde M$ can be expressed as a product $M_1 \times \ldots \times M_m$ of irreducible symmetric spaces.
All $M_i$ are Einstein.
If all Einstein constants $\lambda_i$ are negative, then $M$ is said to be of \emph{noncompact type}.
Furthermore, if all $\lambda_i$ are equal to some $\lambda < 0$, then $(M, \ov g)$ is Einstein itself with Einstein constant $\lambda$.
Hence it is a fixed point of the rescaled Ricci flow equation
\begin{equation} \partial_t g_t = - 2 \Ric_{g_t} + 2 \lambda g_t. \label{eq:RF} \end{equation}

In this paper, we will prove stability results for the metric $\ov g$, i.e. we will show that every sufficiently small perturbation $g_0 = \ov g + h$ flows back to $\ov g$ under (\ref{eq:RF}) as $t \to \infty$.
Surprisingly, for most symmetric spaces we don't have to impose any spatial decay assumption on the perturbation $h$.

\begin{Theorem} \label{Thm:mainA}
Let $(M, \ov g)$ be a locally symmetric space of noncompact type which is Einstein of Einstein constant $\lambda < 0$ and assume that the de Rham decomposition of $\tilde M$ contains no factors which are homothetic to $\IH^n, (n \geq 2)$ or $\IC \IH^{2n}, (n \geq 1)$.
Then there is an $\varepsilon > 0$ depending only on $\tilde M$ such that if
\[ (1-\varepsilon) \ov g < g_0 < (1+\varepsilon) \ov g, \]
and if $(g_t)$ evolves by (\ref{eq:RF}), then $g_t$ exists for all time $t$ and as $t \to \infty$ we have convergence $g_t \longrightarrow \ov g$ in the pointed Cheeger-Gromov sense, i.e. there is a family of diffeomorphisms $\Psi_t$ of $M$ such that $\Psi_t^* g_t \longrightarrow \ov g$ and $\Psi_t \to \Psi_\infty$ in the smooth sense on every compact subset of $M$.
\end{Theorem}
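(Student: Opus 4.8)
The plan is to combine DeTurck's trick with a linearised-stability and fixed-point argument. Rather than solving (\ref{eq:RF}) directly, I would study the associated rescaled Ricci--DeTurck flow with fixed background metric $\ov g$,
\[ \partial_t g_t \;=\; -2\Ric_{g_t} - 2\lambda g_t + \mathcal{L}_{V(g_t,\ov g)} g_t , \]
where $V(g,\ov g)^k = g^{ij}(\Gamma^k_{ij}[g] - \Gamma^k_{ij}[\ov g])$; this equation is strictly parabolic. Writing $g_t = \ov g + h_t$ it takes the schematic form $\partial_t h_t = L h_t + Q[h_t]$, in which $L$ is the linearisation of the rescaled Ricci--DeTurck operator at the Einstein metric $\ov g$ --- a Laplace-type operator on symmetric $2$-tensors of the form $Lh = \Delta_{\ov g} h + \mathcal{R}(h)$, with $\mathcal{R}$ a zeroth-order curvature term (at an Einstein metric $L$ is the Lichnerowicz Laplacian up to a constant multiple of the identity) --- and $Q[h]$ is a nonlinearity of the schematic type $A(h)\ast\ov\nabla h\ast\ov\nabla h + B(h)\ast h\ast\ov\nabla^2 h$, i.e.\ at least quadratic in $(h,\ov\nabla h,\ov\nabla^2 h)$, with coefficients $A(h),B(h)$ smooth in $h$ and uniformly bounded while $\|h\|_{C^0}$ stays small. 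Since $M$ is locally symmetric, it will suffice to prove the key linear estimates on the universal cover $\tilde M$, and all quantitative constants --- in particular the $\varepsilon$ of the theorem --- will then depend only on $\tilde M$.

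The heart of the proof is a decay estimate for the linear semigroup $e^{tL}$ on $(\tilde M,\ov g)$. I would first establish an $L^2$-spectral gap: there is $\delta > 0$ depending only on $\tilde M$ with $\langle L h, h\rangle_{L^2} \le -\delta \|h\|_{L^2}^2$ for all compactly supported symmetric $2$-tensors $h$. For the rough-Laplacian part this rests on the classical positivity of the bottom of the $L^2$-spectrum of a symmetric space of noncompact type, together with the analogous lower bound for $-\Delta_{\ov g}$ on tensor bundles, a consequence of exponential volume growth and the explicit structure of the spectrum. The delicate point --- and the place where the hypothesis of the theorem enters --- is to check that the zeroth-order curvature term $\mathcal{R}$ cannot overwhelm this gap: one splits $\tilde M$ into its irreducible de Rham factors, computes $\mathcal{R}$ acting on $\Sym^2$ factor by factor from the Lie-theoretic description of the curvature of each factor, and verifies the sign. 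It is precisely the factors $\IH^n$ $(n \ge 2)$ and $\IC\IH^{2n}$ $(n \ge 1)$ for which equality holds along some tensor direction, so that the gap degenerates; these therefore have to be excluded. I would then upgrade the $L^2$-gap to the estimates the nonlinear problem actually requires, namely the uniform exponential decay $\|e^{tL}\|_{C^0 \to C^0} \le C e^{-\delta' t}$ together with the parabolic smoothing bounds $\|\ov\nabla^k e^{tL} u\|_{C^0} \le C t^{-k/2} e^{-\delta' t} \|u\|_{C^0}$ for $k = 1,2$. On a noncompact manifold an $L^2$-gap by itself does not yield this; instead one uses the sharp off-diagonal heat-kernel estimates available on symmetric spaces of noncompact type to show that $\sup_x \int_{\tilde M} |H_t(x,y)|\, d\vol(y)$ --- which is exactly $\|e^{tL}\|_{C^0\to C^0}$ --- and the analogous integrals for the derivatives of the heat kernel decay exponentially in $t$. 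This is the improved $L^1$-decay estimate for the (bundle-valued) heat kernel announced in the abstract, and it is exactly this feature that makes it possible to prove stability without imposing any spatial decay on $h$; for $\IH^n$ and $\IC\IH^{2n}$ the corresponding $L^1$-decay fails or is too slow, which is why the companion theorems for those spaces need a weak decay assumption on the initial perturbation.

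With the linear estimates in hand, the nonlinear step is a contraction-mapping argument based on the Duhamel formula
\[ h_t \;=\; e^{tL} h_0 + \int_0^t e^{(t-s)L}\, Q[h_s]\, ds . \]
I would carry it out in a Banach space of time-dependent symmetric $2$-tensors with a weighted norm of parabolic type, roughly $\|h\|_\ast = \sup_{t > 0} e^{\delta' t/2}\big( \|h_t\|_{C^0} + t^{1/2}\|h_t\|_{C^1} + t\,\|h_t\|_{C^2} \big)$ --- or rather a refinement adapted to merely $C^0$-small initial data, in which the two derivatives occurring in $Q$ are measured in averaged (parabolic-Morrey) quantities, so that the borderline time integrals converge --- using the bound $\|Q[h_s]\| \lesssim \|h_s\|_{C^1}^2 + \|h_s\|_{C^0}\|h_s\|_{C^2}$ valid while $\|h_s\|_{C^0}$ is small. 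One checks that the Duhamel map is a contraction on a small ball around $0$ in this space provided $\|h_0\|_{C^0} \le \varepsilon$ with $\varepsilon = \varepsilon(\tilde M)$ small enough. This produces a global smooth solution $h_t$ of the Ricci--DeTurck flow with $\|h_t\|_{C^0} \to 0$ exponentially, hence global existence of the solution of (\ref{eq:RF}); since then the DeTurck vector fields $V(g_t,\ov g)$ decay exponentially in every $C^k$-norm on compact sets, the diffeomorphisms $\Psi_t$ they generate converge locally smoothly to a limiting diffeomorphism and $\Psi_t^\ast g_t \to \ov g$, which is the asserted pointed Cheeger--Gromov convergence.

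I expect the main obstacle to lie in the linear analysis: proving the curvature-corrected spectral gap on symmetric $2$-tensors uniformly over all irreducible factors requires a genuine case-by-case study of the curvature operator, and this is where the somewhat unexpected exclusion of $\IH^n$ $(n \ge 2)$ and $\IC\IH^{2n}$ $(n \ge 1)$ is forced; and converting the $L^2$-gap into the $C^0 \to C^0$ exponential decay with smoothing on a possibly noncompact manifold is the other hard point, since there one cannot simply invoke bounded geometry but must exploit the special structure and the exponential volume growth of symmetric spaces of noncompact type --- this is precisely the content of the improved $L^1$ heat-kernel estimate. The remaining ingredients --- the standard parabolic regularity theory, the nonlinear contraction, and the passage from Ricci--DeTurck flow back to Ricci flow --- are comparatively routine.
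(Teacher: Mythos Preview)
Your overall architecture --- DeTurck gauge, $L^1$-decay of the heat kernel of the linearised operator giving $\|e^{-tL}\|_{C^0\to C^0}\le Ce^{-\lambda t}$, then a Duhamel/continuity argument for the nonlinear equation --- is exactly the paper's strategy, and the nonlinear closing step is indeed routine once the linear estimate is in hand (the paper uses a simple continuity argument with $Z_t=\max_{[0,t]} e^{\lambda s}\|h_s\|_{L^\infty}$ rather than a contraction, but that is cosmetic).

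However, you have misidentified where the exclusion of $\IH^n$ and $\IC\IH^{2n}$ enters, and this is not harmless: it points to the wrong mechanism for the $L^1$-decay. The $L^2$-spectral gap for $L$ on $\Sym_2 T^*$ does \emph{not} degenerate on $\IH^n$ ($n\ge 3$) or $\IC\IH^{2n}$ ($n\ge 2$); for those spaces the Bochner constant $\lambda_B$ is strictly positive (Proposition~\ref{Prop:lambdaWWpos}(iii)). What fails for them is a \emph{different} constant $\lambda_L$: the smallest eigenvalue of $L$ restricted to sections invariant under a Borel subgroup $P=AN$ (``parabolically invariant'' sections). The $L^1$-decay rate of the heat kernel is governed by $\min\{\lambda_L,\lambda_B\}$ in rank one, and in higher rank by a whole family of constants $\lambda_\WW$, one for each wall $\WW$ of the Weyl chamber, defined via Bochner formulas for the operator $-\ov\triangle+S_\WW$ on the cross-sectional symmetric space $\ov M_\WW$ (Theorem~\ref{Thm:dectrianglegenrank}). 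The obstruction $\lambda_L=0$ on $\IH^n$ and $\IC\IH^{2n}$ corresponds to the existence of nontrivial ``cusp deformations'' --- infinitesimal deformations of the nilpotent Lie algebra $\nn$ satisfying (\ref{eq:newnull1})--(\ref{eq:newnull3}) --- and the analysis that rules these out for all other irreducible factors (Lemmas~\ref{Lem:lambdaalambdab}--\ref{Lem:Nrank1}) is not a case-by-case curvature computation but a structural argument about the root-space decomposition.

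Concretely, your plan ``$L^2$-gap $\Rightarrow$ off-diagonal heat kernel bounds $\Rightarrow$ $L^1$-decay'' will not work as stated: on $\IH^n$ ($n\ge 3$) you have a perfectly good $L^2$-gap and perfectly good Gaussian off-diagonal bounds, yet $\|k_t\|_{L^1}$ does not decay, because there are bounded $P$-invariant sections $f$ with $Lf=0$ (so $\int\langle f,k_t\rangle$ is constant). The paper's route to the $L^1$-bound goes instead through the spherical model of $k_t$ on the maximal flat, a comparison with a scalar heat operator $L^\circ$ on $\af$, and --- in higher rank --- a delicate localisation near the walls of the Weyl chamber controlling mixed $L^2L^1$-norms (subsections~\ref{subsec:L1decrank1}--\ref{subsec:L1decgenrank}). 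This is the ``improved $L^1$-decay estimate'' you allude to, but its input is not the $L^2$-gap; it is the collection of constants $\lambda_\WW$, and showing $\min_\WW\lambda_\WW>0$ is precisely Section~\ref{sec:Einstop}.
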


In the hyperbolic or complex hyperbolic case, we have to impose stronger assumptions on the perturbation.

\begin{Theorem} \label{Thm:mainB}
Let $(M, \ov g)$ be either $\IH^n$ for $n \geq 3$ or $\IC \IH^{2n}$ for $n \geq 2$, choose a basepoint $x_0 \in M$ and let $r = d(\cdot, x_0)$ denote the radial distance function. \\
There is an $\varepsilon_1 > 0$ and for every $q < \infty$ an $\varepsilon_2 = \varepsilon_2(q) > 0$ such that the following holds:
If $g _0 = \ov g + h$ and $h = h_1 + h_2$ satisfies
\[ |h_1| < \frac{\varepsilon_1}{r + 1} \qquad \text{and} \qquad \sup_M |h_2| +  \bigg( \int_M |h_2|^q dx \bigg)^{1/q} < \varepsilon_2, \]
then Ricci flow (\ref{eq:RF}) exists for all time and we have convergence $g_t \longrightarrow \ov g$ in the pointed Cheeger-Gromov sense.
\end{Theorem}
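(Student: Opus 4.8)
The plan is to use the DeTurck trick to convert \eqref{eq:RF} into a strictly parabolic equation for the perturbation, to establish the linear heat-kernel decay that the (gapless) spectral behaviour of $\IH^n$ and $\IC\IH^{2n}$ forces upon us, and then to close a nonlinear Duhamel iteration in a norm adapted to the splitting $h = h_1 + h_2$.

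\emph{Step 1: DeTurck reduction.} Replace \eqref{eq:RF} by the associated Ricci--DeTurck flow with fixed background metric $\ov g$. Setting $h_t = g_t - \ov g$, it becomes a quasilinear parabolic equation $\partial_t h = Lh + \mathcal N(h)$, where $L$ is the linearization at $\ov g$ --- a Lichnerowicz-type operator $\ov\Delta + (\text{zeroth order built from the curvature of }\ov g)$ acting on sections of $\Sym^2 T^*M$ --- and $\mathcal N(h)$ is at least quadratic in $h$ and $\ov\nabla h$, with coefficients smooth in $h$ that stay bounded while $|h|$ is small; its only contribution of the same order as $Lh$ is of the form $(g^{-1}-\ov g^{-1})\ast\ov\nabla^2 h$ and carries a coefficient $O(|h|)$, so it gets absorbed into the leading Laplacian once $|h|$ is kept small. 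The metric $\ov g$ is Einstein, so $h\equiv 0$ is stationary. Since the DeTurck vector field $W(g_t,\ov g)$ stays controlled along the flow, convergence $h_t\to 0$ in $C^\infty$ on compact subsets implies $g_t\to\ov g$ in the pointed Cheeger--Gromov sense; hence it suffices to produce a global-in-time solution $h_t$ with the right decay.

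\emph{Step 2: linear estimates --- the crux.} In contrast with the symmetric spaces treated in Theorem~\ref{Thm:mainA}, on $\IH^n$ ($n\ge 3$) and $\IC\IH^{2n}$ ($n\ge 2$) the operator $L$ is linearly stable but has \emph{no spectral gap}: its $L^2$-spectrum reaches up to $0$. Hence $e^{tL}$ does not decay on $L^2$ or $L^\infty$, and the $L^\infty$-Schauder scheme underlying Theorem~\ref{Thm:mainA} is unavailable. The substitute is an \emph{improved $L^1$-type decay estimate} for the heat kernel $K_t$ of $L$ on $\Sym^2 T^*M$: one shows that, even with no spectral gap, there is $\delta>0$ with $\sup_p \int_M |K_t(p,q)|\, dq \le C(1+t)^{-\delta}$, and it is the positivity of $\delta$ --- which fails precisely for $\IH^2 = \IC\IH^1$ --- that underlies the dimension restrictions. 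Combined with short-time ultracontractivity $\|e^{tL}\|_{L^q\to L^\infty}\lesssim t^{-n/(2q)}$ and Gaussian off-diagonal bounds, this gives the $L^q$-smoothing and decay estimates for $e^{tL}$ ($1\le q<\infty$) that the nonlinear argument needs; and fed into elementary comparison geometry of negatively curved spaces it yields the estimate used on the slowly-decaying part: since $d(p,q)\le\tfrac12(r(p)+1)$ forces $r(q)+1\ge\tfrac12(r(p)+1)$, splitting $\int_M |K_t(p,q)|(r(q)+1)^{-1}\, dq$ into this region and its complement (where $|K_t|$ decays in $d(p,q)$) shows that $|u|(p)\le(r(p)+1)^{-1}$ implies $|e^{tL}u|(p)\le C(1+t)^{-\delta}$, with the spatial weight $(r(p)+1)^{-1}$ recovered at large distances. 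I expect this to be the main obstacle: one has to extract genuine decay from a gapless operator, robustly for the bundle operator $L$ rather than for the scalar Laplacian, and make the spatial weight $(r+1)^{-1}$ and the time decay interact so as to survive the nonlinear iteration.

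\emph{Step 3: nonlinear iteration.} Choose decay profiles $\eta_1(t),\eta_2(t)$ (with $\eta_1$ the polynomial rate from Step~2 and $\eta_2$ the faster rate available for the $L^q\cap L^\infty$ part) and work in the Banach space $X$ of solution curves $t\mapsto h_t=h_{1,t}+h_{2,t}$ with
\[ \|h\|_X = \sup_{t\ge 0}\Big[\, \eta_1(t)^{-1}\sup_p (r(p)+1)\,|h_{1,t}|(p) + \eta_2(t)^{-1}\big(\|h_{2,t}\|_{L^\infty}+\|h_{2,t}\|_{L^q}\big)\,\Big] + (\text{derivative terms}), \]
the derivative terms (bounding $\ov\nabla^k h_t$, $k\ge 1$) being included because $\mathcal N$ involves $\ov\nabla h$ and $\ov\nabla^2 h$, and recovered a posteriori from interior parabolic (Shi-type) estimates once $|h_t|$ is pointwise small; the decomposition of $h_t$ is propagated along the flow by solving the linear problem for each piece and assigning the nonlinearity to whichever channel keeps the iteration closed. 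In Duhamel's formula $h_t = e^{tL}h_0 + \int_0^t e^{(t-s)L}\mathcal N(h_s)\,\dd s$, the linear term is handled by Step~2, giving $\|e^{\,\cdot\,L}h_0\|_X\le C(\varepsilon_1+\varepsilon_2)$; for the nonlinear term one uses that $\mathcal N(h_s)$ is quadratic --- a product of two factors each $O((r+1)^{-1})$ is again $O((r+1)^{-1})$ and feeds the $h_1$-channel, while a product involving an $L^q$-factor lies in $L^{q/2}\cap L^q$ and feeds the $h_2$-channel after smoothing, which is exactly where the splitting and the $L^q$-hypothesis on $h_2$ enter --- together with the convolution bound $\int_0^t \eta_i(t-s)\|h\|_X^2\,\eta_j(s)\,\dd s\lesssim \|h\|_X^2\,\eta_{\max}(t)$ for the chosen profiles. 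Hence $\Phi(h)_t = e^{tL}h_0 + \int_0^t e^{(t-s)L}\mathcal N(h_s)\,\dd s$ is a contraction on a small ball of $X$ provided $\varepsilon_1$ and $\varepsilon_2=\varepsilon_2(q)$ are small enough, and its fixed point is a global solution with $\|h_t\|$ decaying locally uniformly. Parabolic smoothing upgrades this to $C^\infty$ convergence $h_t\to 0$ on compact subsets, and Step~1 converts it into $g_t\to\ov g$ in the pointed Cheeger--Gromov sense.
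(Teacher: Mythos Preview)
Your Step~1 and the overall iteration architecture in Step~3 are fine and match the paper's approach. The genuine gap is in Step~2, where you have the spectral picture exactly inverted.

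On $\IH^n$ $(n\ge 3)$ and $\IC\IH^{2n}$ $(n\ge 2)$ the Einstein operator $L$ \emph{does} have an $L^2$ spectral gap: there is a Bochner identity $L = D^*D + \lambda_B$ with $\lambda_B>0$, so $\|k_t\|_{L^2}\le C e^{-\lambda_B t}$. What fails is the $L^1$ decay of the heat kernel: the paper shows $\lambda_L=0$, meaning there exist bounded (parabolically invariant) sections $f$ with $Lf=0$, whence $\int_M\langle f,k_t\rangle$ is constant in $t$ and $\|k_t\|_{L^1}$ is bounded \emph{below} away from zero for all time. Your claimed estimate $\sup_p\int_M|K_t(p,q)|\,dq\le C(1+t)^{-\delta}$ with $\delta>0$ is therefore false, and the mechanism you build on it collapses. (Incidentally, $\IH^2$ is the case with $\lambda_B=0$ and $\lambda_L>0$, the reverse of what you wrote.)

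The paper's actual linear inputs are: (i) for the $h_2$-channel, $L^q$ decay $\|e^{-tL}\|_{L^q\to L^q}\le Ce^{-\lambda t}$ with $\lambda=\tfrac{2}{q}\lambda_B$, obtained by interpolating the $L^2$ exponential decay against the mere $L^1\to L^1$ boundedness; (ii) for the $h_1$-channel, the estimate $\int_M|k_t|(x_1,x)(r(x)+1+a)^{-w}\,dx\le C(r(x_1)+1+a+t)^{-w}$. The latter is the ``trick from the geometry of negatively curved spaces'': one uses that $k_t$ is spherical about $x_1$, restricts it to a narrow sector pointing toward $x_0$ (whose $L^1$-mass is small in the opening angle), and on the complement of this sector inside $B_{r_2}(x_0)$ applies Cauchy--Schwarz together with the $L^2$ decay $\|k_t\|_{L^2}\le Ce^{-\lambda_B t}$ and a hyperbolic volume bound for $B_{r_2}(x_0)\setminus S_{v,\alpha}$. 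It is the $L^2$ gap, not any $L^1$ decay, that produces the $+t$ in the denominator. Your splitting of the integral by $d(p,q)\le\tfrac12(r(p)+1)$ gives only preservation of the weight $(r+1)^{-1}$, not the time gain, and without that gain the Duhamel integral in Step~3 does not close.
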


In the case $M = \IH^n$, $n \geq 4$ Schulze, Schn\"urer and Simon (\cite{SSS}) have shown stability for every perturbation $h$ for which $\Vert h \Vert_{L^\infty(M)}$ is bounded by a small constant depending on $\Vert h \Vert_{L^2(M)}$.
This result is implied by Theorem \ref{Thm:mainB} by the interpolation inequality.
Li and Yin (\cite{LY}) have shown a stability result for $M = \IH^n$, $n \geq 3$ when the Riemannian curvature approaches the hyperbolic curvature like $\varepsilon_1(\delta) e^{-\delta r}$.

One drawback of the decay assumption of Theorem \ref{Thm:mainB} is that we cannot generalize the stability to quotients of $M$ under a group action which does not fix the distance function $r$.
Results of this kind have to be proven separately.
For example, for compact quotients of hyperbolic space, a stability result was obtained by Ye in \cite{Ye} and for finite volume quotients (i.e. if there are cusps) by the author in \cite{BamCusps}.

Theorems \ref{Thm:mainA} and \ref{Thm:mainB} have the following immediate consequences:
\begin{Corollary} \label{Cor:mainA}
Let $(M, \ov g)$ be a locally symmetric space of noncompact type which is Einstein of Einstein constant $\lambda < 0$ and assume that the de Rham decomposition of $\tilde M$ contains no factors which are homothetic to $\IH^n, (n \geq 2)$ or $\IC \IH^{2n}, (n \geq 1)$.
Then there is an $\varepsilon > 0$ depending only on $\tilde M$ such the following holds:
If $g$ is an Einstein metric on $M$ with Einstein constant $\lambda$ and
\[ (1-\varepsilon) \ov g < g < (1+\varepsilon) \ov g, \]
then $g$ is isometric to $\ov g$.
\end{Corollary}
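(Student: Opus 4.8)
The plan is to reduce the statement to Theorem \ref{Thm:mainA} by running the Ricci flow (\ref{eq:RF}) with initial metric $g$ itself. Since $g$ is Einstein with the same Einstein constant $\lambda$ as $\ov g$, exactly the computation that makes $\ov g$ a stationary point of (\ref{eq:RF}) shows that the constant family $g_t \equiv g$ solves (\ref{eq:RF}) with $g_0 = g$. The assumption $(1-\eps)\,\ov g \le g \le (1+\eps)\,\ov g$, imposed with the $\eps$ furnished by Theorem \ref{Thm:mainA} (which depends only on $\tilde M$), makes $g$ complete (being uniformly bi-Lipschitz to the complete metric $\ov g$), so Theorem \ref{Thm:mainA} applies to this flow and gives $g_t \longrightarrow \ov g$ in the pointed Cheeger--Gromov sense as $t \to \infty$. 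Because the flow is stationary, Theorem \ref{Thm:mainA} thus exhibits the constant sequence $(M,g)$ as converging, in the sense of Cheeger and Gromov, to $(M,\ov g)$; as it trivially also converges to $(M,g)$ itself, the uniqueness of Cheeger--Gromov limits up to isometry yields that $g$ is isometric to $\ov g$.

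To spell the last step out directly, fix a basepoint $x_0 \in M$ and take the diffeomorphisms $\Psi_t$ of $M$ provided by Theorem \ref{Thm:mainA}, normalized so that $\Psi_t(x_0) = x_0$, with $\Psi_t^* g = \Psi_t^* g_t \longrightarrow \ov g$ in $C^\infty_{loc}(M)$. Since these pullback metrics are eventually uniformly two-sided comparable to $\ov g$, converge in $C^\infty_{loc}$, and both $g$ and $\ov g$ have bounded geometry, the $\Psi_t$ carry locally uniform $C^\infty$ bounds; by Arzel\`a--Ascoli a subsequence converges in $C^\infty_{loc}$ to a smooth map $\Psi_\infty \colon M \to M$ with $\Psi_\infty(x_0) = x_0$ and $\Psi_\infty^* g = \ov g$. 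Hence $\Psi_\infty \colon (M,\ov g) \to (M,g)$ is a local isometry with complete domain, so --- $M$ being connected --- a Riemannian covering map. Carrying out the same compactness argument for the inverse maps $\Psi_t^{-1}$, which are controlled in the same way since the $\Psi_t$ distort distances by a factor tending to $1$ on larger and larger balls, produces a local isometry $\Phi_\infty \colon (M,g) \to (M,\ov g)$ with $\Psi_\infty \circ \Phi_\infty = \id_M = \Phi_\infty \circ \Psi_\infty$; thus $\Psi_\infty$ is a bijective local isometry, i.e.\ an isometry between $\ov g$ and $g$.

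There is in truth no hard step here: all of the content lies in Theorem \ref{Thm:mainA}. The only points that require (routine) care are the ones just named --- that the convergence in Theorem \ref{Thm:mainA} really is standard pointed Cheeger--Gromov convergence, that the realizing diffeomorphisms $\Psi_t$ and their inverses may be taken with locally uniform regularity and with $\Psi_t(x_0) = x_0$, and that one may pass to the limit in the composition $\Psi_t \circ \Psi_t^{-1} = \id$ --- all of which are standard features of Cheeger--Gromov theory, or may simply be replaced by a citation to the uniqueness of pointed Cheeger--Gromov limits. Everything else (that $g$ is a fixed point of (\ref{eq:RF}), that the hypothesis of Theorem \ref{Thm:mainA} holds with the very same $\eps$, and that the pinching implies completeness) is immediate.
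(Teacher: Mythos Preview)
The proposal is correct and is exactly the argument the paper intends: the corollary is stated as an immediate consequence of Theorem~\ref{Thm:mainA} without further proof, and your reasoning---that an Einstein metric with constant $\lambda$ gives a stationary solution of~(\ref{eq:RF}), to which Theorem~\ref{Thm:mainA} then applies---is precisely the point. Your detailed extraction of the limiting isometry via Arzel\`a--Ascoli is more than the paper spells out but is standard and sound.
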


\begin{Corollary}
Let $(M, \ov g)$ be either $\IH^n$ for $n \geq 3$ or $\IC \IH^{2n}$ for $n \geq 2$, choose a basepoint $x_0 \in M$ and let $r = d(\cdot, x_0)$ denote the radial distance function. \\
There is an $\varepsilon_1 > 0$ and for every $q < \infty$ an $\varepsilon_2 = \varepsilon_2(q) > 0$ such that the following holds:
If $g$ is an Einstein metric on $M$ of the same Einstein constant as $\ov g$ and $g = \ov g + h_1 + h_2$ with
\[ |h_1| < \frac{\varepsilon_1}{r + 1} \qquad \text{and} \qquad \sup_M |h_2| +  \bigg( \int_M |h_2|^q dx \bigg)^{1/q} < \varepsilon_2, \]
then $g$ is isometric to $\ov g$.
\end{Corollary}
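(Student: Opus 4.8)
The idea is to deduce the statement from Theorem \ref{Thm:mainB} using only the elementary fact that an Einstein metric of the appropriate Einstein constant is, like $\ov g$, a stationary point of the flow (\ref{eq:RF}). So let $g = \ov g + h_1 + h_2$ be Einstein of the same Einstein constant as $\ov g$, with $h_1, h_2$ satisfying the stated bounds for the constants $\eps_1$ and $\eps_2 = \eps_2(q)$ provided by Theorem \ref{Thm:mainB}. First I would note that $g$ is automatically smooth and of bounded geometry: in $g$-harmonic coordinates the Einstein equation is elliptic, so the $C^0$-closeness of $g$ to $\ov g$ bootstraps to uniform bounds on $g$ and all of its covariant derivatives, in particular on the curvature and its derivatives. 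Hence the constant family $g_t \equiv g$ is the unique solution of (\ref{eq:RF}) with initial metric $g_0 = g$.

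Since $g$ satisfies the hypotheses of Theorem \ref{Thm:mainB} by construction, that theorem applies to this flow and yields diffeomorphisms $\Psi_t$ of $M$, which we may take to fix $x_0$, such that $\Psi_t^* g = \Psi_t^* g_t \longrightarrow \ov g$ in $C^\infty_{loc}$ as $t \to \infty$. By the standard Cheeger--Gromov compactness argument --- using $\Psi_t(x_0) = x_0$ together with the uniform $C^k$-bounds on $\Psi_t^* g$ over compact subsets of $M$ --- the maps $\Psi_t$ are themselves uniformly bounded in $C^k_{loc}$ for every $k$, so after passing to a subsequence they converge in $C^\infty_{loc}$ to a smooth map $\Psi_\infty \colon M \to M$ satisfying $\Psi_\infty^* g = \ov g$.

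Nondegeneracy of $\ov g$ now forces $d\Psi_\infty$ to be invertible at every point, so $\Psi_\infty$ is a local isometry from the complete manifold $(M, \ov g)$ to $(M, g)$; a local isometry out of a complete manifold is a surjective Riemannian covering, and since $M = \IH^n$ or $\IC\IH^{2n}$ is simply connected, $\Psi_\infty$ is in fact a diffeomorphism, hence a global isometry $(M, \ov g) \to (M, g)$. (Corollary \ref{Cor:mainA} follows from Theorem \ref{Thm:mainA} in exactly the same way, the bounded-geometry input there being supplied directly by the two-sided bound $(1-\eps)\ov g \le g \le (1+\eps)\ov g$.)

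The only step that requires genuine care is extracting the limiting isometry from the Cheeger--Gromov convergence of a \emph{constant} flow: one must rule out that the $\Psi_t$ degenerate or escape to infinity. The basepoint normalization $\Psi_t(x_0) = x_0$ together with the equicontinuity coming from the $C^\infty_{loc}$-convergence of $\Psi_t^* g$ handles this, and the remainder is routine. Beyond this and the short elliptic-regularity remark establishing bounded geometry of $g$, I anticipate no real obstacle --- the entire content is carried by Theorem \ref{Thm:mainB}.
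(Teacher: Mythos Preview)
Your strategy---apply Theorem~\ref{Thm:mainB} to the constant flow $g_t\equiv g$ and extract an isometry from the Cheeger--Gromov convergence---is exactly what the paper means by ``immediate consequence'' (no proof is given there). The one step that is not actually justified is the clause ``which we may take to fix $x_0$''. This is \emph{not} a general feature of Cheeger--Gromov convergence of a constant family: if $g$ is a metric on $\IR$ agreeing with the flat metric $\ov g=dx^2$ outside a compact set, then the translations $\Psi_t(x)=x+t$ give $\Psi_t^*g\to\ov g$ in $C^\infty_{\textup{loc}}$, yet no basepoint-preserving family can do this, and $g\not\cong\ov g$. So the bare statement of Theorem~\ref{Thm:mainB} does not suffice, and the ``equicontinuity'' you invoke presupposes exactly the basepoint control you are trying to establish.

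What saves the argument is that the \emph{proof} of Theorem~\ref{Thm:mainB} in subsection~\ref{subsec:ThmmainB} (together with Corollary~\ref{Cor:Shi}) yields more than its statement: the Ricci--deTurck flow satisfies $g_t^{DT}\to\ov g$ \emph{globally} in every $C^k(M)$-norm, and since $g$ is Einstein each $g_t^{DT}=(\Psi_t)_*g$ is isometric to $g$ via the diffeomorphisms of Proposition~\ref{Prop:DTisRF}. Now use that $(M,\ov g)$ is homogeneous: compose $\Psi_t$ with an $\ov g$-isometry $\gamma_t$ sending $\Psi_t(x_0)$ back to $x_0$. Then $(\gamma_t\circ\Psi_t)(x_0)=x_0$, and $(\gamma_t\circ\Psi_t)_*g=(\gamma_t)_*g_t^{DT}$ still converges to $\ov g$ in $C^\infty_{\textup{loc}}$, precisely because the convergence $g_t^{DT}\to\ov g$ is global (the isometry $\gamma_t$ moves compact sets but preserves $\ov g$-$C^k$ norms). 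With the basepoint thus fixed, your compactness and covering-map argument goes through as written.
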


By results of Graham-Lee (\cite{GL}) and Biquard (\cite{Biq}), the spaces $\IH^n, (n \geq 4)$ and $\IC \IH^{2n}, (n \geq 2)$ admit deformations $g$ which are Einstein of the same Einstein constant, are not isometric to $\ov g$ and satisfy
\[ (1-\varepsilon) \ov g < g_0 < (1+\varepsilon) \ov g. \]
Hence for those spaces we cannot expect a result which is as strong as that of Theorem \ref{Thm:mainA} or Corollary \ref{Cor:mainA}.
However, the following questions remain:

\begin{Question}
Do we always have longtime existence of the Ricci flow for any such initial metrics and under what assumptions do we have convergence to an Einstein metric?
\end{Question}

\begin{Question}
Does Theorem \ref{Thm:mainA} also hold for $\IH^3$?
Note that every nearby Einstein metric is hyperbolic.
\end{Question}

\subsection{Heat kernel estimates in twisted vector bundles}
Theorems \ref{Thm:mainA} and \ref{Thm:mainB} rely on a careful analysis of heat kernels in homogeneous vector bundles over the symmetric space in question.
Our main interest here will lie in the bundle of symmetric bilinear forms, the bundle that perturbations of $\ov{g}$ live in, but our analysis does not limit to this vector bundle.
The results of our analysis are not related to Ricci flow and are of independent interest.
They provide a characterization of the $L^1$-decay behavior of heat kernels in homogeneous vector bundles over symmetric spaces of noncompact type.
The asserted decay rates are optimal in many cases.
Note that Carron (\cite{Car}) has obtained strong pointwise estimates on such heat kernels.
However, in the settings analyzed by the author, these estimates do not imply the optimal $L^1$-decay rate.

We will now summarize our heat kernel estimates (see subsection \ref{subsec:heatkerIntro} for more details).
In the following statements, we consider a simply-connected symmetric space $(M, \ov{g})$ of noncompact type and a homogeneous vector bundle $E$ over $M$.
Note that there is a canonical connection $\nabla^E$ on $E$.
Let $(k_t)_{t > 0} \in C^\infty (M; E) \otimes E^*_{p_0}$ be the heat kernel for the connection Laplacian $\triangle = - \nabla^{E*} \nabla^E = \sum_{i=1}^n (\nabla^E)^2_{v_i, v_i}$  ($(v_i)_{i = 1, \ldots, n}$ denotes a local orthonormal frame) centered at some point $p_0 \in M$, i.e.
\[ \partial_t k_t = \triangle k_t \qquad \text{and} \qquad k_t \xrightarrow{t \to 0} \delta_{p_0}  \id_{E_{p_0}}. \]
In subsection \ref{subsec:heatkerIntro} we will describe a recipe for computing constants $\lambda_L, \lambda_B \geq 0$ (if $M$ has rank $1$) or constants $\lambda_\CC, \lambda_0, \lambda_1 \geq 0$ (in the general rank case), based on the geometry of $E$ and $M$, such that the following Theorems hold:

\begin{Theorem}[Rank 1 case, cf Theorems \ref{Thm:dectrianglerank1}, \ref{Thm:Asymptriangle}] \label{Thm:restatementrank1}
Assume that $M$ has rank $1$.
Then there are constants $c > 0, C< \infty$ such that:

If $\lambda_B > \lambda_L$, then the exponential decay rate is exactly $- \lambda_L$, i.e.
\[ c e^{-\lambda_L t} < \Vert k_t \Vert_{L^1(M)} < C e^{-\lambda_L t} \qquad \text{for all} \qquad t > 0. \]
If $\lambda_B < \lambda_L$, then the exponential decay rate lies between $-\lambda_L$ and $-\lambda_B$, i.e.
\[ c e^{-\lambda_L t} < \Vert k_t \Vert_{L^1(M)} < C e^{-\lambda_B t} \qquad \text{for all} \qquad t > 0. \]
If $\lambda_B = \lambda_L$, the upper bound still holds with $\lambda_B$ replaced by any $\lambda < \lambda_B$ (where $C$ depends on $\lambda$).
More precisely, we have
\[ c e^{-\lambda_L t} < \Vert k_t \Vert_{L^1(M)} < C (\log(t+2))^{1/2} (t+2)^{a/2} e^{-\lambda_L t} \qquad \text{for all} \qquad t > 0 \]
where $a$ can be determined via the root system of $M$.

Moreover, we have the pointwise estimate
\[ |k_t(p)| < \frac{C}{\vol B_r(p_0)} e^{-\min \{ \lambda_L, \lambda_B \} t} \qquad \text{where} \qquad r = d (p_0, p). \]
\end{Theorem}

Note that the last assertion is neither stronger nor weaker than the first.

\begin{Theorem}[General rank case, cf Theorem \ref{Thm:dectrianglegenrank}] \label{Thm:restatementgenrank}
Assume that $M$ has arbitrary rank.
Then there are constants $c > 0, C < \infty$ such that:

If $\lambda_1 > \lambda_\CC$, then the exponential decay rate is exactly $-\lambda_0 = - \lambda_\CC$, i.e.
\[ c e^{-\lambda_0 t} < \Vert k_t \Vert_{L^1(M)} < C e^{-\lambda_0 t} \qquad \text{for all} \qquad t > 0. \]
If $\lambda_1 \leq \lambda_\CC$, then the upper bound still holds with $\lambda_0$ replaced by any $\lambda < \lambda_0$ (where $C$ depends on $\lambda$).
More precisely, there is an $A < \infty$ such that.
\[ c e^{-\lambda_\CC t} < \Vert k_t \Vert_{L^1(M)} < C (t+2)^A e^{-\lambda_0 t} \qquad \text{for all} \qquad t > 0. \]
\end{Theorem}

As an immediate corollary we obtain
\begin{Corollary} \label{Cor:dectriangle}
Assume that $M$ has arbitrary rank.
Consider a solution $(s_t)_{t \geq 0} \in C^\infty(M; E)$ to the heat eqation $\partial_t s_t = \triangle s_t$ which is bounded on compact time intervals.
Then if $\lambda < \lambda_0$ or $\lambda \leq \lambda_0$ and $\lambda_1 \not= \lambda_\CC$, we have
\[ \Vert s_t \Vert_{L^\infty(M)} \leq C e^{-\lambda t} \Vert s_0 \Vert_{L^\infty(M)} \]
and in the case $\lambda = \lambda_1 = \lambda_\CC$ we get
\[ \Vert s_t \Vert_{L^\infty(M)} \leq C (t+2)^A  e^{-\lambda t} \Vert s_0 \Vert_{L^\infty(M)}. \]
\end{Corollary}
Observe that we did not impose any spatial decay or compact support assumptions on $s_0$.

Furthermore, we can use Theorem \ref{Thm:dectrianglegenrank} to find an $L^1$-estimate on the associated Green's kernel which leads to an $L^\infty$-estimate of the Poisson equation:
\begin{Corollary} \label{Cor:Greentriangle}
Assume that $M$ has arbitrary rank.
Let $\lambda < \lambda_0$ and consider the Green's kernel $g \in C^\infty(M \setminus \{ p_0 \};E) \otimes E_0^*$ of the operator $-\triangle - \lambda$ centered in $p_0$.
Then $\Lambda = \Vert g \Vert_{L^1} < \infty$.

As a consequence, we obtain the estimate
\begin{equation} \label{eq:triangles}
 \Lambda \Vert \triangle s + \lambda s \Vert_{L^\infty} \geq  \Vert s \Vert_{L^\infty}
\end{equation}
for all bounded sections $s \in C^\infty(M;E)$.
In particular, $-\triangle - \lambda$ does not have $L^\infty$-bounded kernel elements.
\end{Corollary}

Recall that the constants $\lambda_L, \lambda_B$ or $\lambda_\CC, \lambda_0, \lambda_1$ are non-negative and depend on the homogeneous vector bundle $E$.
More specifically, they arise from a local computation in the symmetric space $M$ and the vector bundle $E$ and depend on the curvature of $M$ and $E$.
Our results are interesting for those vector bundles $E$ for which $\lambda_L, \lambda_B$ or $\lambda_\CC, \lambda_0, \lambda_1$ are even positive, which can only happen if the curvature of $E$ does not vanish.
It will be an essential aspect of our proofs to understand and use the curvature of $E$ to our advantage.
In fact, our results would not imply a positive decay rate if the curvature of $E$ vanished, i.e. if $E$ was flat.
For example, in this case the quantity $\Vert k_t \Vert_{L^1}$ would be constant in time, so Theorem \ref{Thm:dectrianglegenrank} could not imply a positive decay rate.
As for Corollary \ref{Cor:dectriangle}, observe that a simple application of the maximum principle already gives us $\Vert s_t \Vert_{L^\infty} \leq \Vert s_0 \Vert_{L^\infty}$ for any vector bundle $E$, which is sharp in the case in which $E$ is flat since constant solutions exist and are stationary.
On the other hand, if we consider the case in which $E$ has non-vanishing curvature, then Theorem \ref{Thm:dectrianglegenrank} and Corollary \ref{Cor:dectriangle} may yield a better decay rate, which, however, only becomes noticeable for large $t$.
Corollary \ref{Cor:Greentriangle} illustrates the effect of the curvature of $E$ in the most demonstrative way: If $E$ were flat, then any constant section $s \in C^\infty(M;E)$ would contradict inequality (\ref{eq:triangles}) already for $\lambda = 0$.
In the non-flat case, however, it may happen that no constant section exist. 
So the curvature of $E$ forces every section indirectly to have non-zero Laplacian.

\subsection{Outline of the paper}
The main ingredients of the proofs Theorems \ref{Thm:mainA} and \ref{Thm:mainB} are the heat kernel estimates in twisted vector bundles over symmetric spaces.
We will apply these estimates to the vector bundle $\Sym_2 T^*M$ whose sections are perturbations $h_t$ of the metric $\ov{g}$.
If $h_t$ is small, then the Ricci flow equation expressed in terms of $h_t$ can be approximated by the linearized Ricci deTurck flow equation.
This linearized deTurck flow equation is a heat equation with an extra zeroth order term, which just generates an additional exponential decay or growth rate.
Our goal will then be to use Theorems \ref{Thm:restatementrank1} and \ref{Thm:restatementgenrank} to estimate the exponential decay rate of the $L^1$-norm of the heat kernel associated to the linearized Ricci deTurck flow equation.
If this rate is positive, then this implies an exponential decay of the $L^\infty$-norm of any bounded solution of the linearized Ricci deTurck flow equation similarly as in Corollary \ref{Cor:dectriangle}.
In this case the stability of the nonlinear equation follows easily.

Next, we will compute the constants $\lambda_\CC, \lambda_0, \lambda_1$ and find that the obstruction against exponential decay of the linearized equation comes from so-called cusp deformations (see subsection \ref{subsec:nullspace}).
Those deformations correspond to the ``trivial Einstein deformations'' in \cite[sec 2.3]{BamDehn} and can be seen as algebraic deformations of cusp cross-sections.
Cusp deformations created also the major analytic issues in \cite{BamCusps}.
It will turn out that cusp deformations only exist for the spaces $\IH^n, (n \geq 3)$ and $\IC \IH^{2n}, (n \geq 2)$.
Theorem \ref{Thm:mainA} and Theorem \ref{Thm:mainB} for type $h_2$ perturbations will then follow immediately from this heat kernel estimate.
In order to allow type $h_1$ perturbations, we will use a trick from the geometry of negatively curved spaces.

The paper is organized as follows: In section \ref{sec:AnalyticalPrelim}, we discuss the Ricci flow and Ricci deTurck flow equation and give a short overview over all analytical tools needed in this paper.
Section \ref{sec:symsp} contains a brief introduction into the geometry of symmetric spaces.
In section \ref{sec:heatkernel}, we prove more abstract bounds on heat kernels in homogeneous vector bundles over symmetric spaces.
These bounds involve certain constants, which we will then estimate for our particular purpose in section \ref{sec:Einstop}.
Finally, section \ref{sec:proofs} contains the proofs of Theorems \ref{Thm:mainA} and \ref{Thm:mainB}.

\subsection{Acknowledgments} 
I would like to thank my advisor Gang Tian for his constant support.
Moreover, I am grateful to Hans-Joachim Hein, Robert Kremser, John Lott, Peter Sarnak and Anna Wienhard for many helpful discussions.

\section{Analytical preliminaries} \label{sec:AnalyticalPrelim}
\subsection{Ricci deTurck flow} \label{subsec:RdTflow}
In order to establish the desired stability results, we will analyze Ricci deTurck flow.
This flow is a modification of Ricci flow via a continuous family of diffeomorphisms.

Recall that the rescaled Ricci flow equation reads
\begin{equation} 
\dot{g}^{RF}_t = - 2 \Ric_{g^{RF}_t} + 2 \lambda g^{RF}_t. \label{eq:nRF}
\end{equation}
In order to define the Ricci deTurck flow, we need to make use of a distinguished background metric $\ov{g}$ which we will always choose to be the given symmetric metric on $M$.
Define the divergence operator
\[ \DIV_{\ov{g}} : C^\infty(M; \Sym_2 T^*M) \longrightarrow C^\infty(M; T M), \quad h \longmapsto - \sum_i (\ov\nabla_{\ov{e}_i} h(\ov{e}_i, \cdot))^{\ov{\#}} \]
where we sum over a local $\ov{g}$-orthonormal frame field $(\ov{e}_i)$ and the musical operator is also taken with respect to $\ov{g}$.
Set
\[ X_{\ov g}(h) = \DIV_{\ov{g}} h + \tfrac12 \ov\nabla \tr_{\ov{g}} h. \]
Then the Ricci deTurck flow equation reads
\begin{equation} \label{eq:RdTflow}
 \dot{g}^{DT}_t = - 2 \Ric_{g^{DT}_t} + 2\lambda g^{DT}_t - \mathcal{L}_{X_{\ov{g}}(g^{DT}_t)} g^{DT}_t.
\end{equation}
The advantage of Ricci deTurck flow over Ricci flow is that its linearization at $g_t = \ov{g}$ is strongly elliptic.
This fact has been used by deTurck to give a simplified proof for the short-time existence of Ricci flow (\cite{DeT}).
In fact, if we express equation (\ref{eq:RdTflow}) in terms of the perturbation $h_t = g_t^{DT} - \ov g$, we obtain (for this and the following computations compare with \cite[sec 2.2]{BamCusps})
\begin{equation} \label{eq:nonlinRdT} \partial_t h_t + L h_t = Q_t  \end{equation}
where $L$ is called \emph{Einstein operator} with
\[ (L h)_{ab} = - \triangle h_{ab} - 2 \ov{g}^{uv} \ov{g}^{pq} \ov{R}_{aupb} h_{vq} \]
and $Q_t$ only contains terms of higher order:
\begin{alignat*}{1}
Q_{ab} = &- g^{uv} g^{pq} ( \nabla_u h_{pa} \nabla_v h_{qb} - \nabla_p h_{ua} \nabla_v h_{qb} + \tfrac12 \nabla_a h_{up} \nabla_b h_{vq}) \\
&  - g^{uv}g^{pq} ( - \nabla_u h_{vp} + \tfrac12 \nabla_p h_{uv} ) (\nabla_a h_{qb} + \nabla_b h_{qa} - \nabla_q h_{ab}) \\
& - \ov{g}^{uv} \ov{g}^{pq} ( - \nabla_p h_{qv} + \tfrac12 \nabla_v h_{pq} ) \nabla_u h_{ab} \\ 
& - \ov{g}^{uv} \ov{g}^{pq} ( - \nabla^2_{bp} h_{qv} + \tfrac12 \nabla^2_{bv} h_{pq}) h_{au} 
 - \ov{g}^{uv} \ov{g}^{pq} ( - \nabla^2_{ap} h_{qv} + \tfrac12 \nabla^2_{av} h_{pq}) h_{bu} \\
& - ( g^{uv} - \ov{g}^{uv}) ( \nabla^2_{ua} h_{bv} + \nabla^2_{ub} h_{av} - \nabla^2_{uv} h_{ab} - \nabla^2_{ab} h_{uv} ).
\end{alignat*}
Hence if $|h| < 0.1$, we can estimate $|Q| \leq C ( |\nabla h|^2 + |h| |\nabla^2 h| )$.
We will also sometimes make use of the identity
\[ Q_t = R_t + \nabla^* S_t, \]
where
\begin{alignat*}{1}
R_{ab} = & - g^{uv} g^{pq} ( \nabla_u h_{pa} \nabla_v h_{qb} - \nabla_p h_{ua} \nabla_v h_{qb} + \tfrac12 \nabla_a h_{up} \nabla_b h_{vq}) \\
& - g^{uv} g^{pq} ( - \nabla_u h_{vp} + \tfrac12 \nabla_p h_{uv}) (\nabla_a h_{qb} + \nabla_b h_{qa} - \nabla_q h_{ab} ) \\
& + \ov{g}^{uv} \ov{g}^{pq} ( - 2 \nabla_p h_{qv} + \tfrac12 \nabla_v h_{pq} ) ( \nabla_a h_{bu} + \nabla_b h_{au} - \nabla_u h_{ab} ) \\
& + \ov{g}^{uv} \ov{g}^{pq} \nabla_a h_{pu} \nabla_b h_{qv}
\end{alignat*}
and $( \nabla^* S)_{ab} = - \ov{g}^{kl} \nabla_k S_{lab}$ with
\begin{alignat*}{1}
S_{lab} =& \ov{g}^{uv} \ov{g}^{pq} ( - \nabla_p h_{qv} + \tfrac12 \nabla_v h_{pq} ) (\ov{g}_{lb}  h_{au} +\ov{g}_{la} h_{bu} ) \\
& + \ov{g}_{lp} ( g^{pv} - \ov{g}^{pv} ) ( \nabla_a h_{bv} + \nabla_b h_{av} - \nabla_v h_{ab} ) - \ov{g}_{la} ( g^{uv} - \ov{g}^{uv} ) \nabla_b h_{uv}.
\end{alignat*}
Observe that as long as $|h| < 0.1$ we have
\[ |R| \leq C |\nabla h|^2 \qquad \text{and} \qquad |S| \leq C |h| |\nabla h|. \]

The following Proposition expresses the equivalence of Ricci deTurck flow and Ricci flow.
\begin{Proposition} \label{Prop:DTisRF}
 Let $(g_t^{DT})_{t \in [0,T)}$ be a smooth solution to the Ricci deTurck flow equation (\ref{eq:RdTflow}) and assume that $|g_t^{DT}-\ov{g}| < \varepsilon_0$ everywhere for some universal constant $\varepsilon_0 > 0$, which only depends on $(M, \ov{g})$.
 Define the time dependent vector field $X_t = X_{\ov g} (g_t^{DT})$.
 Then $X_t$ has a flow $(\Psi_t)_{t \in [0,T)}$, i.e. there is a family of diffeomorphisms $\Psi_t : M \to M$ such that
\[ \dot\Psi_t = X_t \circ \Psi_t \qquad \text{and} \qquad \Psi_0 = \id_M, \]
and $g_t = \Psi^*_t g_t^{DT}$ solves the normalized Ricci flow equation (\ref{eq:nRF}).
\end{Proposition}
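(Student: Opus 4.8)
The plan is to construct the flow $(\Psi_t)$ by solving the ODE $\dot\Psi_t = X_t\circ\Psi_t$, $\Psi_0 = \id_M$, and then verify by a direct computation that $g_t := \Psi_t^* g_t^{DT}$ satisfies the normalized Ricci flow equation (\ref{eq:nRF}). First I would observe that $X_t = X_{\ov g}(g_t^{DT}) = \DIV_{\ov g} g_t^{DT} + \tfrac12\ov\nabla\tr_{\ov g} g_t^{DT}$ is, for each fixed $t$, a smooth vector field on $M$, and that $(t,x)\mapsto X_t(x)$ is jointly smooth since $(g_t^{DT})$ is a smooth solution of (\ref{eq:RdTflow}). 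The only subtlety in producing a globally defined flow is completeness: on a noncompact $M$ one must rule out that integral curves of $X_t$ escape to infinity in finite time. Here the hypothesis $|g_t^{DT} - \ov g| < 0.1$ (measured with $\ov g$) is exactly what is needed: it forces $g_t^{DT}$ to be uniformly equivalent to the complete metric $\ov g$, and together with the standard interior derivative estimates for Ricci deTurck flow it gives a local-in-time bound on $|X_t|_{\ov g}$ on any time subinterval $[0,T']\subset[0,T)$, which by a Grönwall-type argument on $r(\Psi_t(x),x)$ prevents integral curves from leaving compact sets in finite time. Hence $\Psi_t$ is defined for all $t\in[0,T)$ and each $\Psi_t$ is a diffeomorphism of $M$ (the inverse flow being generated by the corresponding pullback vector field).

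Next I would carry out the pullback computation. Writing $g_t = \Psi_t^* g_t^{DT}$ and differentiating in $t$ using the product rule for the $t$-dependence inside $g^{DT}$ and the $t$-dependence of $\Psi_t$, one gets
\[
\dot g_t = \Psi_t^*\big(\dot g_t^{DT}\big) + \Psi_t^*\big(\mathcal L_{X_t} g_t^{DT}\big)
= \Psi_t^*\big(\dot g_t^{DT} + \mathcal L_{X_t} g_t^{DT}\big).
\]
Now substitute the Ricci deTurck equation (\ref{eq:RdTflow}), $\dot g_t^{DT} = -2\Ric_{g_t^{DT}} - 2\lambda g_t^{DT} - \mathcal L_{X_t} g_t^{DT}$; the two Lie-derivative terms cancel, leaving $\dot g_t^{DT} + \mathcal L_{X_t} g_t^{DT} = -2\Ric_{g_t^{DT}} - 2\lambda g_t^{DT}$. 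Applying $\Psi_t^*$ and using naturality of the Ricci tensor under diffeomorphisms, $\Psi_t^*\Ric_{g_t^{DT}} = \Ric_{\Psi_t^* g_t^{DT}} = \Ric_{g_t}$, we obtain $\dot g_t = -2\Ric_{g_t} - 2\lambda g_t$, which is precisely (\ref{eq:nRF}).

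I expect the main obstacle to be the completeness/global-existence issue for the flow $(\Psi_t)$: on a compact manifold this is immediate, but here one genuinely needs to use the smallness assumption $|g_t^{DT}-\ov g|<0.1$ and standard parabolic interior estimates to bound $X_t$ and thereby control the escape rate of integral curves. The pullback identity itself is a routine but slightly delicate bookkeeping exercise — one has to be careful that $\Psi_t$ is the flow of the \emph{time-dependent} field $X_t$ rather than an autonomous one, so that the correct form of the Lie-derivative term appears in $\tfrac{d}{dt}\Psi_t^* g_t^{DT}$; this is the standard computation underlying deTurck's trick, and I would cite (\cite{DeT}) or carry it out in a couple of lines.
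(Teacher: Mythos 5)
Your proposal is correct and follows essentially the same route as the paper: the paper also justifies global existence of the flow $(\Psi_t)$ by bounding $|X_t|$ via the interior derivative estimates (Corollary \ref{Cor:Shi} gives $|X_t|\leq Ct^{-1/2}$, which is integrable in $t$ and hence controls the displacement of integral curves on the complete manifold $(M,\ov g)$), and leaves the pullback computation as a standard check. Your version of that computation, with the cancellation of the Lie-derivative terms and the naturality of $\Ric$, is exactly the intended argument.
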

\begin{proof}
For the existence of the flow $(\Psi_t)$ observe that we have $|X_t| \leq C (t^{-1/2} + 1)$ by Corollary \ref{Cor:Shi} below.
The fact that $g_t$ satisfies the normalized Ricci flow equation follows directly from (\ref{eq:RdTflow}).
\end{proof}

Hence, in order to establish Theorems \ref{Thm:mainA} and \ref{Thm:mainB}, it suffices to prove the stability for Ricci deTurck flow instead of Ricci flow.
As we will see later, the main work will go into establishing the stability of the \emph{linearized Ricci deTurck flow} equation
\begin{equation} \label{eq:linRdT} \partial_t h_t + L h_t = 0 \end{equation}

\subsection{A priori derivative estimates}
We recall an a priori derivative estimate for linear or a certain type of nonlinear parabolic equations.
If $\Omega \subset \IR^n \times \IR$ denotes some parabolic neighborhood in space-time (e.g. $\Omega = B_r(0) \times [0,T]$), then we denote by $C^{2m;m}(\Omega)$ the space of scalar or vector valued functions on $\Omega$ which are $i$ times differentiable in spatial direction and $j$ times differentiable in time direction whenever $i + 2j \leq 2m$.
For $\alpha \in (0,\frac12)$, the corresponding H\"older space is denoted by $C^{2m, 2\alpha; m, \alpha}(\Omega)$.

In order to present our results in a scaling invariant way, we use the following weights to define the H\"older norm on $C^{2m, 2\alpha; m, \alpha}(\Omega)$:
Assume 
\begin{equation*} r = \min \{ r' \; : \; \text{$\Omega \subset B_{r'}(p) \times [t-(r')^2, t]$ for some $p$, $t$} \} < \infty. \end{equation*}
Then set
\[ \Vert u \Vert_{C^{2m, 2\alpha; m, \alpha}(\Omega)} = \sum_{|\iota|+2k \leq 2m} r^{|\iota|+2k} (\Vert D^\iota \partial_t^k u \Vert_{C^0} + r^{2\alpha} [ D^\iota \partial_t^k u ]_{2\alpha,\alpha} ), \]
where $\iota$ runs over products of spatial derivatives.

Set $B_r = B_r(0) \subset \IR^n$.

\begin{Proposition} \label{Prop:Shi}
 Let $r > 0$ and consider the parabolic neighborhoods $\Omega = B_r \times [-r^2,0]$ and $\Omega' = B_{2r} \times [-4r^2,0]$.
 
 Assume that $u \in C^{2;1}(\Omega')$ satisfies the equation
 \begin{multline*} (\partial_t - L) u = R[u] = r^{-2} f_1 (r^{-1} x, u) \cdot u + r^{-1} f_2(r^{-1} x, u) \cdot \nabla u \\ + f_3(r^{-1} x, u) \cdot \nabla u \otimes \nabla u + f_4(r^{-1} x, u) \cdot u \otimes \nabla^2 u,  \end{multline*}
 where $f_1, \ldots, f_4$ are smooth functions in $x$ and $u$ such that $f_2, f_3, f_4$  can be paired with the tensors $u$, $\nabla u$, $\nabla u \otimes \nabla u$, $u \otimes \nabla^2 u$.
 Assume that the linear operator $L$ has the form
 \[
  L u = a_{ij}(x) \partial_{ij}^2 u + b_i(x) \partial_i u + c(x) u.
 \]
 
 Now assume that we have the following bounds for $m \geq 1$, $\alpha \in (0,\frac12)$:
 \[ \begin{split} \frac{1}{\Lambda} < a_{ij} < \Lambda, \quad \Vert a_{ij} \Vert_{C^{2m-2, 2\alpha; m-1, \alpha}(\Omega')} < \Lambda, \\ \quad \Vert b_{i} \Vert_{C^{2m-2, 2\alpha; m-1, \alpha}(\Omega')} < r^{-1} \Lambda, \quad \Vert c \Vert_{C^{2m-2, 2\alpha; m-1, \alpha}(\Omega')} < r^{-2} \Lambda. 
 \end{split}
 \]
 
 Then there are constants $\varepsilon_m > 0$ and $C_m < \infty$ depending only on $\Lambda$, $\alpha$, $n$, $m$ and the $f_i$ such that if
 \[ H = \Vert u \Vert_{L^\infty(\Omega')}  < \varepsilon_m, \]
 then
 \[ \Vert u \Vert_{C^{2m, 2\alpha; m, \alpha}(\Omega)} < C_m H . \]
\end{Proposition}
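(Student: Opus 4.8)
The plan is to prove this by a parabolic bootstrap in which the smallness of $H$ is exploited twice: to absorb the top‑order nonlinear term $f_4\cdot u\otimes\nabla^2 u$ into the leading part of the operator, and to absorb the top‑order output of the nonlinearity after each application of a linear interior estimate. First I would reduce to $r=1$: under the parabolic rescaling $x\mapsto r^{-1}x$, $t\mapsto r^{-2}t$ (with $u$ left unscaled) the equation retains its form, the hypotheses become $\Lambda^{-1}<a_{ij}<\Lambda$ together with $\Vert a_{ij}\Vert_{C^{2m-2,2\alpha;m-1,\alpha}(\Omega')}<\Lambda$ and similarly for $b_i$ and $c$ on $\Omega'=B_2\times[-4,0]$, and the weighted norms $\Vert\cdot\Vert_{C^{2m,2\alpha;m,\alpha}}$ are unchanged --- the weights $r^{|\iota|+2k}$, $r^{2\alpha}$ being chosen precisely so that this holds. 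One then proceeds by induction on $m$, working on a nested family of parabolic neighborhoods $\Omega=\Omega_0\Subset\Omega_1\Subset\cdots\Subset\Omega_N=\Omega'$.

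For the base case $m=1$ the first step is an a priori gradient bound $\Vert\nabla u\Vert_{L^\infty(\Omega_1)}\le CH$: after moving $f_4\cdot u\otimes\nabla^2 u$ to the left (where it perturbs the second‑order coefficients by $O(H)$, so that for $\varepsilon_1$ small the operator stays uniformly parabolic with continuous leading coefficients), the right‑hand side has quadratic gradient growth, and the standard interior gradient estimate for such equations --- whose validity here rests on $\Vert u\Vert_{L^\infty}$ being small --- gives the bound. I would then rewrite the equation once more, absorbing $f_3\cdot\nabla u\otimes\nabla u=(f_3\cdot\nabla u)\cdot\nabla u$ into the first‑order coefficients (perturbed by $O(\Vert\nabla u\Vert_\infty)$, which is bounded), so that the remaining right‑hand side is $\le C(|u|+|u|\,|\nabla u|)\le CH$. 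Interior $W^{2,p}$ (parabolic Calder\'on--Zygmund) estimates, valid for all $p<\infty$ since the leading coefficients are continuous, then yield $\Vert u\Vert_{W^{2,p}(\Omega_2)}\le C_pH$; Sobolev embedding with $p$ large yields $\Vert u\Vert_{C^{1,2\alpha;0,\alpha}(\Omega_2)}\le CH$; and finally the interior parabolic Schauder estimate for the operator (whose leading coefficients are now in $C^{0,2\alpha;0,\alpha}$) gives $\Vert u\Vert_{C^{2,2\alpha;1,\alpha}(\Omega)}\le C\big(\Vert u\Vert_{L^\infty(\Omega_2)}+\Vert f_1+f_2\cdot u\otimes\nabla u+f_3\cdot\nabla u\otimes\nabla u\Vert_{C^{0,2\alpha;0,\alpha}(\Omega_2)}\big)\le C_1H$, the last step using that every summand of the right‑hand side is either linear in $u$ or a product of two factors each already bounded by $CH$ in $C^{1,2\alpha;0,\alpha}(\Omega_2)$.

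For the inductive step I would assume $\Vert u\Vert_{C^{2m-2,2\alpha;m-1,\alpha}(\Omega_1)}\le C_{m-1}H$. For each $\iota,k$ with $|\iota|+2k\le 2m-2$ put $v=D^\iota\partial_t^k u$; differentiating the equation gives $(\partial_t-L)v=F_{\iota,k}$, where $F_{\iota,k}$ consists of the commutator $[D^\iota\partial_t^k,L]u$ --- a sum of derivatives of $a_{ij},b_i,c$ of order $\le 2m-2$ (controlled in $C^{0,2\alpha;0,\alpha}$ by the hypotheses, which is exactly why that much coefficient regularity is assumed) paired with derivatives of $u$ of order $\le 2m-1$ --- together with $D^\iota\partial_t^k R[u]$, which by the Leibniz rule is a sum of products, each carrying a factor that is a $0$th‑ or $1$st‑order derivative of $u$ (hence $O(H)$ by the inductive hypothesis) times derivatives of $u$ of order $\le 2m$ and derivatives of the compositions $f_i(x,u(x))$ (which lie in $C^{0,2\alpha;0,\alpha}$ with norm bounded in terms of the $f_i$ and $\Vert u\Vert_{C^{2m-2,2\alpha;m-1,\alpha}(\Omega_1)}$). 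Applying the interior parabolic Schauder estimate to $v$ gives $\Vert v\Vert_{C^{2,2\alpha;1,\alpha}(\Omega)}\le C\big(\Vert v\Vert_{L^\infty(\Omega_1)}+\Vert F_{\iota,k}\Vert_{C^{0,2\alpha;0,\alpha}(\Omega_1)}\big)$. The $2m$th‑order pieces of $F_{\iota,k}$ carry an $O(H)$ coefficient and, after summing over $\iota,k$, are absorbed into the left‑hand side --- this is what forces $\varepsilon_m$ to be small; the $(2m-1)$st‑order pieces are controlled by a H\"older interpolation inequality between the $C^{2m-2,2\alpha;m-1,\alpha}$ and $C^{2m,2\alpha;m,\alpha}$ norms, with a small coefficient in front of the latter, and then likewise absorbed; and everything else is bounded by $C_{m-1}H$ via the inductive hypothesis. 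Summing over all $\iota,k$ with $|\iota|+2k\le 2m-2$ and absorbing yields $\Vert u\Vert_{C^{2m,2\alpha;m,\alpha}(\Omega)}\le C_mH$.

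I expect the main obstacle to be making the estimate genuinely linear in $H$, rather than merely $\Vert u\Vert\le C(H+H^{3/2}+\cdots)$; this requirement dictates the whole shape of the argument --- the transfer of $f_4\cdot u\otimes\nabla^2 u$ into the leading operator, the repeated absorption of top‑order terms carrying $O(H)$ coefficients, and hence the $m$‑dependence of $\varepsilon_m$. The other delicate point is the base case: one must first extract the a priori gradient bound (and, preceding it, enough qualitative interior regularity of $u$ to apply the parabolic estimates) before the $W^{2,p}$ and Schauder estimates can be fed in, and it is precisely the smallness of $\Vert u\Vert_{L^\infty}$ that renders the quadratic‑gradient term $f_3\cdot\nabla u\otimes\nabla u$ harmless there. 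By contrast, the half‑order gap appearing in the higher‑order bootstrap is routine and is closed by the standard H\"older interpolation inequalities.
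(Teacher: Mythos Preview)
The paper does not give a proof of this proposition; it simply writes ``For a proof see e.g.\ \cite{BamCusps}.'' Your proposal is the standard parabolic bootstrap one expects to find in such a reference: scale to $r=1$, absorb the $f_4\cdot u\otimes\nabla^2 u$ term into the leading coefficients using smallness of $H$, obtain an interior gradient bound for the resulting equation with quadratic gradient growth, then run $W^{2,p}$ and Schauder estimates for the base case, and finally induct on $m$ by differentiating and absorbing the top-order contributions (which carry an $O(H)$ factor) into the left-hand side. The outline is correct and there is no substantive gap; the only point to watch in writing it out is that at the base step the interior gradient estimate for equations with quadratic gradient nonlinearity genuinely requires a smallness condition on the oscillation of $u$ (not just boundedness), which your hypothesis $\Vert u\Vert_{L^\infty}<\varepsilon_1$ supplies.
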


For a proof see e.g. \cite[Proposition 2.5]{BamCusps}.

We will frequently make use of the following consequence of Proposition \ref{Prop:Shi}.
\begin{Corollary} \label{Cor:Shi}
 Let $0 < \tau < 1$ and assume that $(h_t)_{t \in [0,\tau)}$ satisfies either the Ricci deTurck flow equation (\ref{eq:nonlinRdT}) or the linearized Ricci deTurck flow equation (\ref{eq:linRdT}) on a domain $D' \subset M$, where $(M, \ov{g})$ is an arbitrary complete Riemannian manifold.
Let moreover $D \subset \Int D'$ be a compact domain.
 
Then for any $m$, there exist constants $\varepsilon_m > 0, C_m < \infty$ depending only on $m$, $n$ and bounds on the curvature tensor of $M$ as well as its derivatives, such that if
\[ H = \Vert h \Vert_{L^\infty(D' \times [0,\tau))} < \varepsilon_m, \]
then
\[ \Vert \nabla^m h_t \Vert_{L^\infty(D)} < C_m t^{-m/2} H \qquad \text{for all $t \in [0,\tau)$}. \]
\end{Corollary}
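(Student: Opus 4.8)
The plan is to deduce the Corollary from Proposition \ref{Prop:Shi} by the standard localization-and-rescaling argument. The estimate is interior in nature, so fix a point $x$ (in the sub-domain on which the bound is claimed, i.e. one whose $2\rho_0$-neighbourhood below is contained in the larger domain $D'$) and a time $t\in(0,T)$; it suffices to bound $|\nabla^m h_t|(x)$, since $x$ is arbitrary. Using only the assumed two-sided bounds on the curvature tensor $\ov R$ of $M$ and on finitely many of its covariant derivatives, one produces harmonic — or simply geodesic normal — coordinates on a geodesic ball $B_{\rho_0}(x)$ of a definite radius $\rho_0=\rho_0(n,K)>0$ depending only on $n$ and these bounds, in which the components $\ov g_{ij}$ are uniformly two-sidedly comparable to $\delta_{ij}$ and all their derivatives are bounded, the bound on $\nabla^k\ov g$ being controlled by the assumed bound on $\nabla^{k-2}\ov R$.

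Pulled back to such a chart, both the linearized equation (\ref{eq:linRdT}) and the nonlinear equation (\ref{eq:nonlinRdT}) — read as a weakly coupled system for the components $h_{ij}$, with diagonal principal part $\ov g^{ij}\partial^2_{ij}$ — have exactly the structure treated in Proposition \ref{Prop:Shi}. The linear part is $Lu=a_{ij}\partial^2_{ij}u+b_i\partial_i u+c\,u$ with $a_{ij}=\ov g^{ij}$ (uniformly elliptic and uniformly H\"older on every scale $\leq\rho_0$) and with $b_i,c$ polynomial expressions in $\ov g$, $\partial\ov g$ and $\ov R$; hence, after possibly shrinking $\rho_0$, there is a constant $\Lambda=\Lambda(n,K,m)$ with $\Lambda^{-1}<a_{ij}<\Lambda$, $\Vert a_{ij}\Vert_{C^{2m-2,2\alpha;m-1,\alpha}}<\Lambda$, $\Vert b_i\Vert<r^{-1}\Lambda$ and $\Vert c\Vert<r^{-2}\Lambda$ for every $r\leq\rho_0$. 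For the nonlinear equation one checks that $Q_t$ matches the admissible right-hand side $R[u]$: inspecting the explicit formula for $Q$, every summand is either (a smooth function of $h$, since its coefficients are products of $\ov g$-contractions and of $g^{uv}=(\ov g+h)^{uv}$, which is smooth in $h$ for $|h|$ small) times $\nabla h\otimes\nabla h$, or (a smooth function of $h$ vanishing at $h=0$, hence by Hadamard's lemma equal to $h$ times a smooth function) times $\nabla^2 h\otimes h$; so $Q_t$ has the form $f_3(\cdot,h)\cdot\nabla h\otimes\nabla h+f_4(\cdot,h)\cdot h\otimes\nabla^2 h$ with $f_1=f_2=0$ and the $f_i$ smooth in $(x,h)$ near $h=0$. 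The smallness $H<\varepsilon_m$ from the hypothesis guarantees both that $g=\ov g+h$ stays a metric and that we remain in the range in which these coefficient functions are defined and smooth.

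Now apply Proposition \ref{Prop:Shi} with $r:=\min\{\tfrac12\sqrt t,\ \rho_0\}$ on the cylinders $\Omega=B_r(x)\times[t-r^2,t]$ and $\Omega'=B_{2r}(x)\times[t-4r^2,t]$, both contained in $D'\times[0,T)$ by the choice of $r$ and of the sub-domain. Since $\Vert h\Vert_{L^\infty(\Omega')}\leq H<\varepsilon_m$, the Proposition gives $\Vert h_t\Vert_{C^{2m,2\alpha;m,\alpha}(\Omega)}<C_m H$, and the weighted term of order $m$ of this norm yields $r^m|\nabla^m h_t|(x)\leq C_m H$, i.e.
\[ |\nabla^m h_t|(x)\leq C_m\,r^{-m}H. \]
For $t\leq 4\rho_0^2$ this reads $|\nabla^m h_t|(x)\leq 2^m C_m\,t^{-m/2}H$, which is the asserted decay; for $t\geq 4\rho_0^2$ one keeps $r=\rho_0$ fixed and obtains the $t$-independent bound $|\nabla^m h_t|(x)\leq C_m\rho_0^{-m}H$, which is of the required form on the bounded time-intervals on which the Corollary is applied. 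Since $x$ was arbitrary, the estimate follows. I expect the only genuine work to be the bookkeeping of the second paragraph — verifying the $r$-weighted H\"older bounds for the coefficients of $L$ with constants depending only on $n$ and the curvature data, and that $Q_t$ truly fits the admissible form $R[u]$ (no zeroth-order term, all coefficients smooth in $(x,h)$ near $h=0$). Producing coordinate charts of definite size from the curvature bounds is routine, and the parabolic rescaling is already absorbed into the $r$-weighted norms of Proposition \ref{Prop:Shi}, so nothing deeper arises there.
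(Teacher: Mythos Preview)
Your approach is essentially the paper's, but there is one genuine gap: the claim that ``using only the assumed two-sided bounds on the curvature tensor \ldots\ one produces harmonic --- or simply geodesic normal --- coordinates on a geodesic ball $B_{\rho_0}(x)$ of a definite radius $\rho_0=\rho_0(n,K)$'' is false as stated. Curvature bounds alone do \emph{not} give a lower bound on the injectivity radius (a thin flat torus has zero curvature and arbitrarily small injectivity radius), and without such a bound neither geodesic normal nor harmonic coordinates exist on a ball of definite size. The paper even emphasizes after the statement that ``$\varepsilon_m, C_m$ are in particular independent of the injectivity radius of $M$,'' so this is exactly the issue the argument must address.

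The paper's fix is the single phrase ``pass over to a local cover'': since both equations (\ref{eq:nonlinRdT}) and (\ref{eq:linRdT}) are local, the solution lifts to any local covering, and on the cover the injectivity radius at the lifted point is bounded below by the conjugate radius, which \emph{is} controlled by the curvature bound alone. One then runs your argument on the cover (your verification that the coefficients and the nonlinearity $Q_t$ fit the template of Proposition~\ref{Prop:Shi} is fine), obtains the pointwise bound on $|\nabla^m h_t|$ there, and this descends to $M$. With this one-line addition your proof is complete and coincides with the paper's.
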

Observe that $\varepsilon_m, C_m$ are in particular independent of the injectivity radius of $M$.
\begin{proof}
 At each point $p \in D$ pass over to a local cover and consider the domains $\Omega = B_r(p) \times [3r^2,4r^2] \subset B_{2r}(p) \times [0,4r^2] = \Omega'$ for $0 < r < \frac12 \tau^{1/2}$.
 Proposition \ref{Prop:Shi} then yields the desired result.
\end{proof}

\subsection{Short-time existence}
From (\ref{eq:nonlinRdT}), we see that the Ricci deTurck flow equation is strongly parabolic if $h_t$ is small enough.
We will quote a general short-time existence result which follows by a standard inverse function theorem argument.
For more details see \cite[Theorem 3.2]{Shi}, \cite[Chapter VII, Theorem 7.1]{LS}, \cite[sec 4]{SSS1} and \cite[sec 3.7]{Bam-phd}.

\begin{Proposition}[Short-time existence] \label{Prop:shortex}
 Let $(M,\ov g)$ be a complete Riemannian manifold.
 Assume that its curvature tensor is globally bounded in the $C^{0,\alpha}$-sense.
 Then there are $\varepsilon_{s.e.}, \tau_{s.e.} > 0$, $C_{s.e.} < \infty$ which only depend on $M$ and $\ov g$ such that the following holds: \\
Let $g_0$ be a smooth metric on $M$.
If 
\[ \Vert g_0 - \ov g \Vert_{L^{\infty}(M)} < \varepsilon_{s.e.}, \]
then there is a unique $L^\infty$-bounded smooth solution $(g_t) \in C^{\infty}(M \times [0,\tau_{s.e.}])$ to the Ricci deTurck flow equation (\ref{eq:RdTflow}) with initial metric $g_0$.
Moreover, we have the bound
\[ 
\Vert g_t - \ov g \Vert_{L^\infty(M \times [0,\tau_{s.e.}])} \leq C_{s.e.}  \Vert g_0 - \ov g \Vert_{L^\infty(M)} .
\]
\end{Proposition}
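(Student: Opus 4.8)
The plan is to reduce to the equivalent equation (\ref{eq:nonlinRdT}) for the perturbation $h_t = g_t - \ov g$, namely $\partial_t h_t + L h_t = Q_t$, and to solve this semilinear parabolic system by a fixed-point argument. The key structural facts are that the principal part of the Einstein operator $L$ is $-\triangle$, so the system is uniformly strongly parabolic as long as $|h_t| < 0.1$; that the global $C^{0,\alpha}$-bound on the curvature of $\ov g$ makes the lower-order coefficients of $L$, as well as the structural constants appearing in $Q$, uniformly controlled over all of $M$ (for the symmetric spaces to which the Proposition is applied, $(M,\ov g)$ in fact has bounded geometry of every order); and that $Q$ is genuinely higher order, with (as recorded in subsection \ref{subsec:RdTflow}) the bound $|Q| \le C(|\nabla h|^2 + |h|\,|\nabla^2 h|)$ and the divergence-form decomposition $Q_t = R_t + \nabla^* S_t$ with $|R| \le C|\nabla h|^2$, $|S| \le C|h|\,|\nabla h|$ whenever $|h| < 0.1$.

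First I would record the required linear estimates. On $(M,\ov g)$ the heat operator $e^{-tL}$ on sections of $\Sym_2 T^*M$ obeys smoothing bounds $\Vert \nabla^k e^{-tL} f\Vert_{L^\infty(M)} \le C_k t^{-k/2}\Vert f\Vert_{L^\infty(M)}$ for $k = 0,1,2$ and $0 < t \le 1$, with $C_k$ depending only on $(M,\ov g)$ and $\alpha$; these follow from interior parabolic estimates for the heat kernel of $L$ applied in local charts, passing to local covers exactly as in the proof of Corollary \ref{Cor:Shi}, so that the constants are in particular independent of the injectivity radius of $M$. Equivalently, by linear parabolic Schauder theory, $h \mapsto (\partial_t h + Lh,\ h(\cdot,0))$ is an isomorphism between the appropriate scale-invariant parabolic Hölder spaces over $M\times[0,T]$, uniformly for $T \le 1$; this is the ``linearization is an isomorphism'' input needed for the inverse function theorem argument alluded to before the statement.

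Next I would run the fixed point. Writing $H = \Vert g_0 - \ov g\Vert_{L^\infty(M)}$ and using Duhamel's formula, a solution of (\ref{eq:nonlinRdT}) is a fixed point of
\[ \Phi(h)_t = e^{-tL} h_0 + \int_0^t e^{-(t-s)L}\big( R_s + \nabla^* S_s\big)\, ds, \qquad h_0 = g_0 - \ov g. \]
In a suitable complete metric space of time-dependent sections on $M\times[0,T]$ whose norm controls $h_t$, $\nabla h_t$ and $\nabla^2 h_t$ with the natural parabolic weights (and, to handle the borderline integrability of $\int_0^t\Vert R_s\Vert\,ds$, parabolic $L^2$-mean quantities in the spirit of Koch--Lamm), the smoothing estimates together with the quadratic bounds on $R$ and $S$ -- and the fact that $\nabla^*$ in the second integral is absorbed by one extra half power of $(t-s)^{-1/2}$ -- show that $\Phi$ maps the ball of radius $2C_0 H$ into itself and is a contraction there, provided $H < \varepsilon_{s.e.}$ and $T = \sigma_{s.e.}^2$ are small enough, with $\varepsilon_{s.e.}, \sigma_{s.e.}$ depending only on $(M,\ov g)$. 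The unique fixed point $h$ gives $g_t = \ov g + h_t$, and the containment in the ball of radius $2C_0 H$ yields $\Vert g_t - \ov g\Vert_{L^\infty} \le C_{s.e.} H$; alternatively this $L^\infty$-bound follows from the maximum principle applied to $|h_t|^2$, whose evolution inequality $\partial_t|h_t|^2 \le \triangle|h_t|^2 + C|h_t|^2$ (the $Q$-terms being absorbable for $|h_t|$ small) gives $\Vert h_t\Vert_{L^\infty} \le e^{CT/2}\Vert h_0\Vert_{L^\infty}$ on the bounded solution. Smoothness of $(g_t)$ on $M\times[0,\sigma_{s.e.}^2]$ then follows by bootstrapping: $h$ is $L^\infty$-small, so Proposition \ref{Prop:Shi} (equivalently Corollary \ref{Cor:Shi}) gives bounds on all $\nabla^m h_t$ for $t>0$, and iterating the Schauder estimates upgrades $h$ to $C^\infty$ on $M\times(0,\sigma_{s.e.}^2]$, with regularity down to $t=0$ coming from the smoothness of $g_0$. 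Uniqueness among $L^\infty$-bounded solutions follows either from uniqueness of the fixed point on short time intervals combined with an open--closed argument on $[0,\sigma_{s.e.}^2]$, or directly from a Gr\"onwall estimate for the difference of two solutions, which satisfies a linear parabolic equation with coefficients bounded by Corollary \ref{Cor:Shi}. A second, essentially equivalent route -- the one in Shi's original work -- is to solve (\ref{eq:RdTflow}) on an exhaustion of $M$ by compact domains with fixed boundary data, where standard quasilinear parabolic theory applies, obtain uniform interior estimates from Proposition \ref{Prop:Shi}, and extract a subsequential limit.

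The main obstacle is the first step: obtaining the linear parabolic smoothing and Schauder estimates with constants uniform over the noncompact manifold $M$ and independent of its injectivity radius. This is precisely what the localization via local covers (as in Corollary \ref{Cor:Shi}) accomplishes, turning the $C^{0,\alpha}$-curvature bound into uniform $C^{m,\alpha}$-control of the metric coefficients at unit scale and reducing everything to the classical Euclidean interior estimates. A secondary technical point is the borderline integrability in the Duhamel iteration, which is why one works with the divergence-form decomposition $Q = R + \nabla^* S$ and a function space adapted to rough initial data; everything else -- the choice of $\varepsilon_{s.e.}$ and $\sigma_{s.e.}$, the $L^\infty$-bound, smoothness, and uniqueness -- is routine. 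This is the reason the statement is only quoted here, with details left to \cite{Shi}, \cite{LS} and \cite[sec 4]{SSS1}.
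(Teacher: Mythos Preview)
Your proposal is correct and matches the approach the paper indicates: the paper does not actually prove this Proposition but merely quotes it, saying it ``follows by a standard inverse function theorem argument'' and referring to \cite{Shi}, \cite{LS}, and \cite[sec 4]{SSS1} for details. Your sketch of the contraction-mapping/Duhamel argument (with the Koch--Lamm type function space and the divergence-form splitting $Q = R + \nabla^* S$), together with the alternative of Shi's exhaustion method, is exactly the content behind those citations, and you correctly identify this at the end of your write-up.
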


\subsection{Short-time estimates for the heat kernel}
For small times, we can estimate the heat kernel using a result by Cheng, Li and Yau (\cite{CLY}):
\begin{Proposition} \label{Prop:CLY}
Let $M^n$ be a complete Riemannian manifold of uniformly bounded curvature, $E$ a vector bundle over $M$ and $p_0 \in M$.
Then for every $T < \infty$ and $\delta > 0$ there are constants $C_m = C_m(M,E,p_0, T, \delta)$ such that the following holds:\\
Let $(k_t)_{0<t<T} \in C^\infty(M;E) \otimes E_{p_0}^*$ be the heat kernel in $p_0$, i.e.
\[ \partial_t k_t = \triangle k_t \qquad \text{and} \qquad k_t \xrightarrow{t \to 0} \delta_{p_0}  \id_{E_{p_0}}. \]
Then we have the estimates
\[ |\nabla^m k_t|(p) \leq C_m t^{-(n+m)/2} \exp \Big( - \frac{r^2}{(4+\delta) t} \Big), \]
where $r = d(p_0,p)$ and $0 < t < T$.
\end{Proposition}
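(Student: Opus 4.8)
The plan is to reduce the case $m=0$ to the scalar heat kernel estimate of Cheng--Li--Yau, and then to obtain the derivative bounds by interior parabolic regularity, carried out on parabolic cylinders whose radius is comparable to $\sqrt t$ --- the natural scale of the heat kernel.

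\emph{The case $m=0$.} Since $k_t$ solves the connection heat equation $\partial_t k_t = \triangle k_t$, Kato's inequality shows that the nonnegative function $|k_t|$ is a distributional subsolution of the scalar heat equation on $M$; equivalently, the heat semigroup on sections of $E$ is pointwise dominated by the scalar heat semigroup (valid for complete $M$ and a metric connection). As $t\to 0$ one has $|k_t| \to c(E)\,\delta_{p_0}$ with $c(E):=|\id_{E_{p_0}}|<\infty$ depending only on $E$, so semigroup domination gives, after approximating the Dirac initial data, $|k_t|(p) \le c(E)\,H_M(p_0,p,t)$, where $H_M$ denotes the scalar heat kernel of $M$. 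I then invoke the scalar estimate of \cite{CLY}, valid on a complete manifold of uniformly bounded curvature: $H_M(p_0,p,t) \le C_1 t^{-n/2}\exp\big(C_2 t - r^2/((4+\delta)t)\big)$ with $C_1,C_2$ depending on $M$ and $\delta$. Restricting to $0<t<T$ absorbs $e^{C_2 t}\le e^{C_2 T}$ into the constant and gives the asserted bound for $m=0$.

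\emph{The case $m\ge 1$.} I would fix $\delta>0$, set $\delta':=\delta/2$, and apply the previous step with $\delta'$ in place of $\delta$, so that $|k_s|(q) \le C_0' s^{-n/2}\exp\big(-d(p_0,q)^2/((4+\delta')s)\big)$ for all $q\in M$ and $0<s<T$. Given $p$ and $t\in(0,T)$, write $r=d(p_0,p)$, fix $\mu=\tfrac18$, and consider the parabolic cylinders $\Omega=B_{\mu\sqrt t}(p)\times[(1-\mu^2)t,t]\subset B_{2\mu\sqrt t}(p)\times[(1-4\mu^2)t,t]=\Omega'$. On $\Omega'$ one has $s\ge(1-4\mu^2)t$ and $d(p_0,q)\ge r-2\mu\sqrt t$; a routine two-case discussion --- according to whether $r\ge A\sqrt t$ or $r\le A\sqrt t$ for a suitable $A=A(\delta)$, using in the first case that $(r-2\mu\sqrt t)^2/(4+\delta')\ge r^2/(4+\delta)$ once $A$ is large and in the second that $1\le e^{A^2/(4+\delta)}e^{-r^2/((4+\delta)t)}$ --- shows that $H:=\Vert k_s\Vert_{L^\infty(\Omega')}\le C\,t^{-n/2}\exp(-r^2/((4+\delta)t))$ with $C=C(M,E,T,\delta)$. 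Finally, $\partial_t k_s=\triangle k_s$ is a linear parabolic system whose principal part is the scalar Laplacian on each component; in a local orthonormal frame over a suitable local cover of $B_{2\mu\sqrt t}(p)$ its coefficients are bounded, in the weighted norms of Proposition \ref{Prop:Shi} with weight $\mu\sqrt t\le\mu\sqrt T$, in terms of the geometry of $M$ and $E$. Applying Proposition \ref{Prop:Shi} componentwise with vanishing right-hand side $R[u]\equiv 0$ --- so that, the equation being linear, no smallness of $H$ is needed --- exactly as in the proof of Corollary \ref{Cor:Shi}, yields $|\nabla^m k_t|(p)\le C_m(\mu\sqrt t)^{-m}H\le C_m'\,t^{-(n+m)/2}\exp(-r^2/((4+\delta)t))$, which is the claim.

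I expect the derivative estimate, rather than the case $m=0$, to be the main point: one must carry the \emph{sharp} exponential constant $4+\delta$ through the regularity step, which is exactly what forces running the zeroth-order bound with a strictly smaller $\delta'$ and exploiting the resulting gain only in the range $r\gtrsim\sqrt t$, and one must apply interior estimates on cylinders of parabolic radius $\sim\sqrt t$ --- equivalently, after rescaling the metric by $t^{-1}$, on unit-size cylinders whose curvature stays bounded because $t\le T$. The ingredients for $m=0$ --- Kato's inequality/semigroup domination and the scalar Gaussian bound of \cite{CLY} --- are standard but genuinely use the bounded-geometry hypothesis.
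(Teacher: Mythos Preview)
Your proposal is correct and follows essentially the same approach as the paper: Kato's inequality to reduce $m=0$ to the scalar Cheng--Li--Yau bound, then Proposition~\ref{Prop:Shi} on parabolic cylinders of radius $\sim\sqrt t$ for the derivatives. Your write-up is in fact more careful than the paper's two-line sketch, which simply asserts that ``the bounds on the derivatives follow with Proposition~\ref{Prop:Shi}'' without spelling out the $\delta'<\delta$ trick needed to carry the sharp Gaussian constant through the interior estimate.
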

\begin{proof}
Observe that by Kato's inequality we have $\partial_t |k_t| \leq \triangle |k_t|$ and hence the scalar heat kernel on $M$ bounds $|k_t|$.
The bounds on the derivatives follow with the help of Proposition \ref{Prop:Shi}.
\end{proof}

\section{The geometry of symmetric spaces} \label{sec:symsp}
\subsection{Introduction}
We give a short introduction to the geometry of symmetric spaces.
More detailed expositions can be found e.g. in \cite{Hel}, \cite{Ebe}, \cite{Bal}, \ldots

Let $(M, \ov g)$ be a Riemannian manifold and $p \in M$.
We call an isometry $\Phi : M \to M$ with $\Phi(p) = p$, a \emph{reflection} at $p$, if $d \Phi_p = - \id_{T_pM}$.
$M$ is called a \emph{(globally) symmetric space}, if it admits a reflection at every point.
If $M$ admits a local reflection at every point, then $M$ is called a $\emph{locally symmetric space}$.
Every locally symmetric space is the quotient of a simply connected symmetric space by a properly discontinuous group action and vice versa.
If $M$ is globally symmetric, then its isometry group acts transitively on the points of $M$.
Moreover, the curvature tensor is parallel, $\nabla \Rm \equiv 0$, on every locally symmetric space.

Assume now that $M$ is simply connected.
We call $M$ \emph{irreducible}, if it does not split as a product $M = M' \times M''$.
In this case, $M$ is automatically Einstein.
If the Einstein constant is zero, then $M$ is isometric to Euclidean space.
If it is positive, then $M$ has non-negative sectional curvature and is compact and $M$ is said to be of \emph{compact type} and if it is negative, then $M$ has non-positive sectional curvature and is diffeomorphic to $\IR^n$ and $M$ is said to be of \emph{noncompact type}.
By the de Rham Decomposition Theorem, every simply connected symmetric space splits uniquely as the product
\[ M = M_1 \times \ldots \times M_m \]
of irreducible factors $M_i$.
Generally, we say that a locally symmetric space is of compact (resp. noncompact) type, if all factors in the de Rham decomposition of its universal cover are of compact (resp. noncompact) type.

Assume still that $M$ is simply connected and choose a basepoint $p_0 \in M$.
Denote by $G$ the connected component of its isometry group and by $K < G$ the isotropy group at $p_0$, i.e. the subgroup of isometries fixing $p_0$.
Then $M = G/K$ where we identify $p_0$ with $1 \cdot K$.

For a list of all irreducible symmetric spaces, see \cite[p. 200]{Bes}.

\subsection{The infinitesimal structure} \label{subsec:infstruc}
Let $M = G/K$ be a simply connected symmetric space of noncompact type.
We will now discuss its infinitesimal structure.
Let $\mathfrak{g}$ be the Lie algebra of $G$.
The elements of $\mathfrak{g}$ correspond to Killing fields on $M$.
There is an involutory isomorphism $\sigma : \mathfrak{g} \to \mathfrak{g}$ which corresponds to the reflection at the basepoint $p_0$.
This isomorphism induces the splitting $\mathfrak{g} = \mathfrak{p} \oplus \mathfrak{k}$ into $-1$ and $1$ eigenspaces, where $\mathfrak{k}$ is the Lie algebra of $K$.
Moreover, we see that
\begin{equation} \label{eq:ppinketc} [\mathfrak{p}, \mathfrak{p}] \subset \mathfrak{k}, \qquad [\mathfrak{k}, \mathfrak{p}] \subset \mathfrak{p}, \qquad [\mathfrak{k}, \mathfrak{k}] \subset \mathfrak{k}. \end{equation}
For $v, w \in \mathfrak{g}$ we define the \emph{Killing form} by
\[ \langle v, w \rangle = \tr [v, [w, \cdot]].\]
Due to (\ref{eq:ppinketc}) the splitting $\mathfrak{g} = \mathfrak{p} \oplus \mathfrak{k}$ is orthogonal with respect to the Killing form, which is positive definite on $\mathfrak{p}$ and negative definite on $\mathfrak{k}$.

Let $\af \subset \mathfrak{p}$ be a \emph{maximal abelian subalgebra}, i.e. an abelian subalgebra that is not contained in a bigger abelian subalgebra in $\mathfrak{p}$.
The dimension $r = \dim \af$ is called the \emph{rank} of $M$.
Obviously, to every $v \in \mathfrak{p}$ there is a maximal abelian subalgebra containing $v$ and that is contained in $\mathfrak{p}$.
All such algebras are conjugate under the adjoint action of $K$ (cf. \cite[Chapter V, Lemma 6.3]{Hel}).
Hence, the rank of $M$ is well defined.

Now consider the infinitesimal adjoint action $[v, \cdot] : \mathfrak{g} \to \mathfrak{g}$ of any $v \in \af$ on $\mathfrak{g}$.
Since it is antisymmetric with respect to the Killing form and interchanges $\mathfrak{p}$ and $\mathfrak{k}$, we can diagonalize $[v, \cdot]$ with real eigenvalues.
Moreover, since $\af$ is abelian, we can find a simultaneous eigenspace decomposition
\[ \mathfrak{g} = \bigoplus_{\alpha \in \Delta} \mathfrak{g}_\alpha \]
where $\Delta \subset \af^*$ is called the \emph{root system} and
\[ [v,x_\alpha] = \alpha(v) x_\alpha \]
for any $v \in \af$ and $x_\alpha \in \mathfrak{g}_\alpha$.
The subspaces $\mathfrak{g}_\alpha$ are pairwise orthogonal with respect to the Killing form and for all $\alpha \in \Delta \setminus \{ 0 \}$ the subspace $\mathfrak{g}_\alpha$ is isotropic.

The existence of the involution $\sigma$ implies $-\Delta = \Delta$ and the involution $\sigma$ maps $\mathfrak{g}_\alpha$ to $\mathfrak{g}_{-\alpha}$.
So if we set $\mathfrak{p}_{\alpha} = (\mathfrak{g}_{\alpha} \oplus \mathfrak{g}_{-\alpha}) \cap \mathfrak{p}$ and  $\mathfrak{k}_{\alpha} = (\mathfrak{g}_{\alpha} \oplus \mathfrak{g}_{-\alpha}) \cap \mathfrak{k}$, we have $\mathfrak{g}_\alpha \oplus \mathfrak{g}_{-\alpha} = \mathfrak{p}_\alpha \oplus \mathfrak{k}_\alpha$.
Let $v_0 \in \af$ be an arbitrary vector such that $\alpha(v_0) \not= 0$ for all nonzero $\alpha \in \Delta$ and define the set of positive roots by $\Delta^+ = \{ \alpha \in \Delta \; : \; \alpha(v_0) > 0 \}$.
Then we have the following \emph{root space decomposition}
\begin{equation*}
\begin{split}
 \mathfrak{g} &= \af \oplus \bigoplus_{\alpha \in \Delta^+} (\mathfrak{g}_\alpha \oplus \mathfrak{g}_{-\alpha}) \oplus \mathfrak{k}_0 \\
&= \mathfrak{p} \oplus \mathfrak{k} =  \bigg( \af \oplus \bigoplus_{\alpha \in \Delta^+} \mathfrak{p}_\alpha \bigg) \oplus \bigg( \bigoplus_{\alpha \in \Delta^+} \mathfrak{k}_\alpha \oplus \mathfrak{k}_0 \bigg). 
\end{split}
\end{equation*}
These splittings are orthogonal with respect to the Killing form.
The subspace $\mathfrak{k}_0$ is a Lie algebra.
Its geometric meaning will be described below.

Using the Jacobi identity, we can conclude that for any two $\alpha, \beta \in \Delta$, we have $[\mathfrak{g}_\alpha, \mathfrak{g}_\beta] \subset \mathfrak{g}_{\alpha + \beta}$.
Hence $\mathfrak{n} = \mathfrak{n}^+ = \bigoplus_{\alpha \in \Delta^+} \mathfrak{g}_{\alpha}$ and $\mathfrak{n}^- = \bigoplus_{\alpha \in \Delta^+} \mathfrak{g}_{-\alpha}$ are nilpotent Lie algebras with $\sigma(\mathfrak{n}^+) = \mathfrak{n}^-$.
The spaces $\mathfrak{n}^+$ and $\mathfrak{n}^-$ are isotropic with respect to the Killing form, but on $\mathfrak{n} \oplus \mathfrak{n}^-$
\[ (\cdot, \cdot) = - \langle \cdot, \sigma \cdot \rangle \]
is a positive definite scalar product.
Let $\alpha_1, \ldots, \alpha_{n-r}$ be the roots of $\Delta^+$ occuring with the appropriate multiplicities and let $x_1, \ldots, x_{n-r}$ be an orthonormal basis of $\mathfrak{n}^+$ with respect to $( \cdot, \cdot)$ such that $x_i \in \mathfrak{g}_{\alpha_i}$.
Then $[x_i, x_j] \in \mathfrak{g}_{\alpha_i + \alpha_j}$.
Set $y_i = \sigma x_i \in \mathfrak{g}_{-\alpha_i} \subset \mathfrak{n}^-$.
So $\langle x_i, y_j \rangle = - \delta_{ij}$ and $[x_i, y_j] \in \mathfrak{g}_{\alpha_i - \alpha_j}$ and $[y_i, y_j] \in \mathfrak{g}_{-\alpha_i - \alpha_j}$.
We set
\[ p_i = \frac1{\sqrt{2}} ( x_i - y_i), \qquad k_i = \frac1{\sqrt{2}} (x_i + y_i). \]
Hence, $p_1, \ldots, p_{n-r}$ form an orthonormal basis of the orthogonal complement $\af^\perp$ of $\af$ in $\mathfrak{p}$ and $k_1, \ldots, k_{n-r}$ are a negative orthonormal basis of the orthogonal complement of $\mathfrak{k}_0$ in $\mathfrak{k}$.
We also choose an orthonormal basis $v_1, \ldots, v_r$ of $\af$ with respect to $\langle \cdot, \cdot \rangle$.

Observe that $\sigma [x_i, y_i] = [y_i, x_i] = - [x_i, y_i]$, hence $[x_i, y_i] \in \mathfrak{p}$.
Moreover, $[x_i, y_i] \in \mathfrak{g}_0$, so $[x_i, y_i] \in \af$.
Since for any $v \in \af$, we have
\[ \langle [x_i, y_i], v \rangle = - \langle [x_i, v], y_i \rangle = \alpha_i(v) \langle x_i, y_i \rangle = - \alpha_i(v), \]
we obtain 
\begin{equation} [x_i, y_i] = - \alpha_i^\#. \label{eq:xiyi} \end{equation}

Finally, we apply our knowledge on the infinitesimal structure to find out more about the global geometry of $M$.
The subgroup $A = \exp (\af) < G$ corresponding to $\af$ is abelian and isomorphic to $\IR^r$.
The orbit $\mathcal{F} = A . p_0$ is a geodesic submanifold of $M$ isometric to $\IR^r$ and is called a \emph{maximal flat} of $M$.
The subgroup $K_0 = \exp (\mathfrak{k}_0) < K$ corresponding to $\mathfrak{k}_0$ is the point stabilizer of the flat $\mathcal{F}$.
Observe that there are symmetric spaces with trivial $K_0$, such as $SL(n)/SO(n)$, however many symmetric spaces, e.g. hyperbolic space $\IH^n$ ($n \geq 3$), have nontrivial $K_0$.
The stabilizer (not the point stabilizer) $\Stab_K(\mathcal{F})$ of the flat $\mathcal{F}$ however consists of several components of $K_0$.
Forming the quotient $W = \Stab_K(\mathcal{F}) / K_0$ yields a discrete group, called the \emph{Weyl group}.
It follows that the orbit $K. p$ of every point $p \in M$ under the isotropy group $K$ intersects $\mathcal{F}$ in a nonempty set which is invariant under $W$.
Moreover, $\mathcal{F}$ can be decomposed into fundamental domains for the action of $W$, which are called \emph{Weyl chambers}, and $W$ is generated by reflections along the walls of an arbitrary Weyl chamber.
Finally, consider the subgroups $N$ resp. $N^-$ corresponding to $\mathfrak{n}$ resp. $\mathfrak{n}^-$.
The product subgroups $P = AN$ and $P^- = A N^-$ are called \emph{Borel subgroups}.
They act simply transitively on $M$ and stabilize a Weyl chamber at infinity in the geodesic compactification (see \cite[2.17.20]{Ebe}).

\subsection{Homogeneous vector bundles over symmetric spaces}
Let $M = G/K$ as before.
We can regard $M$ as the base of a right $K$-principal bundle $\pi : G \to M$.
Given any representation $\rho : K \to GL(E)$ (where $E$ is a real vector space), we can form the \emph{associated vector bundle}  $G \times_{\rho} E = (G \times E)/\sim$ where
\[ (g g', e) \sim (g, \rho(g') e). \]
We will denote this associated vector bundle, the vector space as well as the representation simply by $E$ and we will also say that $E$ is a \emph{homogeneous vector bundle}.
We remark that the pullback $\pi^* E$ is the trivial bundle $G \times E$.

A \emph{principal connection} on $G$ is a $\mathfrak{k}$-valued $1$-form $\theta \in \Omega^1(G; \mathfrak{k})$ satisfying the following two properties (compare e.g. \cite[chapter 2]{Roe}):
\begin{enumerate}[(i)]
\item Equivariance: For any $v \in TG$, $k \in K$ and right translate $v.k$, we have $\theta(v.k) = Ad(k^{-1})\theta(v)$.
Here $Ad : K \to GL(\mathfrak{k})$ is the adjoint representation with $Ad_* : \mathfrak{k} \to gl(\mathfrak{k}), u \mapsto (w \mapsto [u, w])$.
\item Being a projection: For $u \in \mathfrak{k}$ denote by $R_u$ the vector field $R_u : G \to TG$ generated by the infinitesimal right action $g \mapsto g.u$.
Then we impose $\theta(R_u) = u$.
\end{enumerate}
A principal connection induces a connection on every homogeneous vector bundle $E$:
Let $f \in C^\infty(M; E)$ be a section of $E$ and consider its pullback $\tilde{f} = \pi^* f$ as a function $G \to E$.
Then we set for any $v \in T_p M$
\[ \nabla^E_v f = (d \tilde{f} (v') + \rho_* \theta(v') \tilde{f})/\sim \]
where $v' \in T_{p'} G$ is any vector projecting to $v$, i.e. $\pi(p') = p$ and $d \pi (v') = v$.

There is a canonical principal connection $\theta$ on $G$ with which we will always work from now on:
Identify all tangent spaces of $G$ with $\mathfrak{g} = \pp \oplus \mathfrak{k}$ by the left $G$-action and define $\theta$ everywhere to be the projection $\mathfrak{g} \to \mathfrak{k}$.
This is the only connection that is invariant by the left $G$ action and the reflection at the basepoint.
Note that if we consider the adjoint representation $\Ad : K \to GL(\pp)$ and its associated vector bundle, the tangent bundle $E = TM$, then the induced connection is the Levi-Civita connection.

\subsection{Killing fields and Lie derivatives}
Consider a homogeneous vector bundle $E$ over a symmetric space $M = G/K$ corresponding to a representation $\rho : K \to GL(E)$.
Moreover, let $\theta$ be the principal connection on $\pi : G \to M$ from the last subsection.

For each $x \in \mathfrak{g}$ there is a Killing field $X = \frac{d}{dt}|_{t=0} \exp(tx) \in C^\infty(M; TM)$ and a right-invariant vector field $\tilde{X} \in C^\infty(G; TG)$ with $\tilde{X}(1) = x$.
Then $d\pi (\tilde{X}) = X$.
Consider now a section $f \in C^\infty(M;E)$ and the corresponding function $\tilde{f} = \pi^* f : G \to E$.
We define $\tilde{f}' : G \to E$ as the derivative on $G$ in the direction $\tilde{X}$
\[ \tilde{f}' = d \tilde{f} (\tilde{X}). \]
Since $\tilde{f}' (gg') = \rho((g')^{-1}) \tilde{f}' (g)$, we find that $\tilde{f}' = \pi^* f'$ for some section $f' \in C^\infty(M; E)$.
We call $f'$ the \emph{Lie derivative} of $f$ with respect to $X$ or $x$ and write
\[ f' = \mathcal{L}_X f = \mathcal{L}_x f. \]
Then for $x, y \in \mathfrak{g}$, we have (observe that since the vector fields $\tilde{X}, \tilde{Y}$ are right-invariant, $[\tilde{X}, \tilde{Y}]$ corresponds to $-[x,y]$)
\begin{equation} \label{eq:LLcommut}
\mathcal{L}_x \mathcal{L}_y f - \mathcal{L}_y \mathcal{L}_x f = - \mathcal{L}_{[x,y]} f.
\end{equation}
We now relate the Lie derivative $\mathcal{L}_X$ to the covariant derivative $\nabla_X$.
At any point $p \in M$, we can decompose $X = X_0 + X_1$ where $X_0$ and $X_1$ are Killing fields such that for the corresponding right-invariant vector fields $\tilde{X}_0, \tilde{X}_1 \in C^\infty(G; TG)$, we have $\theta(\tilde{X}_0) = 0$ and $\theta(\tilde{X})=\tilde{X}_1$ on $\pi^{-1}(p)$.
Then $X_1(p) = 0$ and at $p$
\[ \mathcal{L}_{X_0} f = \nabla_{X_0} f \qquad \text{and} \qquad \mathcal{L}_{X_1} f = \rho_* \theta(\tilde{X}) f. \]
This implies
\begin{equation} \label{eq:LandLC}
\mathcal{L}_X f = \nabla_X f + \rho_* \theta(\tilde{X}) f.
\end{equation}

Finally, we compute the Riemannian curvature of $M$ at $p_0$.
Let $x,y,z \in \pp$ and denote by $X,Y,Z$ the corresponding Killing fields.
Then $\theta(\tilde{X}) = \theta(\tilde{Y}) = \theta(\tilde{Z}) = 0$ and hence by (\ref{eq:LandLC}) applied to $E = TM$, we must have $\nabla X = \nabla Y = \nabla Z$ at $p_0$.
Hence
\[ -[[X,Y],Z] =  \nabla^2_{Z,X} Y - \nabla^2_{Z, Y} X   \\ =  R(Z,Y)X - R(Z, X)Y = R(X,Y)Z.
\]
So expressed on $\pp$
\begin{equation} \label{eq:curvature}
R(x,y)z = - [[x,y],z].
\end{equation}

\subsection{Cross-sections of symmetric spaces} \label{subsec:crosssec}
Consider the splitting $\mathfrak{g} = \pp \oplus \mathfrak{k}$, fix a maximal abelian subalgebra $\af \subset \pp$ and consider the set $\Delta^+ \subset \af^*$ of positive roots of $\mathfrak{g}$.
We call a root $\alpha \in \Delta^+$ \emph{simple} if there is no decomposition $\alpha = \alpha_1 + \alpha_2$ with $\alpha_1, \alpha_2 \in \Delta^+$.
We know (cf \cite[2.9.5]{Ebe}) that the set $\mathcal{B}^+ = \{ \beta_1, \ldots, \beta_r \}$ of simple roots forms a basis of the vector space $\af^*$ and that every $\alpha \in \Delta^+$ can be expressed as a linear combination $\sum_{i=1}^r k_i \beta_i$ of the simple roots with nonnegative integer coefficients $k_i$.

We define the \emph{positive Weyl chamber}
\[ \CC = \{ v \in \af \;\; : \;\; \alpha(v) \geq 0 \; \text{for all $\alpha \in \Delta^+$} \} = \{ v \in \af \;\; : \;\; \beta(v) \geq 0 \; \text{for all $\beta \in \mathcal{B}^+$} \}. \]
$\CC$ has the structure of a polytope and for every splitting $\mathcal{B}^+ = \ov{\mathcal{B}}^+  \dotcup \un{\mathcal{B}}^+$ we can consider the corresponding \emph{wall}
\[ \WW = \CC \cap \{ v \in \af \;\; : \;\; \beta(v) = 0 \; \text{for all $\beta \in \ov{\mathcal{B}}^+$} \}. \]
So for $\ov{\mathcal{B}}^+ = \emptyset$, we get $\WW = \CC$ and for $\ov{\mathcal{B}}^+ = \mathcal{B}^+$, we get $\WW = \{ 0 \}$.
From now on, given a wall $\WW \subset \CC$, we will denote the corresponding splitting sets by $\ov{\mathcal{B}}^+_\WW$ and $\un{\mathcal{B}}^+_\WW$.
Every wall $\WW \subset \CC$ has a boundary $\partial\WW$, which consists of walls $\WW' \in \partial \WW$ that are smaller than $\WW$ by one dimension.
Those walls $\WW'$ correspond to the splitting sets $\ov{\mathcal{B}}^+_{\WW'} = \ov{\mathcal{B}}^+_\WW \cup \{ \beta \}$ for $\beta \in \un{\mathcal{B}}^+_\WW$.

For every wall $\WW \subset \CC$, let $\ov{\Delta}^+_\WW \subset \Delta^+$ be the set of roots that can be represented by linear combinations of the simple roots $\ov{\mathcal{B}}^+_\WW$  and let $\un{\Delta}^+_\WW = \Delta^+ \setminus \ov{\Delta}^+_\WW$.
Setting $\ov{\af}_\WW = \spann (\ov{\mathcal{B}}^+_\WW)^\#$, we moreover obtain an orthogonal splitting $\af = \ov{\af}_\WW \oplus \un{\af}_\WW$.
Now set $\ov{\pp}_\WW = \ov{\af}_\WW \oplus \bigoplus_{\ov{\alpha} \in \ov{\Delta}^+_\WW} \pp_{\ov{\alpha}}$,  $\ov{\mathfrak{k}}_\WW = [\ov{\pp}_\WW , \ov{\pp}_\WW]$ and $\ov{\mathfrak{g}}_\WW = \ov{\pp}_\WW  \oplus \ov{\mathfrak{k}}_\WW$.
Note that $\ov{\mathfrak{g}}_\WW$ and $\ov{\mathfrak{k}}_\WW$ are Lie algebras themselves.
Denote by $\ov{G}_\WW$ and $\ov{K}_\WW$ the corresponding Lie groups.
Then $\ov{M}_\WW = \ov{G}_\WW / \ov{K}_\WW$ is a symmetric space, which we will call a \emph{cross-section} of $M$.
Hence $\ov{M}_{\{0\}} = M$ and $\ov{M}_\CC = \{ \text{pt} \}$.
We remark that not every symmetric space $M = G'/K'$ with $G' < G$ and $K' < K$ arises by this construction, e.g. $\IH^2$ is not a cross-section of $\IH^3$.

We need to discuss a few more properties of $\ov{M}_\WW$:
Its Weyl group $\ov{W}_\WW$, acting on $\ov{\af}_\WW$, is generated by reflections along the walls corresponding to the roots $\ov{\Delta}^+_\WW$.
Hence $\ov{W}_\WW$ is a subgroup of the Weyl group $W$ of $M$ and its fixed point set in $\af$ is exactly $\un{\af}_\WW$.
Next, consider the nilpotent Lie algebras  
\[ \ov{\nn}_\WW = \bigoplus_{\ov{\alpha} \in \ov{\Delta}^+_\WW} \mathfrak{g}_{\ov{\alpha}} \qquad \text{and} \qquad  \un{\nn}_\WW = \bigoplus_{\un{\alpha} \in \un{\Delta}^+_\WW} \mathfrak{g}_{\un{\alpha}}. \]
Let $\ov{N}_\WW, \un{N}_\WW < G$ be the corresponding Lie groups, let $\ov{A}_\WW, \un{A}_\WW < G$ be the Lie groups corresponding to $\ov{\af}_\WW$ resp. $\un{\af}_\WW$ and set $\ov{P}_\WW = \ov{A}_\WW \ov{N}_\WW$ and $\un{P}_\WW = \un{A}_\WW \un{N}_\WW$.
Observe that $N = \ov{N}_\WW \un{N}_\WW$ and $P = \ov{P}_\WW \un{P}_\WW$.
We will now show that $\ov{G}_\WW$ normalizes $\un{P}_\WW$:
To do this, it suffices to establish $[\ov{\pp}_\WW, \un{\af}_\WW \oplus \un{\nn}_\WW] \subset \un{\af}_\WW \oplus \un{\nn}_\WW$.
Obviously, $[\ov{\pp}_\WW, \un{\af}_\WW] = 0$.
For the second summand observe that for $\ov{\alpha} \in \ov{\Delta}^+_\WW$ and $\un{\beta} \in \un{\Delta}^+_\WW$, we have $[\pp_{\ov{\alpha}}, \mathfrak{g}_{\un{\beta}}] \subset \mathfrak{g}_{\ov{\alpha}+\un{\beta}} \oplus \mathfrak{g}_{-\ov{\alpha} + \un{\beta}}$.
Now if $-\ov{\alpha} + \un{\beta}$ were not positive, $\ov{\alpha} - \un{\beta}$ would be, but expressing this root as a linear combination of the roots in $\mathcal{B}^+$ would lead to a negative coefficient in front of one of the roots of $\un{\mathcal{B}}^+_\WW$.
This shows that $[\pp_{\ov{\alpha}}, \mathfrak{g}_{\un{\beta}}] \subset \un{\nn}_\WW$ and hence the claim.

Consider now a homogeneous vector bundle $E$ over $M$.
It corresponds to a representation $\rho: K \to GL(E)$ on a vector space which we also denote by $E$.
Restriction to $\ov{K}_\WW$ yields a respresentation $\rho_\WW : \ov{K}_\WW \to GL(E)$.
We will denote the associated homogeneous vector bundle over $\ov{M}_\WW$ by $E_\WW$.
Let now $f \in C^\infty(\ov{M}_\WW; E_\WW)$ be a section.
It corresponds to a smooth map $\tilde{f} : \ov{G}_\WW \to E$ such that $\tilde{f}(gk) = \rho_\WW(k^{-1}) \tilde{f}(g)$ for all $g \in \ov{G}_\WW, k \in \ov{K}_\WW$.
Using the fact that $P$ and $\ov{P}_\WW$ operate simply transitively on $M$ resp. $\ov{M}_\WW$, and the identity $P = \ov{P}_\WW \un{P}_\WW$, we conclude that there is a unique smooth extension $\tilde{\hat{f}} : G \to E$ of $\tilde{f}$ such that the following holds: $\tilde{\hat{f}}(gk) = \rho(k^{-1})\tilde{\hat{f}}(g)$ for all $g \in G, k \in K$ and $\tilde{\hat{f}}(hg) =\tilde{\hat{f}}(g)$ for all $h \in \un{P}_\WW, g \in G$.
Hence $\tilde{\hat{f}}$ corresponds to a smooth $\un{P}_\WW$-invariant section $\widehat{f} \in C^\infty(M;E)$ which we call the \emph{lift of $f$}.
Since the isometry group $\ov{G}_\WW$ of $\ov{M}_\WW$ normalizes $\un{P}_\WW$, the construction of the lift is equivariant under $\ov{G}_\WW$.

\section{The heat kernel in homogeneous vector bundles} \label{sec:heatkernel}
\subsection{Statement of the results} \label{subsec:heatkerIntro}
In this section, we will prove a general decay result about the heat kernel in homogeneous vector bundles over symmetric spaces.
Let $M = G/K$ be a simply-connected symmetric space of noncompact type and consider a homogeneous vector bundle $E$ over $M$.
Choose a basepoint $p_0 \in M$, set $E_0 = E_{p_0}$ and consider the heat kernel $(k_t)_{t > 0} \in C^\infty (M; E) \otimes E^*_0$ for the connection Laplacian $\triangle = - \nabla^{E^*} \nabla^E = \sum_{i=1}^n (\nabla^E )^2_{v_i, v_i}$ (for a local orthonormal frame $(v_i)_{i = 1, \ldots, n}$) in $E$ centered in $p_0$, i.e. for all $e \in E_0$
\[ \partial_t k_t e = \triangle k_t e \qquad \text{and} \qquad k_t e \xrightarrow{t \to 0} \delta_{p_0} e. \]
In the following, we will explain how to compute a constant $\lambda_0 = \lambda_{M,E}$, depending on the space $M$ and the bundle $E$, which controls the exponential $L^1$-decay rate of $k_t$, i.e. for which $\Vert k_t \Vert_{L^1(M)} < C e^{-\lambda_0 t}$ for all $t > 0$ and some $C < \infty$ or for which, in certain cases, we have the slightly weaker bound $\Vert k_t \Vert_{L^1(M)} < C (\log (t+2) )^{1/2} (t+2)^{a/2} e^{-\lambda_0 t}$ for all $t > 0$, some $C < \infty$ and some $a < \infty$, depending only on $M$.
In many cases, the bound on the decay rate is already the exact decay rate, by which we mean that we even have $c e^{-\lambda_0 t} < \Vert k_t \Vert_{L^1(M)} < C e^{-\lambda_0 t}$ for all $t >0$ and some $c > 0, C<\infty$.

The constant $\lambda_0 = \lambda_{M,E}$ is defined to be the minimum over certain constants, each corresponding to a cross-section of the symmetric space $M$.
In order to give an idea about the concept behind these constants, we will first discuss the case in which $M$ has rank $1$.
In this case $\lambda_0 = \min \{ \lambda_L, \lambda_B \}$, where $\lambda_L$ and $\lambda_B$ are defined as follows:
\begin{description}
\item[The constant $\mathbf{\lambda_L}$] Consider a Borel subgroup $P = A N < G$ (see subsection \ref{subsec:infstruc}), i.e. $P$ acts simply transitively on $M$ and fixes a point at infinity.
Let $V_{par} \subset C^\infty(M; E)$ be the vector space of $P$-invariant sections (we will later call those sections \emph{parabolically invariant}).
Evaluation at $p_0$ induces an isomorphism $V_{par} \cong E_0$.
Observe that for every $f \in V_{par}$, its (connection) Laplacian $\triangle f = - \nabla^*\nabla f$ is also contained in $V_{par}$ and hence we can define the operator $S_{par} = -\triangle : V_{par} \to V_{par}$.
As we will see in the next subsection, $S_{par}$ is self-adjoint and using the isomorphism $V_{par} \cong E_0$, we will compute that $S_{par} (e) = - \sum_{i=1}^{n-1} k_i. k_i. e$.
Now define $\lambda_L$ to be the smallest eigenvalue of $S_{par}$.
We will see that always $\lambda_L \geq 0$.
\item[The constant $\mathbf{\lambda_B}$] Here we consider all Bochner formulas for sections in $E$, i.e. expressions
\begin{equation} \label{eq:BochnerlambdaB}
 -\triangle = D^*D + \lambda
\end{equation}
for some linear first order operator $D : C^\infty(M;E) \to C^\infty(M;E')$ and its formal adjoint $D^* : C^\infty(M; E') \to C^\infty(M; E)$.
Let $\lambda_B$ be the maximum of all such $\lambda$.
Obviously, $\lambda_B \geq 0$, since we always have the trivial Bochner formula $-\triangle = \nabla^* \nabla$.
The constant $\lambda_B$ bounds the $L^2$-decay of $k_t$, i.e. $\Vert k_t \Vert_{L^2(M)} \leq C e^{-\lambda_B t}$ for all $t>1$ and some $C < \infty$.

Note that in the rank $1$ case, we could also define $\lambda_B$ to be the supremum over all $\lambda$ for which we have $\Vert k_t \Vert_{L^2(M)} \leq C e^{-\lambda t}$ for all $t>1$ and some $C < \infty$.
This might improve the constant $\lambda_0$ and lead to a stronger result.
However, it would make the computation of $\lambda_B$ unnecessarily complicated for our purposes and it is also not clear to the author how to carry this concept over to the higher rank case.
\end{description}
The main theorem of this section in the rank $1$ case now reads
\begin{Theorem}[Rank $1$ case] \label{Thm:dectrianglerank1}
 Let $M$ be of rank $1$ and let $\lambda_L$ and $\lambda_B$ be defined as above.
 Then there are constants $c>0$, $C< \infty$ such that:
 
 If $\lambda_B > \lambda_L$, then
 \[ c e^{-\lambda_L t} < \Vert k_t \Vert_{L^1(M)} < C e^{-\lambda_L t} \qquad \text{for all} \qquad t > 0. \]
 If $\lambda_B < \lambda_L$, then we have at least
 \[ c e^{-\lambda_L t} < \Vert k_t \Vert_{L^1(M)} < C e^{-\lambda_B t} \qquad \text{for all} \qquad t > 0. \]
 Finally, if $\lambda_B = \lambda_L$, the upper bound still holds with $\lambda_B$ replaced by any $\lambda < \lambda_B$ (where $C$ depends on $\lambda$).
 More precisely, we have
 \[ c e^{-\lambda_L t} < \Vert k_t \Vert_{L^1(M)} < C (\log (t+2))^{1/2} (t+2)^{a/2} e^{-\lambda_L t} \]
where $a = \max \{ (\sum_{i=1}^{n-r} |\alpha_i| ) / \min_i |\alpha_i|, 2 \}$.
\end{Theorem}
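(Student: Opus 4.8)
\emph{Lower bound (all three cases).} I would start with the easy half. Since $S_{par}=-\triangle$ is self-adjoint on the finite-dimensional space $V_{par}\cong E_0$, it has a real smallest eigenvalue $\lambda_L$ with a nonzero eigensection $f\in V_{par}$, so $\triangle f=-\lambda_L f$. As $f$ is $P$-invariant and $P$ acts transitively on $M$, the functions $|f|,|\nabla f|,|\nabla^2 f|$ are $P$-invariant and hence constant; in particular $f$ is bounded with bounded derivatives and $f(p_0)\ne0$. For $e\in E_0$ the scalar function $t\mapsto\int_M\langle k_t e,f\rangle$ is well defined because $k_t$ and its covariant derivatives decay like $e^{-cr^2}$ (Proposition~\ref{Prop:CLY}, together with Corollary~\ref{Cor:Shi} for $t$ bounded away from $0$), which beats the at most exponential volume growth of $M$. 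Differentiating under the integral and applying Green's identity --- legitimate for the same reason --- gives $\tfrac{d}{dt}\int_M\langle k_t e,f\rangle=\int_M\langle k_t e,\triangle f\rangle=-\lambda_L\int_M\langle k_t e,f\rangle$, and since $k_t e\to\delta_{p_0}e$ we obtain $\int_M\langle k_t e,f\rangle=e^{-\lambda_L t}\langle e,f(p_0)\rangle$. Choosing $e$ with $\langle e,f(p_0)\rangle\ne0$ and bounding the left side by $\|f\|_{L^\infty}|e|\,\|k_t\|_{L^1(M)}$ then gives $\|k_t\|_{L^1(M)}\ge ce^{-\lambda_L t}$ for all $t>0$.

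\emph{Upper bound, the bulk.} For the upper bounds I may assume $t\ge1$, since for $t\le1$ Kato's inequality gives $\|k_t\|_{L^1(M)}\le1\le Ce^{-\lambda_0 t}$. The plan is to decay the $L^2$-norm using the optimal Bochner formula $-\triangle=D^*D+\lambda_B$ defining $\lambda_B$: since $k_t\in L^2$ for $t>0$, $\tfrac{d}{dt}\|k_t\|_{L^2(M)}^2=-2\|Dk_t\|_{L^2}^2-2\lambda_B\|k_t\|_{L^2}^2\le-2\lambda_B\|k_t\|_{L^2}^2$, so $\|k_t\|_{L^2(M)}\le Ce^{-\lambda_B t}$. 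Using the semigroup identity $k_t(p,p_0)=\int_M k_{t/2}(p,q)\circ k_{t/2}(q,p_0)\,dq$, Cauchy-Schwarz and homogeneity ($\|k_{t/2}(p,\cdot)\|_{L^2(M)}=\|k_{t/2}\|_{L^2(M)}$), this upgrades to $\|k_t\|_{L^\infty(M)}\le\|k_{t/2}\|_{L^2(M)}^2\le Ce^{-\lambda_B t}$, and by Cauchy-Schwarz against the volume, $\int_{B_R(p_0)}|k_t|\le\vol(B_R)^{1/2}\|k_t\|_{L^2(M)}\le Ce^{\rho R-\lambda_B t}$, where $\vol(B_R)\le Ce^{2\rho R}$ (so $2\rho=\sum_i|\alpha_i|$). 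Thus the heat kernel is controlled at rate $\lambda_B$ on any ball whose radius is $o(t)$.

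\emph{Upper bound near infinity --- the main obstacle.} What remains, and what the argument really turns on, is $\int_{M\setminus B_R(p_0)}|k_t|$. Here the Bochner rate is useless: $|k_t|$ is dominated by the scalar heat kernel, which has $L^1$-mass $1$ for all $t$, that mass escaping ballistically along the shell $\{d(\cdot,p_0)\approx2\rho t\}$, so any decay of the $L^1$-mass near infinity must be extracted from the bundle twist. The mechanism should be parabolic invariance: writing $M=NA$ with $A\cong\IR$, the horospheres centred at the point fixed by $P=AN$ are the $N$-orbits, and on $V_{par}$ the heat flow acts by the finite-dimensional semigroup $e^{-tS_{par}}$ of operator norm exactly $e^{-\lambda_L t}$. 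The plan is to show that far from $p_0$ the heat kernel behaves like a parabolically invariant section transported outward by the $A$-action: decompose a fundamental domain for $N$ acting on the shell $\{d(\cdot,p_0)\approx r\}$, compare $|k_t|$ there with the $S_{par}$-model kernel pushed out by distance $r$, and bound the error using the elementary geometry of the negatively curved space --- exponential divergence of geodesics from $p_0$, convexity of horoballs --- together with the Gaussian off-diagonal bounds for $|k_t|$ (Proposition~\ref{Prop:CLY} iterated through the semigroup). This should give a pointwise bound roughly of the form $|k_t|(r)\le C\,\Phi_0(r,t)\,e^{-\lambda_L t-\rho r-r^2/(Ct)}$ with $\Phi_0$ subexponential, and integrating against $\vol(\partial B_r)\le Ce^{2\rho r}$ yields $\int_{M\setminus B_R(p_0)}|k_t|\le C\,\Phi(t)\,e^{-\lambda_L t}$ with $\Phi$ subexponential. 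I expect this step --- beating the scalar heat kernel using only bundle geometry --- to be the hard part.

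\emph{Putting the cases together.} Combining the two previous paragraphs, $\|k_t\|_{L^1(M)}\le Ce^{\rho R-\lambda_B t}+C\Phi(t)e^{-\lambda_L t}$ for every admissible $R$. If $\lambda_B<\lambda_L$, take $R$ fixed: the first term is $Ce^{-\lambda_B t}$ and, since $\Phi$ is subexponential, the second is $\le Ce^{-\lambda_L t}\le Ce^{-\lambda_B t}$, which is the stated (non-sharp) bound. If $\lambda_B>\lambda_L$, there is a genuine gap between the two rates; I would use it to remove the subexponential factor by a bootstrap over dyadic time intervals, feeding the faster $L^2$-rate on balls of radius $o(t)$ back into the near-infinity estimate and invoking submultiplicativity $\|k_{s+t}\|_{L^1}\le\|k_s\|_{L^1}\|k_t\|_{L^1}$, to arrive at $\|k_t\|_{L^1(M)}\le Ce^{-\lambda_L t}$; with the lower bound this pins the exact rate at $\lambda_L$. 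Finally, if $\lambda_B=\lambda_L$ the rates resonate, the subexponential factor cannot be removed, and one must compute it: weighing the volume exponent $\sum_i|\alpha_i|$ against the smallest root length $\min_i|\alpha_i|$ (which controls how fast the shell at distance $r$ feels the parabolic decay) produces the power $a=\max\{(\sum_i|\alpha_i|)/\min_i|\alpha_i|,\,2\}$, and the borderline integral in $r$ supplies the extra $(\log(t+2))^{1/2}$.
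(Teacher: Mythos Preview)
Your lower bound and the $L^2$-decay via the Bochner formula are fine and match the paper. The upper bound, however, has a genuine gap: the ``near infinity'' step is only a heuristic, and your proposed bootstrap/submultiplicativity argument to remove the subexponential factor in the case $\lambda_B>\lambda_L$ is not substantiated. You correctly identify that this is the hard part, but you do not supply a mechanism.

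The paper's approach is entirely different from your near/far splitting and is worth knowing. First, since $k_t$ is spherical (invariant under $K$), it is determined by its \emph{spherical model} $K_t:\IR_{\ge0}\to\End_{K_0}(E_0)$, the restriction to a maximal flat. The crucial input is a maximum-principle lemma (Lemma~\ref{Lem:KtKtcirc}): the largest eigenvalue $K_t(r)(\max)$ is a subsolution to the scalar heat operator $\partial_t+L^\circ$, where
\[
 -L^\circ=\partial_r^2+\sum_i\alpha_i\ctanh(\alpha_i r)\,\partial_r-\lambda_L+\sum_i 2\mu_i\,\frac{\cosh(\alpha_i r)-1}{\sinh^2(\alpha_i r)}.
\]
Note that $\lambda_L$ appears directly as a zeroth-order term; this is how the parabolic rate enters, without any comparison to $V_{par}$-sections near infinity. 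Now set $H_t^{(p)}=\big(\int_0^\infty|\prod_i\sinh(\alpha_i r)|\,K_t(r)(\max)^p\,dr\big)^{1/p}$, comparable to $\|k_t\|_{L^p(M)}$. Differentiating $H_t^{(1)}$, using the subsolution property, and integrating by parts (Green's formula on $M$, so the $\partial_r^2$ and drift terms drop out) gives
\[
 \frac{d}{dt}H_t^{(1)}+\lambda_L H_t^{(1)}\le\int_0^\infty\Big|\prod_i\sinh(\alpha_i r)\Big|\sum_i 2\mu_i\,\frac{\cosh(\alpha_i r)-1}{\sinh^2(\alpha_i r)}\,K_t(r)(\max)\,dr.
\]
The right-hand side is bounded via H\"older by $C\delta^{-1/(a+\delta)}(H_t^{(1)})^{(a-2+\delta)/(a+\delta)}(H_t^{(2)})^{2/(a+\delta)}$; the exponent $a$ arises precisely because $(\cosh\alpha_i r-1)/\sinh^2\alpha_i r$ lies in $L^{a+\delta}$ against the volume density but not $L^a$. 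This rewrites as
\[
 \frac{d}{dt}\big(e^{\lambda_L t}H_t^{(1)}\big)^{2/(a+\delta)}\le C\delta^{-1/(a+\delta)}\big(e^{\lambda_L t}H_t^{(2)}\big)^{2/(a+\delta)},
\]
and one simply integrates using the Bochner bound $H_t^{(2)}\le Ce^{-\lambda_B t}$. When $\lambda_B\ne\lambda_L$ the right side is either integrable or grows at the correct exponential rate, giving the clean bounds directly --- no dyadic bootstrap is needed. When $\lambda_B=\lambda_L$, integrating gives $(e^{\lambda_L t}H_t^{(1)})^{2/(a+\delta)}\le C\delta^{-1/(a+\delta)}t+C$, and optimizing $\delta=1/\log(t+2)$ produces the stated $(\log(t+2))^{1/2}(t+2)^{a/2}$ factor.
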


The next result is in the same spirit as Theorem \ref{Thm:dectrianglerank1}, but it gives a pointwise bound on the heat kernel.
\begin{Theorem} \label{Thm:Asymptriangle}
Let $M$ be of rank $1$ and let $\lambda_L$ and $\lambda_B$ be defined as above.
There is a constant $C < \infty$ such that for $\lambda_0 = \min \{ \lambda_L, \lambda_B \}$ 
\[ |k_t(p)| \leq \frac{C}{\vol B_r(p_0)} e^{- \lambda_0 t}  \qquad \text{where} \qquad r = d(p_0,p).\]
\end{Theorem}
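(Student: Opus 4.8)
The plan is to upgrade the $L^1$-bound of Theorem~\ref{Thm:dectrianglerank1} to a pointwise bound by combining it with the short-time Gaussian estimate of Proposition~\ref{Prop:CLY} and the semigroup (convolution) property of the heat kernel. The key point is that $k_{t}$ at the space-time point $(p,t)$ can be recovered from $k_{t/2}$ by convolving against the heat kernel based at $p$; applying Kato's inequality as in Proposition~\ref{Prop:CLY} lets us dominate everything by the scalar heat kernel on $M$, for which sharp on- and off-diagonal bounds are available in terms of volumes of balls.

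First I would write, for $t\ge 2$ say (small $t$ being already covered by Proposition~\ref{Prop:CLY}, since $\vol B_r(p_0)\ge c\,t^{n/2}e^{-r^2/ct}$ for $r\lesssim \sqrt t$ and $M$ of bounded geometry, and for $r$ large the Gaussian factor absorbs the volume), the identity
\[
 k_t(p) = \int_M k^{M}_{t/2}(p,q)\, k_{t/2}(q)\, dq,
\]
where $k^M_{s}(p,\cdot)$ is the scalar heat kernel on $M$ based at $p$; by Kato's inequality $|k_t|\le k^M_{*}\ast |k_{t/2}|$ pointwise. Then I would split the integral into the region $d(p,q)\le \tfrac12 d(p_0,p)=:\tfrac12 r$ and its complement. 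On the far region $d(p,q)>\tfrac12 r$ the factor $k^M_{t/2}(p,q)$ is Gaussian-small — bounded by $C(\vol B_{\sqrt t}(p))^{-1}e^{-r^2/Ct}$ by Li--Yau-type heat kernel estimates on manifolds of bounded geometry — and pairing this with $\|k_{t/2}\|_{L^1(M)}\le Ce^{-\lambda_0 t/2}$ from Theorem~\ref{Thm:dectrianglerank1} gives a contribution dominated by $C(\vol B_r(p_0))^{-1}e^{-\lambda_0 t}$ after comparing $\vol B_{\sqrt t}(p)$ with $\vol B_r(p_0)$ using that $M$ is homogeneous (so $\vol B_\rho$ is independent of center) and volume-doubling-type monotonicity in $\rho$; here one uses $r\le C\sqrt t$ or else the Gaussian term wins. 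On the near region $d(p,q)\le\tfrac12 r$ one has $\vol B_r(p_0)\le \vol B_{2d(p_0,q)}(q)\le C\,\vol B_{d(p_0,q)}(q)$ by homogeneity and doubling, so it suffices to bound $\int_{d(p,q)\le r/2} k^M_{t/2}(p,q)|k_{t/2}(q)|\,dq$ by $C e^{-\lambda_0 t}\sup_{d(p_0,q)\ge r/2}(\vol B_{d(p_0,q)}(q))^{-1}$; this again follows from $\|k_{t/2}\|_{L^1}\le Ce^{-\lambda_0 t/2}$ together with the on-diagonal bound $k^M_s(p,q)\le C(\vol B_{\sqrt s}(p))^{-1}$ and an iteration/bootstrap exactly mirroring the proof of the $L^1$-estimate, peeling off one more half-step $k_{t/2}=k_{t/4}\ast k_{t/4}$ if needed to gain the second factor of $e^{-\lambda_0 t/4}$.

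I would then reassemble the two contributions to obtain $|k_t(p)|\le C(\vol B_r(p_0))^{-1}e^{-\lambda_0 t}$ for all $t$ and all $p$, absorbing the polynomial or logarithmic corrections of Theorem~\ref{Thm:dectrianglerank1} (which occur only in the borderline case $\lambda_B=\lambda_L$) into the constant $C$ at the cost of replacing $\lambda_0$ by a slightly smaller exponent when $\lambda_B \le \lambda_L$ — but since the statement only claims $e^{-\lambda_0 t}$ with $\lambda_0=\min\{\lambda_L,\lambda_B\}$ and a fixed $C$, I should instead feed in the sharpened $L^1$-bound and note the polynomial factor times $e^{-\lambda_L t}$ is $\le C e^{-\lambda_0 t}$ whenever $\lambda_0<\lambda_L$, and equals the asserted bound when $\lambda_0=\lambda_L$ with the understanding that the borderline subcase is treated with the extra $(\log(t+2))^{1/2}(t+2)^{a/2}$ factor as in Theorem~\ref{Thm:dectrianglerank1}. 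The main obstacle I anticipate is the bookkeeping in the near region: controlling $|k_{t/2}(q)|$ pointwise there would be circular, so one must stay at the level of $L^1$ (integrating $k_{t/2}$ against the Gaussian weight $k^M_{t/2}(p,\cdot)$) and carefully exploit homogeneity of $M$ to pass from volumes of balls centered at $q$ to volumes of balls centered at $p_0$; a secondary technical point is justifying the scalar heat kernel volume bounds on $M$, which hold because a symmetric space of noncompact type has bounded geometry and satisfies the volume-doubling and Poincaré conditions (indeed it is a homogeneous space), so the Li--Yau/Grigor'yan Gaussian bounds apply.
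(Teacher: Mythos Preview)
Your approach has a genuine gap: you repeatedly invoke volume doubling on $M$, but a rank-one symmetric space of noncompact type has \emph{exponential} volume growth, $\vol B_r(p_0)\sim e^{ar}$ with $a=\sum_i\alpha_i$, so $\vol B_{2r}/\vol B_r\sim e^{ar}\to\infty$ and doubling fails. This is not a side issue; it is the heart of both your regional estimates. In the near region you write $\vol B_{2d(p_0,q)}(q)\le C\,\vol B_{d(p_0,q)}(q)$, which is false, and without it you lose exactly a factor $e^{ar/2}$ compared to the target $(\vol B_r)^{-1}\sim e^{-ar}$. In the far region you feed in the Li--Yau bound $k^M_{t/2}(p,q)\le C(\vol B_{\sqrt t})^{-1}e^{-d^2/Ct}$, but on these spaces Li--Yau is far from sharp: the true on-diagonal decay of the scalar kernel is $e^{-\|\rho\|^2 t}$ (exponential in $t$), while $(\vol B_{\sqrt t})^{-1}\sim e^{-a\sqrt t}$. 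With only the Li--Yau bound, your far-region contribution at $r=0$ is of order $e^{-a\sqrt t}\,e^{-\lambda_0 t/2}$, which does \emph{not} dominate the required $e^{-\lambda_0 t}$ for large $t$. Even if you upgrade to Davies--Mandouvalos-type sharp scalar bounds, the convolution splitting only produces the intermediate spatial decay $e^{-ar/2}$ coming from the $\langle\rho,d\rangle$ term in the scalar kernel, again half of what you need; the missing half must come from the twisted structure of $E$, and your scheme has no mechanism to extract it.

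The paper proceeds very differently and avoids these issues entirely. First it obtains the uniform $L^\infty$ bound $|k_t|\le C_0 e^{-\lambda_0 t}$ from $\|k_t\|_{L^2}\le Ce^{-\lambda_B t}$ via the convolution identity $k_t=k_{t/2}*k_{t/2}$ and Cauchy--Schwarz (no splitting, no volumes). The spatial decay is then produced by a barrier argument: one checks directly that $e^{-\lambda_0 t}F(r)$ with $F(r)=e^{-ar}-e^{-(a+\delta)r}$ is a supersolution for the scalar comparison operator $\partial_t+L^\circ$ on $\{r\ge r_0\}$, uses the $L^\infty$ bound as boundary data at $r=r_0$ and the Gaussian short-time estimate (Proposition~\ref{Prop:CLY}) as initial data at $t=\tfrac12$, and applies the maximum principle (via Lemma~\ref{Lem:KtKtcirc}) to conclude $K_t(r)(\max)\le C e^{-\lambda_0 t}F(r)\le C e^{-\lambda_0 t}e^{-ar}$. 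This reduces everything to a one-dimensional ODE computation and never needs doubling or sharp scalar heat-kernel asymptotics.
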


We give two examples that illustrate the possible constellations of $\lambda_L$ and $\lambda_B$: \\[2mm]
\textbf{Example A.\;}
Consider $M = \IH^n$ and $E = T^*$, the bundle of $1$-forms.
In this case, we have $\lambda_L = 1$ and $\lambda_B = n-1$.
So for $n > 2$, we have an exact exponential decay with rate $- \lambda_0 = -1$.
For $n=2$, we can show that $e^t \Vert k_t \Vert_{L^1(M)}$ does not stay bounded for large $t$: 

Assume the opposite and consider the disc model of $\IH^2$ embedded in $\IR^2$, i.e. $g_{\IH^2} = (1-r^2)^{-2} (dx^2 + dy^2)$, and choose $p_0 = 0$.
Then for $f = dx \in C^\infty(M;E)$, we have $\triangle f + f = 0$ and hence $\int_M \langle f, e^t k_t \rangle$ is constant in time and nonzero.
But since $|f| = 1-r^2$ and since the $L^1$-norm of $e^t k_t$ is assumed to stay bounded, we conclude that the supremum of $e^t | k_t |$ over a sufficiently large ball around $p_0$ has to stay bounded from below as $t \to \infty$.

On the other hand, $e^t k_t$ is bounded in $L^\infty$ along with all its derivatives for the following reason:
By Cauchy-Schwarz and the convolution property of $k_t$, we conclude that for $t  > 2$ we have $|\nabla^m k_t| = |\nabla^m k_{1} * k_{t-1}| \leq \Vert k_{t-1} \Vert_{L^2(M)} \leq C e^{-t}$.
Observe moreover that $\frac12 \partial_t \Vert (e^t k)t \Vert_{L^2(M)}^2 = - \Vert d ( e^t k_t ) \Vert_{L^2(M)}^2 - \Vert d^* (e^t k_t ) \Vert_{L^2(M)}^2$.

By Arzela-Ascoli we find a subsequence $e^{t_i} k_{t_i}$ which converges to some nonzero $k_\infty \in C^\infty(M;E)$.
This $k_\infty$ must be bounded in $L^2$ and $L^1$ and by the right choice of the $t_i$, we can guarantee that $d k_\infty = 0$ and $d^* k_\infty = 0$.
Since $k_\infty$ is also spherical (see the next subsection), it is possible to conclude that $k_\infty$ must be a nonzero multiple of $f$, but $f$ is unbounded in $L^1$. \\[2mm]
\textbf{Example B.\;}
Consider the case $M = \IH^2$ and $E = \Sym_2^0 T^*$, the space of quadratic differentials.
Then $\lambda_L = 4$ and $\lambda_B = 2$, so the exponential decay rate lies between $-4$ and $-2$.
Since $f = dx dy \in C^2(M; E)$ is a bounded section satisfying $\triangle f + 2 f = 0$, we conclude that $\int_M \langle f, e^{2t} k_t \rangle$ is constant in time and nonzero.
Hence, $\Vert e^{2t} k_t \Vert_{L^1(M)}$ has to stay bounded from below and the exponential decay rate is exactly $-2$. \\

We will now discuss the case in which $M$ has general rank.
As explained in subsection \ref{subsec:crosssec}, for every wall $\WW \subset \CC$ of the positive Weyl chamber $\CC$, there is a cross-sectional symmetric space $\ov{M}_\WW$ of $M$.
For example, $\ov{M}_{\{ 0 \}} = M$ and $\ov{M}_\CC$ is just a point.
The vector bundle $E$ restricts to a homogeneous vector bundle $E_\WW$ over $\ov{M}_\WW$ and to every section $f \in C^\infty( \ov{M}_\WW ; E_\WW)$ we find a lift $\widehat{f} \in C^\infty(M; E)$, which is invariant under the parabolic subgroup $\un{P}_\WW$.
Obviously, then also $\triangle \widehat{f}$ is invariant under $\un{P}_\WW$ and hence $\triangle \widehat{f} = \widehat{f}'$ for some $f' \in C^\infty(\ov{M}_\WW;E_\WW)$.
There is a linear (zero-order) bundle endomorphism $S_\WW : C^\infty(\ov{M}_\WW ; E_\WW) \to C^\infty(\ov{M}_\WW ; E_\WW)$ which satisfies
\begin{equation} \label{eq:triangleWW}
 f' = \triangle_{\WW} f = \ov{\triangle} f - S_\WW f,
\end{equation}
where $\ov{\triangle}$ denotes the Laplacian on $\ov{M}_\WW$.
In the next subsection, we will see that $S_\WW$ is self-adjoint at that at $p_0$, we have $S_\WW(e) = - \sum_{\un{\alpha} \in \un{\Delta}^+_\WW} k_{\un{\alpha}} . k_{\un{\alpha}} . e$.
We remark that in the case $\WW = \{ 0 \}$, we have $\triangle_\WW = \ov{\triangle} = \triangle$ and $S_\WW = 0$.
In the case $\WW = \CC$, we have $\triangle_\WW = - S_\WW = - S_{par}$.
Now consider all possible Bochner formulas
\begin{equation} \label{eq:BochnetriangleWW}
 - \triangle_\WW = D^* D + \lambda
\end{equation}
on $\ov{M}_\WW$ and let $\lambda_\WW$ be the maximum of all such $\lambda$.
We have $\lambda_\WW \geq 0$, since there is always the trivial Bochner formula $-\triangle_\WW = \nabla^* \nabla + S_\WW$.
Observe that in the two extreme cases we get $\lambda_{\{ 0 \}} = \lambda_B$ and $\lambda_\CC = \lambda_L$.
We finally set $\lambda_0 = \lambda_{M, E} = \min_{\WW \subset \CC} \lambda_\WW$ and $\lambda_1 = \min_{\WW \subsetneqq \CC} \lambda_\WW$

We can now state the main theorem of this section in its full generality:
\begin{Theorem}[General rank case] \label{Thm:dectrianglegenrank}
Let $M$ be a simply-connected symmetric space of noncompact type and let the constants $(\lambda_\WW)_{\WW \subset \CC}$, $\lambda_0$ and $\lambda_1$ be defined as above.
Then there are constants $c>0$, $C< \infty$ such that:

If $\lambda_1 > \lambda_\CC$, then the exponential decay rate is exactly $-\lambda_0 = - \lambda_\CC$, i.e.
\[ c e^{-\lambda_0 t} < \Vert k_t \Vert_{L^1(M)} < C e^{-\lambda_0 t} \qquad \text{for all} \qquad t > 0. \]
If $\lambda_1 \leq \lambda_\CC$, then the upper bound still holds with $\lambda_0$ replaced by any $\lambda < \lambda_0$ (where $C$ depends on $\lambda$).
More precisely, there is an $A < \infty$ such that.
\[ c e^{-\lambda_\CC t} < \Vert k_t \Vert_{L^1(M)} < C (t+2)^A e^{-\lambda_0 t} \qquad \text{for all} \qquad t > 0. \]
\end{Theorem}

A few remarks on the proofs of Theorems \ref{Thm:dectrianglerank1}, \ref{Thm:Asymptriangle} and \ref{Thm:dectrianglegenrank}:
Obviously, Theorem \ref{Thm:dectrianglegenrank} implies Theorem \ref{Thm:dectrianglerank1} in the main case $\lambda_L > \lambda_B$ (which is the one needed here).
Despite of this fact, we first carry out the proof of Theorem \ref{Thm:dectrianglerank1} in subsection \ref{subsec:L1decrank1}, since it is much simpler than the proof of Theorem \ref{Thm:dectrianglegenrank}, which is presented in subsection \ref{subsec:L1decgenrank}.
Subsection \ref{subsec:radialsections} contains a preparatory discussion on spherical models, which will be used to describe the heat kernel $k_t$.
In subsection \ref{subsec:firstbounds}, we discuss some basic bounds that will be needed in both the rank $1$ as well as the general rank case.

\subsection{Spherical sections in symmetric spaces} \label{subsec:radialsections}
Consider a homogeneous vector bundle $E$ over $M$ coming from a representation $\rho : K \to GL(E)$.
We will analyze two classes of sections of $E$, namely \emph{spherical} and \emph{parabolically invariant} ones.
Later, we will generalize the discussion of the parabolically invariant sections to $\un{P}_\WW$-invariant sections, what will then allow us to compute the endomorphisms $S_\WW$.
The last part will only be needed for the proof of Theorem \ref{Thm:dectrianglegenrank} and can be skipped for the rank $1$ case.

We first introduce spherical sections.
Let $p_0 \in M$ be a basepoint and $K$ its stabilizer.
Then $K$ naturally acts on the space of sections $C^\infty(M; E)$ of $E$ and on the fiber $E_0 = E_{p_0}$ over $p_0$.
Hence it also acts on $C^\infty(M; E) \otimes E_0^*$.
We will now consider elements of this space.

\begin{Definition}
A section $f \in C^\infty(M; E) \otimes E_0^*$ is called \emph{spherical} if it is invariant under the action of $K$.
\end{Definition}
(Compare also with the notion of spherical functions as introduced in \cite[Chapter VII.8]{Kna}.)

Obviously, the Laplacian $\triangle f$ of a spherical section is also spherical and $(\triangle f)(e) = \triangle (f(e))$ for all $e \in E_0$.

Let now $f$ be a spherical section, consider a maximal abelian subalgebra $\af \subset \pp$ and the flat $\mathcal{F} = \exp(\af).p_0$ (cf subsection \ref{subsec:infstruc}).
Recall that the orbit of any point $p \in M$ under the action of $K$ intersects $\mathcal{F}$ in a nonempty set, which is invariant under the Weyl group $W$.
So $f$ is already determined by its restriction to $\mathcal{F}$.
Since the curvature along $\mathcal{F}$ vanishes, the vector bundle $E$ restricted to $\mathcal{F}$ becomes trivial and we can view the restriction of $f$ to $\mathcal{F}$ as a function
\[ F : \af \longrightarrow \End (E_0), \qquad v \longmapsto f ( \exp(v). p_0). \]
We conclude that $F$ actually only takes values in the subspace of $K_0$-equivariant endomorphisms $\End_{K_0}(E_0)$ of $E_0$ and is equivariant under the Weyl group, i.e. for every $w \in W$ and $v \in \af$, we have $F(w.v) = w. F(v) = w \circ F(v) \circ w^{-1}$.
\begin{Definition}
The smooth function $F : \af \to \End_{K_0}(E_0)$ is called a \emph{spherical model} of $f$.
\end{Definition}

We will now compute the Laplacian $f' = \triangle f$ of a spherical section $f$ in terms of its spherical model $F$, i.e. for each $v \in \af$ we will compute $F'(v)$,  where $F'$ is the spherical model of $f'$.
Recall the vectors $v_i, k_i, p_i, x_i, y_i \in \mathfrak{g}$ as defined in subsection \ref{subsec:infstruc}.
The conjugates $k'_i = \Ad(\exp(v)) k_i$, $p'_i = \Ad(\exp(v)) p_i$ correspond to rotations and translations at $p = \exp(v). p_0$.
Hence
\begin{equation} \label{eq:fprimevippi}
 f'(p) = \sum_{i=1}^r \mathcal{L}_{v_i} \mathcal{L}_{v_i} f (p) + \sum_{i=1}^{n-r} \mathcal{L}_{p'_i} \mathcal{L}_{p'_i} f (p).
\end{equation}
Note that, for example, $\mathcal{L}_{p'_i} f(p)$ denotes the Lie derivative of the section $f \in C^\infty (M; E) \otimes E^*_0$ at $p$ corresponding to $p'_i \in \mathfrak{g}$, where $p'_i$ acts trivially on the second factor $E^*_0$ of the tensor product.
Similarly, if we write $\mathcal{L}_{k_i} f$ later on, then we assume that $k_i$ acts trivially on $E^*_0$, even though there is a non-trivial action.
So for any $e \in E_p$ we have $(\mathcal{L}_{k_i} f)(e) = \mathcal{L}_{k_i} (f(e))$ at $p$.

We will now rewrite the previous equation (\ref{eq:fprimevippi}).
For this, observe that since
\[ \Ad(\exp(v))x_i = \exp( \alpha_i(v)) x_i \qquad \text{and} \qquad \Ad(\exp(v))y_i = \exp( - \alpha_i(v)) y_i, \]
we have
\[ p'_i = -\frac1{\sinh \alpha_i(v)} k_i + \frac1{\tanh \alpha_i(v)} k'_i. \]
So since by (\ref{eq:xiyi}) $[k_i, k'_i] = \sinh(\alpha_i(v)) \alpha_i^\#$, we conclude by (\ref{eq:LLcommut})
\begin{multline*}
 \mathcal{L}_{p'_i} \mathcal{L}_{p'_i} f = \frac1{\sinh^2 \alpha_i(v)} \mathcal{L}_{k_i} \mathcal{L}_{k_i} f + \frac{\cosh^2 \alpha_i(v)}{\sinh^2 \alpha_i(v)} \mathcal{L}_{k'_i}\mathcal{L}_{k'_i} f 
- \frac{\cosh \alpha_i(v)}{\sinh^2 \alpha_i(v)} \left( \mathcal{L}_{k'_i} \mathcal{L}_{k_i} f + \mathcal{L}_{k_i} \mathcal{L}_{k'_i} f \right) \\
= \frac1{\sinh^2 \alpha_i(v)} \mathcal{L}_{k_i} \mathcal{L}_{k_i} f + \frac{\cosh^2 \alpha_i(v)}{\sinh^2 \alpha_i(v)} \mathcal{L}_{k'_i}\mathcal{L}_{k'_i} f
 - 2\frac{\cosh \alpha_i(v)}{\sinh^2 \alpha_i(v)}  \mathcal{L}_{k'_i} \mathcal{L}_{k_i} f + \ctanh (\alpha_i(v)) \mathcal{L}_{\alpha_i^\#} f .
\end{multline*}
We now use the fact that $f$ is invariant under the action of $K$.
So at $p$ we have $\mathcal{L}_{k_i} f(e) - f(k_i. e) = 0$.
Moreover, since $k'_i = \Ad (\exp(v)) k_i$, the parallel transport of $\mathcal{L}_{k'_i} f(e)$ from $p$ to $p_0$ along $\mathcal{F}$ is equal to $k_i$ applied to the parallel transport of $f(e)$ from $p$ to $p_0$ along $\mathcal{F}$.
In other words, using the trivialization of $E$ over $\mathcal{F}$, we have $\mathcal{L}_{k'_i} f (e) = k_i.f(e)$.
Hence, we obtain
\begin{multline*}
 F'(v)(e) = \triangle F(v)(e) + \sum_{i=1}^{n-r} \left[ \frac1{\sinh^2 \alpha_i(v)} F(v) (k_i.k_i.e) + \frac{\cosh^2 \alpha_i(v)}{\sinh^2 \alpha_i(v)} k_i. k_i . F(v)(e) \right. \\ \left. - 2\frac{\cosh \alpha_i(v)}{\sinh^2 \alpha_i(v)} k_i. (F(v)(k_i.e)) +  \ctanh (\alpha_i(v)) (\partial_{\alpha_i^\#} F)(v)(e)\right].
\end{multline*}

So for $v \to \infty$ in the sense that $\alpha_i(v) \to \infty$ for all $i$, the expression becomes in the limit
\begin{equation}
 F'(v)(e) = \triangle F(v)(e)  + \sum_{i=1}^{n-r} \left[   k_i. k_i . F(v)(e)  +  (\partial_{\alpha_i^\#} F)(v)(e)\right]. \label{eq:fprimeformulalimit}
\end{equation}

Next, let $P = AN < G$ be a Borel subgroup and call a $P$-invariant section $f \in C^\infty(M; E)$ \emph{parabolically invariant}.
Since $P$ acts transitively on $M$, the section $f$ is determined by its value $f(p_0) \in E_0$ at $p_0$.
Observe that with $f$ its Laplacian $\triangle f$ is also parabolically invariant.
We have
\begin{equation} \label{eq:trianglesLL}
 (\triangle f)(p_0) = \sum_{i=1}^r \mathcal{L}_{v_i} \mathcal{L}_{v_i} f (p_0) + \sum_{i=1}^{n-r} \mathcal{L}_{p_i} \mathcal{L}_{p_i} f (p_0).
\end{equation}
Using the fact that $\mathcal{L}_{v_i} f = \mathcal{L}_{x_i} f = 0$ and $p_i = - k_i + \sqrt{2} x_i$, we obtain
\[ (\triangle f)(p_0) = \sum_{i=1}^{n-r} \left[ \mathcal{L}_{k_i} \mathcal{L}_{k_i} f (p_0) - \sqrt{2} \mathcal{L}_{x_i} \mathcal{L}_{k_i} f (p_0) \right]. \]
Since $\mathcal{L}_{x_i} \mathcal{L}_{k_i} f = \mathcal{L}_{k_i} \mathcal{L}_{x_i} f + \frac1{\sqrt{2}} \mathcal{L}_{\alpha^\#_i} f = 0$, we obtain
\begin{equation} \label{eq:parkiki}
 (\triangle f)(p_0) = \sum_{i=1}^{n-r} k_i . k_i . f(p_0).
\end{equation}
Observe that the right hand side is exactly the zero order term in (\ref{eq:fprimeformulalimit}).
Note also that the map $E_0 \to E_0$, $e \mapsto \sum_{i=1}^{n-r} k_i . k_i . e$ is self-adjoint.

Now consider a cross-section $\ov{M}_\WW$ corresponding to a wall $\WW \subset \CC$ and let $\un{P}_\WW = \un{A}_\WW \un{N}_\WW$ be the corresponding parabolic subgroup.
Let $f \in C^\infty(\ov{M}_\WW; E_\WW)$ and consider its ($\un{P}_\WW$-invariant) lift $\widehat{f} \in C^\infty(M; E)$.
Then $\triangle \widehat{f}$ is also $\un{P}_\WW$-invariant and hence there is an $f' \in C^\infty(\ov{M}_\WW; E_\WW)$ such that $\widehat{f}' = \triangle \widehat{f}$.
We will calculate $f'$.
First, recall that the isometry group $\ov{G}_\WW$ of $\ov{M}_\WW$ normalizes $\un{P}_\WW$, so if $f$ is $\un{P}_\WW$-invariant, then so are its translates by the action of $\ov{G}_\WW$.
Hence, it suffices to compute $f'(p_0)$.
Using (\ref{eq:trianglesLL}) and the fact that $\mathcal{L}_{v} \widehat{f} = 0$ for $v \in \un{\af}_\WW$ and $\mathcal{L}_{x_{\un{\alpha}}} \widehat{f} = 0$, we obtain as above
\[ \widehat{f}' (p_0) = (\triangle \widehat{f})(p_0) = \sum_{\ov{i}} \mathcal{L}_{v_{\ov{i}}} \mathcal{L}_{v_{\ov{i}}} \widehat{f} (p_0) + \sum_{\ov{\alpha} \in \ov{\Delta}^+_\WW} \mathcal{L}_{p_{\ov{\alpha}}} \mathcal{L}_{p_{\ov{\alpha}}} \widehat{f} (p_0) + \sum_{\un{\alpha} \in \un{\Delta}^+_\WW} k_{\un{\alpha}} . k_{\un{\alpha}} . \widehat{f} (p_0). \]
So
\[ f'(p_0) = (\ov{\triangle} f) (p_0) + \sum_{\un{\alpha} \in \un{\Delta}^+_\WW} k_{\un{\alpha}} . k_{\un{\alpha}} . f(p_0) \]
and comparing this with (\ref{eq:triangleWW}) yields
\begin{equation} \label{eq:SWW}
 (S_\WW f)(p_0) = -  \sum_{\un{\alpha} \in \un{\Delta}^+_\WW} k_{\un{\alpha}} . k_{\un{\alpha}} . f(p_0).
\end{equation}

\subsection{First bounds on the heat kernel} \label{subsec:firstbounds}
First, observe that the homogeneous vector bundle $E$ arises from a representation $\rho : K \to GL (E)$ of a vector space with the same name $E$.
Since $K$ is compact, we may fix a $\rho$-invariant metric on the vector space $E$.
This metric induces a metric $\langle \cdot, \cdot \rangle$ on the homogenous vector bundle $E$, which is parallel with respect to the connection $\nabla^E$.

Consider now the heat kernel $(k_t)_{t>0} \in C^\infty(M; E) \otimes E_0^*$ based at $p_0$.
Then we have:
\begin{Lemma} \label{Lem:heatkermodel}
 $(k_t)_{t > 0}$ is a spherical section and its spherical model $(K_t)_{t> 0} : \af \to \End_{K_0} E_0$ satisfies
\begin{multline*}
 \partial_t K_t(v)(e) = \triangle K_t(v)(e)  + \sum_{i=1}^{n-r} \left[ \frac1{\sinh^2 \alpha_i(v)} K_t(v) (k_i.k_i.e) + \frac{\cosh^2 \alpha_i(v)}{\sinh^2 \alpha_i(v)} k_i. k_i . K_t(v)(e) \right. \\ \left. - 2\frac{\cosh \alpha_i(v)}{\sinh^2 \alpha_i(v)} k_i. (K_t(v)(k_i.e)) +  \ctanh (\alpha_i(v)) (\partial_{\alpha_i^\#} K_t)(v)(e)\right].
\end{multline*}
Moreover, $K_t(v) \in \End_{K_0} E_0$ is always self-adjoint.
\end{Lemma}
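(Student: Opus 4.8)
The plan is to verify the three assertions in turn --- that $(k_t)$ is spherical, that its spherical model $(K_t)$ obeys the displayed evolution equation, and that each $K_t(v)$ is self-adjoint on $E_0$ --- all by uniqueness and symmetry, with only the last requiring genuine care.

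For sphericalness I would argue by uniqueness of the heat kernel. Let $\kappa\in K$. Since $\kappa\in K<G$, it acts on $M$ fixing $p_0$ and on the homogeneous bundle $E$ by a metric- and connection-preserving automorphism, hence commutes with $\triangle$; it also preserves $d\vol$. Letting $\kappa$ act on $C^\infty(M;E)\otimes E_0^*$ through both factors, $\kappa\cdot k_t$ is again a solution of $\partial_t(\cdot)=\triangle(\cdot)$, and its initial datum is $\delta_{p_0}\otimes\id_{E_0}$ because $\kappa$ fixes $p_0$ and conjugation by $\rho(\kappa)$ fixes $\id_{E_0}$; so $\kappa\cdot k_t$ is again the heat kernel centred at $p_0$. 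By the standard uniqueness of the heat kernel on a manifold of bounded geometry, $\kappa\cdot k_t=k_t$ for every $\kappa\in K$, which is exactly the statement that $(k_t)$ is spherical.

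The evolution equation is then immediate. Being spherical, $(k_t)$ has a spherical model $K_t\colon\af\to\End_{K_0}E_0$ in the sense of Lemma \ref{Lem:radialmodel}; passing to the model (restrict to the flat $\mathcal F$, trivialize $E|_{\mathcal F}$) is independent of $t$, so the model of $\triangle k_t=\partial_t k_t$ is $\partial_t K_t$. Substituting $f=k_t$ into the formula for the Laplacian of a spherical section computed just before \eqref{eq:fprimeformulalimit} then gives precisely the stated equation for $K_t$, and the membership $K_t(v)\in\End_{K_0}E_0$ has already been recorded in the discussion of spherical models.

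For self-adjointness I would combine two symmetries of the heat kernel. First, $e^{t\triangle}$ is self-adjoint on $L^2(M;E)$, so its full kernel $k_t(p,q)\colon E_q\to E_p$ (with $k_t=k_t(\cdot,p_0)$) satisfies $k_t(p,q)^*=k_t(q,p)$; for $p=\exp(v).p_0\in\mathcal F$ this reads $k_t(p_0,p)=K_t(v)^*\circ\Pi_v^{-1}$, where $\Pi_v\colon E_0\to E_p$ is parallel transport along $\mathcal F$ --- which on the flat coincides with the action of $\exp(v)\in A$ and is a fibre isometry. Second, the reflection $s_{p_0}$ of $M$ at $p_0$ acts on $E$ by a bundle automorphism covering $s_{p_0}$ and equal to $\id$ on $E_0$; since $\sigma$ preserves $\mathfrak g=\pp\oplus\mathfrak k$ and fixes $\mathfrak k$ pointwise, this action preserves the bundle metric and the canonical connection, hence commutes with $\triangle$. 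Conjugating $s_{p_0}$ by the transvection $\exp(\tfrac12 v)$ yields the reflection $\phi$ at the midpoint of the geodesic from $p_0$ to $p$: an isometry acting on $E$ and interchanging $p_0$ with $p$, for which unwinding the fibre maps on the flat gives $\phi_*|_{E_{p_0}\to E_p}=\Pi_v$. Equivariance of the heat kernel under $\phi$ then gives $k_t(p_0,p)=\Pi_v^{-1}\circ k_t(p,p_0)\circ\Pi_v^{-1}=K_t(v)\circ\Pi_v^{-1}$, and comparing with the first expression yields $K_t(v)^*=K_t(v)$. The one nonroutine step is this last computation: it requires keeping careful track of the several identifications of the fibres of $E$ over $\mathcal F$ --- parallel transport, the $A$-action, and the differentials of $s_{p_0}$ and $\phi$ --- which all agree along the maximal flat but for slightly different reasons.
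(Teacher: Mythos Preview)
Your proposal is correct and matches the paper's approach, which is simply the one-line remark ``By an elementary uniqueness argument one can show''; you have supplied precisely the details the paper omits. The sphericalness and the evolution equation are routine and your treatment is standard. For self-adjointness---the one claim that actually needs an argument---your combination of the $L^2$-symmetry $k_t(p,q)^*=k_t(q,p)$ with the midpoint-reflection symmetry is clean and correct: the key computation that $(s_{p_0})_*$ acts on $E$ by $(g,e)\mapsto(\sigma(g),e)$ and hence as the identity on each fibre over the flat (in the $A$-trivialization) is exactly what makes $\phi_*|_{E_{p_0}\to E_p}=\Pi_v$ hold, and once that is in place the comparison $K_t(v)^*\circ\Pi_v^{-1}=k_t(p_0,p)=K_t(v)\circ\Pi_v^{-1}$ is immediate. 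Your caution about keeping track of the identifications along $\mathcal{F}$ is well placed; nothing is missing.
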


\begin{proof}
By the uniqueness of the heat kernel, $k_t$ must be a spherical section for all $t > 0$.
So the evolution equation for $K_t$ follows from the calculations in the previous subsection.
It remains to show that $K_t(v)$ is a self-adjoint endomorphism.

For this consider first the reflection $\Phi : M \to M$ in $p_0$.
If we make the identifications $M = G/K$ and $p_0 = 1 \cdot K$, then this reflection comes from an automorphism of $G$ that stabilizes $K$.
This automorphism corresponds to an automorphism of the $K$-principal bundle $G \to M= G/K$ and hence induces a reflection $\Phi^E : E \to E$ of the total space of the bundle $\pi : E \to M$ such that $\pi \circ \Phi^E = \Phi \circ \pi$.
Note that $\Phi^E |_{E_0} = - \id_{E_0}$.
Since $\Phi^E$ leaves the connection $\nabla^E$ invariant, we conclude that $k_t$ is equivariant under $\Phi^E$.
In other words $\Phi^E (k_t(p) e) = k_t (\Phi(p)) (-e)$.
In terms of the spherical model $K_t$, this implies
\begin{equation} \label{eq:KvisKminusv}
 K_t (v) = K_t (-v). 
\end{equation}

Next consider the heat kernel $(\tilde{k}_t)_{t > 0} \in C^\infty (M \times M; E \boxtimes E^*)$ with variable basepoint.
This means that
\[ \partial \tilde{k}_t (x,y) = \triangle_x \tilde{k}_t (x,y) \qquad \text{and} \qquad \tilde{k}_t (\cdot, y) \xrightarrow{t \to 0} \delta_{y} \id_{E_y} \]
In particular
\[ k_t (x) = \tilde{k}_t (x, p_0). \]
Fix some points $p_1, p_2 \in M$ and let $e_i \in E_{p_i}$.
For any $t > 0$ we may compute
\begin{multline*}
\frac{d}{ds} \int \big\langle \tilde{k}_s (z, p_1) e_1 , \tilde{k}_{t-s} (z, p_2) e_2 \big\rangle  dz \\
= \int \Big( \big\langle \triangle_z \tilde{k}_s (z, p_1 ) e_1 , \tilde{k}_{t-s} (z, p_2 ) e_2 \big\rangle 
- \big\langle \tilde{k}_s (z, p_1) e_1 , \triangle_z \tilde{k}_{t-s} (z, p_2) e_2 \big\rangle \Big) dz = 0
\end{multline*}
So letting $s$ go to $t$ and $0$ yields
\[ \big\langle \tilde{k}_t (p_2, p_1) e_1, e_2 \big\rangle = \big\langle e_1, \tilde{k}_t (p_1, p_2) e_2 \big\rangle = \big\langle \tilde{k}^*_t (p_1, p_2) e_1, e_2 \big\rangle. \]
Here $\tilde{k}_t^* (p_1, p_2) \in \End (E_{p_1}, E_{p_2})$ denotes the adjoint of $\tilde{k}_t (p_1, p_2) \in \End (E_{p_2}, E_{p_1})$.
In terms of the spherical model, this relation implies that
\[ K_t^* (v) = K_t (-v) = K_t (v). \]
The last equality follows from (\ref{eq:KvisKminusv}).
\end{proof}

Observe that by (\ref{eq:parkiki}) $\lambda_L$ (in the rank $1$ case) or $\lambda_\CC$ (in the general rank case) is the smallest eigenvalue of the endomorphism
\[ E \longrightarrow E, \qquad e \longmapsto - \sum_{i=1}^{n-r} k_i . k_i . e. \]
Denote moreover by $\mu_i$ the largest eigenvalue of the endomorphism
\[ E \longrightarrow E, \qquad e \longmapsto  - k_i . k_i . e \]
and consider the differential operator
\[ - L^\circ = \triangle - \lambda_\CC + \sum_{i=1}^{n-r} 2 \mu_i \frac{\cosh \alpha_i(v) - 1}{\sinh^2 \alpha_i(v)} + \sum_{i=1}^{n-r} \ctanh(\alpha_i(v)) \partial_{\alpha_i^\#}\]
acting on scalar functions on $\af$.
The third term is a smooth, rotationally invariant function on $M$ and hence it is a spherical model coming from a spherical function $\mu \in C^\infty(M; \IR)$.
Using the discussion from subsection \ref{subsec:radialsections} (in the case $E = \IR$), we find that $L^\circ$ corresponds to the differential operator $\triangle - \lambda_\CC + \mu$ acting on spherical functions on $M$.
Let $(k_t^\circ)_{t>0}$ be the heat kernel of $\triangle - \lambda_\CC + \mu$ centered at $p_0$, i.e.
\[ \partial_t k_t^\circ = \triangle k_t^\circ + (- \lambda_\CC + \mu) k_t^\circ \qquad \text{and} \qquad k_t^\circ \xrightarrow{t \to 0} \delta_{p_0}. \]
By uniqueness, $k_t^\circ$ is spherical and its spherical model $K_t^\circ$ satisfies
\[ \partial_t K_t^\circ = - L^\circ K_t^\circ \qquad \]
We can show that $K_t^\circ$ bounds $K_t$:

\begin{Lemma} \label{Lem:KtKtcirc}
 For every $t > 0$ and $v \in \af$ denote by $K_t(v)(\min)$ resp. $K_t(v) (\max)$ the minimal resp. maximal eigenvalue of the endomorphism $K_t(v)$.
 Then
\[ 0 < K_t(v)(\min) \leq K_t(v) (\max) \leq K_t^\circ (v). \]
 Moreover, $K_t(v)(\max)$ is a subsolution to the heat operator $\partial_t + L^\circ$ in the following sense:
 If $(G_t)_{t\geq t_0} \in C^\infty(\af)$ is a solution to the equation $\partial_t G_t = - L^\circ G_t$ and a spherical model at all times, then $K_{t_0} (v)(\max) \leq G_{t_0}$ implies $K_t(v)(\max) \leq G_t$ for all $t \geq t_0$.
\end{Lemma}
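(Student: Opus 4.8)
The plan is to prove the three assertions in order, using the scalar maximum principle applied to the eigenvalue functions of $K_t$ together with the explicit evolution equation from Lemma \ref{Lem:heatkermodel}. First I would establish positivity of $K_t(v)(\min)$. Since $k_t$ is the heat kernel centered at $p_0$, for small $t$ it is close to $\delta_{p_0}\id_{E_0}$, and a short-time analysis (e.g.\ comparison with the Euclidean heat kernel, cf.\ Proposition \ref{Prop:CLY}) shows $K_t(v)$ is positive definite for $t$ small. To propagate positivity, I would use the evolution equation for $K_t$ and the observation that the worst case for the minimal eigenvalue is controlled by a scalar parabolic inequality of the form $\partial_t g \geq \triangle g + (\text{bounded lower-order})\, g$, so that $g$ cannot cross zero; alternatively, positivity of $K_t(v)(\min)$ follows from a vector-bundle maximum principle (Hamilton's tensor maximum principle) applied to the ODE–PDE system satisfied by $K_t$, using that the zero-order reaction term preserves the cone of positive-definite endomorphisms.

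For the upper bound $K_t(v)(\max)\le K_t^\circ(v)$, the key step is to show that $\phi_t(v):=K_t(v)(\max)$ is a subsolution of $\partial_t + L^\circ$. Fix a time $t_1$ and a point $v_1\in\af$ where the maximal eigenvalue of $K_{t_1}(v_1)$ is attained by a unit eigenvector $e$; extend $e$ suitably near $(v_1,t_1)$. Evaluating the evolution equation of Lemma \ref{Lem:heatkermodel} on $e$ and using that $e$ is a maximal eigenvector of the self-adjoint endomorphism $K_{t_1}(v_1)$, I would bound each term: the term $k_i.k_i.K_t(v)(e)$ contributes at most $-\mu_i\phi_t(v)$ by definition of $\mu_i$; the cross term $-2\frac{\cosh\alpha_i(v)}{\sinh^2\alpha_i(v)}k_i.(K_t(v)(k_i.e))$ is estimated by Cauchy–Schwarz using $|k_i.e|^2 = -\langle k_i.k_i.e,e\rangle \le \mu_i$ and $|K_t(v)(k_i.e)|\le \phi_t(v)\,|k_i.e|$, giving a contribution bounded by $2\mu_i\frac{\cosh\alpha_i(v)}{\sinh^2\alpha_i(v)}\phi_t(v)$; the term $\frac{1}{\sinh^2\alpha_i(v)}K_t(v)(k_i.k_i.e)$ is $\le 0$ since $k_i.k_i.e$ can be bounded in terms of $e$ with a nonpositive coefficient after using self-adjointness — more carefully, one writes $K_t(v)(k_i.k_i.e) = \langle K_t(v)(k_i.k_i.e),e\rangle\,e + (\perp)$, and the relevant scalar part is $-\mu_i$-controlled. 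Collecting terms, and adding and subtracting $\lambda_\CC$ (recall $\lambda_\CC$ is the smallest eigenvalue of $e\mapsto -\sum_i k_i.k_i.e$, so $-\sum_i \mu_i \le -\lambda_\CC$ is the wrong direction — rather $\sum \mu_i \ge \lambda_\CC$ always, and the relevant combination is exactly the one built into $L^\circ$), one arrives at $\partial_t\phi_t(v) \le \triangle\phi_t(v) - \lambda_\CC\phi_t(v) + \sum_i 2\mu_i\frac{\cosh\alpha_i(v)-1}{\sinh^2\alpha_i(v)}\phi_t(v) + \sum_i\ctanh(\alpha_i(v))\partial_{\alpha_i^\#}\phi_t(v) = -L^\circ\phi_t(v)$ at $(v_1,t_1)$, which is the desired subsolution property (to be made rigorous via a standard perturbation $\phi_t + \eps e^{\Lambda t}$ argument since $\phi_t$ is only Lipschitz). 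The comparison claim with $G_t$ then follows from the scalar parabolic maximum principle on $\af$ (the Weyl-equivariance and decay of the spherical models handle behavior at infinity), and taking $G_t = K_t^\circ$ — which solves $\partial_t G_t = -L^\circ G_t$ with the same $\delta_{p_0}$ initial condition, and which dominates $K_t$ near $t=0$ because $k_t^\circ$ is a scalar heat kernel bounding $|k_t|$ by Kato's inequality — gives $K_t(v)(\max)\le K_t^\circ(v)$.

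The main obstacle I anticipate is making the subsolution argument for $\phi_t(v)=K_t(v)(\max)$ rigorous, since $\phi_t$ is in general only Lipschitz (eigenvalue branches can cross) and the cross term involving $k_i.(K_t(v)(k_i.e))$ must be handled carefully so as to land \emph{exactly} on the coefficient $2\mu_i(\cosh\alpha_i(v)-1)/\sinh^2\alpha_i(v)$ appearing in $L^\circ$ rather than something weaker — this requires using self-adjointness of $K_t(v)$ and optimizing the Cauchy–Schwarz step, together with the algebraic identity $[k_i,k_i'] = \sinh(\alpha_i(v))\alpha_i^\#$ that produces the first-order $\ctanh$ drift term. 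A secondary technical point is justifying the comparison principle on the noncompact flat $\af$, which I would do either by the growth control on spherical models from Lemma \ref{Lem:radialmodel} or by exhausting $\af$ with large balls and using that both sides decay.
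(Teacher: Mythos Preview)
Your overall strategy --- prove positivity of $K_t(v)(\min)$ and the subsolution property of $K_t(v)(\max)$ by a scalar maximum-principle argument at an extremal eigenvector, with the $\varepsilon e^{At}$ perturbation you anticipate --- is exactly the paper's approach. The paper also declines to prove $K_t(\max)\le K_t^\circ$ directly, noting it is not needed downstream (so your Kato remark, which does not quite apply since $k_t^\circ$ is the kernel of $\triangle-\lambda_\CC+\mu$ rather than of $\triangle$, is moot).

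There is, however, a real gap in your treatment of the zero-order terms. Your claim that ``$k_i.k_i.K_t(v)(e)$ contributes at most $-\mu_i\phi_t(v)$'' is the wrong inequality: since $K_t(v)e=\phi_t(v)e$ at the maximum, one has $\langle k_i.k_i.K_t(v)e,e\rangle = -\phi_t(v)\,|k_i.e|^2 \in [-\mu_i\phi_t(v),\,0]$, so this term is \emph{at least} $-\mu_i\phi_t(v)$. Your parenthetical shows you sense this, but without a fix you cannot land on $L^\circ$. The missing ingredient is the splitting
\[
\frac{\cosh^2\alpha_i}{\sinh^2\alpha_i}=1+\frac{1}{\sinh^2\alpha_i}.
\]
The ``$1$'' piece gives $\sum_i\langle k_i.k_i.K_t(v)e,e\rangle = \phi_t(v)\sum_i\langle k_i.k_i.e,e\rangle \le -\lambda_\CC\,\phi_t(v)$, and here the inequality goes the right way precisely because $\lambda_\CC$ is the \emph{smallest} eigenvalue of the \emph{full} operator $-\sum_i k_i.k_i.$, applied to the sum and not termwise. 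The remaining three pieces, with coefficients $\frac{1}{\sinh^2},\frac{1}{\sinh^2},-\frac{2\cosh}{\sinh^2}$, combine --- using the equality $\langle K_t(v)(k_i.k_i.e),e\rangle=-\phi_t(v)|k_i.e|^2$ and the direct eigenvalue bound $\langle K_t(v)(k_i.e),k_i.e\rangle\le\phi_t(v)|k_i.e|^2$ (no Cauchy--Schwarz needed) --- to give $\phi_t(v)\,\frac{2(\cosh\alpha_i-1)}{\sinh^2\alpha_i}\,|k_i.e|^2$, whose coefficient is nonnegative, so that now $|k_i.e|^2\le\mu_i$ points the right way. Without this splitting, bounding each $k_i.k_i.$ term separately and the cross term by Cauchy--Schwarz produces a zero-order coefficient that is strictly larger than that of $-L^\circ$ for large $v$ (in particular it loses the $-\lambda_\CC$), so you would only obtain a subsolution for a weaker operator than $L^\circ$.
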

\begin{proof}
The proof makes use of the maximum principle.
We will first establish the bound $K_t(v)(\min) > 0$.
If the inequality was not true then, by the local behavior of the heat kernel for small times, we would find some $\varepsilon > 0$ and some first time $t' > 0$ such that there are $v' \in \af$ and $e' \in E_0$ with $|e'|=1$ such that
\[ \langle K_t(v)e,e \rangle \geq - \varepsilon \]
holds for all $t \leq t'$, $v \in \af$ and $e \in E_0$ with $|e| = 1$ with equality for $t=t'$, $v=v'$ and $e = e'$.
This implies $K_{t'}(v')e' = K_{t'} (v') (\min) e' = - \varepsilon e'$ and
\begin{equation} \label{eq:Ktleeq0}
 \langle \partial_t K_{t'}(v')e', e' \rangle \leq 0.
\end{equation}
as well as $\langle \partial_u K_{t'}(v') e', e' \rangle = 0$ for any direction $u \in \af$ and $\langle \triangle K_{t'}(v') e', e' \rangle \geq 0$.
As for the zero order terms we compute (note that due to the invariance of $\langle \cdot, \cdot \rangle$ on $E_0$ under the action of $K$, we have $\langle k_i . e_1, e_2 \rangle + \langle e_1, k_i . e_2 \rangle = 0$ for any $i = 1, \ldots, n-r$ and $e_1, e_2 \in E_0$)
\begin{multline*}
 \langle K_{t'}(v')(k_i. k_i. e' ), e' \rangle = \langle K_{t'}(v')(e'), k_i . k_i . e' \rangle \\ = K_{t'}(v')(\min) \langle k_i . k_i . e', e' \rangle = \varepsilon \langle k_i . e', k_i . e' \rangle
\end{multline*}
and
\[ \langle k_i . (K_{t'}(v') (k_i . e')), e' \rangle = - \langle K_{t'}(v')(k_i . e'), k_i. e' \rangle \leq \varepsilon \langle k_i. e', k_i . e' \rangle . \]
So
\begin{multline*}
\langle \partial_t K_t(v') e', e' \rangle \geq \sum_{i=1}^{n-r} \bigg\langle \frac1{\sinh^2 \alpha_i(v')} K_{t'}(v') (k_i.k_i.e') + \frac{\cosh^2 \alpha_i(v')}{\sinh^2 \alpha_i(v')} k_i. k_i . K_{t'}(v')(e') \\ - 2\frac{\cosh \alpha_i(v')}{\sinh^2 \alpha_i(v')} k_i. (K_{t'}(v')(k_i.e')), e' \bigg\rangle \geq \varepsilon \sum_{i=1}^{n-r} \bigg( \frac{1- \cosh \alpha_i(v')}{\sinh \alpha_i(v')} \bigg)^2 | k_i . e' |^2 > 0,
\end{multline*}
contradicting (\ref{eq:Ktleeq0}).

We will now prove the claim that $K_t(v)(\max)$ is a subsolution to the heat operator $\partial_t + L^\circ$.
The bound $K_t(v)(\max) \leq K_t^\circ(v)$ follows with a little more effort and will not be proven here since we will not need it.

Denote by $A$ a fixed constant that will be determined later.
Let $\varepsilon > 0$ and assume again that there is some first time $t' > t_0$ such that there are some $v' \in \af$, $e' \in E_0$ with $|e' | = 1$ such that
\[  \langle K_t(v)e, e \rangle \leq G_t(v) + \varepsilon e^{At} \]
holds for all $t_0 \leq t \leq t'$, $v \in \af$ and $e \in E_0$ with $|e| = 1$ and equality is true for $t = t'$, $v = v'$ and $e=e'$.
This implies $K_{t'}(v')e' = K_{t'}(v')(\max) e'$ and $K_{t'}(v')(\max) = G_{t'}(v') + \varepsilon e^{At'}$.

Obviously,
\begin{equation} \langle \partial_t K_{t'} (v') e', e' \rangle \geq \partial_t G_{t'}(v') + \varepsilon A e^{At'}. \label{eq:dtK}
\end{equation}
Moreover, for any direction $u \in \af$, we have $\langle \partial_u K_{t'}(v') e', e' \rangle = \partial_u G_{t'}(v')$ and $\langle \triangle K_{t'} (v') e', e' \rangle \leq \triangle G_{t'}(v')$.
So
\begin{multline}
 \bigg\langle \triangle K_{t'}(v')e' + \sum_{i=1}^{n-r} \ctanh(\alpha_i(v')) (\partial_{\alpha_i^{\#}} K_{t'})(v')(e'), e' \bigg\rangle \\
\leq \triangle G_{t'}(v') + \sum_{i=1}^{n-r} \ctanh(\alpha_i(v')) \partial_{\alpha_i^{\#}} G_{t'}(v'). \label{eq:LapK}
\end{multline}
Since $K_{t'}(v')(e') = K_{t'}(v')(\max) e'$ and $K_{t'}(v')(\max) > 0$, we can estimate
\[  \sum_{i=1}^{n-r} \left\langle k_i.k_i. K_{t'} (v') (e'), e' \right\rangle \leq - \lambda_\CC K_{t'}(v')(\max). \]
Moreover,
\begin{multline*} - \langle k_i. (K_{t'}(v')(k_i. e')), e' \rangle =  \langle K_{t'}(v') k_i. e', k_i. e' \rangle \\ 
\leq K_{t'}(v')(\max) \langle k_i. e', k_i . e' \rangle = - K_{t'}(v')(\max) \langle e', k_i.k_i.e' \rangle .
\end{multline*}
This implies (we use $\frac{\cosh^2 \alpha_i(v')}{\sinh^2 \alpha_i(v')} = 1 + \frac{1}{\sinh^2 \alpha_i(v')}$ and $K_{t'} ( v' ) (e') = K_{t'} (v') (\max ) e'$ here)
\begin{multline*}
 \sum_{i=1}^{n-r} \bigg\langle \frac1{\sinh^2 \alpha_i(v')} K_{t'}(v') (k_i.k_i.e') + \frac{\cosh^2 \alpha_i(v')}{\sinh^2 \alpha_i(v')} k_i. k_i . K_{t'}(v')(e') \\ - 2\frac{\cosh \alpha_i(v')}{\sinh^2 \alpha_i(v')} k_i. (K_{t'}(v')(k_i.e')), e' \bigg\rangle \\
 =
 \sum_{i=1}^{n-r} \bigg( \big\langle k_i. k_i . K_{t'} (v') ( e' ), e' \big\rangle + \frac2{\sinh^2 \alpha_i(v')} \big\langle k_i . k_i . K_{t'}(v') (e') , e' \big\rangle  \\
 + 2\frac{\cosh \alpha_i(v')}{\sinh^2 \alpha_i(v')} \big\langle  K_{t'}(v')(k_i.e'), k_i . e' \big\rangle \bigg) \\
\leq \sum_{i=1}^{n-r} \bigg( - \lambda_\CC  + \frac2{\sinh^2 \alpha_i(v')} \big\langle k_i . k_i . e' , e' \big\rangle  
 + 2\frac{\cosh \alpha_i(v')}{\sinh^2 \alpha_i(v')} \big\langle k_i.e', k_i . e' \big\rangle \bigg) K_{t'} (v') (\max ) \\
 = \sum_{i=1}^{n-r} \bigg( - \lambda_\CC  - 2 \frac{\cosh \alpha_i (v') - 1}{\sinh^2 \alpha_i(v')} \big\langle k_i . k_i . e' , e' \big\rangle   \bigg) K_{t'} (v') (\max ) \\
 \leq \bigg( - \lambda_\CC + \sum_{i=1}^{n-r} 2\mu_i \frac{\cosh \alpha_i (v') - 1}{\sinh^2 \alpha_i (v')} \bigg) K_{t'}(v')(\max).
\end{multline*}
So with (\ref{eq:LapK}) we obtain
\begin{multline*} \langle \partial_t K_{t'}(v') e', e' \rangle \leq - L^\circ G_{t'}(v') - \bigg( - \lambda_\CC + \sum_{i=1}^{n-r} 2\mu_i \frac{\cosh \alpha_i(v') - 1}{\sinh^2 \alpha_i(v')} \bigg) G_{t'}(v') \\ 
+ \bigg( - \lambda_\CC + \sum_{i=1}^{n-r} 2\mu_i \frac{\cosh \alpha_i (v') - 1}{\sinh^2 \alpha_i (v')} \bigg) K_{t'}(v')(\max). \end{multline*}
Note that in the case in which $\alpha_i (v) = 0$ for some $i$, this inequality can be deduced in a similar way.
Combining the inequality with (\ref{eq:dtK}) and using the fact that $K_{t'}(v')(\max) = G_{t'}(v') + \varepsilon e^{At'}$, we conclude
\[ \varepsilon A e^{A t'} \leq \varepsilon \bigg( - \lambda_\CC + \sum_{i=1}^{n-r} 2\mu_i \frac{\cosh \alpha_i (v') - 1}{\sinh^2 \alpha_i (v')} \bigg) e^{At'}. \]
For sufficiently large $A$ this yields a contradiction.
\end{proof}

\subsection{The rank 1 case} \label{subsec:L1decrank1}
Assume in this subsection that $M$ has rank $1$.
Then $\af \cong \IR$ and we simply write $\alpha_i = \alpha_i(1)$, i.e. $\alpha_i(r) = \alpha_i r$.
We will now prove Theorems \ref{Thm:dectrianglerank1} and \ref{Thm:Asymptriangle}:
\begin{proof}[Proof of Theorem \ref{Thm:dectrianglerank1}]
For the lower bounds observe that by definition of $\lambda_L$, there is a (parabolically invariant) section $f \in C^\infty(M; E)$ with $\triangle f = - \lambda_L f$.
Hence its convolution with the heat kernel satisfies $f * k_t = e^{-\lambda_L t} f$, and thus the $L^1$-norm of $e^{\lambda_L t} k_t$ must be bounded from below.

In order to establish the upper bounds, set
\[ H_t^{(p)} = \Big( \int_0^\infty \Big| \prod_{i=1}^{n-1} \sinh (\alpha_i r) \Big| \cdot \left( K_t(r)(\max) \right)^p dr \Big)^{1/p} \]
for $p \geq 1$.
Since $\End (E_0)$ is finite dimensional, there are constants $c_p$ and $C_p$ depending on $p$ such that
\[ c_p H_t^{(p)} \leq \Vert k_t \Vert_{L^p(M)} \leq C_p H_t^{(p)}. \]
By definition of $\lambda_B$, there is a Bochner formula of the form $- \triangle = D^* D + \lambda_B$.
A basic application of Stoke's Theorem yields
\[ \frac{d}{dt} \Vert k_t \Vert_{L^2(M)}^2 = - 2 \lambda_B \Vert k_t \Vert_{L^2(M)}^2 - 2 \Vert D k_t \Vert_{L^2(M)}^2 \leq   - 2 \lambda_B \Vert k_t \Vert_{L^2(M)}^2.\]
Hence for $t \geq 1$
\begin{equation} H_t^{(2)} \leq \frac1{c_2} \Vert k_t \Vert_{L^2} \leq C e^{-\lambda_B t}. \label{eq:M2} \end{equation}

Now from Lemma \ref{Lem:KtKtcirc} we know that $K_t(v)(\max)$ is a subsolution to the heat operator $\partial_t + L^\circ$ and hence
\begin{multline*} \frac{d}{dt} H^{(1)}_t + \lambda_L H^{(1)}_t \\
\leq  \int_0^\infty \Big| \prod_{i=1}^{n-1} \sinh (\alpha_i r) \Big| \Big( \partial_r^2 K_t(r)(\max) + \sum_{i=1}^{n-1} \alpha_i \ctanh(\alpha_i r) \partial_r K_t(r)(\max) \Big) dr \\ + \int_0^\infty \Big| \prod_{i=1}^{n-1} \sinh (\alpha_i r) \Big| \cdot \sum_{i=1}^{n-1} 2\mu_i  \frac{\cosh (\alpha_i r) - 1}{\sinh^2 (\alpha_i r)} K_t(r)(\max) dr.
\end{multline*}
By integration by parts or Green's formula applied to the corresponding spherical function, the first integral vanishes.
It remains to bound the second integral.

Let $\delta > 0$.
Choose the $i_0$ for which $\alpha_{i_0}$ is minimal and recall that $a = \max \{ (\sum_{i=1}^{n-1} \alpha_i ) / \alpha_{i_0}, 2 \}$.
Using H\"older's inequality, we can bound the second integral by
\begin{multline*}
 C \bigg( \int_0^\infty \Big| \prod_{i=1}^{n-1} \sinh (\alpha_i r) \Big|  \left( \frac{\cosh (\alpha_{i_0} r ) - 1}{\sinh^2 (\alpha_{i_0} r)} \right)^{a+\delta} dr \bigg)^{\frac1{a+\delta}} \\ \times 
\bigg( \int_0^\infty \Big| \prod_{i=1}^{n-1} \sinh (\alpha_i r) \Big| (K_t(r)(\max))^{1+\frac1{a+\delta-1}} dr \bigg)^{\frac{a+\delta-1}{a+\delta}} \\
\leq C  \delta^{-\frac1{a+\delta}} H_t^{( 1+\frac1{a+\delta-1} )}
\leq C \delta^{-\frac1{a+\delta}} \big( H_t^{(1)} \big)^{\frac{a-2+\delta}{a+\delta}} \big( H_t^{(2)} \big)^{\frac2{a+\delta}}.
\end{multline*}
We can rewrite this inequality as
\[ \frac{d}{d t} \left( e^{\lambda_L t} H_t^{(1)} \right)^{\frac2{a+\delta}} \leq \frac2{a+\delta} C \delta^{-\frac1{a+\delta}} \left( e^{\lambda_L t} H_t^{(2)}  \right)^{\frac2{a+\delta}}. \]
In the cases $\lambda_L < \lambda_B$ and $\lambda_L > \lambda_B$, we can integrate this inequality using (\ref{eq:M2}) to conclude that $e^{\lambda_L t} H_t^{(1)}$ or $e^{\lambda_B t} H_t^{(1)}$ stays bounded.
If $\lambda_L = \lambda_B$, we find
\[ \frac{d}{d t} \left( e^{\lambda_L t} H_t^{(1)} \right)^{\frac2{a+\delta}} \leq C \delta^{-\frac1{a+\delta}}. \]
Hence for $\delta < 1$
\[ e^{\lambda_L t} H_t^{(1)} \leq C \left( \delta^{-\frac1{a+\delta}} t + 2 \right)^{\frac{a+\delta}2} \leq C \delta^{-1/2} (t+2)^{\frac{a+\delta}2}. \]
The theorem follows for $\delta = 1/\log (t+2)$.
\end{proof}

\begin{proof}[Proof of Theorem \ref{Thm:Asymptriangle}]
Note first that for an appropriate constant $V_0$
\[ \vol B_r(p_0) = V_0 \int_0^r \prod_{i=1}^{n-1} \sinh (\alpha_i r') dr' \]
and hence if we set $a = \sum_{i=1}^{n-1} \alpha_i$, we find that $\vol B_r(p_0)$ is asymptotic to $e^{ar}$.
So for $r \geq 1$ we have $c e^{ar} < \vol B_r(p_0) < C e^{ar}$ for large $r$.
For $r \leq 1$, can simply compare with the Euclidean volume growth: $c r^n < \vol B_r(p_0) < C r^n$.

Now consider the heat kernel $k_t$.
For $t < \frac12$, the inequality follows from Proposition \ref{Prop:CLY}.
For $t \geq \frac12$, we can argue as in the proof of Theorem \ref{Thm:dectrianglerank1} and conclude
\[ \Vert k_t \Vert_{L^2(M)} \leq C e^{-\lambda_B t}. \]
By convolution and Cauchy-Schwarz, this gives us an $L^\infty$-bound for $t \geq \frac12$
\begin{equation} \label{eq:Linftyestimate}
 |k_t|(p) = |k_{t/2} * k_{t/2}|(p) \leq C_0 e^{-\lambda_B t} \leq C_0 e^{-\lambda_0 t}.
\end{equation}
Observe that the quantities $k_t(\max)$ and $|k_t|$ are comparable: $C^{-1} |k_t| \leq k_t(\max) \leq |k_t|$.
Now recall that by Lemma \ref{Lem:KtKtcirc}, the spherical model $K_t(\max)$ is a subsolution to the heat operator $\partial_t + L^\circ$ where 
\[ - L^\circ = \partial_r^2 + \sum_{i=1}^{n-1} \alpha_i \ctanh (\alpha_i r) \partial_r - \lambda_L + \sum_{i=1}^{n-1} 2\mu_i \frac{\cosh (\alpha_i r) - 1}{\sinh^2 (\alpha_i r)}. \]

Let $\delta = \frac12 \min_i \alpha_i$ and set $F(r) = e^{-ar} - e^{-(a+\delta)r}$.
Then there is some $r_0$ such that $F'(r) < 0$ for $r \geq r_0$ and we have
\begin{multline*}
 - L^\circ F + \lambda_L F < F''(r) + \sum_{i=1}^{n-1} \alpha_i F'(r) + \sum_{i=1}^{n-1} 2\mu_i \frac{\cosh(\alpha_i r) - 1}{\sinh^2 (\alpha_i r)} F(r) \\
< - \delta (a+\delta) e^{-(a+\delta)r} + C e^{-2 \delta r} (e^{-ar} - e^{-(a+\delta)r}).
\end{multline*}
So, possibly after increasing $r_0$, we can assume that $(-L^\circ F + \lambda_L F)(r) < 0$ for all $r \geq r_0$.
This shows that $e^{-\lambda_L t} F(r)$ and hence also $e^{-\lambda_0 t} F(r)$ is a supsolution for the heat operator $\partial_t + L^\circ$ on the domain $\{ r \geq r_0 \}$.

Now choose $C_1$ so large that $C_1 F(r_0) \geq C_0$.
By (\ref{eq:Linftyestimate}), we have the boundary estimate $K_t(r)(\max)(r_0) \leq C_1 e^{-\lambda_0 t} F(r_0)$ for $t \geq \frac12$.
Moreover, using Proposition \ref{Prop:CLY} we find, after possibly increasing $C_1$, that $K_{\frac12}(r)(\max) \leq C_1 e^{-\lambda_0 \frac12} F(r)$ for $r \geq r_0$.
So, by the maximum principle this implies $K_t(r)(\max)(r) \leq C_1 e^{-\lambda_0 t} F(r)$ for all $t \geq \frac12$ and $r \geq r_0$.
Since $F(r) < e^{-ar}$, this proves the claim.
\end{proof}

\subsection{The $L^1$-decay in the general rank case} \label{subsec:L1decgenrank}
We will now carry out the proof of Theorem \ref{Thm:dectrianglegenrank} for the case in which $M$ is allowed to have rank greater than $1$.
The difficulty here comes from the fact that the functions $\frac{\cosh \alpha_i - 1}{\sinh^2 \alpha_i}$ are only decaying towards one coordinate direction.
We will resolve this issue by controlling certain $L^2L^1$-norms of $K_t$, which allow us to reduce dimensions step by step.
Those norms will correspond to the possible splittings $\af = \ov{\af}_\WW \oplus \un{\af}_\WW$ for walls $\WW \subset \CC$ (see subsection \ref{subsec:crosssec}) and will be controlled using Bochner formulae for the corresponding symmetric spaces $\ov{M}_\WW$.
In the case in which $M$ is a product of rank~$1$ symmetric spaces, this program can be carried out without any problems:
The spaces $\ov{M}_\WW$ are factors of $M$ and the spherical model $K_t$ on $\af$ can be approximated by a spherical model on $\ov{\af}_\WW$ and an $\un{N}_\WW$-invariant section on $\un{\af}_\WW$ as long as we are far enough away from the origin on $\un{\af}_\WW$.
However in the general case, the domain, on which the spherical model resembles this mixed spherical-parabolic model, has a more complicated geometry, due to the lack of orthogonality of the roots.
Therefore, a more careful localization has to be carried out.

For the moment let $\lambda_0$ be an arbitrary constant.
We will need the following results:
\begin{Lemma} \label{Lem:highder}
There are constants $C_m < \infty$ such that the following holds:
Assume that $\Vert k_t \Vert_{L^1(M)} \leq H e^{-\lambda_0 t}$ for $t \in [0,T]$.
Then
\[ \Vert \nabla^m k_t \Vert_{L^1(M)} \leq C_m H e^{-\lambda_0 t} \qquad \text{for} \qquad t \in [1, T]. \]
\end{Lemma}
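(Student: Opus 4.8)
The plan is to use the semigroup property of the heat kernel to trade a fixed gain of time for the $m$ spatial derivatives, at the cost of a constant. Write $\tilde{k}_s(p,q)$ for the two-point heat kernel of $E$ (a homomorphism $E_q \to E_p$), so that $k_s(p) = \tilde{k}_s(p,p_0)$, and recall the reproducing identity $k_t(p) = \int_M \tilde{k}_{1/2}(p,q)\, k_{t-1/2}(q)\, dq$, valid for $t > \tfrac12$. Differentiating $m$ times in the $p$-variable moves all derivatives onto the first factor,
\[
 \nabla^m k_t(p) \;=\; \int_M \bigl( \nabla^m_p \tilde{k}_{1/2}(p,q) \bigr)\, k_{t-1/2}(q)\, dq ,
\]
whence, taking pointwise norms, integrating in $p$ and applying Fubini's theorem,
\[
 \Vert \nabla^m k_t \Vert_{L^1(M)} \;\le\; \Bigl( \sup_{q\in M} \int_M \bigl| \nabla^m_p \tilde{k}_{1/2}(p,q) \bigr|\, dp \Bigr) \cdot \Vert k_{t-1/2} \Vert_{L^1(M)} .
\]

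For $t\in[1,T]$ one has $t-\tfrac12\in[\tfrac12,T]$, so the hypothesis applies to the last factor and gives $\Vert k_{t-1/2}\Vert_{L^1(M)} \le H e^{-\lambda_0(t-1/2)} \le e^{|\lambda_0|/2}\,H\,e^{-\lambda_0 t}$. It therefore remains to bound $\int_M |\nabla^m_p \tilde{k}_{1/2}(p,q)|\,dp$ by a constant independent of $q$. Since $M=G/K$ is homogeneous it suffices to consider $q=p_0$, and then Proposition \ref{Prop:CLY} (on the fixed time interval $(0,1)$, say with $\delta=1$) yields a Gaussian bound $|\nabla^m \tilde{k}_{1/2}(p,p_0)| \le C_m \exp(-c_0\, d(p_0,p)^2)$ for some $c_0>0$. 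As $M$ is of noncompact type its volume growth is at most exponential, $\vol B_r(p_0)\le C' e^{c'r}$ (compare the computation in the proof of Theorem \ref{Thm:Asymptriangle}), so the Gaussian weight is integrable and $\int_M \exp(-c_0\,d(p_0,p)^2)\,dp < \infty$. Chaining the three displays gives the claim, with $C_m$ depending only on $M$, $E$, $m$ and $\lambda_0$.

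I do not expect a genuine obstacle here; the argument is essentially bookkeeping around the semigroup property. The only point requiring a little care is the passage from the fixed-basepoint estimate of Proposition \ref{Prop:CLY} to a bound that is uniform over all centres $q\in M$ — this is exactly where homogeneity of the symmetric space enters, and it is what makes the estimate global rather than merely interior. One could alternatively avoid the two-point kernel, bounding $|\nabla^m k_t|(p)$ pointwise by a local supremum of $|k_s|$ via Proposition \ref{Prop:Shi} and then combining a parabolic local maximum estimate for the scalar subsolution $|k_s|$ with Fubini; but the convolution route above is shorter.
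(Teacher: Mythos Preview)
Your proof is correct and follows essentially the same approach as the paper: the paper's one-line argument reads ``This follows from the fact that $\nabla^m k_t = \nabla^m k_1 * k_{t-1}$ and Young's inequality,'' which is exactly your convolution/Fubini computation written out in full (with the cosmetic change $1 \leadsto \tfrac12$ in the time split). Your explicit invocation of homogeneity to make the $\sup_q$ constant and of Proposition~\ref{Prop:CLY} to verify $\Vert \nabla^m k_{1/2} \Vert_{L^1(M)} < \infty$ just fills in what the paper leaves implicit.
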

\begin{proof}
This follows from the fact that $\nabla^m k_t = \nabla^m k_1 * k_{t-1}$ and Young's inequality. 
\end{proof}

\begin{Lemma} \label{Lem:Sobolev}
There are constants $d > 0$ and $C < \infty$ such that for every wall $\WW \subset \CC$ we have the following inequality on the cross-section $\ov{M}_\WW$ of dimension $\ov{n}$:
Let $g \in C^\infty(\ov{M}_\WW; E_\WW) \otimes E^*_0$ be a spherical section.
Then for any $p \in \ov{M}_{\WW}$ we have
\[ |g|(p) \leq C e^{-dr} \Vert g \Vert_{W^{1,\ov{n}}(\ov{M}_\WW)} \]
where $r = d(p_0,p)$ and $W^{1,\ov{n}}$ denotes the Sobolev norm.
\end{Lemma}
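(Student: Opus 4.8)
The plan is to reduce the inequality to a one–variable statement about the spherical model of $g$ on the Weyl chamber of $\ov{M}_\WW$, exploiting two facts: the volume form of $\ov{M}_\WW$ grows exponentially in the flat directions, and $\ov{M}_\WW$ has strictly smaller rank than dimension, so that the Morrey embedding $W^{1,\ov{n}}\hookrightarrow C^0$ holds on the (lower–dimensional) chamber with room to spare.

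First I would set up notation following subsections \ref{subsec:crosssec} and \ref{subsec:radialsections}. Let $\ov{\af}_\WW$ be the maximal abelian subalgebra of the cross-section, $\ov{\CC}_\WW\subset\ov{\af}_\WW$ its positive Weyl chamber, $\ov{N}=\dim\ov{\af}_\WW$ its rank, and let $\ov{\Delta}^+_\WW$ be the corresponding positive roots, each $\alpha$ carrying multiplicity $m_\alpha=\dim\mathfrak{g}_\alpha$; put $\ov{\rho}_\WW=\tfrac12\sum_{\alpha\in\ov{\Delta}^+_\WW}m_\alpha\alpha$. Since $\ov{M}_\WW$ is of noncompact type it has at least one positive root, hence $\ov{n}=\ov{N}+\sum_\alpha m_\alpha>\ov{N}$ (the case $\ov{M}_\WW=\{\text{pt}\}$ being trivial). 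By Lemma \ref{Lem:radialmodel} a spherical $g$ is determined by its model $F:\ov{\af}_\WW\to\End(E_0)$, which is $\ov{W}_\WW$-equivariant, and $|g|$ is the $\ov{W}_\WW$-invariant function $u(v)=|F(v)|$ on $\ov{\af}_\WW$. Since $E_\WW$ is flat and isometrically trivialized along the flat $\exp(\ov{\af}_\WW).p_0$ (subsection \ref{subsec:radialsections}), the flat–direction part of $\nabla g$ at $\exp(v).p_0$ is the ordinary derivative $DF(v)$, so $|\nabla g|(\exp(v).p_0)\ge|DF|(v)\ge|Du|(v)$. Writing $\delta(v)=\prod_{\alpha\in\ov{\Delta}^+_\WW}\sinh(\alpha(v))^{m_\alpha}$ for (a constant multiple of) the polar volume density of $\ov{M}_\WW$, and using $\int_{\ov{M}_\WW}|g|^{\ov n}=\mathrm{const}\cdot\int_{\ov{\CC}_\WW}u^{\ov n}\delta\,dv$, it suffices to bound, for the point $v_0\in\ov{\CC}_\WW$ with $|v_0|=r$ representing $p$,
\[
 u(v_0)\ \le\ C\,e^{-dr}\Big(\int_{\ov{\CC}_\WW}\big(u^{\ov n}+|Du|^{\ov n}\big)\,\delta(v)\,dv\Big)^{1/\ov n}\ \le\ C\,e^{-dr}\,\Vert g\Vert_{W^{1,\ov n}(\ov{M}_\WW)}.
\]

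The main estimate, for $r\ge 1$ and $v_0$ at distance $\ge\tfrac1{10}$ from every wall of $\ov{\CC}_\WW$, then runs as follows. Because $\ov{\Delta}^+_\WW$ spans the dual of $\ov{\af}_\WW$, no nonzero $v\in\ov{\CC}_\WW$ annihilates all positive roots, so $\ov{\rho}_\WW>0$ on $\ov{\CC}_\WW\setminus\{0\}$; by compactness of $\{|v|=1\}\cap\ov{\CC}_\WW$ there is $d_0>0$ with $\ov{\rho}_\WW(v)\ge d_0|v|$, and hence $\delta(v)\ge c_0 e^{2d_0|v|}$ on the part of the chamber where all $\alpha(v)$ are bounded below — in particular on $B=B_{1/20}(v_0)\subset\ov{\af}_\WW$, where $\delta\ge c\,e^{2d_0(r-1)}$. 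I would now apply the classical Morrey inequality $W^{1,\ov n}(B)\hookrightarrow C^0(B)$ on the $\ov{N}$-dimensional ball $B$ — legitimate with a constant independent of $v_0$ precisely because $\ov{n}>\ov{N}$ — to get $u(v_0)^{\ov n}\le C\int_B(u^{\ov n}+|Du|^{\ov n})\,dv\le Ce^{-2d_0(r-1)}\int_{\ov{\CC}_\WW}(u^{\ov n}+|Du|^{\ov n})\delta\,dv$, which gives the claim with $d=2d_0/\ov{n}$. For bounded $r$ the factor $e^{-dr}$ is inessential and one falls back on the ordinary local Sobolev embedding on $\ov{M}_\WW$, which has uniformly bounded geometry; at the end one takes the minimum of $d$ over the finitely many walls $\WW\subset\CC$.

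The main obstacle I expect is the behaviour near the walls of $\ov{\CC}_\WW$, where $\delta(v)$ degenerates like a power of the distance to the wall and where the non-orthogonality of the roots makes the shape of $B_{1/20}(v_0)\cap\ov{\CC}_\WW$ awkward. I would handle it by reflection: using $\ov{W}_\WW$-invariance, extend $u$ across the finitely many walls meeting $B_1(v_0)$ to a function on the full ball $B_1(v_0)\subset\ov{\af}_\WW$ (which stays in $W^{1,\ov n}$, being the model of a smooth spherical section and hence locally Lipschitz across walls), apply Morrey there, and re-split into the $\ov{W}_\WW$-translates of $\ov{\CC}_\WW$, on each of which $\delta$ is comparable to $e^{2d_0 r}$ times a locally integrable power (a root multiplicity) of the distance to the reflecting wall. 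Absorbing that power requires a weighted Morrey/Sobolev inequality on $\ov{N}$-balls with an $A_{\ov n}$-weight of the form $\mathrm{dist}(\cdot,\text{wall})^{m_\alpha}$; this is the genuinely delicate point of the proof, and it again exploits the margin $\ov{n}>\ov{N}$ coming from the extra root directions of $\ov{M}_\WW$. The remaining items — the polar coordinate volume formula, the identification of the flat-direction covariant derivative with $DF$, and the uniformity of all constants in $\WW$ — are of exactly the kind already carried out in subsection \ref{subsec:radialsections}.
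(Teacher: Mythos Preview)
Your approach is different from the paper's and considerably more involved. The paper's proof is a three-line pigeonhole argument that exploits the spherical property \emph{globally} rather than infinitesimally: since $|g|$ is $K$-invariant, it takes the same value at every point of the orbit $K.p$; this orbit, being a compact submanifold of diameter comparable to $r$ in a space of bounded geometry, contains $N=\lfloor c\,e^{dr}\rfloor$ points $p_1,\dots,p_N$ with pairwise distance $>2$. Applying the ordinary local Sobolev embedding on each of the disjoint unit balls $B_1(p_k)$ gives $|g|(p)=|g|(p_k)\le C\Vert g\Vert_{W^{1,\ov n}(B_1(p_k))}$, and summing in $\ell^{\ov n}$ yields $N^{1/\ov n}\,|g|(p)\le C\Vert g\Vert_{W^{1,\ov n}(\ov M_\WW)}$. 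No polar density, no walls, no weighted inequalities.

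Your route via the spherical model and the polar volume weight $\delta$ is in principle workable, but the wall difficulty you flag is not just a technicality you can wave through with ``$A_{\ov n}$-weights''. When $v_0$ sits on a codimension-$k$ stratum of $\ov\CC_\WW$, the weight degenerates like a product of $k$ powers of distances to hyperplanes, and you need a weighted Morrey embedding on an $\ov N$-ball with that weight; the relevant exponent conditions $m_\alpha<\ov n-1$ can become borderline, and you have not verified them. Moreover your exponential lower bound $\delta\ge c\,e^{2d_0|v|}$ fails precisely in a neighbourhood of every wall, so the decay rate $d$ you extract in the interior argument does not survive to the boundary strata without a separate (and different) argument on each stratum. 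All of this can likely be pushed through, but it is substantially more work than the paper's proof, which sidesteps the issue entirely by never descending to the flat.
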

\begin{proof}
Since there are only finitely many cross-sections $\ov{M}_\WW$ of $M$, it suffices to show the inequality on $M$.
By Sobolev embedding, the inequality holds  for some uniform $C$ whenever $r \leq 10$.
Assume now $r > 10$ and consider the orbit $O = K. p$.
There are constants $c, d > 0$ that are independent of $p$ such that we can find $N = \lfloor c e^{dr} \rfloor$ points $p_1, \ldots, p_N \in O$ whose pairwise distance is greater than $2$.
Then on each $B_k = B_1(p_k)$ we have by Sobolev embedding
\[ |g|(p) = |g|(p_k) \leq C \Vert g \Vert_{W^{1, n}(B_k)}. \]
Hence
\[ N |g|(x) \leq \sum_{k=1}^N C \Vert g \Vert_{W^{1, n}(B_k)} \leq C \Vert g \Vert_{W^{1,n}(M)}. \]
This yields the desired bound.
\end{proof}

We will need appropriate cutoff functions that specify the regions in which we compare the spherical model $K_t$ with some mixed spherical-parabolic models.
We will use a parameter $\sigma > 10$ to specify the accuracy with which this comparison holds.
For every wall $\WW \subset \CC$ consider the splitting $\af = \ov{\af}_\WW \oplus \un{\af}_\WW$.
Corresponding to $\WW$ and the parameter $\sigma$, we will define cutoff functions $\ov{\eta}_{\sigma}^\WW \in C^\infty(\ov{\af}_{\WW})$ and $\un{\eta}_{\sigma}^\WW \in C^\infty(\un{\af}_\WW)$ such that the support of $\eta^{\WW}_{\sigma} = \ov{\eta}^{\WW}_{\sigma} \un{\eta}^{\WW}_{\sigma} \in C^\infty(\af)$ and the region in which $\eta^{\WW}_{\sigma}$ equals $1$ resemble the wall $\WW$ in a coarse sense.

In order to do this, we first define regions that will help us to characterize the behavior of the $\eta^{\WW}_{\sigma}$.
Let $(a_{\WW})_{\WW \subset \CC}, (b_{\WW})_{\WW \subset \CC}$ be numbers greater than $1$, which we will determine in the next Lemma and define the regions $X^\WW_\sigma, S^\WW_\sigma, R^{\WW}_\sigma \subset \af$ as follows:
\begin{alignat*}{2}
X^\WW_\sigma &= \{ \ov{v} + \un{v} \in \ov{\af}_\WW \oplus \un{\af}_\WW \; : \; |\ov{v}| \leq a_\WW (\sigma-1), \; & \un{\alpha}(\un{v}) &\geq 0 \; \text{for all} \; \un{\alpha} \in \un{\mathcal{B}}^+_\WW \} \\
S^\WW_\sigma &= \{ \ov{v} + \un{v} \in \ov{\af}_\WW \oplus \un{\af}_\WW \; : \; |\ov{v}| \leq a_\WW \sigma, \; & \un{\alpha}(\un{v}) &\geq b_\WW \sigma \; \text{for all} \; \un{\alpha} \in \un{\mathcal{B}}^+_\WW \} \\
R^\WW_\sigma &= \{ \ov{v} + \un{v} \in \ov{\af}_\WW \oplus \un{\af}_\WW \; : \; |\ov{v}| \leq a_\WW (\sigma - 1), \; & \un{\alpha}(\un{v}) &\geq b_\WW (\sigma + 1) \; \text{for all} \; \un{\alpha} \in \un{\mathcal{B}}^+_\WW \}
\end{alignat*}
We will later identify $S^\WW_\sigma$ as containing the support of $\eta^\WW_\sigma$, $R^\WW_\sigma$ as a region in which $\eta^\WW_\sigma$ is constantly equal to $1$ and the regions $X^{\WW'}_\sigma$ for $\WW' \in \partial \WW$ will serve to cover a certain part of the support of $\partial \eta^\WW_\sigma$ (namely $\supp \ov{\eta}^\WW_\sigma \partial \un{\eta}^\WW_\sigma$).
We need the following geometric identities:

\begin{figure}[t]
\caption{The regions $X^\WW_\sigma$, $S^\WW_\sigma$ and $R^\WW_\sigma$.}
\begin{center}
\begin{picture}(0,0)%
\hspace{16mm}\includegraphics[width=11cm]{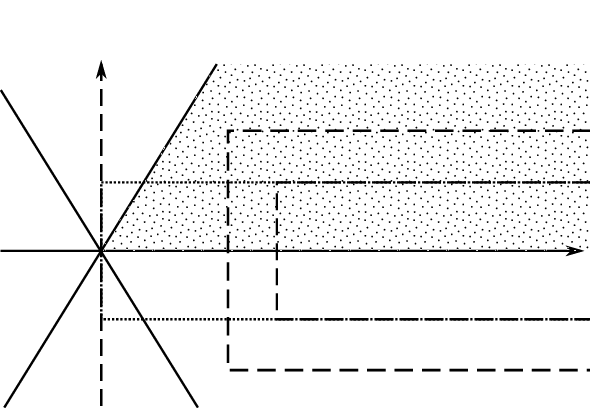}%
\end{picture}%
\setlength{\unitlength}{2863sp}%
\begingroup\makeatletter\ifx\SetFigFont\undefined%
\gdef\SetFigFont#1#2#3#4#5{%
  \reset@font\fontsize{#1}{#2pt}%
  \fontfamily{#3}\fontseries{#4}\fontshape{#5}%
  \selectfont}%
\fi\endgroup%
\begin{picture}(11195,4100)(518,-5050)
\put(3500,-2800){\makebox(0,0)[lb]{\smash{{\SetFigFont{12}{14.4}{\familydefault}{\mddefault}{\updefault}$X^\WW_\sigma$}}}}
\put(4500,-2100){\makebox(0,0)[lb]{\smash{{\SetFigFont{12}{14.4}{\familydefault}{\mddefault}{\updefault}$S^\WW_\sigma$}}}}
\put(5100,-2800){\makebox(0,0)[lb]{\smash{{\SetFigFont{12}{14.4}{\familydefault}{\mddefault}{\updefault}$R^\WW_\sigma$}}}}
\put(2900,-1000){\makebox(0,0)[lb]{\smash{{\SetFigFont{12}{14.4}{\familydefault}{\mddefault}{\updefault}$\ov{\af}_\WW$}}}}
\put(8600,-3000){\makebox(0,0)[lb]{\smash{{\SetFigFont{12}{14.4}{\familydefault}{\mddefault}{\updefault}$\un{\af}_\WW$}}}}
\put(6000,-1300){\makebox(0,0)[lb]{\smash{{\SetFigFont{12}{14.4}{\familydefault}{\mddefault}{\updefault}$\CC$}}}}
\put(6000,-3400){\makebox(0,0)[lb]{\smash{{\SetFigFont{12}{14.4}{\familydefault}{\mddefault}{\updefault}$\WW$}}}}
\end{picture}%
\end{center}
\end{figure}
\begin{Lemma} \label{Lem:regions}
There are choices for $a_\WW, b_\WW > 1$ (which we will henceforth fix) such that for any wall $\WW \subset \CC$ and all $\sigma > 10$:
\begin{enumerate}[(1)]
\item $\un{\alpha}(v) \geq \sigma$ whenever $v \in S^\WW_\sigma$ for all $\un{\alpha} \in \un{\Delta}^+_\WW$.
\item We can cover a certain boundary part of $S^\WW_\sigma$ by $X^{\WW'}_\sigma$ for $\WW' \in \partial \WW$:
\[ \{ \ov{v} + \un{v} \in S^\WW_\sigma \;\; : \;\; \un{\beta}(\un{v}) \leq b_\WW (\sigma + 1) \; \text{for some} \; \un{\beta} \in \un{\mathcal{B}}^+_\WW \} \subset \bigcup_{\WW' \in \partial \WW} X^{\WW'}_\sigma. \]
Recall, that $\partial \WW$ denotes the set of all codimension $1$ walls of $\WW$.
\item For any $f \geq 1$ we have
\[ X^\WW_\sigma \subset R^\WW_{f \sigma} \cup \bigcup_{\WW' \in \partial \WW} X^{\WW'}_{f \sigma}. \]
\end{enumerate}
\end{Lemma}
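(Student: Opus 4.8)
The plan is to treat this as a statement purely about the linear geometry of the Weyl chamber $\CC\subset\af$: I would fix the constants $a_\WW,b_\WW$ by downward induction on $\dim\WW$ and then verify (1)--(3) by elementary estimates, the only genuinely non-bookkeeping ingredient being a positivity property of the orthogonal projections onto the subspaces $\un{\af}_\WW$. The first step is to relate the two orthogonal splittings $\af=\ov{\af}_\WW\oplus\un{\af}_\WW$ and $\af=\ov{\af}_{\WW'}\oplus\un{\af}_{\WW'}$ attached to a wall $\WW$ and a codimension-one subwall $\WW'\in\partial\WW$, where the latter corresponds to moving one simple root $\un{\beta}\in\un{\mathcal{B}}^+_\WW$ into $\ov{\mathcal{B}}^+_\WW$. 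Setting $w=\proj_{\un{\af}_\WW}(\un{\beta}^\#)$ one checks $\ov{\af}_{\WW'}=\ov{\af}_\WW\oplus\IR w$ orthogonally, so that if $v=\ov{v}+\un{v}$ and $v=\ov{v}'+\un{v}'$ are the two decompositions of $v\in\af$ then
\[ \ov{v}'=\ov{v}+|w|^{-2}\,\un{\beta}(\un{v})\,w, \qquad \un{v}'=\un{v}-|w|^{-2}\,\un{\beta}(\un{v})\,w, \qquad |\ov{v}'|^2=|\ov{v}|^2+|w|^{-2}\,\un{\beta}(\un{v})^2. \]
The finitely many vectors $w$ obtained in this way are nonzero (the $\proj_{\un{\af}_\WW}(\un{\beta}^\#)$, $\un{\beta}\in\un{\mathcal{B}}^+_\WW$, are a basis of $\un{\af}_\WW$), hence $|w|\ge w_0$ for a uniform $w_0>0$; I also put $L=\max_{\alpha\in\Delta}|\alpha^\#|$.

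The one substantive point is the inequality $\un{\alpha}(w)=\langle\proj_{\un{\af}_\WW}(\un{\alpha}^\#),\proj_{\un{\af}_\WW}(\un{\beta}^\#)\rangle\le 0$ for distinct $\un{\alpha},\un{\beta}\in\un{\mathcal{B}}^+_\WW$. I would prove it by using that $\un{\af}_\WW$ is the fixed subspace of the parabolic Weyl group $\ov{W}_\WW$ generated by the reflections in $\ov{\mathcal{B}}^+_\WW$, so that orthogonal projection onto $\un{\af}_\WW$ is the averaging operator $v\mapsto|\ov{W}_\WW|^{-1}\sum_{w'\in\ov{W}_\WW}w'v$; by invariance of the Killing form the claim reduces to $\sum_{w'\in\ov{W}_\WW}\langle\un{\alpha}^\#,(w'\un{\beta})^\#\rangle\le 0$. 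Now for every $w'\in\ov{W}_\WW$ the root $w'\un{\beta}$ is positive and, expressed in the simple roots, has coefficient $1$ on $\un{\beta}$, nonnegative coefficients on $\ov{\mathcal{B}}^+_\WW$, and zero coefficient on every other simple root; since $\un{\alpha}$ is a simple root different from $\un{\beta}$ and from all of $\ov{\mathcal{B}}^+_\WW$, the pairing $\langle\un{\alpha}^\#,(w'\un{\beta})^\#\rangle$ is a nonnegative combination of inner products of pairwise distinct simple roots and is hence $\le 0$.

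With this available I would fix the constants top-down: choose $a_\CC>1$ arbitrary, and for a wall $\WW$ with $\dim\WW<r$ choose $a_\WW$ so large that $a_\WW\ge\tfrac{11}{9}\sqrt{a_{\WW''}^2+b_{\WW''}^2 w_0^{-2}}$ for every wall $\WW''$ with $\dim\WW''=\dim\WW+1$ and $\WW\in\partial\WW''$; in all cases set $b_\WW=L\,a_\WW+1$. Then (1) is immediate: for $v=\ov{v}+\un{v}\in S^\WW_\sigma$ and $\un{\alpha}\in\un{\Delta}^+_\WW$, expanding $\un{\alpha}$ in simple roots gives $\un{\alpha}(\un{v})\ge b_\WW\sigma$ (the summands from $\ov{\mathcal{B}}^+_\WW$ vanish on $\un{v}\in\un{\af}_\WW$, those from $\un{\mathcal{B}}^+_\WW$ are each $\ge b_\WW\sigma$, and at least one of the latter occurs because $\un{\alpha}\notin\ov{\Delta}^+_\WW$), while $\un{\alpha}(\ov{v})\ge-L|\ov{v}|\ge-La_\WW\sigma$; hence $\un{\alpha}(v)\ge(b_\WW-La_\WW)\sigma=\sigma$. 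For (2) and (3) I would argue uniformly with $\tau=\sigma$ in case (2) and $\tau=f\sigma$ in case (3): given $v=\ov{v}+\un{v}$ in the respective left-hand set, in case (3) either all $\un{\alpha}(\un{v})$, $\un{\alpha}\in\un{\mathcal{B}}^+_\WW$, are $\ge b_\WW(f\sigma+1)$ --- then $v\in R^\WW_{f\sigma}$, since also $|\ov{v}|\le a_\WW(\sigma-1)\le a_\WW(f\sigma-1)$ --- or they are not; in that case, and automatically in case (2) by the hypothesis defining its left-hand set, there is a $\un{\beta}\in\un{\mathcal{B}}^+_\WW$ with $\un{\beta}(\un{v})\le b_\WW(\tau+1)$, and taking the corresponding $\WW'\in\partial\WW$ the displayed identities give $|\ov{v}'|^2=|\ov{v}|^2+|w|^{-2}\un{\beta}(\un{v})^2\le(a_\WW^2+b_\WW^2 w_0^{-2})(\tau+1)^2\le a_{\WW'}^2(\tau-1)^2$ (the last step because $\tfrac{11}{9}(\tau-1)\ge\tau+1$ for $\tau\ge\sigma>10$), while the positivity inequality yields $\un{\alpha}(\un{v}')=\un{\alpha}(\un{v})-|w|^{-2}\un{\beta}(\un{v})\,\un{\alpha}(w)\ge\un{\alpha}(\un{v})\ge 0$ for all $\un{\alpha}\in\un{\mathcal{B}}^+_{\WW'}$; hence $v\in X^{\WW'}_{\tau}$, which is exactly the claimed inclusion.

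I expect the main obstacle to be the positivity inequality of the second paragraph: it is the one place where the non-orthogonality of the root system genuinely enters, and it is needed precisely so that the cone conditions $\un{\alpha}(\cdot)\ge 0$ cutting out the regions $X^{\WW'}$ survive the change of splitting. Everything else --- the hierarchical choice of the $a_\WW,b_\WW$ and the three inclusions --- is routine once one keeps track of the two relevant scales and notices that cases (2) and (3) are governed by the same estimate with $\tau$ specialized to $\sigma$ and $f\sigma$ respectively.
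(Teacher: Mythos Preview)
Your proof is correct and follows essentially the same route as the paper: fix the constants by downward induction on $\dim\WW$, verify (1) by expanding $\un{\alpha}$ in simple roots, and reduce both (2) and (3) to the single computation that passes from the $\WW$-splitting to the $\WW'$-splitting for the subwall $\WW'\in\partial\WW$ determined by the offending simple root $\un{\beta}$.

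The one noteworthy variation is how you handle the positivity step. You correctly use $w=\proj_{\un{\af}_\WW}(\un{\beta}^\#)$ in the change-of-splitting formula and then need $\un{\alpha}(w)\le 0$; you obtain this by realising $\proj_{\un{\af}_\WW}$ as the average over the parabolic Weyl group $\ov{W}_\WW$ and reducing to the standard fact that distinct simple roots pair non-positively. The paper instead works directly with $\un{\beta}^\#$ in place of $w$ and proves $\langle\un{\alpha}^\#,\un{\beta}^\#\rangle\le 0$ from scratch via a short Jacobi-identity computation in $\mathfrak{g}$ (this is equation~(\ref{eq:2simpleroots})). Your Weyl-group averaging is a slightly longer detour but has the advantage of making explicit why the \emph{projected} inner product (which is what the change of splitting actually produces) is still non-positive; the paper's formulation is terser but glosses over this projection. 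Either way, the substantive content is the same inequality for simple roots.
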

\begin{proof}
Recall that the walls $\WW$ of $\CC$ stand in one-to-one correspondence with splittings $\mathcal{B}^+ = \ov{\mathcal{B}}^+_\WW \dotcup \un{\mathcal{B}}^+_\WW$ of the basis $\mathcal{B}^+$ and that $\ov{\af}_\WW = \spann (\ov{\mathcal{B}}^+_\WW)^\#$.

For property (1) observe that for $v = \ov{v} + \un{v} \in S^\WW_\sigma$ and $\un{\alpha} \in \un{\Delta}^+_\WW$ we have
\[ \un{\alpha}(v) = \un{\alpha}(\ov{v}) + \un{\alpha}(\un{v}) \geq - C_0 a_\WW \sigma + b_\WW \sigma \]
for some large constant $C_0$.
So property (1) can be ensured if
\begin{equation} \label{eq:aWbW1}
 b_\WW - C_0 a_\WW \geq 1 \qquad \text{for all} \qquad \WW \subset \CC.
\end{equation}

As for property (2) consider $v = \ov{v} + \un{v} \in S^\WW_\sigma$ and assume that $\un{\beta}(\un{v}) \leq b_\WW (\sigma + 1)$ for some $\un{\beta} \in \un{\mathcal{B}}^+_\WW$.
Choose $\un{\gamma} \in \un{\mathcal{B}}^+_\WW$ such that
\[ \frac{\un{\gamma}(\un{v})}{| \proj_{\un{\af}_\WW}(\un{\gamma})|} \leq \frac{\un{\alpha}(\un{v})}{| \proj_{\un{\af}_\WW}(\un{\alpha})|} \qquad \text{for all} \qquad \un{\alpha} \in \un{\mathcal{B}}^+_\WW. \]
Then $\un{\gamma} (\un{v}) \leq C_1 \un{\beta}(\un{v}) \leq C_1 b_\WW (\sigma + 1)$.
Let $\WW' \in \partial \WW$ be the wall for which $\un{\mathcal{B}}^+_{\WW'} = \un{\mathcal{B}}^+_\WW \setminus \{ \un{\gamma} \}$.
Then $\ov{\af}_{\WW'} = \spann ( \ov{\af}_\WW \cup \{ \un{\gamma}^\# \})$.
Hence, if we consider the splitting $v = \ov{v}' + \un{v}' \in \ov{\af}_{\WW'} \oplus \un{\af}_{\WW'}$, we find $|\ov{v}'| \leq  |\ov{v}| + C_2 \un{\gamma}(\un{v}) \leq  a_\WW \sigma + C_2 C_1 b_\WW (\sigma + 1)$.
So, if we choose 
\begin{equation} \label{eq:aWbW2}
a_{\WW'} \geq 2 C_1 a_\WW + 4 C_2 C_1 b_\WW \qquad \text{for all} \qquad \WW' \in \partial \WW, 
\end{equation}
we can ensure that $|\ov{v}'| \leq a_{\WW'} (\sigma-1)$.
In order to conclude that $v \in X^{\WW'}_\sigma$, we still have to show that $\un{\alpha}(\un{v}') \geq 0$ for all $\un{\alpha} \in \un{\mathcal{B}}^+_{\WW'}$.
For this, observe that
\[ \un{v}' = \un{v} - \frac{\un{\gamma}(\un{v})}{| \proj_{\un{\af}_\WW}(\un{\gamma})|^2}  \proj_{\un{\af}_\WW}(\un{\gamma}^\#). \]
So for any $\un{\alpha} \in \un{\mathcal{B}}^+_{\WW'}$
\begin{multline*}
 \un{\alpha} (\un{v}') = \un{\alpha}(\un{v}) - \frac{\un{\gamma}(\un{v})}{| \proj_{\un{\af}_\WW}(\un{\gamma})|^2}  \big\langle \proj_{\un{\af}_\WW}(\un{\alpha}), \proj_{\un{\af}_\WW}(\un{\gamma}) \big\rangle \\
 \geq \un{\alpha}(\un{v}) - \frac{\un{\gamma}(\un{v})}{| \proj_{\un{\af}_\WW}(\un{\gamma})|}  | \proj_{\un{\af}_\WW}(\un{\alpha}) | \geq 0
\end{multline*}

Finally, we analyze property (3):
Let $v \in X^\WW_\sigma \setminus R^\WW_{f \sigma}$.
Then there is a $\un{\beta} \in \un{\mathcal{B}}^+_\WW$ such that $\un{\beta}(\un{v}) \leq b_\WW (f \sigma + 1)$.
As in the previous paragraph, we conclude that property (3) holds whenever (\ref{eq:aWbW2}) is satisfied.

It is now easy to see that we can choose the constants $(a_\WW)_{\WW \subset \CC}$ and $(b_\WW)_{\WW \subset \CC}$ to satisfy (\ref{eq:aWbW1}) and (\ref{eq:aWbW2}).
\end{proof}

In the following Lemma we introduce the cutoff functions $\ov{\eta}^\WW_\sigma$, $\un{\eta}^\WW_\sigma$.
\begin{Lemma} \label{Lem:eta}
We can define cutoff functions $\ov{\eta}^{\WW}_{\sigma} \in C^\infty(\ov{\af}_{\WW})$, $\un{\eta}^{\WW}_{\sigma} \in C^\infty( \un{\af}_{\WW})$ and $\eta^\WW_\sigma = \ov{\eta}^\WW_\sigma \un{\eta}^\WW_\sigma \in C^\infty( \af )$ with the following properties (for $\sigma > 10$):
\begin{enumerate}[(1)]
\item $0 \leq \ov{\eta}^{\WW}_{\sigma}, \un{\eta}^{\WW}_{\sigma} \leq 1$ and $| \partial \ov{\eta}^{\WW}_{\sigma} |, |\partial \un{\eta}^{\WW}_{\sigma} |, | \partial^2 \un{\eta}^{\WW}_{\sigma} | \leq C$ everywhere and independently of $\sigma$ and $\WW$.
Moreover, $\ov{\eta}^{\WW}_{\sigma}$ is invariant under the Weyl group $\ov{W}_\WW$.
\item $\supp \eta^{\WW}_{\sigma} \subset S^\WW_\sigma$ and $\{ \eta^\WW_\sigma = 1 \} \supset R^\WW_\sigma$.
\item On $\supp \eta^{\WW}_{\sigma}$ we have $\un{\alpha} \geq \sigma$ for all $\un{\alpha} \in \un{\Delta}^+_{\WW}$.
\item \[ \supp \ov{\eta}^{\WW}_{\sigma} \partial \un{\eta}^{\WW}_{\sigma}, \; \supp \ov{\eta}^{\WW}_{\sigma} \partial^2 \un{\eta}^{\WW}_{\sigma} \; \subset \; \bigcup_{\WW' \in \partial \WW} X^{\WW'}_\sigma. \]
\end{enumerate}
\end{Lemma}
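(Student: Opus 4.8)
The plan is to write the cutoff functions down explicitly as products of one‑dimensional bump functions in suitable linear coordinates, and then read off properties (1)--(4) directly from the definitions of $X^\WW_\sigma, S^\WW_\sigma, R^\WW_\sigma$ together with Lemma \ref{Lem:regions}. The geometric content — the inclusions among the three families of regions — has already been isolated in that lemma, so what remains is essentially a careful bookkeeping of supports and derivative bounds.

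First I would fix, once and for all, two smooth profiles: $\chi_0 : \IR \to [0,1]$ with $\chi_0 \equiv 0$ on $(-\infty,0]$ and $\chi_0 \equiv 1$ on $[1,\infty)$, and $\psi_0 : \IR \to [0,1]$ with $\psi_0 \equiv 1$ on $(-\infty,-1]$ and $\psi_0 \equiv 0$ on $[0,\infty)$. For the $\un\af_\WW$–factor, note that the restrictions $\{\un\beta|_{\un\af_\WW} : \un\beta \in \un{\mathcal B}^+_\WW\}$ form a basis of $\un\af_\WW^*$: each $\ov\beta \in \ov{\mathcal B}^+_\WW$ annihilates $\un\af_\WW$ (since $\ov\beta^\# \in \ov\af_\WW \perp \un\af_\WW$), so the annihilator of $\un\af_\WW$ in $\af^*$ is $\spann \ov{\mathcal B}^+_\WW$; hence the $\un\beta|_{\un\af_\WW}$ are linearly independent, and there are $|\un{\mathcal B}^+_\WW| = \dim \un\af_\WW$ of them. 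With $a_\WW, b_\WW > 1$ the constants from Lemma \ref{Lem:regions}, I would then set
\[ \un\eta^{\WW}_\sigma(\un v) = \prod_{\un\beta \in \un{\mathcal B}^+_\WW} \chi_0\Big( \frac{\un\beta(\un v) - b_\WW \sigma}{b_\WW} \Big), \qquad \ov\eta^{\WW}_\sigma(\ov v) = \psi_0 \Big( \frac{|\ov v| - a_\WW \sigma}{a_\WW} \Big), \]
and $\eta^\WW_\sigma = \ov\eta^\WW_\sigma \un\eta^\WW_\sigma \in C^\infty(\af)$. Two small points need a remark: $\ov\eta^{\WW}_\sigma$ is smooth despite the non‑smoothness of $|\ov v|$ at the origin, because for $\sigma > 10$ and $a_\WW > 1$ it is identically $1$ on the ball $\{|\ov v| < a_\WW(\sigma-1)\}$, which contains a neighborhood of $0$; and $\ov\eta^{\WW}_\sigma$ is $\ov W_\WW$–invariant because $|\ov v|$ is, $\ov W_\WW$ acting on $\ov\af_\WW$ by isometries.

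Then I would verify the four properties. Property (1): the derivative bounds hold because $|\chi_0'|, |\chi_0''|, |\psi_0'|$ are bounded by fixed constants, the factors $1/b_\WW, 1/a_\WW$ are $\leq 1$, differentiation in $v$ produces only the bounded covectors $\un\beta$ resp.\ the unit covector $\nabla|\ov v|$, and there are finitely many walls $\WW$ and roots; these bounds are visibly $\sigma$–independent since increasing $\sigma$ merely translates the transition region. Property (2): from $\psi_0 \equiv 0$ on $[0,\infty)$ and $\chi_0 \equiv 0$ on $(-\infty,0]$ one gets $\supp \eta^\WW_\sigma \subset \{|\ov v| \leq a_\WW\sigma\} \cap \{\un\beta(\un v) \geq b_\WW\sigma \;\forall \un\beta \in \un{\mathcal B}^+_\WW\} = S^\WW_\sigma$, and from $\psi_0 \equiv 1$ on $(-\infty,-1]$, $\chi_0 \equiv 1$ on $[1,\infty)$ one gets $\{\eta^\WW_\sigma = 1\} \supset \{|\ov v| \leq a_\WW(\sigma-1)\} \cap \{\un\beta(\un v) \geq b_\WW(\sigma+1) \;\forall\un\beta\} = R^\WW_\sigma$. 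Property (3) is then immediate from (2) and Lemma \ref{Lem:regions}(1). For property (4): every summand of $\partial \un\eta^\WW_\sigma$ resp.\ $\partial^2 \un\eta^\WW_\sigma$ carries a factor $\chi_0'$ or $\chi_0''$ evaluated at some argument $(\un\beta(\un v) - b_\WW\sigma)/b_\WW$, which is supported in $[0,1]$; hence on $\supp \ov\eta^\WW_\sigma \partial \un\eta^\WW_\sigma$ (and likewise for $\partial^2 \un\eta^\WW_\sigma$) one has simultaneously $v \in S^\WW_\sigma$ and $\un\beta(\un v) \leq b_\WW(\sigma+1)$ for some $\un\beta \in \un{\mathcal B}^+_\WW$, so by Lemma \ref{Lem:regions}(2) this set lies in $\bigcup_{\WW' \in \partial \WW} X^{\WW'}_\sigma$. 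I do not expect any real obstacle here: the only things requiring a moment's care are the claim that $\{\un\beta|_{\un\af_\WW}\}$ is a genuine coordinate system on $\un\af_\WW$, the smoothness of $\ov\eta^\WW_\sigma$ across the origin, and the uniformity of all derivative bounds in $\sigma$ and $\WW$, all of which are addressed above.
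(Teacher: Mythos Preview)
Your proposal is correct and follows essentially the same construction as the paper: a radially symmetric cutoff on $\ov\af_\WW$ and a product of one-dimensional cutoffs in the simple-root coordinates $\un\beta(\un v)$ on $\un\af_\WW$, with properties (3) and (4) read off from Lemma \ref{Lem:regions}. Your write-up is in fact more explicit than the paper's about the uniformity of the derivative bounds, the smoothness of $\ov\eta^\WW_\sigma$ at the origin, and the fact that the $\un\beta|_{\un\af_\WW}$ form a coordinate system.
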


\begin{proof}
Let $\ov{\eta}^{\WW}_{\sigma} \in C^\infty(\ov{\af}_{\WW})$ be a radially symmetric cutoff function which is $1$ on $\ov{B}_{a_{\WW}(\sigma -1)} (0) \subset \ov{\af}$ and vanishes outside $\ov{B}_{a_{\WW} \sigma}(0)$.
For $\WW = \CC$, we just set $\ov{\eta}^\WW_\sigma = 1$.
In order to define $\un{\eta}^{\WW}_{\sigma} \in C^\infty(\un{\af}_{\WW})$, we choose a cutoff function $\varphi^{\WW}_{\sigma} \in C^\infty(\IR)$ which is $1$ on $[b_{\WW} (\sigma + 1), \infty)$ and vanishes on $(-\infty, b_{\WW} \sigma]$ and we set
\[ \un{\eta}^{\WW}_{\sigma} = \prod_{\un{\alpha} \in \un{\mathcal{B}}^+_{\WW}} \varphi^{\WW}_{\sigma} ( \un{\alpha} \circ \proj_{\un{\af}_{\WW}} ). \]
For $\WW = \{ 0 \}$, we set $\un{\eta}^\WW_\sigma = 1$.
Properties (1) and (2) trivially hold.
Property (3) is just a restatement of Lemma \ref{Lem:regions} (1) and property (4) follows from Lemma \ref{Lem:regions} (2).
\end{proof}

Now consider the heat kernel $(k_t)_{t>0}$ and its spherical model $(K_t)_{t>0}$.
Let $\lambda_0$ still be an arbitrary constant and assume that $\Vert k_t \Vert_{L^1(M)} \leq H e^{-\lambda_0 t}$ for $t \in [0,T]$.
In the following analysis will always assume that $t \in [1,T]$.

Fix a wall $\WW \subset \CC$ and some $\sigma > 10$.
Unless denoted otherwise, we will mostly omit $\WW$ in the index, i.e. $\ov{\eta}_{\sigma} = \ov{\eta}^{\WW}_{\sigma}$ and $\un{\eta}_{\sigma} = \un{\eta}^{\WW}_{\sigma}$.
Consider the splitting $\af = \ov{\af} \oplus \un{\af}$ associated to $\WW$ and define the time-dependent function $G_t : \ov{\af} \to \End_{K_0} E_0$ by\footnote{In the following, whenever we write down an expression of this kind, we want to take the multiplicities of the roots of $\Delta^+$ into account, i.e. a root of higher multiplicity will appear with that multiplicity in the product or sum.}
\[ G_t = \int_{\un{\af}} \un{\eta}_{\sigma} K_t \prod_{\un{\alpha} \in \un{\Delta}^+} e^{\un{\alpha}}. \]
In the case $\WW = \{ 0 \}$ we just have $G_t = K_t$.
Observe that since $K_t(\min) > 0$ by Lemma \ref{Lem:KtKtcirc}, we can use $G_t$ to bound the weighted $L^1$-norm of $K_t$ along $\un{\af}$:
\begin{equation} \label{eq:Gboundsintegral}
 \int_{\un{\af}} \un{\eta}_{\sigma} |K_t| \prod_{\un{\alpha} \in \un{\Delta}^+} e^{\un{\alpha}} \leq C |G_t|.
\end{equation}
By construction, $G_t$ is a spherical model on $\ov{M}_\WW$ (for this note that $\ov{G}_\WW$ stabilizes $\un{A}_\WW$ since $[\ov{\mathfrak{g}}_\WW, \un{\af}_\WW ] = 0$).
Let $g_t \in C^\infty(\ov{M}; E) \otimes E_0^*$ be the associated spherical section.
We can estimate that on $\supp \ov{\eta}_{\sigma} \subset \ov{\af}$ we have (using Lemma \ref{Lem:eta} (3))
\[ |\partial^m G_t | \leq \int_{\un{\af}} \un{\eta}_{\sigma} | \partial^m K_t | \prod_{\un{\alpha}} e^{\un{\alpha}} \leq C \int_{\un{\af}} \un{\eta}_{\sigma} | \partial^m K_t | \Big| \prod_{\un{\alpha}} \sinh \un{\alpha} \Big|. \]
So using Lemma \ref{Lem:highder} and the calculus from subsection \ref{subsec:radialsections}, we can conclude that for $t \in [1,T]$
\begin{equation} \label{eq:nablagestimate}
 \Vert \nabla^m g_t \Vert_{L^1(\supp \ov{\eta}_{\sigma})} \leq C_m \Vert \nabla^m k_t \Vert_{L^1(M)} \leq C_m H e^{-\lambda_0 t}.
\end{equation}
So by Lemma \ref{Lem:Sobolev} we obtain for $t \in [1,T]$, $p \in \supp \ov{\eta}_\sigma \subset \ov{M}$ and $r = \dist(p, p_0)$
\begin{equation} \label{eq:Gestimate}
 |g_t|(p) \leq C e^{-d r} H e^{-\lambda_0 t}.
\end{equation}

We can understand the evolution of $G_t$ using the evolution equation for $K_t$ from Lemma \ref{Lem:heatkermodel}.
To simplify notation we will denote all indices $i$ corresponding to roots $\alpha_i \in \ov{\Delta}^+$ by $\ov{i}$ and we will denote the indices $\un{i}$ similarly.
Moreover, we will use the decomposition $\triangle = \ov{\triangle} + \un{\triangle}$ where $\ov{\triangle}$ denotes the Laplacian on $\ov{\af}_\WW$ and $\un{\triangle}$ the Laplacian on $\un{\af}_\WW$.
\begin{alignat}{1}
\partial_t G_t &= \int_{\un{\af}} \un{\eta}_{\sigma} \bigg[ \ov{\triangle} K_t + \un{\triangle} K_t + \sum_{\ov{\alpha}} \ctanh \ov{\alpha} \; \partial_{\ov{\alpha}^{\#}} K_t + \sum_{\un{\alpha}} (\ctanh \un{\alpha} - 1) \partial_{\un{\alpha}^\#} K_t \displaybreak[1] \notag \\
&\qquad + \sum_i \Big( \frac1{\sinh^2 \alpha_i} K_t . k_i . k_i + \frac{\cosh^2 \alpha_i}{\sinh^2 \alpha_i} k_i . k_i . K_t - 2 \frac{\cosh \alpha_i}{\sinh^2 \alpha_i} k_i . K_t . k_i \Big) \notag \\
& \qquad + \sum_{\un{\alpha}} \partial_{\un{\alpha}^{\#}} K_t \bigg] \prod_{\un{\alpha}} e^{\un{\alpha}}\displaybreak[1] \notag \\
& = \ov{\triangle} G_t + \sum_{\ov{\alpha}} \ctanh \ov{\alpha} \; \partial_{\ov{\alpha}^{\#}} G_t + \sum_{\ov{i}} \Big( \frac1{\sinh^2 \alpha_{\ov{i}}} G_t . k_{\ov{i}} . k_{\ov{i}} + \frac{\cosh^2 \alpha_{\ov{i}}}{\sinh^2 \alpha_{\ov{i}}} k_{\ov{i}} . k_{\ov{i}} . G_t \notag \\
& \qquad - 2 \frac{\cosh \alpha_{\ov{i}}}{\sinh^2 \alpha_{\ov{i}}} k_{\ov{i}} . G_t . k_{\ov{i}} \Big) + \sum_{\un{i}} k_{\un{i}} . k_{\un{i}} . G_t + \%_1 + \%_2 + \%_3 \label{eq:evofG1}
\end{alignat}
where
\begin{alignat*}{1}
\%_1 &= \int_{\un{\af}} \un{\eta}_{\sigma} \Big[ \un{\triangle} K_t + \sum_{\un{\alpha}} \partial_{\un{\alpha}^{\#}} K_t \Big] \prod_{\un{\alpha}} e^{\un{\alpha}} \displaybreak[1] \\
\%_2 &= \int_{\un{\af}} \un{\eta}_{\sigma} \sum_{\un{\alpha}} ( \ctanh \un{\alpha} - 1 ) \partial_{\un{\alpha}}^{\#} K_t \prod_{\un{\alpha}} e^{\un{\alpha}} \displaybreak[1] \\
\%_3 &= \int_{\un{\af}} \un{\eta}_{\sigma} \sum_{\un{i}} \Big( \frac1{\sinh^2 \alpha_{\un{i}}} K_t . k_{\un{i}} . k_{\un{i}} + \Big( \frac{\cosh^2 \alpha_{\un{i}}}{\sinh^2 \alpha_{\un{i}}} - 1\Big) k_{\un{i}} . k_{\un{i}} . K_t \\
&\qquad\qquad\qquad  - 2 \frac{\cosh \alpha_{\un{i}}}{\sinh^2 \alpha_{\un{i}}} k_{\un{i}} . K_t . k_{\un{i}} \Big) \prod_{\un{\alpha}} e^{\un{\alpha}}
\end{alignat*}
Recall that all but the $\%$-terms in (\ref{eq:evofG1}) together just represent the operator $\triangle_{\WW} g$ from (\ref{eq:triangleWW}) in terms of spherical models on $\ov{M}$.
Hence, by the definition of $\lambda_\WW$, there is a first order differential operator $D$ such that
\begin{equation} \label{eq:evolgt}
 \partial_t g_t = - D^* D g_t - \lambda_{\WW} g_t + \%_1 + \%_2 + \%_3.
\end{equation}

Define the time-dependent quantity
\[ B^{\WW}_{\sigma, t} = \int_{\ov{\af}} \ov{\eta}^2_{\sigma} |G_t|^2 \Big| \prod_{\ov{\alpha}} \sinh \ov{\alpha} \Big| = \Vert \ov{\eta}_{\sigma} g_t \Vert^2_{L^2(\ov{M})}. \]
If $\WW = \CC$, we just set $B^{\WW}_{\sigma,t} = |G_t|^2$.
We can compute its time derivative using (\ref{eq:evolgt}):
\begin{multline}
 \tfrac12 \partial_t B^{\WW}_{\sigma, t} = - \Vert \ov{\eta}_{\sigma} D g_t \Vert^2_{L^2(\ov{M})} - \lambda_{\WW} B^{\WW}_{\sigma, t} \\ + \int_{\ov{M}}\ov{\eta}_{\sigma} * \nabla \ov{\eta}_{\sigma} * g_t * \nabla g_t + \int_{\ov{M}} \ov{\eta}^2_{\sigma} \big( \%_1 + \%_2 + \%_3 \big) g_t. \label{eq:evBW1}
\end{multline}
The last two error terms can be estimated using the following Lemma.

\begin{Lemma} \label{Lem:percest}
There are constants $C, A < \infty$ and $c > 0$ such that we have the following estimates:
Assume that $\Vert K_t \Vert_{L^1(M)} \leq H e^{-\lambda_0 t}$ for $t \in [0,T]$.
Then we have for times $[1,T]$:
\begin{alignat*}{1}
\bigg| \int_{\ov{M}} \ov{\eta}^2_{\sigma} \big( \%_2 + \%_3 \big) g_t \bigg| &\leq C e^{- c\sigma} H^2 e^{-2\lambda_0 t} \displaybreak[1] \\
\int_{\ov{M}} \ov{\eta}_{\sigma} |\nabla \ov{\eta}_{\sigma}| | g_t | | \nabla g_t | &\leq C e^{-c \sigma} H^2 e^{-2\lambda_0 t}
\end{alignat*}
Moreover, for every $\varepsilon \in (0,1)$ we have the following estimate:
For $\WW' \subset \WW$ set $f_{\WW'} = \varepsilon^{\dim \WW' - \dim \WW}$.
Then
\[ \bigg| \int_{\ov{M}} \ov{\eta}^2_{\sigma} \%_1 g_t \bigg| \leq \big( B^{\WW}_{\sigma, t} \big)^{1/2} \Vert \ov{\eta}_{\sigma} \%_1 \Vert_{L^2(\ov{M})} \leq C \big( B^{\WW}_{\sigma, t} \big)^{1/2} \sum_{\WW' \subsetneqq \WW} e^{\varepsilon A f_{\WW'} \sigma}\big( B^{\WW'}_{f_{\WW'} \sigma, t} \big)^{1/2}.  \]
\end{Lemma}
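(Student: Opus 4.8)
The plan is to prove the three estimates separately, the one for $\%_1$ being by far the most delicate; throughout I use the hypothesis $\Vert k_t\Vert_{L^1(M)}\leq He^{-\lambda_0 t}$ and restrict to $t\in[1,T]$. For $\%_2$ and $\%_3$ the only point is that their coefficients are exponentially small on $\supp\un{\eta}_\sigma$: by Lemma~\ref{Lem:eta}(3) one has $\un\alpha\geq\sigma$ there for every $\un\alpha\in\un{\Delta}^+_\WW$, and $\ctanh\un\alpha-1$, $1/\sinh^2\alpha_{\un{i}}$ and $\cosh^2\alpha_{\un{i}}/\sinh^2\alpha_{\un{i}}-1$ decay like $e^{-2\un\alpha}$ while $\cosh\alpha_{\un{i}}/\sinh^2\alpha_{\un{i}}$ decays like $e^{-\un\alpha}$, so all are $\leq Ce^{-\sigma}$. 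Since the $k_{\un{i}}$ act as fixed endomorphisms of $E_0$, it follows that $|\%_2|$ and $|\%_3|$ are, pointwise in $\ov v$, at most $Ce^{-\sigma}$ times $\int_{\un{\af}}\un{\eta}_\sigma|\nabla^m K_t|\prod_{\un\alpha}e^{\un\alpha}$ with $m=1$ (for $\%_2$) resp.\ $m=0$ (for the zero-order $\%_3$), using $e^{\un\alpha}\leq C\sinh\un\alpha$ on the support. Integrating against $\ov{\eta}_\sigma$ over $\ov M$, using the comparison underlying (\ref{eq:nablagestimate}) together with Lemma~\ref{Lem:highder}, and then pairing with $g_t$ via $|g_t|\leq CHe^{-\lambda_0 t}$ from (\ref{eq:Gestimate}) yields the first inequality. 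For the cutoff term, $\partial\ov{\eta}_\sigma$ is supported in $\{a_\WW(\sigma-1)\leq|\ov v|\leq a_\WW\sigma\}$, so there $r=d(p_0,\cdot)\geq\tfrac12 a_\WW\sigma$ and hence $|g_t|\leq Ce^{-c\sigma}He^{-\lambda_0 t}$ by (\ref{eq:Gestimate}); combining this with $|\partial\ov{\eta}_\sigma|\leq C$ and $\Vert\nabla g_t\Vert_{L^1(\supp\ov{\eta}_\sigma)}\leq CHe^{-\lambda_0 t}$ from (\ref{eq:nablagestimate}) gives the second inequality.

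\emph{The term $\%_1$: the divergence identity.}
After the trivial Cauchy--Schwarz step $\int_{\ov M}\ov{\eta}_\sigma^2\%_1 g_t=\int_{\ov M}(\ov{\eta}_\sigma g_t)(\ov{\eta}_\sigma\%_1)\leq(B^\WW_\sigma)^{1/2}\Vert\ov{\eta}_\sigma\%_1\Vert_{L^2(\ov M)}$ it remains to bound $\Vert\ov{\eta}_\sigma\%_1\Vert_{L^2(\ov M)}$. The key observation is that $\%_1$ is, apart from the cutoff, an exact weighted divergence. Set $\un{\rho}=\sum_{\un\alpha\in\un{\Delta}^+_\WW}\un\alpha$ and $w=\exp(\un{\rho})=\prod_{\un\alpha}e^{\un\alpha}$. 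Since the Weyl group $\ov{W}_\WW$ permutes $\un{\Delta}^+_\WW$, the functional $\un{\rho}$ is $\ov{W}_\WW$-invariant, hence $\un{\rho}^\#$ lies in the fixed-point set $\un{\af}_\WW$; consequently $w$ depends only on the variable $\un v$ and $\nabla_{\un{\af}}\log w=\un{\rho}^\#$. Therefore $\un{\triangle} K_t+\sum_{\un\alpha}\partial_{\un\alpha^\#}K_t=\un{\triangle} K_t+\partial_{\un{\rho}^\#}K_t=w^{-1}\DIV_{\un{\af}}(w\,\nabla_{\un{\af}}K_t)$, and one integration by parts in $\un v$ (the boundary term at infinity vanishing by integrability of $k_t$) gives $\%_1(\ov v)=-\int_{\un{\af}_\WW}w\,\langle\nabla_{\un{\af}}\un{\eta}_\sigma,\nabla_{\un{\af}}K_t\rangle\,d\un v$. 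In particular $\%_1$ is supported where $\partial\un{\eta}_\sigma\neq 0$, so by Lemma~\ref{Lem:eta}(4) the relevant pairs $(\ov v,\un v)$ lie in $\bigcup_{\WW'\in\partial\WW}X^{\WW'}_\sigma$ and $|\nabla\un{\eta}_\sigma|\leq C$.

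\emph{The term $\%_1$: localization down the face lattice.}
For each $\WW'\in\partial\WW$ I pass to the $\WW'$-splitting $\af=\ov{\af}_{\WW'}\oplus\un{\af}_{\WW'}$ and integrate $\nabla K_t$ over $\un{\af}_{\WW'}$ against the $\WW'$-weight $\prod_{\un\alpha'\in\un{\Delta}^+_{\WW'}}e^{\un\alpha'}$, which reproduces a $G^{\WW'}$-type quantity; the weight mismatch $\prod_{\un\alpha\in\un{\Delta}^+_\WW\setminus\un{\Delta}^+_{\WW'}}e^{\un\alpha}$ is bounded on $X^{\WW'}_\sigma$ (where $|\ov v'|\leq a_{\WW'}\sigma$) by $e^{A\sigma}$, since these extra roots vanish on $\un{\af}_{\WW'}$. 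To replace the bare positivity condition in $X^{\WW'}_\sigma$ by the region $R^{\WW'}_{f\sigma}$ on which $\un{\eta}^{\WW'}_{f\sigma}\equiv1$ — which is what actually produces $B^{\WW'}_{f\sigma}$ — I use the nesting $X^{\WW'}_\sigma\subset R^{\WW'}_{f\sigma}\cup\bigcup_{\WW''\in\partial\WW'}X^{\WW''}_{f\sigma}$ of Lemma~\ref{Lem:regions}(3) with $f=1/\varepsilon$ and iterate downward. The decisive bookkeeping point is that at each descent the weight mismatch is estimated on the intersection $X^{(\mathrm{old})}\cap R^{(\mathrm{new})}$, i.e.\ with the region parameter of the previous step, so that the accumulated mismatch at a codimension-$k$ wall $\WW'$ stays below $\exp(A\sigma\sum_{j<k}\varepsilon^{-j})\leq\exp(\varepsilon A'\varepsilon^{-k}\sigma)=\exp(\varepsilon A'f_{\WW'}\sigma)$; on the interior pieces one arrives at a region where $\un{\eta}^{\WW'}_{f_{\WW'}\sigma}\equiv1$, whose contribution is controlled by $(B^{\WW'}_{f_{\WW'}\sigma})^{1/2}$ (using Lemma~\ref{Lem:highder} and, on the deeper cross-sections, Lemma~\ref{Lem:Sobolev}). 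Summing over all $\WW'\subsetneqq\WW$ yields $\Vert\ov{\eta}_\sigma\%_1\Vert_{L^2(\ov M)}\leq C\sum_{\WW'\subsetneqq\WW}e^{\varepsilon A f_{\WW'}\sigma}(B^{\WW'}_{f_{\WW'}\sigma})^{1/2}$, which with Cauchy--Schwarz gives the claim.

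\emph{Main obstacle.}
The hard part is entirely the $\%_1$ estimate, and within it the recursive localization: one must verify that the bulk part of $\un{\triangle}+\sum\partial_{\un\alpha^\#}$ really is an exact weighted divergence on $\un{\af}_\WW$ (which rests on the $\ov{W}_\WW$-invariance of $\un{\rho}$), and then organize the boundary contributions — which live on the geometrically awkward, non-orthogonal regions $X^{\WW'}_\sigma$ — into a finite sum over the proper faces of $\WW$, carefully tracking the two competing exponential factors (the weight mismatch $e^{A\sigma}$ incurred at each descent against the decay hidden in $B^{\WW'}_{f_{\WW'}\sigma}$) through the whole face lattice of $\CC$. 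The geometric nesting of Lemma~\ref{Lem:regions}(3) and the uniform derivative bounds of Lemma~\ref{Lem:eta} are exactly what make this combinatorial estimate close.
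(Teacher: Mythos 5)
Your proposal follows essentially the same route as the paper: the $\%_2,\%_3$ bound via the $e^{-\sigma}$-smallness of the coefficients on $\supp\un{\eta}_\sigma$ (Lemma \ref{Lem:eta}(3)) together with (\ref{eq:Gestimate}) and Lemma \ref{Lem:highder}; the cutoff term via the $e^{-dr}$ decay of $g_t$ on $\supp\nabla\ov{\eta}_\sigma$; and for $\%_1$ the same integration by parts (resting on $\ov{W}_\WW$-invariance of $\sum\un{\alpha}^\#$), localization of the boundary term to $\bigcup_{\WW'\in\partial\WW}X^{\WW'}_\sigma$ via Lemma \ref{Lem:eta}(4), and recursive descent through the face lattice using Lemma \ref{Lem:regions}(3) with $f=\varepsilon^{-1}$, Cauchy--Schwarz along the splitting $\ov{\af}_{\WW'}\oplus\un{\af}_{\WW'}$, and the same bookkeeping of the $e^{A\sigma}$ mismatch against the $\varepsilon$-rescaling of $\sigma$. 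The one point to tighten: you integrate by parts only once, leaving $\nabla_{\un{\af}}K_t$ in the boundary term, but the quantities $B^{\WW'}$ are built from $K_t$ itself, so the recursion as written lands on a ``$G^{\WW'}$-type'' object rather than on $(B^{\WW'}_{f_{\WW'}\sigma})^{1/2}$; the paper avoids this by moving all derivatives onto the cutoff (using $|\partial^2\un{\eta}_\sigma|\leq C$ from Lemma \ref{Lem:eta}(1)), so that only $|K_t|\,(|\partial\un{\eta}_\sigma|+|\partial^2\un{\eta}_\sigma|)$ appears, and you should do the same.
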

\begin{proof}
We start with the first inequality.
Observe that by Lemma \ref{Lem:eta}(3) we know that on $\supp \ov{\eta}_\sigma \subset \ov{\af}$
\[ |\%_2| + |\%_3| \leq C e^{-c \sigma} \int_{\un{\af}} \un{\eta}_{\sigma} \big( |K_t| + |\partial K_t| \big) \prod_{\un{\alpha}} e^{\un{\alpha}}. \]
Hence, by Lemma \ref{Lem:highder} and (\ref{eq:Gestimate}) we conclude (using $e^{-d r} \leq 1$)
\begin{multline*}
\bigg| \int_{\ov{M}} \ov{\eta}^2_{\sigma} \big( \%_2 + \%_2 ) g_t \bigg| 
\leq C e^{-c \sigma} H e^{-\lambda_0 t} \int_{\ov{\af}} \ov{\eta}^2_{\sigma} \bigg( \int_{\un{\af}} \un{\eta}_{\sigma} \big( |K_t| + | \partial K_t | \big) \prod_{\un{\alpha}} e^{\un{\alpha}} \bigg) \Big| \prod_{\ov{\alpha}}  \sinh \ov{\alpha} \Big| \\
\leq C e^{-c\sigma} H e^{-\lambda_0 t} \Big( \Vert k_t \Vert_{L^1(M)} + \Vert \nabla k_t \Vert_{L^1(M)} \Big)
\leq C e^{-c\sigma} H^2 e^{-2\lambda_0 t}
\end{multline*}

The second inequality can be established in an analogous way.
This time, however, we need make use of the $e^{-d r}$-factor in (\ref{eq:Gestimate}) and  we need to make use of (\ref{eq:nablagestimate}) for $m=1$.
We find that for some $c>0$ depending on $d$
\[ \int_{\ov{M}} \ov{\eta}_{\sigma} |\nabla \ov{\eta}_{\sigma} | |g_t| |\nabla g_t|
\leq C e^{-c\sigma} H e^{-\lambda_0 t} \int_{\ov{M}} \ov{\eta}_{\sigma} | \nabla g_t | \leq C e^{-c \sigma} H^2 e^{-2\lambda_0 t}. \]

We now establish the third inequality.
To avoid confusion, we will write out the $\WW$-index again.
Observe first that $\sum_{\un{\alpha}} \un{\alpha}^\#$ is invariant under $\ov{W}_\WW$ and hence it is contained in $\un{\af}_\WW$.
Let now $\ov{v} \in \ov{\af}_\WW$.
Then, by integration by parts
\begin{multline}
 \big( \ov{\eta}^{\WW}_{\sigma} |\%_1| \big) (\ov{v}) \leq \int_{\{ \ov{v} \} \times \un{\af}_\WW} \ov{\eta}^{\WW}_{\sigma} \big( |\partial \un{\eta}^\WW_{\sigma} | + | \partial^2 \un{\eta}^\WW_{\sigma} | \big) |K_t| \prod_{\un{\alpha} \in \un{\Delta}^+_\WW} e^{\un{\alpha}} \\
 \leq C \int_{\{ \ov{v} \} \times \un{\af}_\WW} \ov{\eta}^\WW_\sigma \big( |\partial \un{\eta}^\WW_{\sigma} | + | \partial^2 \un{\eta}^\WW_\sigma | \big) | K_t | \bigg| \prod_{\un{\alpha} \in \un{\Delta}^+_\WW} \sinh \un{\alpha} \bigg|. \label{eq:perc1}
\end{multline}
Since by Lemma \ref{Lem:eta} (4) we know that the support of the integrand is covered by the $X^{\WW'}_\sigma$ for $\WW' \in \partial \WW$, we can bound the right hand side of (\ref{eq:perc1}) by $C \sum_{\WW' \in \partial \WW} Y^{\WW'}_\sigma$ where
\[ Y^{\WW'}_\sigma = \int_{\{\ov{v} \} \times \un{\af}_\WW \cap X^{\WW'}_\sigma} |K_t| \bigg| \prod_{\un{\alpha} \in \un{\Delta}^+_\WW} \sinh \un{\alpha} \bigg|. \]
Now consider any $\WW' \subsetneqq \WW$ (not necessarily of codimension $1$ in $\WW$) and let $f \geq 1$.
Then we can bound $Y^{\WW'}_\sigma$ using Lemma \ref{Lem:regions} (3) for $f = \varepsilon^{-1}$
\[ Y^{\WW'}_\sigma \leq \int_{\{ \ov{v} \} \times \un{\af}_\WW \cap X^{\WW'}_{\sigma}} \eta^{\WW'}_{\varepsilon^{-1} \sigma} |K_t| \bigg| \prod_{\un{\alpha} \in \un{\Delta}^+_\WW} \sinh \un{\alpha} \bigg| + \sum_{\WW'' \in \partial \WW'} Y^{\WW''}_{\varepsilon^{-1} \sigma}. \]
In order to bound the integral, let us first analyze its domain $\{\ov{v} \} \times \un{\af}_\WW \cap X^{\WW'}_{\sigma}$.
Observe that the set $X^{\WW'}_{\sigma}$ can be written as a direct product $X^{\WW'}_{\sigma} = \ov{X}^{\WW'}_{\sigma} \times \un{X}^{\WW'}_{\sigma}$ with respect to the splitting $\ov{\af}_{\WW'} \oplus \un{\af}_{\WW'}$.
Moreover, since $\un{\af}_{\WW'} \subset \un{\af}_\WW$, there is an orthogonal splitting $\un{\af}_\WW = \af_\perp \oplus \un{\af}_{\WW'}$ and we have $\ov{\af}_{\WW'} = \ov{\af}_{\WW} \oplus \af_\perp$.
So we can represent the domain of the integral as a product with respect to the splitting $\af = \ov{\af}_{\WW'} \oplus \un{\af}_{\WW'}$:
\[ \big( \{ \ov{v} \} \times \un{\af}_\WW \big) \cap X^{\WW'}_\sigma = \big( \{ \ov{v} \} \times \af_\perp \times \un{\af}_{\WW'} \big) \cap \big( \ov{X}^{\WW'}_\sigma \times \un{X}^{\WW'}_\sigma \big) = \big( \{ \ov{v} \} \times \af_\perp \cap \ov{X}^{\WW'}_\sigma \big) \times \un{X}^{\WW'}_\sigma \]
So by Cauchy-Schwarz
\begin{multline*}
 Y^{\WW'}_\sigma \leq \bigg( \int_{\{ \ov{v} \} \times \af_\perp \cap \ov{X}^{\WW'}_\sigma} \big( \ov{\eta}^{\WW'}_{\varepsilon^{-1} \sigma} \big)^2 \bigg( \int_{\un{X}^{\WW'}_\sigma} \un{\eta}^{\WW'}_{\varepsilon^{-1} \sigma} |K_t| \bigg| \hspace{-1mm} \prod_{\alpha \in \un{\Delta}^+_{\WW'}} \hspace{-2mm}\sinh \alpha \bigg| \bigg)^2 \bigg| \hspace{-1mm} \prod_{\alpha \in \ov{\Delta}^+_{\WW'} \cap \un{\Delta}^+_\WW} \hspace{-5mm} \sinh \alpha \bigg| \bigg)^{1/2} \\
 \times \bigg( \int_{\{ \ov{v} \} \times \af_\perp \cap \ov{X}^{\WW'}_\sigma} \bigg| \hspace{-1mm} \prod_{\alpha \in \ov{\Delta}^+_{\WW'} \cap \un{\Delta}^+_\WW} \hspace{-5mm} \sinh \alpha \bigg| \bigg)^{1/2} + \sum_{\WW'' \in \partial \WW'} Y^{\WW''}_{\varepsilon^{-1} \sigma}.
\end{multline*}
The last integral can be bounded by $C e^{2 A \sigma}$ for an appropriate $A < \infty$.
The second integral (inside the first integral) can be bounded by $C |G_t|$ using (\ref{eq:Gboundsintegral}) if we define $G_t$ for $\varepsilon^{-1} \sigma$ instead of $\sigma$.
So
\[ Y^{\WW'}_\sigma \leq C e^{A \sigma} \big( B^{\WW'}_{\varepsilon^{-1} \sigma, t} \big)^{1/2} + \sum_{\WW'' \in \partial \WW'} Y^{\WW''}_{\varepsilon^{-1} \sigma}. \]

Now recall that $f_{\WW'} = \varepsilon^{\dim \WW' - \dim \WW}$ for $\WW' \subset \WW$.
Substituting in the identity above $\varepsilon f_{\WW'}\sigma$ for $\sigma$, applying it recursively and plugging it back into (\ref{eq:perc1}) yields
\[ \big( \ov{\eta}^\WW_\sigma | \%_1 | \big) (\ov{v}) \leq C \sum_{\WW' \subsetneqq \WW} e^{A \varepsilon f_{\WW'} \sigma} \big( B^{\WW'}_{f_{\WW'} \sigma, t} \big)^{1/2}. \]
So
\[ \int_{\ov{\af}_\WW} \big( \ov{\eta}^\WW_\sigma | \%_1 | \big)^2 \bigg| \prod_{\ov{\alpha} \in \ov{\Delta}^+_\WW} \sinh \ov{\alpha} \bigg| \leq C \sum_{\WW' \subsetneqq \WW} e^{2 A \varepsilon f_{\WW'} \sigma} B^{\WW'}_{f_{\WW'} \sigma, t} . \]
This yields the desired result.
\end{proof}

So combining (\ref{eq:evBW1}) with Lemma \ref{Lem:percest}, we conclude
\begin{Lemma} \label{Lem:evBW}
There are constants $C, A < \infty$ and $c > 0$ such that the following holds:
Let $\varepsilon \in (0,1)$, consider a wall $\WW \subset \CC$ and set $f_{\WW'} = \varepsilon^{\dim \WW' - \dim \WW}$ for each $\WW' \subset \WW$.

Then, under the assumption that $\Vert k_t \Vert_{L^1(M)} \leq H e^{-\lambda_0 t}$ for $t \in [0,T]$, we have for $\sigma > 10$ and times $t \in [1,T]$
\[ \tfrac12 \partial_t B^{\WW}_{\sigma, t} \leq - \lambda_{\WW} B^{\WW}_{\sigma, t} + C e^{- 2c \sigma} H^2 e^{- 2 \lambda_0 t} + C \big( B^{\WW}_{\sigma, t} \big)^{1/2} \sum_{\WW' \subsetneqq \WW} e^{\varepsilon A f_{\WW'} \sigma}\big( B^{\WW'}_{f_{\WW'} \sigma, t} \big)^{1/2}. \]
\end{Lemma}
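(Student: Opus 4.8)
The plan is to obtain the asserted differential inequality by reading it off directly from the evolution identity (\ref{eq:evBW1}): I would discard the one manifestly nonpositive term and bound the three remaining error terms using Lemma \ref{Lem:percest}.

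First I would recall that (\ref{eq:evBW1}) expresses $\tfrac12\partial_t B^\WW_\sigma$ as the sum of four contributions: the good term $-\Vert\ov\eta_\sigma D g_t\Vert^2_{L^2(\ov M)}$, the principal term $-\lambda_\WW B^\WW_\sigma$, the commutator term $\int_{\ov M}\ov\eta_\sigma*\nabla\ov\eta_\sigma*g_t*\nabla g_t$ that arises when $D$ is moved across the weight $\ov\eta_\sigma^2$ in the integration by parts pairing (\ref{eq:evolgt}) with $\ov\eta_\sigma^2 g_t$, and the $\%$-term $\int_{\ov M}\ov\eta_\sigma^2(\%_1+\%_2+\%_3)g_t$. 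Since $-\Vert\ov\eta_\sigma D g_t\Vert^2_{L^2(\ov M)}\le 0$, this first term can simply be dropped. I would also note the degenerate cases: for $\WW=\CC$ one has $B^\WW_\sigma=|G_t|^2$ and $\ov\eta_\sigma\equiv 1$, so the $D$-term and the commutator term are both absent and only the $\%$-sum survives, while for $\WW=\{0\}$ that sum is empty; in all cases the argument below degenerates consistently.

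Next I would estimate the commutator term and the $\%$-term using Lemma \ref{Lem:percest}. The commutator term is, up to a universal constant, of the form $\int_{\ov M}\ov\eta_\sigma|\nabla\ov\eta_\sigma||g_t||\nabla g_t|$ --- here I use that $D$ is a fixed first-order operator with bounded coefficients and that $|\partial\ov\eta_\sigma|\le C$ uniformly by Lemma \ref{Lem:eta}(1) --- so the second estimate of Lemma \ref{Lem:percest} bounds it by $Ce^{-c\sigma}H^2e^{-2\lambda_0 t}$. For the $\%_2+\%_3$ part the first estimate of Lemma \ref{Lem:percest} gives $Ce^{-\sigma}H^2e^{-2\lambda_0 t}$, which after shrinking $c$ to be at most $1$ is again $\le Ce^{-c\sigma}H^2e^{-2\lambda_0 t}$. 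For $\%_1$ I would apply Cauchy--Schwarz to get $|\int_{\ov M}\ov\eta_\sigma^2\%_1 g_t|\le(B^\WW_\sigma)^{1/2}\Vert\ov\eta_\sigma\%_1\Vert_{L^2(\ov M)}$ and then invoke the third estimate of Lemma \ref{Lem:percest}, with the same weights $f_{\WW'}=\varepsilon^{\dim\WW'-\dim\WW}$, to dominate this by $C(B^\WW_\sigma)^{1/2}\sum_{\WW'\subsetneqq\WW}e^{\varepsilon A f_{\WW'}\sigma}(B^{\WW'}_{f_{\WW'}\sigma})^{1/2}$. Adding these three bounds to the principal term $-\lambda_\WW B^\WW_\sigma$, and then enlarging $C,A$ and shrinking $c$ to merge the two $e^{-c\sigma}H^2e^{-2\lambda_0 t}$ contributions into a single one, yields exactly the claimed inequality.

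I do not expect any genuine analytic obstacle at this stage, since the substance has already been packaged into (\ref{eq:evBW1}) and Lemma \ref{Lem:percest}; the step is essentially bookkeeping. The one point I would be careful about is the uniformity of the constants $C$, $A$ and $c$ in $\WW$, $\sigma$, $\varepsilon$ and $t\in[1,T]$. This is automatic: there are only finitely many walls $\WW\subset\CC$; the constants provided by Lemma \ref{Lem:percest} are already uniform in all of these parameters; and the derivative bounds for $\ov\eta_\sigma$ in Lemma \ref{Lem:eta}(1) do not depend on $\sigma$ or $\WW$.
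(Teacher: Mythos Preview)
Your proposal is correct and matches the paper's approach exactly: the paper's proof of this lemma is simply the one-line remark that the inequality follows by combining (\ref{eq:evBW1}) with Lemma \ref{Lem:percest}, and you have faithfully unpacked precisely that combination, including the care about uniformity of constants.
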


We will come back to this evolution inequality later.
First, we estimate the evolution of $\Vert k_t \Vert_{L^1(M)}$ in terms of the $B^{\WW}_{\sigma}$.
For this, we define the quantity
\[ S_t = \int_\CC K_t(\max) \prod_\alpha \sh \alpha  \]
and observe that $S_t$ is comparable to $\Vert k_t \Vert_{L^1(M)}$, i.e.
\[ c S_t \leq \Vert k_t \Vert_{L^1(M)} \leq C S_t  \qquad \text{for all $t>0$}.\]

\begin{Lemma} \label{Lem:evK}
There are constants $C, A < \infty$ and $c > 0$ such that:
Let $\varepsilon \in (0,1)$ and set $f_\WW = \varepsilon^{\dim \WW - \dim \CC}$.
Then at any time $t > 0$ and for $\sigma > 10$ we have the estimate
\[ \partial_t S_t \leq -(\lambda_\CC - C e^{-c \sigma}) S_t + C \sum_{\WW \subsetneqq \CC} e^{\varepsilon A f_{\WW}\sigma} \big( B^{\WW}_{f_{\WW} \sigma, t} \big)^{1/2}. \]
\end{Lemma}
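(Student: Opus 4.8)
The plan is to imitate the rank-one computation from the proof of Theorem~\ref{Thm:dectrianglerank1}, and then feed the resulting ``near-wall'' error terms into the covering--recursion machinery of Lemmas~\ref{Lem:regions}, \ref{Lem:eta} and \ref{Lem:percest}.

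\emph{Step 1: differentiate $S_t$ and use the subsolution property.} By Lemma~\ref{Lem:KtKtcirc} the spherical model $K_t(\max)$ is a subsolution of $\partial_t + L^\circ$, where $-L^\circ = \triangle - \lambda_\CC + \mu$ and $\mu$ is the spherical function whose spherical model is $\sum_i 2\mu_i \frac{\cosh\alpha_i - 1}{\sinh^2\alpha_i}$. Integrating the differential inequality $\partial_t K_t(\max) \leq -L^\circ K_t(\max)$ against the volume density $\prod_\alpha \sh\alpha$ over the chamber $\CC$ gives
\[ \partial_t S_t \leq \int_\CC (\triangle K_t(\max))\prod_\alpha \sh\alpha \; - \; \lambda_\CC S_t \; + \; \int_\CC \Big( \sum_i 2\mu_i \tfrac{\cosh\alpha_i(v) - 1}{\sinh^2\alpha_i(v)}\Big) K_t(\max)\prod_\alpha \sh\alpha . \]
The first integral vanishes by Green's formula applied to the associated spherical function on $M$: the normal derivative of a spherical function vanishes on the walls of $\CC$ by Weyl-evenness, and the decay at infinity needed to discard the boundary term at $r\to\infty$ is supplied by $\Vert k_t\Vert_{L^1(M)}, \Vert \nabla k_t\Vert_{L^1(M)} < \infty$ (Proposition~\ref{Prop:CLY} for small $t$, Lemma~\ref{Lem:highder} for $t\geq1$) -- exactly as in the proof of Theorem~\ref{Thm:dectrianglerank1}, where the same step is carried out on a non-smooth $K_t(\max)$ via the maximum-principle formalism of Lemma~\ref{Lem:KtKtcirc}. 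So it remains to estimate the last, zeroth-order, integral.

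\emph{Step 2: split the chamber.} Write $\CC = \CC_{\geq\sigma} \cup (\CC\setminus\CC_{\geq\sigma})$ with $\CC_{\geq\sigma} = \{ v\in\CC : \alpha(v)\geq\sigma \text{ for all } \alpha\in\Delta^+\}$. On $\CC_{\geq\sigma}$ every factor satisfies $\frac{\cosh\alpha_i(v) - 1}{\sinh^2\alpha_i(v)} \leq C e^{-\alpha_i(v)} \leq C e^{-\sigma}$, so this part of the integral is bounded by $C e^{-\sigma}\int_\CC K_t(\max)\prod_\alpha \sh\alpha = C e^{-\sigma} S_t$, which gives the $C e^{-\sigma} S_t$ term in the assertion.

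\emph{Step 3: the near-wall region.} On $\CC\setminus\CC_{\geq\sigma}$ some simple root is $\lesssim\sigma$, so $v$ lies in a neighbourhood of a proper wall of $\CC$. One checks, along the lines of Lemmas~\ref{Lem:regions} and \ref{Lem:eta} and proceeding recursively through walls of decreasing dimension (this is where $f_\WW = \varepsilon^{\dim\WW - \dim\CC}$ enters), that this region is covered by finitely many pieces, each attached to a wall $\WW\subsetneqq\CC$, on which $\eta^\WW_{f_\WW\sigma} = \ov\eta^\WW_{f_\WW\sigma}\un\eta^\WW_{f_\WW\sigma} \equiv 1$ and whose extent in the along-the-wall directions $\ov\af_\WW$ is $O(f_\WW\sigma)$. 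On such a piece, integrating $K_t(\max)\prod_{\un\alpha}e^{\un\alpha}$ in the transverse directions $\un\af_\WW$ yields -- because $\un\eta^\WW_{f_\WW\sigma}\equiv1$ there -- a quantity comparable with $|g_t|$ in the sense of the definitions of $G_t$ and $B^\WW_{f_\WW\sigma}$; Cauchy--Schwarz in the $\ov\af_\WW$-directions against the $\prod_{\ov\alpha}|\sinh\ov\alpha|$-weighted volume of the piece, which is bounded by $e^{\varepsilon A f_\WW\sigma}$ for a suitable $A<\infty$, turns this into $(B^\WW_{f_\WW\sigma})^{1/2}$. Summing over the pieces and over $\WW\subsetneqq\CC$ -- with exactly the bookkeeping used in the proof of the third estimate of Lemma~\ref{Lem:percest} -- bounds the near-wall part of the integral by $C\sum_{\WW\subsetneqq\CC} e^{\varepsilon A f_\WW\sigma}(B^\WW_{f_\WW\sigma})^{1/2}$. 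Combining with Steps~1 and~2 gives the Lemma.

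\emph{Main obstacle.} The only non-routine point is the combinatorics of Step~3: producing the covering of $\CC\setminus\CC_{\geq\sigma}$ by the ``identically-$1$'' pieces and controlling how their along-the-wall extent -- hence the weighted-volume factor $e^{\varepsilon A f_\WW\sigma}$ -- propagates through the downward recursion over walls. This is the same core difficulty as in Lemmas~\ref{Lem:regions} and \ref{Lem:percest}, and, as there, it is caused by the lack of orthogonality among the root directions, which forces the near-wall ``slabs'' to be skew rather than coordinate boxes.
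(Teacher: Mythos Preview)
Your proposal is correct and follows essentially the same approach as the paper: differentiate $S_t$ using the subsolution property of $K_t(\max)$ from Lemma~\ref{Lem:KtKtcirc}, kill the second-order part by Green's formula, bound the potential by $Ce^{-\sigma}$ on the region where all roots are $\geq\sigma$, and handle the complementary near-wall region by the recursive covering of Lemma~\ref{Lem:regions}(3) followed by Cauchy--Schwarz against the $\prod_{\ov\alpha}|\sh\ov\alpha|$-weight to produce the $(B^{\WW}_{f_\WW\sigma})^{1/2}$ terms. The paper organizes Step~3 by introducing the auxiliary quantities $Y^\WW_\sigma = \int_{X^\WW_\sigma}|K_t|\,|\prod_\alpha\sh\alpha|$ and writing out the recursion $Y^\WW_{\varepsilon f_\WW\sigma} \leq Ce^{\varepsilon Af_\WW\sigma}(B^\WW_{f_\WW\sigma})^{1/2} + \sum_{\WW'\in\partial\WW}Y^{\WW'}_{\varepsilon f_{\WW'}\sigma}$ explicitly, but this is exactly the bookkeeping you describe.
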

\begin{proof}
Recall that by Lemma \ref{Lem:KtKtcirc} (see also the proof of Theorem \ref{Thm:dectrianglerank1})
\[ (\partial_t + \lambda_\CC) \int_{\CC} K_t(\max) \prod_\alpha \sinh \alpha \leq \int_{\CC} \Big( \sum_i 2 \mu_i \frac{\cosh \alpha_i - 1}{\sinh^2 \alpha_i} \Big) K_t(\max) \prod_\alpha \sinh \alpha. \]
By Lemma \ref{Lem:regions} (3) we have $\CC = X^\CC_\sigma \subset R^\CC_\sigma \cup \bigcup_{\WW \in \partial \CC} X^\WW_\sigma$.
So by Lemma \ref{Lem:eta} (3), we conclude that outside the regions $X^\WW_\sigma, (\WW \in \partial \CC)$, the term inside the parentheses can be bounded by $C e^{- c\sigma}$.
Hence, in order to establish the Lemma, it suffices to show that for every $\WW \in \partial \CC$, we have
\[ \int_{X^\WW_\sigma} |K_t| \Big| \prod_\alpha \sinh \alpha \Big| \leq C \sum_{\WW' \subset \WW} e^{\varepsilon A f_{\WW'} \sigma} \big( B^{\WW'}_{f_{\WW'} \sigma, t} \big)^{1/2}. \]
Analogously to the proof of Lemma \ref{Lem:percest}, we set for any wall $\WW \subsetneqq \CC$ (not only for codimension $1$ walls)
\[ Y^\WW_\sigma = \int_{X^\WW_\sigma} |K_t| \Big| \prod_\alpha \sinh \alpha \Big| \]
Now, using Lemma \ref{Lem:regions} (3) with $f = \varepsilon^{-1}$ and the splitting $X^\WW_\sigma = \ov{X}^\WW_\sigma \times \un{X}^\WW_\sigma$, we get
\begin{alignat*}{1}
Y^\WW_{\varepsilon f_\WW \sigma} &\leq \int_{X^\WW_{\varepsilon f_\WW \sigma}} \eta^\WW_{f_\WW \sigma} |K_t| \Big| \prod_\alpha \sinh \alpha \Big| + \sum_{\WW' \in \partial \WW} Y^{\WW'}_{\varepsilon f_{\WW'} \sigma} \displaybreak[0] \\
&\leq \bigg( \int_{\ov{X}^\WW_{\varepsilon f_\WW \sigma}} \big(\ov{\eta}^\WW_{f_\WW \sigma} \big)^2 \bigg( \int_{\un{X}^\WW_{\varepsilon f_\WW \sigma}} \un{\eta}^\WW_{f_\WW \sigma} |K_t| \Big| \prod_{\un{\alpha} \in \un{\Delta}^+_\WW} \sinh \un{\alpha} \Big| \bigg)^2 \Big| \prod_{\ov{\alpha} \in \ov{\Delta}^+_\WW} \sinh \ov{\alpha} \Big| \bigg)^{1/2} \\
& \qquad \times \bigg( \int_{\ov{X}^\WW_{\varepsilon f_\WW \sigma}} \Big| \prod_{\ov{\alpha} \in \ov{\Delta}^+_\WW} \sinh \ov{\alpha} \Big| \bigg)^{1/2} + \sum_{\WW' \in \partial \WW} Y^{\WW'}_{\varepsilon f_{\WW'} \sigma} \displaybreak[0] \\
& \leq C e^{\varepsilon A f_\WW \sigma} \big( B^\WW_{f_\WW \sigma, t} \big)^{1/2} + \sum_{\WW' \in \partial \WW} Y^{\WW'}_{\varepsilon f_{\WW'} \sigma}.
\end{alignat*}
Iterating this inequality yields the desired result
\end{proof}

We have now transformed our geometric problem into a problem of bounding the solutions of a system of evolution inequalities.
In the first step, we use Lemma \ref{Lem:evBW} to estimate the $B^{\WW}_{\sigma, t}$ assuming a bound on $\Vert k_t \Vert_{L^1(M)}$.

\begin{Lemma} \label{Lem:BWest}
There are constants $C, A < \infty$ and $1 > c > 0$ such that:
Assume that $\Vert K_t \Vert_{L^1(M)} \leq H e^{-\lambda_0 t}$ for $t \in [0,T]$.
Consider a wall $\WW \subset \CC$, set $\lambda^{\min}_\WW = \min_{\WW' \subset \WW} \lambda_{\WW'}$ and assume $\lambda^{\min}_\WW > \lambda_0$. \\
If $\sigma > 10$ is so large that $2 e^{-c \sigma} < \lambda^{\min}_\WW - \lambda_0$, then we have for times $t \in [1,T]$
\[ B^{\WW}_{\sigma, t} \leq C e^{- c \sigma} H^2 e^{-2 \lambda_0 t} + C e^{A \sigma} \exp(- 2(\lambda^{\min}_{\WW} - e^{- c \sigma}) t ). \]
\end{Lemma}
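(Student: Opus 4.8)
The plan is to prove the estimate by induction on $\dim\WW$, with Lemma \ref{Lem:evBW} as the driving differential inequality. The base case is $\WW=\{0\}$: here $\ov{M}_\WW=M$ and $\un{\af}_\WW=\{0\}$, so $G_t=K_t$ and $B^{\{0\}}_\sigma=\Vert\ov{\eta}_\sigma k_t\Vert^2_{L^2(M)}\le\Vert k_t\Vert^2_{L^2(M)}$; there are no walls $\WW'\subsetneqq\{0\}$, and $\lambda_{\{0\}}=\lambda_B=\lambda^{\min}_{\{0\}}$. The Bochner/Stokes computation from the proof of Theorem \ref{Thm:dectrianglerank1} (cf. (\ref{eq:M2})) gives $\tfrac{d}{dt}\Vert k_t\Vert^2_{L^2(M)}\le-2\lambda_B\Vert k_t\Vert^2_{L^2(M)}$, hence $B^{\{0\}}_\sigma(t)\le Ce^{-2\lambda_B t}$ for $t\ge1$, which is dominated by the second term of the claimed bound.

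For the inductive step, fix $\WW$ with $\lambda^{\min}_\WW>\lambda_0$ and assume the statement for all walls of smaller dimension. Every $\WW'\subsetneqq\WW$ has $\dim\WW'<\dim\WW$; since $\WW''\subset\WW'$ forces $\WW''\subset\WW$ we get $\lambda^{\min}_{\WW'}\ge\lambda^{\min}_\WW>\lambda_0$, and $f_{\WW'}=\varepsilon^{\dim\WW'-\dim\WW}\ge\varepsilon^{-1}>1$ for $\varepsilon<1$, so the inductive hypothesis applies at the larger scale $f_{\WW'}\sigma$ (its hypothesis $e^{-cf_{\WW'}\sigma}<\lambda^{\min}_{\WW'}-\lambda_0$ being inherited since $f_{\WW'}\ge1$ and $\lambda^{\min}_{\WW'}\ge\lambda^{\min}_\WW$). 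Bounding $\lambda^{\min}_{\WW'}\ge\lambda^{\min}_\WW$ and $e^{-f_{\WW'}\sigma}\le e^{-\sigma}$ inside the exponentials and taking square roots yields
\[ \big(B^{\WW'}_{f_{\WW'}\sigma}\big)^{1/2}\le Ce^{-\tfrac c2 f_{\WW'}\sigma}He^{-\lambda_0 t}+Ce^{\tfrac A2 f_{\WW'}\sigma}\exp\!\big(-(\lambda^{\min}_\WW-e^{-\sigma})t\big). \]
Multiplying by $e^{\varepsilon Af_{\WW'}\sigma}$ and summing over $\WW'\subsetneqq\WW$, and choosing $\varepsilon$ small (depending only on the constants $c,A$ from the induction and from Lemma \ref{Lem:evBW}) so that $\varepsilon A<\tfrac c2$ — whence $e^{(\varepsilon A-c/2)f_{\WW'}\sigma}\le e^{-(\frac{c}{2\varepsilon}-A)\sigma}$ with $\tfrac{c}{2\varepsilon}-A$ arbitrarily large — Lemma \ref{Lem:evBW} collapses to a scalar inequality of the form
\[ \tfrac12\partial_t B^\WW_\sigma\le-\lambda_\WW B^\WW_\sigma+Ce^{-c_1\sigma}H^2e^{-2\lambda_0 t}+C\big(B^\WW_\sigma\big)^{1/2}\Big(e^{-c_1\sigma}He^{-\lambda_0 t}+e^{A_1\sigma}\exp\!\big(-(\lambda^{\min}_\WW-e^{-\sigma})t\big)\Big) \]
with $c_1>0$ as large as we wish and $A_1<\infty$.

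It then remains to integrate this. Young's inequality $(B^\WW_\sigma)^{1/2}Y\le\tfrac12 e^{-\sigma}B^\WW_\sigma+\tfrac12 e^\sigma Y^2$ absorbs a copy of $\tfrac12 e^{-\sigma}B^\WW_\sigma$ on the left, leaving effective decay rate $\lambda_\WW-\tfrac12 e^{-\sigma}\ge\lambda^{\min}_\WW-e^{-\sigma}$ and forcing terms of the two shapes $e^{-c_2\sigma}H^2e^{-2\lambda_0 t}$ (with $c_2>0$, since $c_1$ is huge) and $e^{A_2\sigma}\exp(-2(\lambda^{\min}_\WW-e^{-\sigma})t)$. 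Together with the initial bound $B^\WW_\sigma(1)\le Ce^{A\sigma}H^2e^{-2\lambda_0}$ — which follows from (\ref{eq:Gestimate}) and the fact that $\supp\ov{\eta}_\sigma$ has volume $\le Ce^{A\sigma}$ — Duhamel's formula yields the asserted estimate: the hypothesis $e^{-c\sigma}<\lambda^{\min}_\WW-\lambda_0$ ensures the $e^{-2\lambda_0 s}$ forcing integrates against the kernel to $\lesssim e^{-2\lambda_0 t}$ with a $\sigma$-decaying prefactor, while the $\exp(-2(\lambda^{\min}_\WW-e^{-\sigma})s)$ forcing — which may resonate with the kernel and hence pick up a polynomial-in-$t$ factor — integrates to the same exponential at the cost of an extra factor bounded by $e^\sigma$, absorbed into the $e^{A\sigma}$ prefactor. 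Since there are only finitely many walls, the constants $c,A,C$ (and the single choice of $\varepsilon$) can be taken uniform.

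The main obstacle is the simultaneous bookkeeping of the two competing exponential regimes through the induction: a ``universal'' mode $H^2e^{-2\lambda_0 t}$ whose prefactor must remain $\sigma$-decaying, and a ``cross-sectional'' mode $e^{-2(\lambda^{\min}_\WW-e^{-\sigma})t}$ whose prefactor is allowed to grow like $e^{A\sigma}$. Preventing the first prefactor from turning into an increasing exponential of $\sigma$ after the Young step forces $\varepsilon$ to be chosen very small relative to $c$ and $A$, which in turn inflates $f_{\WW'}\sigma$ and hence the $e^{A\sigma}$ appearing in the second mode; one must check that these requirements are mutually consistent, which they are because the recursion strictly decreases $\dim\WW$ and only finitely many values of the dimension occur.
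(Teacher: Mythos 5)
Your argument is essentially the paper's own proof: induction on $\dim\WW$, insertion of the inductive bound for $B^{\WW'}_{f_{\WW'}\sigma}$ into Lemma \ref{Lem:evBW}, absorption of the cross term by Young's inequality at scale $e^{-\sigma}$, a choice of $\varepsilon$ making $\varepsilon A f_{\WW'}$ small against $c f_{\WW'}$, and integration against the residual spectral gaps of size $\gtrsim e^{-\sigma}$ resp.\ $\gtrsim e^{-c\sigma}$, which produce exactly the $e^{\sigma}$-type losses you absorb into $e^{A\sigma}$. The one slip is the initial-data term: bounding $B^{\WW}_{\sigma}(t{=}1)\le C e^{A\sigma}H^2e^{-2\lambda_0}$ via (\ref{eq:Gestimate}) injects a factor $H^2$ into the second term of the conclusion, which must be $H$-free for the lemma to feed correctly into the proof of Theorem \ref{Thm:dectrianglegenrank} (in the case $\lambda_1\le\lambda_{\CC}$ an $H$-dependent second term would wreck the Gronwall step there); instead bound $B^{\WW}_{\sigma}(t{=}1)\le C$ uniformly in $H$ and $\sigma$ directly from the Gaussian estimate of Proposition \ref{Prop:CLY} at time $1$.
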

\begin{proof}
Let $c > 0$ be the constant from Lemma \ref{Lem:evBW}.
The constants $C,A$ will be determined in the course of the proof.

We proceed by induction over the dimension of $\WW$.
Fix $\WW \subset \CC$ and assume that the inequality is true for any $\WW' \subsetneqq \WW$.
Let $\varepsilon > 0$ be a constant whose value will be determined later and choose $(f_{\WW'})_{\WW' \subset \WW}$ according to Lemma \ref{Lem:evBW}, i.e. $f_{\WW'} = \varepsilon^{\dim \WW' - \dim \WW}$.
Then
\begin{alignat*}{1}
\tfrac12 \partial_t B^{\WW}_{\sigma, t} &\leq - \lambda_\WW B^{\WW}_{\sigma, t} + C e^{-2c \sigma} H^2 e^{-2\lambda_0 t} + C \big( B^\WW_{\sigma,t} \big)^{1/2} \sum_{\WW' \subsetneqq \WW} e^{\varepsilon A f_{\WW'} \sigma} \big( B^{\WW'}_{f_{\WW'} \sigma,t} \big)^{1/2} \\
&\leq - (\lambda_\WW - e^{-c \sigma}) B^{\WW}_{\sigma, t} + C e^{- 2c \sigma} H^2 e^{-2\lambda_0 t} + C e^{c \sigma} \sum_{\WW' \subsetneqq \WW} e^{2 \varepsilon A f_{\WW'} \sigma} B^{\WW'}_{f_{\WW'} \sigma, t} \\
&\leq  - (\lambda_\WW - e^{-c \sigma}) B^{\WW}_{\sigma, t} + C e^{-2c \sigma} H^2 e^{-2\lambda_0 t} + C \hspace{-2mm} \sum_{\WW' \subsetneqq \WW} \hspace{-2mm} e^{(2 \varepsilon A f_{\WW'} + c) \sigma} e^{- c f_{\WW'} \sigma} H^2 e^{- 2 \lambda_0 t} \\
& \qquad + C \sum_{\WW' \subsetneqq \WW} e^{(2 \varepsilon A f_{\WW'} + 1) \sigma} e^{A f_{\WW'} \sigma} \exp ( - 2(\lambda^{\min}_\WW - e^{- c \varepsilon^{-1} \sigma}) t ).
\end{alignat*}
Now choose $\varepsilon$ small enough such that $2 \varepsilon A f_{\WW'} + 1 - c f_{\WW'} \leq - 2 c$ for all $\WW' \subsetneqq \WW$ and set $A' = 2 \varepsilon A f_{\{ 0 \}} + 1 + A f_{\{ 0 \}}$.
Since $\sigma > 10$, we can find a constant $c' > 0$ such that $e^{-c \sigma} - e^{- c\varepsilon^{-1} \sigma} > c' e^{-c\sigma}$.
Applying those assumptions, we obtain the evolution inequality
\begin{multline*}
 \tfrac12 \partial_t B^{\WW}_{\sigma, t} \leq - (\lambda^{\min}_{\WW} - e^{-c \sigma}) B^{\WW}_{\sigma, t} + C e^{- 2 c \sigma} H^2 e^{-2\lambda_0 t} \\ + C e^{A' \sigma} \exp ( - 2 c' e^{-c\sigma} t -2(\lambda^{\min}_\WW - e^{-c\sigma}) t ).
\end{multline*}
So
\begin{multline*}
 \tfrac12 \partial_t \big( \exp( 2(\lambda^{\min}_{\WW} - e^{-c\sigma})t) B^{\WW}_{\sigma, t} \big) \\ 
 \leq C e^{- 2 c \sigma} H^2 \exp( 2(\lambda^{\min}_{\WW} - \lambda_0 - e^{-c\sigma}) t) + C e^{A' \sigma} \exp( - 2 c' e^{-c\sigma} t ).
\end{multline*}
Integrating this inequality and using the fact that $2(\lambda^{\min}-\lambda_0 - e^{-c\sigma}) > 2 e^{-c \sigma}$ yields
\begin{multline*}
 \exp( 2(\lambda^{\min}_{\WW} - e^{-c\sigma})t) B^{\WW}_{\sigma, t} \leq C B^{\WW}_{\sigma, 1} \\ + C e^{- c \sigma} H^2 \exp( 2(\lambda_\WW^{\min} - \lambda_0 - e^{-c\sigma})t ) + C e^{A' \sigma + c \sigma}
\end{multline*}
and hence the desired result.
\end{proof}

We can finally combine Lemmas \ref{Lem:evK} and \ref{Lem:BWest} to prove Theorem \ref{Thm:dectrianglegenrank}.

\begin{proof}[Proof of Theorem \ref{Thm:dectrianglegenrank}]
For small times, the theorem follows from Proposition \ref{Prop:CLY}.

In order to see the lower bound of Theorem \ref{Thm:dectrianglegenrank}, we proceed as in the proof of Theorem \ref{Thm:dectrianglerank1}: 
Recall that $\lambda_\CC$ is the largest eigenvalue of the operator $- \triangle_\CC = S_\CC = S_{par}$ acting on the finite dimensional vector space of all $P = A N$ invariant (i.e. parabolically invariant) sections of $E$.
Let $f \in C^\infty (M; E)$ be a section corresponding to $\lambda_\CC$, i.e. $\triangle f = - \lambda_\CC f$.
So its convolution with the heat kernel satisfies $f * k_t = e^{-\lambda_\CC t} f$.
It follows that the $L^1$-norm of $e^{\lambda_\CC t} k_t$ is uniformly bounded from below.

We will now establish the upper bound of Theorem \ref{Thm:dectrianglegenrank}.
Assume for the moment that $\lambda_0$ is an arbitrary constant satisfying $\lambda_0 < \lambda_1 = \min_{\WW \subsetneqq \CC} \lambda_\WW$ and set $H_t = \sup_{t' \in [0,t]} S_{t'} e^{\lambda_0 t'}$.
Then by Lemma \ref{Lem:evK} and Lemma \ref{Lem:BWest} as long as $2e^{-c \sigma} <  \lambda_1 - \lambda_0$ and $\sigma > 10$, we have for $t \geq 1$
\begin{multline*}
 \partial_t S_t \leq - (\lambda_\CC - C e^{-c\sigma}) S_t \\ + C \sum_{\WW \subsetneqq \CC} e^{\varepsilon A f_\WW \sigma} \Big( e^{-c f_{\WW} \sigma/2} H_t e^{-\lambda_0 t} + e^{A f_\WW \sigma/2} \exp (- (\lambda^{\min}_{\WW} - e^{-c f_{\WW} \sigma} ) t ) \Big).
\end{multline*}
So, if $\varepsilon$ is chosen small enough as to ensure $\varepsilon A f_\WW - c f_\WW / 2 < -c$ for all $\WW \subsetneqq \CC$, we obtain for $A' = A f_\WW / 2$
\begin{equation} \label{eq:finalevolution}
 \partial_t \big( S_t e^{\lambda_\CC t} \big) \leq C e^{- c\sigma} H_t e^{(\lambda_\CC - \lambda_0)t} + C e^{A' \sigma} \exp ( - (\lambda_1 - \lambda_\CC - e^{-c\sigma}) t ).
\end{equation}

Now consider first the case $\lambda_1 > \lambda_\CC$ and set $\lambda_0 = \lambda_\CC = \min_{\WW \subset \CC} \lambda_\WW$.
Choose $\delta > 0$ small enough such that $A' \delta - \lambda_1 + \lambda_\CC < - 2\delta$ and set $\sigma = \delta t$.
Then for large $t$ we can assume $e^{-c \sigma} < \delta$ and $2 e^{-c \sigma} < \lambda_1 - \lambda_0$ and we get
\[ \partial_t \big( S_t e^{\lambda_0 t} \big) \leq C e^{-c\delta t} H_t + C e^{-\delta t} \leq C e^{-c\delta t} (H_t + 1). \]
So whenever $H_t \leq 2 S_t e^{\lambda_0 t}$, we find
\[ \partial_t \log (S_t e^{\lambda_0 t} + 1) \leq 2 C e^{-c\delta t}. \]
Since the right hand side is integrable for $t \to \infty$, we conclude that $S_t e^{\lambda_0 t}$ stays bounded.

Consider now the case $\lambda_1 \leq \lambda_\CC$.
Plugging $\lambda_0 = \lambda_1 - 2 e^{-c\sigma}$ into (\ref{eq:finalevolution}) yields
\[ \partial_t \big( S_t e^{\lambda_0 t} \big) \leq  C e^{-c\sigma} H_t + C e^{A' \sigma} \exp ( (e^{-c\sigma} - 2e^{- c \sigma}) t ) \leq C e^{-c\sigma} H_t + C e^{A' \sigma}. \]
So whenever $H_t \leq 2 S_t e^{\lambda_0 t}$, we find
\[ \partial_t \big( S_t e^{\lambda_0 t} \big) \leq C e^{-c\sigma} \big( S_t e^{\lambda_0 t} \big) + C e^{A' \sigma}. \]
By Gronwall's Lemma, we conclude
\[ S_t e^{\lambda_0 t} \leq C \exp ( C e^{-c\sigma} t + (A'+1) \sigma ). \]
Choosing $\sigma = c^{-1} \log (t+2)$ yields the desired result.
\end{proof}

\section{Analysis of the Einstein operator}\label{sec:Einstop}
\subsection{Introduction}
In this section, we will apply the results from section \ref{sec:heatkernel} to the linearized Ricci deTurck equation $\partial_t h_t = - L h_t$, where $L = - \triangle - 2 \Rm$ is the Einstein operator (see \ref{subsec:RdTflow}).
Let $E = \Sym_2 T^*$ be the bundle of symmetric bilinear forms.
Observe that the zero order term $2\Rm$ is a fiberwise self-adjoint endomorphism on $E$ and hence it can be diagonalized with eigenvalues $\varphi_1, \ldots, \varphi_m$ with respect to a splitting $E = E_1 \oplus \ldots \oplus E_m$ of vector bundles.

We can integrate the extra term $2\Rm$ into Theorem \ref{Thm:dectrianglerank1} in the following way:
Redefine $\lambda_L$ to be the smallest eigenvalue of the operator $S_{par} = - \triangle - 2 \Rm : V_{par} \to V_{par}$ acting on parabolically invariant sections and $\lambda_B$ as the optimal constant $\lambda$ for Bochner formulas $-\triangle - 2\Rm = D^* D + \lambda$.
Those new constants $\lambda_L$ and $\lambda_B$ are then just the old constants minus $\varphi_i$ on each $E_i$.
Now, redefine $(k_t)_{t > 0} \in C^\infty( M; E) \otimes E_0^*$ to be the heat kernel for the operator $\partial_t + L$, i.e. $\partial_t k_t = - L k_t$.
Obviously, $k_t$ is just the old heat kernel with some extra exponential $\varphi_i$-decay on the $E_i$-component.
So with these redefinitions, Theorem \ref{Thm:dectrianglerank1} stays valid in its original reading: For $\lambda_0 = \min \{ \lambda_L, \lambda_B \}$, we have $\Vert k_t \Vert_{L^1(M)} \leq C e^{-\lambda_0 t}$.
The same is true for Theorem \ref{Thm:Asymptriangle}.

Analogously, we can integrate the $2\Rm$-term into Theorem \ref{Thm:dectrianglegenrank}:
This time, we have to redefine the constants $(\lambda_\WW)_{\WW \subset \CC}$ introduced in subsection \ref{subsec:heatkerIntro} to include the zero order term.
In order to do this, we replace the Laplace operator $\triangle$ by $-L$ in the paragraph preceding equation (\ref{eq:triangleWW}), i.e. we set $- L \widehat{f} = \widehat{f'}$.
This changes the definition of $\triangle_\WW$ and $- S_\WW$ by an extra $2\Rm$ summand and hence gives us a new $\lambda_\WW$.
Now Theorem \ref{Thm:dectrianglegenrank} continues to hold for the redefined heat kernel.

So in order to estimate the $L^1$-decay rate of $k_t$, we need to get a good bound on the redefined constants $\lambda_\WW$.
In this section we will solely be concerned with the analysis of these constants.
Our result will be:

\begin{Proposition} \label{Prop:lambdaWWpos}
Assume that $M$ is a symmetric space of noncompact type that is Einstein.
Consider the Einstein operator $L = -\triangle - 2\Rm$ acting on the vector bundle $E = \Sym_2 T^*$ of symmetric bilinear forms over $M$. 
Let $(\lambda_\WW)_{\WW \subset \CC}$ be the constants associated to $M$, $E$ and $L$ as redefined above. Then
\begin{enumerate}[(i)]
\item $\lambda_\WW \geq 0$ for all walls $\WW \subset \CC$ and hence $\lambda_0 = \min_{\WW \subset \CC} \lambda_\WW \geq 0$.
\item We have $\lambda_0 > 0$ if and only if $M$ does not contain any hyperbolic or complex hyperbolic factor in its de Rham decomposition.
\item If $M = \IH^n$ for $n \geq 3$ or $M = \IC \IH^{2n}$ for $n \geq 2$, then $\lambda_L = \lambda_\CC = 0$ and $\lambda_B = \lambda_{\{ 0 \}} > 0$.
\item If $M= \IH^2$, then $\lambda_L = \lambda_\CC > 0$ and $\lambda_B = \lambda_{\{ 0 \}} = 0$.
\end{enumerate}
\end{Proposition}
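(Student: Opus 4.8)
The plan is to reduce the computation of each constant $\lambda_\WW$ to a finite-dimensional eigenvalue problem on the $\ov K_\WW$-module $\Sym_2\pp$, expressed through the root data of Section~\ref{sec:symsp}, and then to isolate the exceptional spaces by a classification argument. First I would unwind the redefined definition: by (\ref{eq:triangleWW}), (\ref{eq:SWW}) and the substitution of $-L$ for $\triangle$, the cross-sectional operator $\triangle_\WW$ acts on the homogeneous bundle $E_\WW$ associated with the $\ov K_\WW$-representation $\Sym_2\pp$, and equals $\ov\triangle + 2R - S_\WW$, where $\ov\triangle$ is the connection Laplacian on $E_\WW$, the curvature endomorphism $2R$ is built from $R(x,y)z = -[[x,y],z]$ (cf.\ (\ref{eq:curvature})), and $S_\WW(e) = -\sum_{\un\alpha\in\un\Delta^+_\WW} k_{\un\alpha}.k_{\un\alpha}.e$. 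Because $K$ is compact we may take $\rho$ orthogonal, so each $e\mapsto -k.k.e$ is a nonnegative self-adjoint endomorphism and hence $S_\WW\geq 0$. Thus $-\triangle_\WW = \nabla^*\nabla + S_\WW - 2R$, the trivial Bochner formula, and for the maximal wall $\WW = \CC$ no derivative term remains: at $p_0$ one gets the explicit endomorphism $S_{par}(e) = -\sum_{i=1}^{n-r} k_i.k_i.e - 2R(e)$ of $\Sym_2\pp$, whose smallest eigenvalue is $\lambda_L = \lambda_\CC$.

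For (i) I would prove that on every cross-section the zero-order remainder left after the natural Bochner manipulations is nonnegative. Besides the trivial formula one also uses the first-order operators intrinsic to $\Sym_2 T^*$ --- the symmetrised covariant derivative $\delta^*$ and the divergence $\delta$, together with $d$ and $d^*$ acting on the one-forms these produce --- whose Weitzenb\"ock identities let one replace $\nabla^*\nabla$ by combinations $\delta^*\delta + \delta\delta^* + \cdots$ up to curvature terms; combining them, the surplus collapses to a genuine constant, and by (\ref{eq:curvature}), the relation $[x_i,y_i] = -\alpha_i^\#$, and the positivity of $(\cdot,\cdot) = -\langle\cdot,\sigma\cdot\rangle$ on $\nn\oplus\nn^-$ this constant is seen to be nonnegative by a finite check performed factor by factor on the de Rham decomposition. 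Since there are only finitely many walls, $\lambda_0 = \min_{\WW\subset\CC}\lambda_\WW\geq 0$ follows.

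For (ii)--(iv) the loss of strict positivity is caused exactly by a null section of $\triangle_\WW$ among the parabolically invariant sections --- a cusp deformation in the sense of \cite{BamDehn}. If $M = M_1\times\cdots\times M_k$, then $\Sym_2 T^*$ splits into $\bigoplus_j\Sym_2 T^*M_j$ and the cross-terms $T^*M_i\otimes T^*M_j$, on the latter of which the curvature action vanishes and the associated constants are strictly positive (as they are for $T^*$ itself, cf.\ Example~A); hence $\lambda_0(M) = \min_j \lambda_0(\Sym_2 T^*M_j)$ and it suffices to treat irreducible $M$. For $M = \IH^n$ I would compute $S_{par}$ on the $\mathfrak{k}_0$-invariant part of $\Sym_2\pp$ and find a one-dimensional kernel exactly when $n\geq 3$ --- the deformation rescaling the horosphere metric --- so $\lambda_L = \lambda_\CC = 0$ there, while for $n = 2$ this invariant subspace is too small, $S_{par} > 0$, and the bounded null section $dx\,dy$ of Example~B gives $\lambda_B = \lambda_{\{0\}} = 0$. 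For $M = \IC\IH^m$ the parallel computation on the Heisenberg-type cross-section yields a parity condition ($\lambda_L = \lambda_\CC = 0$ iff $m = 2n$, with $\lambda_B > 0$ in that case), and for every remaining irreducible symmetric space one checks $S_{par} > 0$ and, using a Bochner formula as in (i), $\lambda_{\{0\}} > 0$, so $\lambda_0 > 0$; for the intermediate walls of these spaces one combines the strict positivity of $S_\WW$ on the directions transverse to $\ov M_\WW$ with the Bochner estimate tangent to it.

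The step I expect to be the genuine obstacle is this last classification: producing a Bochner formula on $\Sym_2 T^*$ sharp enough to yield strict positivity uniformly across the whole list of irreducible symmetric spaces of noncompact type, and pinning down the precise dimension and parity thresholds for $\IH^n$ and $\IC\IH^{2n}$ by an explicit computation of $S_{par}$ on the small $\mathfrak{k}_0$-invariant subspace of $\Sym_2\pp$.
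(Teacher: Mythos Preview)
Your proposal has the right skeleton but is missing the two structural ingredients that actually make the paper's argument go through, and replaces them with a classification check that you do not explain how to execute.

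First, the block decomposition. The paper does \emph{not} produce a single Bochner formula on all of $\Sym_2 T^*$ sharp enough to control $\lambda_\WW$. Instead it proves (Lemma~\ref{Lem:partovpunp}) that $S_\WW$ preserves the splitting $\Sym_2\pp = \Sym_2\ov\pp \oplus \ov\pp\un\pp \oplus \Sym_2\un\pp$ induced by $\pp = \ov\pp_\WW \oplus \un\pp_\WW$, and then treats each block by a different mechanism: on $\ov\pp\un\pp$ the trivial Bochner formula with a direct positivity computation for $S_\WW$ (Lemma~\ref{Lem:ovpunp}); on $\Sym_2\ov\pp$ the Bochner formula $\DIV^*\DIV + d^*d$ on $\ov M_\WW$ together with Lemma~\ref{Lem:Bochnerformula} (Lemma~\ref{Lem:Sym2ovp}); on $\Sym_2\un\pp$ a further splitting along $\un\af\oplus\un\af^\perp$ (Lemma~\ref{Lem:partsont}) and then the hard piece $\Sym_2\un\af^\perp$. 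Your Weitzenb\"ock scheme ``$\delta^*\delta + \delta\delta^* + \cdots$'' is only the $\Sym_2\ov\pp$ part of this; it cannot by itself see the contribution of $S_\WW$ on the transverse block $\Sym_2\un\pp$, which is where the entire nullspace $\mathcal N$ lives.

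Second, and more seriously, the analysis of $\Sym_2\un\af^\perp$ is not a ``finite check factor by factor'' but a uniform computation in terms of the structure constants $\tTT{i}{j}{k}$ of the nilpotent algebra $\nn$ (subsection~\ref{subsec:afperp}). After a long calculation the paper writes $2\langle S_\WW h,h\rangle$ as an explicit sum of squares and identifies the nullspace $\mathcal N$ by the conditions (\ref{eq:null1})--(\ref{eq:null5}), equivalently (\ref{eq:newnewnull1})--(\ref{eq:newnewnull2}). The vanishing of $\mathcal N$ for higher-rank irreducible $M$ is then \emph{not} done by classification: Lemmas~\ref{Lem:lambdaalambdab} and~\ref{Lem:hasrep} show that every eigenvalue $\lambda_a$ depends only on the root $\alpha_a$, and since every root is a nonnegative-integer combination of simple roots, condition~(\ref{eq:newnewnull1}) forces $\lambda_a = \alpha_a(v)$ for some $v\in\af$; condition~(\ref{eq:newnewnull2}) then gives $\sum_a \alpha_a(v)\alpha_a^\# = \tfrac12 v = 0$. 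This is a clean algebraic argument valid for all irreducible higher-rank spaces at once. Your plan ``for every remaining irreducible symmetric space one checks $S_{par}>0$'' would require going through the full Cartan list and is neither what the paper does nor something you indicate how to carry out. Likewise, your remark about a ``parity condition'' for $\IC\IH^m$ is off: the spaces $\IC\IH^{2n}$ always have even real dimension, and Lemma~\ref{Lem:Nrank1} shows $\mathcal N \neq 0$ for all $n\geq 2$ by identifying it with $\{h\in\Sym_2\IC^{n-1}: Jh+hJ=0\}$.
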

Hence in the last case in which $M$ does not contain a hyperbolic or complex hyperbolic factor, the $L^1$-norm of the heat kernel $k_t$ associated to $\partial_t + L$ is exponentially decaying as $t \to \infty$.
If $M = \IH^n$ ($n \geq 3$) or $\IC \IH^{2n}$ ($n \geq 2$), then $\Vert k_t \Vert_{L^1(M)}$ stays bounded and by Theorem \ref{Thm:Asymptriangle}, we have the bound $|k_t| < C (\vol B_r(p_0))^{-1}$ for all $t$.

This section is organized as follows:
In subsection \ref{subsec:lambdaWWprelim}, we recall the important identities and carry out some of the basic calculations.
The reader who is only interested in the rank $1$ case, will find an estimate on $\lambda_B$ in subsection \ref{subsec:lambdaWWBochner}.
For an estimate on $\lambda_L$ the reader can immediately jump to subsections \ref{subsec:af} through \ref{subsec:lambdaWWconclusion} (where he or she can always replace the index $\un{m}$ by $m$ and leave out any term with index $\ov{l}$).
In order to understand the higher rank case, subsection \ref{subsec:lambdaWWBochner} will still be important since it discusses a Bochner formula that will later be applied to cross-section $\ov{M}_\WW$ of $M$.
In subsection \ref{subsec:lambdaWWblockform}, we will find that the problem of estimating $\lambda_\WW$ reduces to an estimate on three vector bundles $E_{\ov{\pp} \un{\pp}}$, $E_{\Sym_2 \ov{\pp}}$ and $E_{\Sym_2 \un{\pp}}$.
The estimates on $E_{\ov{\pp} \un{\pp}}$ and $E_{\Sym_2 \ov{\pp}}$ will be carried out in subsections \ref{subsec:ovpunp} and \ref{subsec:Sym2ovp}.
The estimate on $E_{\Sym_2 \un{\pp}}$ is the most difficult one and will be carried out in subsections \ref{subsec:af} and \ref{subsec:afperp}.
During this discussion, a possible nullspace arises which we will then analyze in subsection \ref{subsec:nullspace}.
Finally, subsection \ref{subsec:lambdaWWconclusion} contains the proof of Proposition \ref{Prop:lambdaWWpos}.

\subsection{Preliminary calculations} \label{subsec:lambdaWWprelim}
Fix a wall $\WW \subset \CC$ and consider the splitting $\Delta^+ = \ov{\Delta}^+_\WW \dotcup \un{\Delta}^+_\WW$.
As explained in subsection \ref{subsec:crosssec}, we obtain orthogonal splittings $\af = \ov{\af}_\WW \oplus \un{\af}_\WW$, $\pp = \ov{\pp}_\WW \oplus \un{\pp}_\WW$ and $\mathfrak{k} = \ov{\mathfrak{k}}_\WW \oplus \un{\mathfrak{k}}_\WW$.
Here
\[ \ov{\pp}_\WW = \ov{\af}_\WW \oplus \bigoplus_{\ov{\alpha} \in \ov{\Delta}^+_\WW} \pp_{\ov{\alpha}} \qquad \text{and} \qquad \un{\pp}_\WW = \un{\af}_\WW \oplus \bigoplus_{\un{\alpha} \in \un{\Delta}^+_\WW} \pp_{\un{\alpha}} . \]
Moreover, we set
\[ \ov{\mathfrak{k}}_\WW = [ \ov{\pp}_\WW, \ov{\pp}_\WW] \qquad \text{and} \qquad \un{\mathfrak{k}}_\WW = \bigoplus_{\un{\alpha} \in \un{\Delta}^+_\WW} \mathfrak{k}_{\un{\alpha}} \]
We remark that $\un{\mathfrak{k}}_\WW$ is not a Lie algebra and in general $\mathfrak{k} \not= \ov{\mathfrak{k}}_\WW \oplus \un{\mathfrak{k}}_\WW$.
In the following, we will often make use of the fact that $\ov{\mathfrak{g}}_\WW = \ov{\pp}_\WW \oplus \ov{\mathfrak{k}}_\WW$ is a Lie algebra and that moreover
\begin{equation} \label{eq:ovun}
 [\ov{\pp}_\WW, \un{\pp}_\WW], [\ov{\mathfrak{k}}_\WW, \un{\mathfrak{k}}_\WW] \subset \un{\mathfrak{k}}_\WW, \qquad \text{and} \qquad [\un{\pp}_\WW, \ov{\mathfrak{k}}_\WW], [\ov{\pp}_\WW, \un{\mathfrak{k}}_\WW] \subset \un{\mathfrak{\pp}}_\WW.
\end{equation}

From now on, we will leave out the index $\WW$.
Recall the orthonormal systems $k_1, \ldots, k_{n-r} \in \mathfrak{k}$ and $p_1, \ldots, p_{n-r} \in \mathfrak{p}$ from subsection \ref{subsec:infstruc}.
They split into systems $\{ k_{\ov{i}} \}, \{ p_{\ov{i}} \}$ and $\{ k_{\un{i}} \}, \{ p_{\un{i}} \}$ corresponding to roots $\alpha_{\ov{i}} \in \ov{\Delta}^+$ and $\alpha_{\un{i}} \in \un{\Delta}^+$, respectively.
In the following, we will denote by $e_1, \ldots, e_n \in \pp$ an arbitrary orthonormal basis of $\pp$ that obeys the splitting $\pp = \ov{\pp} \oplus \un{\pp}$, i.e. the index set $\{ i \}$ splits into $\{ \ov{i} \}$ and $\{ \un{i} \}$ such that $\{ e_{\ov{i}} \}$ is an orthonormal basis for $\ov{\pp}$ and $\{ e_{\un{i}} \}$ one for $\un{\pp}$.
Note that $e_1, \ldots, e_n$ does not need to contain the vectors $p_1, \ldots, p_{n-r}$.

Let $E = \Sym_2 T^*$ be the vector bundle of symmetric bilinear forms.
We will identify $T^* \cong T$.
At the basepoint $p_0 \in M$, we can identify $T \cong \pp$ and hence $E_0 = \Sym_2 \pp$.
The splitting $\pp = \ov{\pp} \oplus \un{\pp}$ induces a splitting $\Sym_2 \pp = \Sym_2 \ov{\pp} \oplus \Sym_2 \un{\pp} \oplus \ov{\pp} \un{\pp}$.
Observe that this splitting comes from a splitting 
\begin{equation} \label{eq:splittingE}
 E_\WW = E_{\Sym_2 \ov{\pp}} \oplus E_{\Sym_2 \un{\pp}} \oplus E_{\ov{\pp} \un{\pp}}
\end{equation}
over the whole space $\ov{M}_\WW$.
We will denote the elements of $\Sym_2 \pp$ by $v \cdot w = w \cdot v$ for $v, w \in \pp$ and set $\langle v \cdot w, v' \cdot w' \rangle = \frac12 \langle v, v' \rangle \langle w, w' \rangle + \frac12 \langle v, w' \rangle \langle w, v' \rangle$.
Hence $\{ \sqrt{2} e_i \cdot e_j, \; e_k \cdot e_k \; : \; i < j \}$ is an orthonormal basis for $\Sym_2 \pp$.

From (\ref{eq:curvature}), we obtain that $\Rm (v \cdot w) = \sum_l e_l \cdot R(e_l, v)w = - \sum_l e_l \cdot [[ e_l, v], w]$.
Hence using (\ref{eq:SWW}), we can compute that for $v \cdot w \in \Sym_2 \pp$
\begin{multline*}
 S_\WW(v \cdot w) = - \sum_{\un{m}} \big( [ k_{\un{m}}, [ k_{\un{m}}, v ]] \cdot w + v \cdot [ k_{\un{m}}, [k_{\un{m}}, w]] + 2 [k_{\un{m}}, v] \cdot [k_{\un{m}}, w] \big) \\
 + 2 \sum_l e_l \cdot [[ e_l, v], w].
\end{multline*}
Pairing this with $v' \cdot w' \in \Sym_2 \pp$ yields
\begin{alignat}{2}
2 \langle S_\WW (v \cdot w), v' \cdot w' \rangle &= \sum_{\un{m}} \Big( \big\langle [k_{\un{m}}, v ], [k_{\un{m}}, v'] \big\rangle \langle w, w' \rangle + \big\langle[ k_{\un{m}}, v ], [ k_{\un{m}}, w'] \big\rangle \langle w, v' \rangle \notag \\
&\qquad\; + \langle v, v' \rangle \big\langle[ k_{\un{m}}, w ], [ k_{\un{m}}, w'] \big\rangle + \langle v, w' \rangle \big\langle[ k_{\un{m}}, w ], [ k_{\un{m}}, v'] \big\rangle \notag \\
&\qquad\; - 2 \big\langle [ k_{\un{m}}, v], v' \big\rangle \big\langle [ k_{\un{m}}, w], w' \big\rangle
- 2 \big\langle [ k_{\un{m}}, v], w' \big\rangle \big\langle [ k_{\un{m}}, w], v' \big\rangle \Big) \notag \\
 &\quad + 2 \big\langle [[v', v], w], w' \big\rangle + 2 \big\langle [[ w', v], w], v' \big\rangle  \label{eq:SWWincoo}
 \end{alignat}
 
\subsection{A Bochner formula} \label{subsec:lambdaWWBochner}
We will now derive a Bochner formula for $L$ on $M$ and hence get a lower bound for $\lambda_{\{ 0 \}} = \lambda_B$.
In the higher rank case, this Bochner formula will be applied to cross-sections $\ov{M}_\WW$ of $M$ in subsection \ref{subsec:Sym2ovp}.
So in order to allow for this further application, we will not require $M$ to be Einstein in this subsection.

Recall the definition of the divergence operator
\[ \DIV : C^\infty(M; \Sym_2 T^*) \longrightarrow C^\infty(M;T^*), \quad h_{ij} \longmapsto - \sum_i \nabla_i h_{ij} \]
and define the exterior derivative with coefficients in $T^*$
\[ d : C^\infty(M; \Sym_2 T^*) \longrightarrow C^\infty(M; \Lambda_2 T^* \otimes T^*), \quad h_{ij} \longmapsto \nabla_i h_{jk} - \nabla_j h_{ik}. \]
Their formal adjoints are
\[  \DIV^* : C^\infty(M; T^*) \longrightarrow C^\infty(M; \Sym_2 T^*), \quad \gamma_i \longmapsto \tfrac12 ( \nabla_i \gamma_j + \nabla_j \gamma_i ) \]
and
\[ d^* : C^\infty(M; \Lambda_2 T^* \otimes T^*) \longrightarrow C^\infty(M ; \Sym_2 T^*), \quad \gamma_{ijk} \longmapsto - \tfrac12( \nabla_k \gamma_{kij} + \nabla_k \gamma_{kji} ). \]
We can then calculate that
\begin{equation} \label{eq:Bochnerdivd}
 (Lh)_{ij} = (\DIV^* \DIV + d^* d)h_{ij} - \Rm (h)_{ij} - \tfrac12 \sum_k \big( \Ric_{ik} h_{kj} + h_{ik} \Ric_{kj} \big).
\end{equation}
In the following Lemma we will show that the zero order term is nonnegative and most often even positive definite.
Hence, by choosing $D = \DIV + d : C^\infty(M;E) \to C^\infty(M; T^* \oplus \Lambda_2 T^* \otimes T^*)$, we conclude that $\lambda_{\{0\}} = \lambda_B$ is nonnegative resp. positive (compare with (\ref{eq:BochnerlambdaB}) and (\ref{eq:BochnetriangleWW})). 

\begin{Lemma} \label{Lem:Bochnerformula}
Let $\Rm$ and $\Ric$ be the Riemannian and Ricci curvature at a point of a symmetric space $M$ of noncompact type and let $T$ be the tangent space at that point.
Then the operator
\[ A: \Sym_2 T^* \longrightarrow \Sym_2 T^*, \qquad h_{ij} \mapsto - \Rm (h)_{ij} - \tfrac12 \sum_k \big( \Ric_{ik} h_{kj} + h_{jk} \Ric_{ki} \big) \]
is self-adjoint and nonnegative definite. \\
Moreover, if we consider the splitting of the tangent space $T = T_1 \oplus \ldots \oplus T_m$ associated to the de Rham decomposition $M = M_1 \times \ldots \times M_m$ and assume that $M_1, \ldots, M_{m'}$ are the only $\IH^2$-factors, then the nullspace of $A$ is
\[ \big\{ h = h_1 + \ldots + h_{m'} \;\; : \;\; h_i \in \Sym_2 T_i^*, \; \tr h_i = 0 \big\}. \]
Hence, if $M$ does not contain any $\IH^2$-factor, then $A$ is positive definite.
\end{Lemma}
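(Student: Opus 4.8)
The plan is to reduce to the irreducible de Rham factors, prove nonnegativity of $A$ on each factor by an explicit "sum of squares" coming from the Lie-algebraic description of the curvature, and then identify the kernel by elementary structure theory.

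First I would note that $A$ is self-adjoint: the curvature term is self-adjoint by the pair symmetry $\ov R_{aupb}=\ov R_{pbau}$, and the Ricci terms are self-adjoint because $\Ric$ is. Then, along the de Rham splitting $T=T_1\oplus\cdots\oplus T_m$, decompose $\Sym_2 T^*=\bigoplus_i\Sym_2 T_i^*\ \oplus\ \bigoplus_{i<j}(T_i^*\odot T_j^*)$. Since $R$ is the curvature of a metric product it annihilates every mixed block $T_i^*\odot T_j^*$, while $\Ric$ acts on $T_i$ as $\lambda_i\id$; hence $A$ is block diagonal for this decomposition, it acts on $T_i^*\odot T_j^*$ as multiplication by $-\tfrac12(\lambda_i+\lambda_j)=\tfrac12(|\lambda_i|+|\lambda_j|)>0$, and on $\Sym_2 T_i^*$ it is $h\mapsto -R(h)-\lambda_i h$ with $\lambda_i<0$. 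In particular the mixed blocks are positive definite, the kernel is contained in $\bigoplus_i\Sym_2 T_i^*$, and it suffices to treat one irreducible factor, which I write $M,\pp,\lambda$.

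On $\pp$ one identifies $h$ with a symmetric endomorphism $H$ of $\pp$ and uses $R(x,y)z=-[[x,y],z]$ from (\ref{eq:curvature}), the skew-symmetry of $[u,\cdot]$ with respect to the Killing form, the Jacobi identity, and the fact that $\sum_a[e_a,He_a]=0$ (evaluate in an orthonormal eigenbasis of $H$), to obtain the identity
\[ \langle A h,h\rangle=\sum_{a,b}\big\langle[e_a,He_b],\,[e_b,He_a]\big\rangle-\lambda|h|^2, \]
the sums running over an orthonormal basis of $\pp$. Since the Killing form is negative definite on $\mathfrak{k}$ we have
\[ \sum_{a,b}\big\langle[e_a,He_b]-[e_b,He_a],\ [e_a,He_b]-[e_b,He_a]\big\rangle\le 0, \]
and expanding, together with $\sum_a\langle[e_a,z],[e_a,z]\rangle=\Ric(z,z)=\lambda|z|^2$, gives $\sum_{a,b}\langle[e_a,He_b],[e_b,He_a]\rangle\ge\sum_{a,b}|[e_a,He_b]|^2=\sum_b\Ric(He_b,He_b)=\lambda|h|^2$. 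Hence $\langle Ah,h\rangle\ge 0$, which is nonnegativity. For the kernel, equality forces $[e_a,He_b]=[e_b,He_a]$ for all $a,b$, i.e. $[x,Hy]=[y,Hx]$ for all $x,y\in\pp$. For $H=c\,\id$ this reads $c[\pp,\pp]=0$, so $c=0$ and $\ker A$ lies in the traceless part. Diagonalizing a traceless $H$ in an orthonormal eigenbasis, the condition becomes $(\mu_a+\mu_b)[f_a,f_b]=0$, i.e. $[V_\nu,V_{\nu'}]=0$ whenever $\nu+\nu'\neq 0$ for the eigenspace decomposition $\pp=\bigoplus_\nu V_\nu$. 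If $H\neq0$, pick $\nu\neq0$ with $V_\nu\neq0$, set $U=V_\nu\oplus V_{-\nu}$ and let $W'$ be the sum of the remaining eigenspaces; one checks $[U,W']=0$ and, via Jacobi, that $W'$ centralizes the subalgebra of $\mathfrak{g}$ generated by $U$, so these commuting subalgebras together generate $\mathfrak{g}$, and simplicity of $\mathfrak{g}$ (irreducibility of $M$) forces $W'=0$. Then $\pp=V_\nu\oplus V_{-\nu}$ with both summands abelian, so $\dim V_{\pm\nu}\le\rank M$, while $2\rank M\le\dim\pp$ from the root space decomposition $\pp=\af\oplus\bigoplus_{\alpha\in\Delta^+}\pp_\alpha$; equality throughout forces the restricted root system to be $A_1^{\oplus\rank M}$ with all multiplicities one, which for an irreducible $M$ means $M=\IH^2$ (where indeed every traceless $H$ on the $2$-plane satisfies the condition, e.g. because $Ah=(\tr h)\,\ov g$ there). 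Reassembling the factors gives the stated description of $\ker A$; in particular $A$ is positive definite when there is no $\IH^2$-factor.

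The routine part is the sum-of-squares identity and the resulting nonnegativity; the real work is the kernel computation, where the decisive structural inputs are that among irreducible symmetric spaces of noncompact type only $\IH^2$ satisfies $2\rank=\dim$ (equivalently, has restricted root system $A_1^{\oplus\rank}$ with unit multiplicities), together with the simplicity argument that rules out every reducible eigenspace configuration of $H$. Everything else is bookkeeping with the Jacobi identity and the negative-definiteness of the Killing form on $\mathfrak{k}$.
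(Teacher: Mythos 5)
Your proof is correct and, at its core, is the paper's proof: the de Rham reduction with positive definiteness on the mixed blocks, and the identity $\langle Ah,h\rangle=\sum_{a,b}\langle[e_a,He_b],[e_b,He_a]\rangle-\lambda|h|^2$ combined with the negative definiteness of the Killing form on $\mathfrak{k}$ are, in an eigenbasis of $H$, exactly the paper's computation $\langle Ah,h\rangle=-\tfrac12\sum_{i,j}K_{ij}(\lambda_i+\lambda_j)^2$ with $K_{ij}=\langle[e_i,e_j],[e_i,e_j]\rangle\le 0$, and the concluding rank count $2\rank M\le\dim\pp$ with equality only for products of $\IH^2$ is also the paper's. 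The one genuinely different sub-step is the passage from the kernel condition $[V_\nu,V_{\nu'}]=0$ for $\nu+\nu'\ne 0$ to $\pp=V_\nu\oplus V_{-\nu}$: you show that the subalgebras generated by $V_\nu\oplus V_{-\nu}$ and by the remaining eigenspaces commute and invoke simplicity of $\mathfrak{g}$, whereas the paper deduces a geometric de Rham splitting from the vanishing of the relevant sectional curvatures via the sign of the curvature operator; your variant is more self-contained and equally valid. One presentational slip to fix: testing $H=c\,\id$ only shows $\id\notin\ker A$ and does not by itself imply that $\ker A$ lies in the traceless part, so you should run the eigenspace argument for an arbitrary nonzero kernel element (it applies verbatim, since the condition $(\mu_a+\mu_b)[f_a,f_b]=0$ never uses tracelessness) and read off tracelessness at the end from $\dim V_\nu=\dim V_{-\nu}$.
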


\begin{proof}
Assume first that $M = M' \times M''$ is reducible and let $T = T' \oplus T''$ be the corresponding splitting.
Choose an orthonormal basis $e_1, \ldots, e_n$ of $T$ which obeys this splitting.
We first show that $A$ preserves the induced splitting $\Sym_2 T^* = \Sym_2 (T')^* \oplus \Sym_2 (T'')^* \oplus (T')^* (T'')^*$:
If $h \in \Sym_2 (T')^*$, then $\Rm(h)_{ij} = \sum_{s,t} R_{istj} h_{st}$ is only nonzero if $e_i, e_j \in T'$, since $R_{istj}$ is only nonzero if either all indices $i, s, t, j$ belong to $T'$ or to $T''$.
Furthermore since $\Ric_T = \Ric_{T'} + \Ric_{T''}$, we see that $\sum_k \Ric_{ik} h_{kj}$ is only nonzero if $e_i, e_j \in T'$.
So $A(h) \in \Sym_2 (T')^*$.
Analogously, we see that $A$ maps $\Sym_2 (T'')^*$ into itself and by self-adjointness it also has to preserve $(T')^* (T'')^*$.

Next, we show that $A$ is positive definite on $(T')^*(T'')^*$.
Let $h \in (T')^* (T'')^*$ and observe that $\Rm (h) = 0$.
Then $\langle A(h), h \rangle = - \sum_{i,j,k} \Ric_{ik} h_{kj} h_{ij} > 0$ if $h \not= 0$.
So, we can restrict our proof to the case in which $M$ is irreducible. 

Let $h \in \Sym_2 T^*$ and choose an orthonormal basis $e_1, \ldots, e_n$ for which $h$ is diagonal, i.e. $h = \sum_{i=1}^n \lambda_i e_i^* \otimes e_i^*$.
Observe, that since $M$ is of noncompact type, the sectional curvatures $K_{ij} = \langle R(e_i, e_j) e_j, e_i \rangle \leq 0$.
We can compute that
\[ \big\langle \Rm (h), h \big\rangle = \sum_{i,i',j,j'} R_{ijj'i'} h_{ii'} h_{jj'} = \sum_{i,j} K_{ij} \lambda_i \lambda_j. \]
Hence, since $\Ric_{ii} = \sum_j K_{ij}$
\[ \big\langle A(h), h \big\rangle = - \Big( \sum_{i,j} K_{ij} \lambda_i \lambda_j + \tfrac12 K_{ij} \lambda_j^2 + \tfrac12 K_{ij} \lambda_i^2 \Big) = - \sum_{i,j} \tfrac12 K_{ij} (\lambda_i + \lambda_j)^2 \geq 0. \]
Assume now that $h$ lies in the nullspace.
Then we must have $K_{ij} = 0$ whenever $\lambda_i \not= - \lambda_j$.
We can split $T = T'_1 \oplus \ldots \oplus T'_{m''}$ such that $T'_k$ is spanned by all $e_k$ for which $|\lambda_k|$ is a given constant.
Then for $e_i$ and $e_j$ belonging to different $T'_k$, we have $K_{ij} = 0$ and since the Riemannian curvature even has the property of having nonnegative curvature \emph{operator}, we conclude that all sectional curvature between different $T'_k$ vanish and hence $T'_1 \oplus \ldots \oplus T'_{m''}$ corresponds to a geometric splitting $M = M_1 \times \ldots \times M_{m''}$.
Since we assumed $M$ to be irreducible, we find that $m'' = 1$ and hence $h$ only has two eigenvalues $-\lambda$ and $\lambda$.

Let $T = T_- \oplus T_+$ be the orthogonal splitting of eigenspaces of $h$.
As before, we conclude that the sectional curvatures on $T_-$ and $T_+$ all vanish and hence those subspaces correspond to flats in $M$ or abelian subspace $\af_-, \af_+ \subset \pp$.
Now since the rank $r$ of $M$ is equal to the number of simple roots and hence is not larger than $n-r$, we conclude $2r \leq n$.
But since $\dim \af_- + \dim \af_+ = n$, we must have $2r = n$ and $\dim \af_- = \dim \af_+ = r$.
In this case, all positive roots of $M$ have to be simple.
So for any two distinct positive roots $\alpha_i, \alpha_j \in \Delta^+$ we have
\begin{multline*}
 \langle \alpha^\#_i, \alpha^\#_j \rangle = \big\langle [x_i, y_i], [x_j, y_j] \big\rangle = - \big\langle [x_i, [x_j, y_j]], y_i \big\rangle \\
 = \big\langle [x_j, [y_j, x_i]], y_i \big\rangle + \big\langle [y_j, [x_i, x_j]], y_i \big\rangle 
 = \big\langle [x_i, y_j], [x_j, y_i] \big\rangle + \big\langle [x_i, x_j], [y_i, y_j] \big\rangle = 0,
\end{multline*}
since $\alpha_i - \alpha_j$, $\alpha_j - \alpha_i$ and $\alpha_i + \alpha_j$ cannot be roots.
Hence all roots are orthogonal to each other.
It follows that $M = \IH^2 \times \ldots \times \IH^2$ and by irreducibility $M = \IH^2$.
\end{proof}

\subsection{Block form of the Einstein operator} \label{subsec:lambdaWWblockform}
From now on assume that $M$ is Einstein.
The splitting $\pp = \ov{\pp} \oplus \un{\pp}$ induces a splitting $\Sym_2 \pp = \Sym_2 \ov{\pp} \oplus \Sym_2 \un{\pp} \oplus \ov{\pp} \un{\pp}$.
We will find that $S_\WW : \Sym_2 \pp \to \Sym_2 \pp$ preserves this splitting, i.e. the differential operator $- \ov{\triangle} + S_\WW$ acting on $C^\infty(\ov{M}_\WW; E_\WW)$ preserves the splitting (\ref{eq:splittingE}) if we view it as a bundle endomorphism.
Hence we can analyze the operator $- \ov{\triangle} + S_\WW$ on each component separately.
This will be carried out in the following subsections.

\begin{Lemma} \label{Lem:simpleovpunp}
The map $\pp \to \pp$, $v \mapsto - \sum_{\un{m}} [k_{\un{m}}, [k_{\un{m}}, v]]$ is self-adjoint and preserves the splitting $\pp = \ov{\pp} \oplus \un{\pp}$. \\
In particular, $\sum_{\un{m}} \big\langle [k_{\un{m}}, v], [k_{\un{m}}, w] \big\rangle = 0$ for any $v \in \ov{\pp}$ and $w \in \un{\pp}$.
\end{Lemma}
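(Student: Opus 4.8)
The plan is to use the root-space decomposition together with the structural inclusions \eqref{eq:ovun}. The map in question, call it $P : \pp \to \pp$ with $P(v) = - \sum_{\un{m}} [k_{\un{m}}, [k_{\un{m}}, v]]$, is a sum of operators of the form $-\ad(k_{\un m})^2$. Self-adjointness is immediate: for each $\un m$, the operator $\ad(k_{\un m})$ is antisymmetric with respect to the Killing form restricted to $\pp$ (this is the standard fact that $\ad(u)$ is skew for $u \in \mathfrak{k}$, recalled in subsection \ref{subsec:infstruc}), hence $-\ad(k_{\un m})^2 = \ad(k_{\un m})^* \ad(k_{\un m})$ is self-adjoint and in fact nonnegative. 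Summing over $\un m$ preserves self-adjointness.

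For the splitting claim, first observe $[k_{\un m}, v] \in \pp$ for $v \in \pp$, since $[\mathfrak{k},\pp]\subset\pp$, so $P$ does map $\pp$ to $\pp$. To see that $P$ preserves $\pp = \ov{\pp} \oplus \un{\pp}$, it suffices to show each $\ad(k_{\un m})$ interacts correctly with the splitting, or more efficiently to show directly that $P(\ov{\pp}) \subset \ov{\pp}$ and $P(\un{\pp}) \subset \un{\pp}$; by self-adjointness of $P$ it is then enough to check one of these, say $P(\ov\pp)\subset\ov\pp$, which is equivalent to the orthogonality statement $\sum_{\un m}\langle[k_{\un m},v],[k_{\un m},w]\rangle = 0$ for $v\in\ov\pp$, $w\in\un\pp$. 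First I would record that $k_{\un m} \in \un{\mathfrak{k}}_\WW$ since $\alpha_{\un m} \in \un{\Delta}^+_\WW$. Then for $v \in \ov{\pp}$ we use $[\un{\mathfrak{k}}_\WW, \ov{\pp}_\WW] \subset \un{\pp}_\WW$ (second inclusion in \eqref{eq:ovun}), so $[k_{\un m}, v] \in \un{\pp}$; and for $w \in \un{\pp}$ we use $[\un{\mathfrak{k}}_\WW, \un{\pp}_\WW] \subset \ov{\pp}_\WW \oplus \un{\pp}_\WW$ — more precisely I need that the $\ov\pp$-component pairs trivially. The clean way: $[k_{\un m},v]\in\un\pp$ while $[k_{\un m},w]$, I claim, has its $\langle\cdot,[k_{\un m},v]\rangle$-pairing governed by $\langle[k_{\un m},[k_{\un m},w]],v\rangle$ (moving one $\ad(k_{\un m})$ across by antisymmetry), and $[k_{\un m},[k_{\un m},w]]$: with $w\in\un\pp$, $[k_{\un m},w]\in\ov\pp\oplus\un\pp$ but the relevant point is that iterating $\ad(k_{\un m})$ twice on $\un\pp$ followed by pairing against $\ov\pp$ vanishes because $\ad(k_{\un m})$ maps $\ov\pp\to\un\pp$ and $\un\pp\to\ov\pp\oplus\un\pp$ in a way that, composed, sends $\ov\pp\to\ov\pp\oplus\un\pp$ with controlled $\ov\pp$-part — I would instead argue via explicit root spaces.

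Concretely, the cleanest route: write everything in terms of the root space pieces $\pp_\alpha$ and $\af$. Recall $k_{\un m} \in \mathfrak{k}_{\alpha_{\un m}}$ with $\alpha_{\un m}\in\un\Delta^+_\WW$, and that $\ad(k_{\un m})$ shifts root spaces: $[\mathfrak{g}_{\alpha_{\un m}}\oplus\mathfrak{g}_{-\alpha_{\un m}}, \mathfrak{g}_\beta]\subset \mathfrak{g}_{\beta+\alpha_{\un m}}\oplus\mathfrak{g}_{\beta-\alpha_{\un m}}$. Take $v\in\ov\pp$, so $v\in\ov\af$ or $v\in\pp_{\ov\alpha}$ for some $\ov\alpha\in\ov\Delta^+_\WW$; then $[k_{\un m},v]$ lands in root spaces indexed by $\pm\alpha_{\un m}$ (if $v\in\ov\af$) or $\ov\alpha\pm\alpha_{\un m}$, all of which have a nonzero $\un\Delta^+_\WW$-coefficient, hence lie in $\un\pp\oplus\un\mathfrak{k}$; intersecting with $\pp$, $[k_{\un m},v]\in\un\pp$. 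Similarly for $w\in\un\pp$, $[k_{\un m},w]$ lands in root spaces shifted by $\pm\alpha_{\un m}$ from an index with a positive $\un$-coefficient, but now the resulting indices can have $\un$-coefficient zero (landing in $\ov\pp$) only in the direction $\alpha_{\un m}$ itself — in any case the pairing $\langle[k_{\un m},v],[k_{\un m},w]\rangle$ pairs something in $\un\pp$ against $[k_{\un m},w]$, and extracting the $\un\pp$-component of $[k_{\un m},w]$ and using that the root spaces $\pp_{\ov\gamma}$ and $\pp_{\un\gamma}$ are mutually orthogonal reduces the sum to pairings within matching root spaces, which by antisymmetry equal $-\langle w, [k_{\un m},[k_{\un m},v]]\rangle$; and $[k_{\un m},[k_{\un m},v]]$ with $v\in\ov\pp$: first application gives $\un\pp$, second application gives something in $\ov\pp\oplus\un\pp$ whose $\ov\pp$-component would need a root index with zero $\un$-coefficient obtained as $\un\alpha' + (\pm\alpha_{\un m})$ — possible — so this does not obviously vanish term by term, but the full sum $\sum_{\un m}$ does: this is the content to verify. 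I expect the main obstacle to be precisely this bookkeeping — showing the $\ov\pp$-to-$\ov\pp$ return contributions cancel in the sum — and I would handle it by using self-adjointness of $P$ to reduce to $P(\ov\pp)\subset\ov\pp$, then proving $P(\ov\pp)\perp\un\pp$ directly: for $v\in\ov\pp$, $P(v) = -\sum_{\un m}[k_{\un m},[k_{\un m},v]]$, and since each $[k_{\un m},v]\in\un\pp$ and $[k_{\un m},\un\pp]\subset[\un\mathfrak{k}_\WW,\un\pp_\WW]$, the real input needed is $[\un\mathfrak{k}_\WW,\un\pp_\WW]\subset\un\pp_\WW$, i.e. that $\ov\pp$ is orthogonal to $[\un\mathfrak{k}_\WW,\un\pp_\WW]$. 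Using antisymmetry once more, $\langle[k_{\un m},[k_{\un m},v]],u\rangle = \langle[k_{\un m},v],-[k_{\un m},u]\rangle$ for $u\in\ov\pp$; but for $u\in\ov\pp$ we showed $[k_{\un m},u]\in\un\pp$, and $[k_{\un m},v]\in\un\pp$, so this is a pairing within $\un\pp$ — nonzero in general. Hence $P(v)$ can a priori have an $\ov\pp$-component; the claim $P(v)\in\ov\pp$ is then equivalent to $\langle P(v),w\rangle=0$ for $w\in\un\pp$, which via the same manipulation becomes $\sum_{\un m}\langle[k_{\un m},v],[k_{\un m},w]\rangle=0$, the stated ``in particular''. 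So the two assertions of the lemma are genuinely equivalent, and I would prove the orthogonality $\sum_{\un m}\langle[k_{\un m},v],[k_{\un m},w]\rangle = 0$ for $v\in\ov\pp,w\in\un\pp$ directly from the root-space grading: decompose $v,w$ into root-space components, expand the bracket using $[k_{\un m},\pp_\gamma]\subset\mathfrak{g}_{\gamma+\alpha_{\un m}}\oplus\mathfrak{g}_{\gamma-\alpha_{\un m}}$, and observe that $[k_{\un m},v]$ (with $v\in\ov\pp$) and $[k_{\un m},w]$ (with $w\in\un\pp$) can only be non-orthogonal if they share a root space $\mathfrak{g}_\mu$; tracking which $\mu$ are reachable from $\ov\Delta^+_\WW\cup\{0\}$ versus from $\un\Delta^+_\WW$ by a single shift $\pm\alpha_{\un m}$, and then summing over $\un m$, the contributions cancel in pairs (shift by $+\alpha_{\un m}$ against $-\alpha_{\un m}$) using $[x_i,y_i]=-\alpha_i^\#\in\af$ from \eqref{eq:xiyi} to control the diagonal terms. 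This is a finite, if slightly intricate, root-combinatorial computation.
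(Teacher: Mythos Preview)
Your reduction is fine: self-adjointness is immediate from skew-symmetry of $\ad(k_{\un m})$, and you correctly observe that preserving the splitting is equivalent to the orthogonality statement $\sum_{\un m}\langle[k_{\un m},v],[k_{\un m},w]\rangle=0$ for $v\in\ov\pp$, $w\in\un\pp$. You also correctly extract $[k_{\un m},v]\in\un\pp$ from \eqref{eq:ovun}. But from that point on the proposal is not a proof: you end with ``the contributions cancel in pairs \ldots\ this is a finite, if slightly intricate, root-combinatorial computation'' without performing it, and there is no indication that the cancellation is pairwise in $\un m$ (in fact it is not). A warning sign is that you never use the Einstein hypothesis, which is the running assumption of the section and is exactly what closes the argument.

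The paper's proof takes a different and much shorter route. Insert an orthonormal basis $(e_l)$ of $\pp$ respecting $\ov\pp\oplus\un\pp$, and use $\ad$-invariance to swap the roles of $k_{\un m}$ and $e_l$:
\[
\sum_{\un m}\big\langle[k_{\un m},v],[k_{\un m},w]\big\rangle
=\sum_{\un m,l}\big\langle[k_{\un m},v],e_l\big\rangle\big\langle[k_{\un m},w],e_l\big\rangle
=\sum_{\un m,l}\big\langle[v,e_l],k_{\un m}\big\rangle\big\langle[w,e_l],k_{\un m}\big\rangle.
\]
Now for each fixed $l$, the inclusions \eqref{eq:ovun} give: if $e_l\in\ov\pp$ then $[w,e_{\ov l}]\in\un{\mathfrak{k}}$, while if $e_l\in\un\pp$ then $[v,e_{\un l}]\in\un{\mathfrak{k}}$. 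So in either case one of the two factors already lies in $\un{\mathfrak{k}}$, and the partial sum $\sum_{\un m}\langle\cdot,k_{\un m}\rangle\langle\cdot,k_{\un m}\rangle$ may be replaced by the full Killing pairing $-\langle[v,e_l],[w,e_l]\rangle$. Summing over $l$ gives $-\Ric(v,w)$, which vanishes since $M$ is Einstein and $v\perp w$. The ``swap'' is the trick you are missing; it converts the awkward partial $\un{\mathfrak{k}}$-sum into a full one and turns the problem into a curvature identity rather than root combinatorics.
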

\begin{proof}
Let $e_1, \ldots, e_n \in \pp$ be an orthonormal basis which respects the splitting $\pp = \ov{\pp} \oplus \un{\pp}$.
We find for $v \in \ov{\pp}$ and $w \in \un{\pp}$
\begin{multline}
 - \sum_{\un{m}} \big\langle [k_{\un{m}}, [k_{\un{m}}, v ]], w \big\rangle = \sum_{\un{m}} \big\langle [k_{\un{m}}, v], [k_{\un{m}}, w] \big\rangle  \\
 = \sum_{\un{m}, l} \big\langle [k_{\un{m}}, v], e_{l} \big\rangle \big\langle [k_{\un{m}}, w], e_{l} \big\rangle
 = \sum_{\un{m}, l} \big\langle [v, e_l], k_{\un{m}} \big\rangle \big\langle [w, e_l], k_{\un{m}} \big\rangle \label{eq:kmvkmw}
\end{multline}
Now recall from (\ref{eq:ovun}) that if the index $l$ is of type $\ov{l}$, we have $[w, e_{\ov{l}}] \in \un{\mathfrak{k}}$ and if it is of type $\un{l}$, we have $[v, e_{\un{l}}] \in \un{\mathfrak{k}}$.
So one of the two expressions $[v, e_l]$ and $[w, e_l]$ is always contained in $\un{\mathfrak{k}}$ and we conclude that (\ref{eq:kmvkmw}) is equal to
\[ - \sum_l \big\langle [v, e_l], [w, e_l] \big\rangle = - \Ric(v,w) = - \frac{R}n \langle v, w \rangle = 0. \qedhere \]
\end{proof}

\begin{Lemma} \label{Lem:partovpunp}
For every wall $\WW \subset \CC$, the splitting $\pp = \ov{\pp} \oplus \un{\pp}$ induces a splitting $\Sym_2 \pp = \Sym_2 \ov{\pp} \oplus \Sym_2 \un{\pp} \oplus \ov{\pp} \un{\pp}$.
The operator $S_\WW : \Sym_2 \pp \to \Sym_2 \pp$ is self-adjoint and preserves this splitting.
\end{Lemma}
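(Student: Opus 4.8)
The plan is to read everything off the explicit pairing formula (\ref{eq:SWWincoo}), the bracket relations (\ref{eq:ovun}), and Lemma \ref{Lem:simpleovpunp}. For self-adjointness I would simply observe that the right hand side of (\ref{eq:SWWincoo}) is invariant under the exchange $(v\cdot w)\leftrightarrow(v'\cdot w')$: among the six terms carrying a sum over $\un m$, the first, third, fifth and sixth are individually invariant under this exchange, while the second and fourth get swapped into one another; and the remaining two terms $2\langle[[v',v],w],w'\rangle+2\langle[[w',v],w],v'\rangle$ are, by (\ref{eq:curvature}), a fixed multiple of $\langle R(v\cdot w),v'\cdot w'\rangle$, where the curvature endomorphism $R:\Sym_2\pp\to\Sym_2\pp$ is self-adjoint by the pair symmetry of the Riemann tensor, exactly as in Lemma \ref{Lem:Bochnerformula}.

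For the block structure, since the three summands $\Sym_2\ov\pp$, $\Sym_2\un\pp$ and $\ov\pp\un\pp$ are mutually orthogonal and $S_\WW$ is self-adjoint, it suffices to prove $S_\WW(\Sym_2\ov\pp)\subset\Sym_2\ov\pp$ and $S_\WW(\ov\pp\un\pp)\subset\ov\pp\un\pp$; preservation of $\Sym_2\un\pp$ then follows by pairing against the other two blocks. To prove the first inclusion I would take $v,w\in\ov\pp$ and pair $S_\WW(v\cdot w)$ in (\ref{eq:SWWincoo}) first with $v'\cdot w'$ where $v'\in\ov\pp$, $w'\in\un\pp$, and then with $v'\cdot w'$ where $v',w'\in\un\pp$. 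In the first case each of the eight terms of (\ref{eq:SWWincoo}) vanishes individually, using $\langle\ov\pp,\un\pp\rangle=0$, the inclusions $[\un{\mathfrak k},\ov\pp]\subset\un\pp$, $[\ov\pp,\un{\mathfrak k}]\subset\un\pp$, $[\ov\pp,\un\pp]\subset\un{\mathfrak k}$ from (\ref{eq:ovun}), the fact that $\ov{\mathfrak g}$ is a Lie algebra, and the cross-vanishing $\sum_{\un m}\langle[k_{\un m},\cdot],[k_{\un m},\cdot]\rangle=0$ of Lemma \ref{Lem:simpleovpunp}. In the second case terms $1$--$4$ vanish for the same reasons, but terms $5$--$8$ do not vanish separately; instead one needs the algebraic identity
\[ \langle[[p,v],w],q\rangle=\sum_{\un m}\langle[k_{\un m},v],p\rangle\,\langle[k_{\un m},w],q\rangle\qquad(v,w\in\ov\pp,\ p,q\in\un\pp), \]
which I would prove by expanding $[p,v]\in\un{\mathfrak k}$ in the negative orthonormal basis $\{k_{\un m}\}$ of $\un{\mathfrak k}$ and applying the $\Ad$-invariance of the Killing form twice; substituting it into terms $7$ and $8$ makes them cancel terms $5$ and $6$ exactly. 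The inclusion $S_\WW(\ov\pp\un\pp)\subset\ov\pp\un\pp$ is then handled the same way: the $\Sym_2\ov\pp$-component of $S_\WW(v\cdot w)$ for $v\in\ov\pp$, $w\in\un\pp$ vanishes by self-adjointness together with the inclusion just proved, and the vanishing of the $\Sym_2\un\pp$-component reduces, after discarding terms $1$--$4$, to the same identity applied with $v\in\ov\pp$, $w,p,q\in\un\pp$ (the derivation only uses $[p,v]\in\un{\mathfrak k}$, i.e.\ $p\in\un\pp$, $v\in\ov\pp$).

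The main obstacle is precisely this cancellation on the $\Sym_2\un\pp$-block: the curvature contribution $2\sum_l e_l\cdot[[e_l,v],w]$ and the term $-2\sum_{\un m}[k_{\un m},v]\cdot[k_{\un m},w]$ coming from $-\sum_{\un m}k_{\un m}.k_{\un m}.(v\cdot w)$ each carry a genuine $\Sym_2\un\pp$-component, and only their sum vanishes; establishing the bracket identity above is where the actual work lies, and everything else is bookkeeping with (\ref{eq:ovun}) and orthogonality.
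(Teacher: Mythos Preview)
Your proposal is correct and follows essentially the same route as the paper. The paper's only packaging difference is that it rewrites terms $5$--$8$ of (\ref{eq:SWWincoo}) at the outset as $-2\langle\proj_{\un{\mathfrak k}^\perp}[v,v'],[w,w']\rangle-2\langle\proj_{\un{\mathfrak k}^\perp}[w,v'],[v,w']\rangle$; your bracket identity is precisely the statement that this projection vanishes whenever one of the two entries lies in $\ov\pp$ and the other in $\un\pp$, and the three case checks then match one-for-one.
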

\begin{proof}
Let $v, w, v', w' \in \pp$.
We conclude from (\ref{eq:SWWincoo}) that
\begin{alignat*}{1}
2 \langle S_\WW (v \cdot w), v' \cdot w' \rangle
 = &\sum_{\un{m}} \Big( \big\langle[k_{\un{m}}, v ], [k_{\un{m}}, v'] \big\rangle \langle w, w' \rangle + \big\langle[ k_{\un{m}}, v ], [ k_{\un{m}}, w'] \big\rangle \langle w, v' \rangle \\
& \qquad + \langle v, v' \rangle \big\langle[ k_{\un{m}}, w ], [ k_{\un{m}}, w'] \big\rangle + \langle v, w' \rangle \big\langle[ k_{\un{m}}, w ], [ k_{\un{m}}, v'] \big\rangle  \Big) \\
 - 2 & \sum_{\un{m}} \big\langle [v,v'], k_{\un{m}} \big\rangle \big\langle [w, w'], k_{\un{m}} \big\rangle - 2\big\langle [v, v'], [w, w'] \big\rangle \\
- 2 & \sum_{\un{m}} \big\langle [ v, w'], k_{\un{m}} \big\rangle \big\langle [ w, v'], k_{\un{m}} \big\rangle
- 2\big\langle [w, v'], [v, w'] \big\rangle .
 \end{alignat*}
The fact that $S_\WW$ is self-adjoint follows directly from this expression.
Observe that the last two lines in this formula are equal to
\begin{equation} \label{eq:frakkov}
 -2 \big\langle \proj_{\un{\mathfrak{k}}^\perp} [v,v'], [w,w'] \big\rangle - 2 \big\langle \proj_{\un{\mathfrak{k}}^\perp} [w,v'], [v,w'] \big\rangle,
\end{equation}
where $\un{\mathfrak{k}}^\perp$ is the orthogonal complement of $\un{\mathfrak{k}}$ in $\mathfrak{k}$.

Now assume that $v, w \in \ov{\pp}$ and $v', w' \in \un{\pp}$, i.e. $v \cdot w \in \Sym_2 \ov{\pp}$ and $v' \cdot w' \in \Sym_2 \un{\pp}$.
Then $[v,v'], [w,v'] \in \un{\mathfrak{k}}$, so expression (\ref{eq:frakkov}) vanishes.
Since $\langle w, w' \rangle = \langle w, v' \rangle = \langle v, v' \rangle = \langle v, w' \rangle = 0$, we conclude $\langle S_\WW (v \cdot w), v' \cdot w' \rangle = 0$.

Secondly, assume that $v, w, v' \in \ov{\pp}$ and $w' \in \un{\pp}$, i.e. $v \cdot w \in \Sym_2 \ov{\pp}$ and $v' \cdot w' \in \ov{\pp}\un{\pp}$.
Then $[w, w'], [v, w'] \in \un{\mathfrak{k}}$ and (\ref{eq:frakkov}) vanishes again.
Moreover, $\langle w, w' \rangle = \langle v, w' \rangle = 0$ and by Lemma \ref{Lem:simpleovpunp} we conclude $\sum_{\un{m}} \langle [k_{\un{m}}, v], [k_{\un{m}}, w'] \rangle = \sum_{\un{m}} \langle [k_{\un{m}}, w], [k_{\un{m}}, w'] \rangle = 0$.
So $\langle S_\WW (v \cdot w), v' \cdot w' \rangle = 0$.

Finally, assume that $v \in \ov{\pp}$ and $w, v', w' \in \un{\pp}$, i.e. $v \cdot w \in \ov{\pp} \un{\pp}$ and $v' \cdot w' \in \Sym_2 \un{\pp}$.
Then (\ref{eq:frakkov}) vanishes again since $[v, v'], [v, w'] \in \un{\mathfrak{k}}$ and by Lemma \ref{Lem:simpleovpunp} as well as $\langle v, v' \rangle = \langle v, w' \rangle = 0$, we conclude $\langle S_\WW (v \cdot w), v' \cdot w' \rangle = 0$.
\end{proof}

\subsection{The Einstein operator on $\overline{\mathfrak{p}} \text{\underline{$\mathfrak{p}$}}$} \label{subsec:ovpunp}
We now analyze the operator $- \ov{\triangle} + S_\WW$ on $E_{\ov{\pp}\un{\pp}}$.
Here, we will only need the trivial Bochner formula $- \ov{\triangle} = \nabla^* \nabla$.
Hence it suffices to analyze $S_\WW$ acting on $\ov{\pp} \un{\pp}$.

\begin{Lemma} \label{Lem:ovpunp}
For every wall $\WW \subset \CC$, the restricted operator $S_\WW : \ov{\pp} \un{\pp} \to \ov{\pp} \un{\pp}$ is positive definite.
\end{Lemma}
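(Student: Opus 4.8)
The goal is to show that the zero-order operator $S_\WW$ restricted to the subspace $\ov{\pp}\un{\pp}\subset\Sym_2\pp$ is positive definite. The plan is to pair $S_\WW(v\cdot w)$ with itself, i.e. to take $v'=v$, $w'=w$ with $v\in\ov{\pp}$, $w\in\un{\pp}$ in formula (\ref{eq:SWWincoo}), and to show that every resulting term is nonnegative and that at least one is strictly positive unless $v\cdot w=0$. The natural expectation is that the ``good'' positive contribution comes from the curvature term $2\langle[[v,v],w],w\rangle + 2\langle[[w,v],w],v\rangle$, i.e. the sectional-curvature term $-2\langle R(v,w)w,v\rangle\ge 0$ (indeed $>0$ as long as $v,w$ are both nonzero and noncollinear, which holds here since they live in orthogonal subspaces), while the remaining terms built out of the $[k_{\un m},\cdot]$ brackets need to be shown to be nonnegative.

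First I would specialize (\ref{eq:SWWincoo}): with $v'=v\in\ov{\pp}$ and $w'=w\in\un{\pp}$ the cross-terms containing an inner product $\langle v,w\rangle$ vanish by orthogonality of $\ov{\pp}$ and $\un{\pp}$, so
\[
2\langle S_\WW(v\cdot w),v\cdot w\rangle
 = \sum_{\un m}\Big(\big|[k_{\un m},v]\big|^2|w|^2 + |v|^2\big|[k_{\un m},w]\big|^2 - 2\big\langle[k_{\un m},v],?\big\rangle\cdots\Big) + 2\big\langle[[v,v],w],w\big\rangle + 2\big\langle[[w,v],w],v\big\rangle .
\]
Here $[v,v]=0$ kills one curvature term, and the other is exactly $-2\langle R(v,w)w,v\rangle = -2K(v,w)|v|^2|w|^2 \ge 0$, with strict inequality when $v\neq0$ and $w\neq0$ unless the plane spanned by $v,w$ is flat. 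So the main point reduces to controlling the $k_{\un m}$-terms. I would reorganize them: the terms $\sum_{\un m}\big(|[k_{\un m},v]|^2|w|^2+|v|^2|[k_{\un m},w]|^2\big)$ are manifestly $\ge 0$, and the remaining mixed term $-2\sum_{\un m}\langle[k_{\un m},v],v\rangle\langle[k_{\un m},w],w\rangle$ vanishes because $[k_{\un m},v]\perp v$ (the adjoint action of $\mathfrak{k}$ is skew on $\pp$), so $\langle[k_{\un m},v],v\rangle=0$. The only genuinely mixed surviving term is $-2\sum_{\un m}\langle[k_{\un m},v],w\rangle\langle[k_{\un m},w],v\rangle$; I would either bound this below using Cauchy--Schwarz against the two positive sum-of-squares terms, or, more cleanly, observe that by Lemma \ref{Lem:simpleovpunp} the quantity $\sum_{\un m}\langle[k_{\un m},v],[k_{\un m},w]\rangle$ vanishes for $v\in\ov\pp$, $w\in\un\pp$, and use the relations (\ref{eq:ovun}) to see which brackets land in $\un{\mathfrak{k}}$ versus $\ov{\mathfrak{k}}$, forcing the cross-terms to combine into something of a definite sign.

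**The main obstacle.** The hard part is the bookkeeping of the mixed $k_{\un m}$-term $-2\sum_{\un m}\langle[k_{\un m},v],w\rangle\langle[k_{\un m},w],v\rangle$: a priori its sign is not obvious, and one cannot simply discard it. I expect this is handled exactly as in the proof of Lemma \ref{Lem:partovpunp} — namely, by writing $[k_{\un m},v]$ and $[k_{\un m},w]$ in terms of the root-space structure and using (\ref{eq:ovun}) together with the identity (\ref{eq:xiyi}) to see that such cross-contractions either vanish or can be absorbed. A clean route: show that the quadratic form $v\mapsto\sum_{\un m}|[k_{\un m},v]|^2$ on $\pp$, restricted appropriately, dominates the mixed term, so that the full $k_{\un m}$-block is already nonnegative; then the strict positivity of $S_\WW$ on $\ov\pp\un\pp$ follows purely from the curvature term $-2K(v,w)|v|^2|w|^2$, provided one also argues that the flat case ($K(v,w)=0$) cannot make the whole expression vanish — and in the flat case $v$ and $w$ span a $2$-flat, which would force $v,w$ to lie in a common maximal abelian $\af'$, but $v\in\ov\pp$ has a nontrivial root component (or is in $\ov{\af}$) while $w\in\un\pp$, and one checks the $k_{\un m}$-terms are then strictly positive. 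I would conclude by assembling: $2\langle S_\WW(v\cdot w),v\cdot w\rangle$ is a sum of a nonnegative $k_{\un m}$-contribution and a nonnegative curvature contribution, and the two cannot vanish simultaneously for $v\cdot w\neq 0$, whence $S_\WW$ is positive definite on $\ov\pp\un\pp$.
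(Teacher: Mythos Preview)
Your plan has a structural gap and a sign error, and together they derail the argument.

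\textbf{The structural gap.} You only test $S_\WW$ against simple tensors $v\cdot w$ with $v\in\ov\pp$, $w\in\un\pp$. But a general element of $\ov\pp\,\un\pp$ is $h=\sum_{\ov i,\un j}h_{\ov i\un j}\,e_{\ov i}\cdot e_{\un j}$, and positivity of the quadratic form on rank-one tensors does \emph{not} imply positive definiteness on the whole space. The paper's proof works with a general $h$ from the outset and organizes $2\langle S_\WW h,h\rangle$ into explicit sums of squares.

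\textbf{The sign error.} Even for a simple tensor, your reading of the curvature term is backwards. From (\ref{eq:SWWincoo}) with $v'=v$, $w'=w$, $v\perp w$, one gets the contribution $2\langle[[w,v],w],v\rangle$. Since $R(v,w)w=-[[v,w],w]=[[w,v],w]$, this equals $2\langle R(v,w)w,v\rangle=2K(v,w)\,|v|^2|w|^2\le 0$ on a space of noncompact type. So the curvature piece is a \emph{negative} contribution, not the source of positivity you hoped for. (The mixed $k_{\un m}$-term, by contrast, turns out to be $+2\sum_{\un m}\langle[k_{\un m},v],w\rangle^2\ge 0$ once you use skew-symmetry, so your worry about its sign was misplaced; but it alone doesn't give strict positivity.)

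\textbf{What the paper actually does.} Starting from the general bilinear expression, it rewrites the coefficient $\langle[e_{\ov i},e_{\ov i'}],[e_{\un j},e_{\un j'}]\rangle$ via the Jacobi identity and the fact that $[\ov\pp,\un\pp]\subset\un{\mathfrak k}$, and then regroups $2\langle S_\WW h,h\rangle$ as a sum of three nonnegative squared expressions plus the term $-2\big|\sum_{\ov i,\un j}h_{\ov i\un j}[e_{\ov i},e_{\un j}]\big|^2$, which is itself $\ge 0$ because the Killing form is negative definite on $\mathfrak k$. Strict positivity is then obtained by analyzing the vanishing conditions: one of the squared terms forces $H_{\ov i}:=\sum_{\un j}h_{\ov i\un j}e_{\un j}$ to be orthogonal to $[\un{\mathfrak k},\un\pp]$, and the paper shows $\un\pp\subset[\un{\mathfrak k},\un\pp]$, whence $h=0$. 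None of this is visible if you restrict to simple tensors.
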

\begin{proof}
Let $v, v' \in \ov{\pp}$ and $w, w' \in \un{\pp}$.
Using the calculations from the proof of Lemma \ref{Lem:partovpunp}, the fact that $[v,v'] \in \ov{\mathfrak{k}}$ and $[w,v'] \in \un{\mathfrak{k}}$ and hence $\langle \proj_{\ov{\mathfrak{k}}} [w, v'], [v, w'] \rangle = 0$, we conclude
\begin{multline*}
2 \langle S_\WW (v \cdot w), v' \cdot w' \rangle
 = \sum_{\un{m}} \Big( \big\langle[k_{\un{m}}, v ], [k_{\un{m}}, v'] \big\rangle \langle w, w' \rangle 
+ \langle v, v' \rangle \big\langle[ k_{\un{m}}, w ], [ k_{\un{m}}, w'] \big\rangle  \Big) \\ 
- 2\big\langle [v, v'], [w, w'] \big\rangle .
 \end{multline*}
Let $e_1, \ldots, e_n$ be an orthonormal basis of $\pp$ which respects the splitting $\pp = \ov{\pp} \oplus \un{\pp}$ and express $h = \sum_{\ov{i}, \un{j}} h_{\ov{i} \un{j}} e_{\ov{i}} \cdot e_{\un{j}} \in \ov{\pp} \un{\pp}$.
Then summing over all free indices yields
\begin{multline*}
2 \langle S_\WW h, h \rangle = h_{\ov{i} \un{j}} h_{\ov{i}' \un{j}} \big\langle [k_{\un{m}}, e_{\ov{i}}], [k_{\un{m}}, e_{\ov{i}'}] \big\rangle + h_{\ov{i} \un{j}} h_{\ov{i} \un{j}'} \big\langle [k_{\un{m}}, e_{\un{j}}], [k_{\un{m}}, e_{\un{j}'} ] \big\rangle \\
- 2 h_{\ov{i} \un{j}} h_{\ov{i}' \un{j}'} \big\langle [e_{\ov{i}}, e_{\ov{i}'} ], [e_{\un{j}}, e_{\un{j}'}] \big\rangle .
\end{multline*}
We can rewrite the last coefficient as
\begin{alignat*}{1}
 \big\langle [e_{\ov{i}}, e_{\ov{i}'}], [e_{\un{j}}, e_{\un{j}'}] \big\rangle 
 &= - \big\langle [e_{\ov{i}}, [ e_{\un{j}}, e_{\un{j}'} ]], e_{\ov{i}'} \big\rangle
 = - \big\langle [e_{\un{j}}, [ e_{\ov{i}}, e_{\un{j}'}]], e_{\ov{i}'} \big\rangle + \big\langle [ e_{\un{j}'}, [ e_{\ov{i}}, e_{\un{j}} ]], e_{\ov{i}'} \big\rangle \displaybreak[0] \\
 &\hspace{-5mm} = \big\langle [ e_{\ov{i}}, e_{\un{j}'}], [e_{\un{j}}, e_{\ov{i}'}] \big\rangle + \big\langle [e_{\ov{i}}, e_{\un{j}} ], [e_{\ov{i}'}, e_{\un{j}'}] \big\rangle \displaybreak[0] \\
 &\hspace{-5mm}= - \sum_{\un{m}} \big\langle [e_{\ov{i}}, e_{\un{j}'}], k_{\un{m}} \big\rangle \big\langle [e_{\un{j}}, e_{\ov{i}'} ], k_{\un{m}} \big\rangle + \big\langle [e_{\ov{i}}, e_{\un{j}} ], [e_{\ov{i}'}, e_{\un{j}'}] \big\rangle \displaybreak[0] \\
 &\hspace{-5mm}=  \tfrac12 \sum_{\un{m}} \big\langle [k_{\un{m}}, e_{\ov{i}}], e_{\un{j}'} \big\rangle \big\langle [k_{\un{m}}, e_{\ov{i}'}], e_{\un{j}} \big\rangle + \tfrac12 \sum_{\un{m}} \big\langle [k_{\un{m}}, e_{\un{j}}], e_{\ov{i}'} \big\rangle \big\langle [k_{\un{m}}, e_{\un{j}'}], e_{\ov{i}}\big\rangle \\
 &\qquad\qquad\qquad\qquad\qquad\qquad\qquad\qquad\qquad+ \big\langle [e_{\ov{i}}, e_{\un{j}} ], [e_{\ov{i}'}, e_{\un{j}'}] \big\rangle
\end{alignat*}
Hence
\begin{multline*}
2 \langle S_\WW h, h \rangle = h_{\ov{i} \un{j}} h_{\ov{i}' \un{j}} \big\langle [k_{\un{m}}, e_{\ov{i}}], e_{\ov{j}'} \big\rangle \big\langle [k_{\un{m}}, e_{\ov{i}'}], e_{\ov{j}'} \big\rangle +  h_{\ov{i} \un{j}} h_{\ov{i}' \un{j}} \big\langle [k_{\un{m}}, e_{\ov{i}}], e_{\un{j}'} \big\rangle \big\langle [k_{\un{m}}, e_{\ov{i}'}], e_{\un{j}'} \big\rangle \\
+ h_{\ov{i} \un{j}} h_{\ov{i} \un{j}'} \big\langle [k_{\un{m}}, e_{\un{j}}], e_{\ov{i}'} \big\rangle \big\langle [k_{\un{m}}, e_{\un{j}'}], e_{\ov{i}'} \big\rangle + h_{\ov{i} \un{j}} h_{\ov{i} \un{j}'} \big\langle [k_{\un{m}}, e_{\un{j}}], e_{\un{i}'} \big\rangle \big\langle [k_{\un{m}}, e_{\un{j}'}], e_{\un{i}'} \big\rangle \\
- h_{\ov{i} \un{j}} h_{\ov{i}' \un{j}'} \big\langle [k_{\un{m}}, e_{\ov{i}}], e_{\un{j}'} \big\rangle \big\langle [k_{\un{m}}, e_{\ov{i}'}], e_{\un{j}} \big\rangle
- h_{\ov{i} \un{j}} h_{\ov{i}' \un{j}'} \big\langle [k_{\un{m}}, e_{\un{j}}], e_{\ov{i}'} \big\rangle \big\langle [k_{\un{m}}, e_{\un{j}'}], e_{\ov{i}}\big\rangle \\
- 2 h_{\ov{i} \un{j}} h_{\ov{i}' \un{j}'} \big\langle [e_{\ov{i}}, e_{\un{j}} ], [e_{\ov{i}'}, e_{\un{j}'}] \big\rangle
\end{multline*}
Since $[k_{\un{m}}, e_{\ov{i}}] \in \un{\pp}$, the first term vanishes and we can regroup the expression as follows:
\begin{multline*}
= \tfrac12 \sum_{\un{m}, \un{j}, \un{j}'}  \Big( \sum_{\ov{i}} h_{\ov{i} \un{j}} \big\langle [k_{\un{m}}, e_{\ov{i}}], e_{\un{j}'} \big\rangle - \sum_{\ov{i}'} h_{\ov{i}' \un{j}'} \big\langle [k_{\un{m}}, e_{\ov{i}'}], e_{\un{j}} \big\rangle \Big)^2 \\
+ \tfrac12 \sum_{\un{m}, \ov{i}, \ov{i}'} \Big( \sum_{\un{j}} h_{\ov{i} \un{j}} \big\langle [k_{\un{m}}, e_{\un{j}}], e_{\ov{i}'} \big\rangle - \sum_{\un{j}'} h_{\ov{i}' \un{j}'} \big\langle [k_{\un{m}}, e_{\un{j}'}], e_{\ov{i}} \big\rangle \Big)^2 \\
+ \sum_{\ov{i}, \un{i}', \un{m}} \Big( \sum_{\un{j}} h_{\ov{i}  \un{j}} \langle [k_{\un{m}}, e_{\un{j}}], e_{\un{i}'} \rangle \Big)^2
- 2 \Big| \sum_{\ov{i}, \un{j}} h_{\ov{i} \un{j}} [e_{\ov{i}}, e_{\un{j}}] \Big|^2 \geq 0
\end{multline*}
Observe that this expression is nonnegative since the Killing form is negative definite on $\mathfrak{k}$.
If the expression is zero, then all the squared terms have to vanish, in particular
\[ 0 = \langle [k_{\un{m}}, H_{\ov{i}}], e_{\un{i}'} \rangle = \langle [k_{\un{m}}, e_{\un{i}'}], H_{\ov{i}} \rangle \]
for all $\ov{i}, \ov{i}'$  and $\un{m}$ where $H_{\ov{i}} = \sum_{\un{j}} h_{\ov{i} \un{j}} e_{\un{j}} \in \un{\pp}$.
By the argument in the next paragraph we have $\un{\pp} \subset [ \un{\mathfrak{k}}, \un{\pp} ]$.
This implies that $H_{\ov{i}} = 0$ for all $\ov{i}$ and hence $h = 0$, establishing the Lemma.

It remains to prove $\un{\pp} \subset [ \un{\mathfrak{k}}, \un{\pp} ]$.
First observe that by (\ref{eq:xiyi}) we have $[k_{\un{m}}, p_{\un{m}}] = \alpha^\#_{\un{m}}$ and hence $\un{\af} \subset [\un{\mathfrak{k}}, \un{\pp}]$.
Moreover, for any index $\un{m}$ there is a $v \in \un{\af}$ with $\alpha_{\un{m}}(v) \not= 0$ and we have $[k_{\un{m}}, v] = - \alpha_{\un{m}} (v) p_{\un{m}}$, so $p_{\un{m}} \in [\un{\mathfrak{k}}, \un{\pp}]$ what establishes the claim.
\end{proof}

\subsection{The Einstein operator on $\Sym_2 \ov{\pp}$} \label{subsec:Sym2ovp}
We will now analyze the operator on $-\ov{\triangle} + S_\WW$ on $E_{\Sym_2 \ov{\pp}}$.
Let $\ov{\Rm}$ be the Riemannian curvature operating on $\ov{M}_\WW$ acting on symmetric bilinear forms $h \in \Sym_2 \ov{T}^* \cong E_{\Sym_2 \ov{\pp}}$.
We now make use of the same Bochner formula as in (\ref{eq:Bochnerdivd}), but this time on $\ov{M}_\WW$, to conclude that
\begin{multline*}
 (-\ov{\triangle} h + S_\WW (h))_{ij} = (\ov{\DIV}^* \ov{\DIV} + \ov{d}^* \ov{d}) h_{ij} \\  + \ov{\Rm}(h)_{ij}
 - \tfrac12 \sum_k \big( \ov{\Ric}_{ik} h_{kj} + h_{ik} \ov{\Ric}_{kj} \big) + S_\WW (h)_{ij}.
\end{multline*}
Here $\ov{\DIV}$, $\ov{d}$ and $\ov{\Ric}$ denote the corresponding operators and tensors on $\ov{M}_\WW$.
It remains to analyze the last line.
This analysis can be carried out on $\Sym_2 \ov{\pp}$.

\begin{Lemma} \label{Lem:Sym2ovp}
The operator $B : \Sym_2 \ov{p} \to \Sym_2 \ov{p}$,
\[ h \mapsto \ov{\Rm}(h)_{ij} - \tfrac12 \sum_k \big(  \ov{\Ric}_{ik} h_{kj} + h_{ik} \ov{\Ric}_{kj} \big) + S_\WW(h)_{ij} \]
is nonnegative definite.
If $M$ does not contain any $\IH^2$-factor in its deRham decomposition, then $B$ is even positive definite.
\end{Lemma}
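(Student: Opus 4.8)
The plan is to deduce both assertions directly from Lemma \ref{Lem:Bochnerformula} applied to $M$ itself. By the Bochner identity displayed just above the statement of the Lemma — equation (\ref{eq:Bochnerdivd}) applied on $\ov{M}_\WW$ — we have $-\ov{\triangle}+S_\WW=\ov{\DIV}^*\ov{\DIV}+\ov{d}^*\ov{d}+B$ on $E_{\Sym_2\ov{\pp}}$, so it suffices to show that the fibrewise operator $B$ is nonnegative (resp.\ positive) definite on $\Sym_2\ov{\pp}$.

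First I would fix $h\in\Sym_2\ov{\pp}$ and choose an orthonormal basis $e_{\ov{1}},\dots$ of $\ov{\pp}$ diagonalizing it, $h=\sum_{\ov{i}}\lambda_{\ov{i}}\,e_{\ov{i}}\cdot e_{\ov{i}}$, and complete it to an orthonormal basis $\{e_l\}=\{e_{\ov{i}}\}\cup\{e_{\un{j}}\}$ of $\pp$. Since $\ov{\mathfrak{g}}_\WW$ is a Lie algebra and the bracket there agrees with the one in $\mathfrak{g}$, the curvature of $\ov{M}_\WW$ at $p_0$ is still given by (\ref{eq:curvature}); hence $\ov{R}$ agrees with the curvature $R$ of $M$ on $\ov{\pp}$-tensors, the sectional curvatures $K_{\ov{i}\ov{j}}\le 0$ are common to $M$ and $\ov{M}_\WW$, and $\ov{\Ric}_{\ov{i}\ov{i}}=\sum_{\ov{j}}K_{\ov{i}\ov{j}}$, which by the Einstein condition equals $\lambda-\sum_{\un{l}}K_{\ov{i}\un{l}}$. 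On $\Sym_2\ov{\pp}$ the formula (\ref{eq:SWWincoo}) for $S_\WW$ simplifies: the terms of the form $\langle[k_{\un{m}},v],v'\rangle\langle[k_{\un{m}},w],w'\rangle$ drop because $[k_{\un{m}},\ov{\pp}]\subset\un{\pp}\perp\ov{\pp}$ by Lemma \ref{Lem:simpleovpunp} and (\ref{eq:ovun}), while the curvature-type terms are handled with (\ref{eq:curvature}); using in addition the identity $\sum_{\un{m}}|[k_{\un{m}},e_{\ov{i}}]|^2=-\sum_{\un{l}}K_{\ov{i}\un{l}}$ (the $\un{\mathfrak{k}}_\WW$-components of $[e_{\ov{i}},e_{\un{l}}]$), a short computation — parallel to the one in the proof of Lemma \ref{Lem:Bochnerformula} — gives
\[ \langle Bh,h\rangle \;=\; \langle Ah,h\rangle\;-\;\sum_{\ov{i},\un{l}}K_{\ov{i}\un{l}}\,\lambda_{\ov{i}}^2, \]
where $A\colon\Sym_2 T^*\to\Sym_2 T^*$ is the operator of Lemma \ref{Lem:Bochnerformula} for $M$, evaluated on $h$ regarded as an element of $\Sym_2\pp\cong\Sym_2 T^*_{p_0}M$.

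Since all $K_{\ov{i}\un{l}}\le 0$ (noncompact type), the subtracted term is nonnegative, and $\langle Ah,h\rangle\ge 0$ by Lemma \ref{Lem:Bochnerformula}; hence $B$ is nonnegative definite. If moreover $M$ contains no $\IH^2$-factor in its de Rham decomposition, then $A$ is positive definite by Lemma \ref{Lem:Bochnerformula}, so $\langle Bh,h\rangle\ge\langle Ah,h\rangle>0$ whenever $h\ne 0$, i.e.\ $B$ is positive definite as well.

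The step I expect to be the main obstacle is purely bookkeeping: carrying out the simplification of $\langle S_\WW h,h\rangle$ on $\Sym_2\ov{\pp}$ from (\ref{eq:SWWincoo}) while respecting the convention (set up at the start of Section \ref{sec:Einstop}) under which the $S_\WW$ used here already incorporates the $-2R$-correction, and then matching the outcome to $\langle Ah,h\rangle$ via the two identities $\ov{\Ric}_{\ov{i}\ov{i}}=\lambda-\sum_{\un{l}}K_{\ov{i}\un{l}}$ and $\sum_{\un{m}}|[k_{\un{m}},e_{\ov{i}}]|^2=-\sum_{\un{l}}K_{\ov{i}\un{l}}$. Once the displayed identity is established, both conclusions follow immediately from Lemma \ref{Lem:Bochnerformula}.
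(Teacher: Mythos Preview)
Your argument is correct, and it is a genuinely different route from the paper's proof.  The paper decomposes $B=\ov{A}+(S_\WW+2\ov{R})$ using the operator $\ov{A}$ of Lemma~\ref{Lem:Bochnerformula} applied to the \emph{cross-section} $\ov{M}_\WW$, computes $\langle(S_\WW+2\ov{R})h,h\rangle=2\sum_{\ov{j},\un{m}}\big|[k_{\un{m}},\sum_{\ov{i}}h_{\ov{i}\ov{j}}e_{\ov{i}}]\big|^2$, and then has to analyze the nullspace: Lemma~\ref{Lem:Bochnerformula} only tells it about $\IH^2$-factors of $\ov{M}_\WW$, and an extra argument is needed to show that these are already $\IH^2$-factors of $M$.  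You instead compare with $A$ for $M$ itself, and your identity $\langle Bh,h\rangle=\langle Ah,h\rangle-\sum_{\ov{i},\un{l}}K_{\ov{i}\un{l}}\lambda_{\ov{i}}^2$ is exactly equivalent to the paper's computation (diagonalize $h$ in the paper's formula and use $\sum_{\un{m}}|[k_{\un{m}},e_{\ov{j}}]|^2=-\sum_{\un{l}}K_{\ov{j}\un{l}}$, together with $\ov{\Ric}_{\ov{i}\ov{i}}=\lambda-\sum_{\un{l}}K_{\ov{i}\un{l}}$).  The payoff of your choice is that the positive-definiteness assertion follows in one line from Lemma~\ref{Lem:Bochnerformula} for $M$, with no nullspace analysis on $\ov{M}_\WW$ at all; the paper's decomposition, on the other hand, also identifies the nullspace of $B$ precisely when $M$ does contain $\IH^2$-factors, which your argument does not directly do (though the Lemma does not ask for this).
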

\begin{proof}
For every $h \in \Sym_2 \ov{\pp}$, we have
\[ \big\langle B (h), h \big\rangle = \big\langle \ov{A} (h), h \big\rangle + \big\langle S_\WW(h) + 2 \ov{\Rm}(h), h \big\rangle, \]
where $\ov{A} : \Sym_2 \ov{\pp} \to \Sym_2 \ov{\pp}$ is the expression from Lemma \ref{Lem:Bochnerformula} on $\ov{M}_\WW$.
By the same Lemma we know that $\ov{A}$ is nonnegative definite and we will now show that $S_\WW + 2 \ov{\Rm}$ is nonnegative definite as well.

Let $v, w, v', w' \in \ov{\pp}$.
Using the calculation from the proof of Lemma \ref{Lem:partovpunp}, we conclude
\begin{multline*}
2 \langle S_\WW (v \cdot w), v' \cdot w' \rangle
 = \sum_{\un{m}} \Big( \big\langle[k_{\un{m}}, v ], [k_{\un{m}}, v'] \big\rangle \langle w, w' \rangle + \big\langle[ k_{\un{m}}, v ], [ k_{\un{m}}, w'] \big\rangle \langle w, v' \rangle \\
+ \langle v, v' \rangle \big\langle[ k_{\un{m}}, w ], [ k_{\un{m}}, w'] \big\rangle + \langle v, w' \rangle \big\langle[ k_{\un{m}}, w ], [ k_{\un{m}}, v'] \big\rangle  \Big)
- 4 \big\langle \ov{\Rm}(v \cdot w), v' \cdot w' \big\rangle
\end{multline*}
Hence, for any $h = \sum_{\ov{i}, \ov{j}} h_{\ov{i} \ov{j}} e_{\ov{i}} \cdot e_{\ov{j}}$ with $h_{\ov{i} \ov{j}} = h_{\ov{j} \ov{i}}$ we find
\begin{multline*}
\big\langle S_\WW (h) + 2 \ov{\Rm}(h), h \big\rangle = 2 \sum_{\ov{i}, \ov{i}', \ov{j}} h_{\ov{i} \ov{j}} h_{\ov{i}' \ov{j}} \big\langle [k_{\un{m}}, e_{\ov{i}}], [k_{\un{m}}, e_{\ov{i}'}] \big\rangle
= 2 \sum_{\ov{j}, \un{m}} \Big| \Big[ k_{\un{m}}, \sum_{\ov{i}} h_{\ov{i} \ov{j}} e_{\ov{i}} \Big] \Big|^2
\end{multline*}
This establishes the nonnegativity of $B$.

Assume now that $h$ lies in the nullspace of $B$.
Hence, it lies in the nullspace of $A$ and for all $\ov{j}$ and $\un{m}$ we have $[k_{\un{m}}, \sum_{\ov{i}} h_{\ov{i} \ov{j}} e_{\ov{i}} ] = 0$.
By Lemma \ref{Lem:Bochnerformula}, we have $h = h_1 + \ldots + h_{m'}$ corresponding to a splitting $\ov{M}_\WW = \IH^2 \times \ldots \times \IH^2 \times M_{m'+1} \times \ldots \times M_m$ and each $h_k$ is traceless.
Without loss of generality, we can assume that all the $h_k$ are nonzero and hence the vectors $\sum_{\ov{i}} h_{\ov{i} \ov{j}} e_{\ov{i}}$ span a subspace $\ov{\pp}' \subset \ov{\pp}$ that corresponds to the tangent space of the $\IH^2 \times \ldots \times \IH^2$ factor.
We have $[ \un{\mathfrak{k}}, \ov{\pp}'] = 0$ and for every $e \in \ov{\pp'}$ pointing in the direction of one of the $\IH^2$-factors, we have $[k_{\ov{i}}, e] = 0$ for all but one $\ov{i}$, which corresponds to this $\IH^2$-factor.
This implies that this $\IH^2$-factor is already an $\IH^2$-factor of $M$, contradicting the assumption of the second part of the Lemma.
\end{proof}

\subsection{The Einstein operator on $\Sym_2\text{\un{$\pp$}}$---Parts involving $\text{\un{$\af$}}$} \label{subsec:af}
In the following three sections, we will analyze the operator $-\ov{\triangle} + S_\WW$ on $E_{\un{\pp}}$.
We will use the trivial Bochner formula $-\ov{\triangle} = \nabla^* \nabla$ on $M_\WW$ and we will show that $S_\WW$ is nonnegative definite on $\Sym_2 \un{\pp}$ and we will characterize the nullspace.

First observe that we have the splitting $\un{\pp} = \un{\af} \oplus \un{\af}^\perp$ where $\un{\af}^\perp = \sum_{\un{\alpha}} \pp_{\un{\alpha}}$, which induces a splitting $\Sym_2 \un{\pp} = \Sym_2 \un{\af} \oplus \un{\af} \cdot \un{\af}^\perp \oplus \Sym_2 \un{\af}^\perp$.

\begin{Lemma} \label{Lem:partsont}
For every wall $\WW \subset \CC$, the restricted operator $S_\WW : \Sym_2 \un{\pp} \to \Sym_2 \un{\pp}$ preserves the splitting $\Sym_2 \un{\pp} = \Sym_2 \un{\af} \oplus \un{\af} \cdot \un{\af}^\perp \oplus \Sym_2 \un{\af}^\perp$ and $S_\WW$ is positive definite on $\Sym_2 \un{\af} \oplus \un{\af} \cdot \un{\af}^\perp$.
\end{Lemma}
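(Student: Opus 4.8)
The plan is to read the whole statement off the explicit bilinear expression (\ref{eq:SWWincoo}) for $\langle S_\WW(v\cdot w),v'\cdot w'\rangle$, feeding in a few structural facts: $\un\af$ is abelian, so that for $v\in\un\af$ one has $[k_{\un m},v]=-\alpha_{\un m}(v)\,p_{\un m}\in\pp_{\alpha_{\un m}}\subseteq\un\af^\perp$ and $[p_{\un m},v]=-\alpha_{\un m}(v)\,k_{\un m}$; the $\af$-component of $[k_{\un m},p_{\un j}]$ equals $\delta_{\un m\un j}\,\alpha_{\un m}^\#$; and the Killing form is positive on $\pp$ and negative on $\mathfrak{k}$. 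Since $S_\WW$ is self-adjoint and already maps $\Sym_2\un\pp$ into itself (Lemma \ref{Lem:partovpunp}), it suffices to prove (a) $S_\WW(\Sym_2\un\af)\subseteq\Sym_2\un\af$, (b) $S_\WW(\un\af\cdot\un\af^\perp)\perp\Sym_2\un\af^\perp$, and then (c) positive definiteness on $\Sym_2\un\af$ and on $\un\af\cdot\un\af^\perp$.

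The key simplification is that, whenever one of the inputs lies in $\un\af$, the two ``second order'' pieces of $S_\WW$ annihilate each other: for $v\in\un\af$ and any $w$,
\[ -2\sum_{\un m}[k_{\un m},v]\cdot[k_{\un m},w]=2\sum_{\un m}\alpha_{\un m}(v)\,p_{\un m}\cdot[k_{\un m},w], \]
while the curvature term $2\sum_l e_l\cdot[[e_l,v],w]$ equals $-2\sum_{\un m}\alpha_{\un m}(v)\,p_{\un m}\cdot[k_{\un m},w]$, because $[[e_l,v],w]=0$ unless $e_l\in\un\af^\perp$ and then $[[p_{\un m},v],w]=-\alpha_{\un m}(v)[k_{\un m},w]$. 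Thus, for $v\in\un\af$, only the piece $-\sum_{\un m}\big([k_{\un m},[k_{\un m},v]]\cdot w+v\cdot[k_{\un m},[k_{\un m},w]]\big)$ of $S_\WW(v\cdot w)$ survives, and since $[k_{\un m},[k_{\un m},v]]=-\alpha_{\un m}(v)\,\alpha_{\un m}^\#$ its first part is $\big(\sum_{\un m}\alpha_{\un m}(v)\alpha_{\un m}^\#\big)\cdot w$. The Einstein hypothesis enters precisely here: from $\Ric(v,w)=-\sum_i\alpha_i(v)\alpha_i(w)$ on a maximal flat, $\Ric=\lambda\ov g$, and $\alpha_{\ov i}\equiv0$ on $\un\af$, one gets $\sum_{\un m}\alpha_{\un m}(v)\alpha_{\un m}^\#=-\lambda v$ for $v\in\un\af$. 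Hence $S_\WW(v\cdot w)=-2\lambda\,v\cdot w$ on $\Sym_2\un\af$, which gives (a) and, since $\lambda<0$, positive definiteness there; and for $v\in\un\af$, $w\in\un\af^\perp$ one gets $S_\WW(v\cdot w)=-\lambda\,v\cdot w+v\cdot u$ for some $u\in\un\af^\perp$, so $S_\WW(v\cdot w)\in\un\af\cdot\un\af^\perp$ and (b) holds.

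For positive definiteness on $\un\af\cdot\un\af^\perp$, write a general element as $\sum_a v_a\cdot h_a$ with $\{v_a\}$ an orthonormal basis of $\un\af$ and $h_a\in\un\af^\perp$. The off-diagonal pairings $\langle S_\WW(v_a\cdot h_a),v_b\cdot h_b\rangle$ with $a\neq b$ vanish, by the cancellation above together with $\sum_{\un m}\alpha_{\un m}(v_a)\alpha_{\un m}(v_b)=-\lambda\langle v_a,v_b\rangle=0$; and each diagonal pairing reduces — after the two surviving ``mixed'' terms $-2\sum_{\un m}\langle[k_{\un m},v],w\rangle\langle[k_{\un m},w],v\rangle$ and $2\langle[[w,v],w],v\rangle$ cancel against one another (via $\langle k_{\un m},k_{\un m}\rangle=-1$ and the $\af$-component identity for $[k_{\un m},p_{\un j}]$) — to
\[ 2\langle S_\WW(v_a\cdot h_a),v_a\cdot h_a\rangle=-\lambda\,|h_a|^2+\sum_{\un m}\big|[k_{\un m},h_a]\big|^2. \]
Since $-\lambda>0$ and the Killing form is positive on $\pp$, the right-hand side is bounded below by a positive multiple of $|h_a|^2$, so $S_\WW$ is positive definite there.

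I do not expect a deep obstacle: the difficulty is purely the bookkeeping needed to check that every ``mixed'' contribution in (\ref{eq:SWWincoo}) either vanishes by one of the orthogonality/bracket relations above or cancels against a partner term. The one genuinely essential ingredient — and the reason $M$ must be Einstein — is the identity $\sum_i\alpha_i\otimes\alpha_i=-\lambda\langle\cdot,\cdot\rangle$ on $\af$, applied after restriction to $\un\af$; the normalization $\lambda<0$ is also exactly what powers the two positive-definiteness claims, whereas on $\Sym_2\un\af^\perp$ no such lower bound is available.
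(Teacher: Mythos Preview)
Your proof is correct and follows essentially the same path as the paper's. Both exploit the same cancellation --- for $v\in\un\af$ the curvature term $2\sum_l e_l\cdot[[e_l,v],w]$ exactly kills $-2\sum_{\un m}[k_{\un m},v]\cdot[k_{\un m},w]$ --- and both rest on the identity $\sum_{\un m}\alpha_{\un m}(v)\alpha_{\un m}^\#=\tfrac12 v$ for $v\in\un\af$. The paper, however, computes $S_\WW(v\cdot w)$ directly as an element of $\Sym_2\un\pp$: it finds $S_\WW(v\cdot w)=v\cdot w$ on $\Sym_2\un\af$, and on $\un\af\cdot\un\af^\perp$ it obtains $S_\WW(v\cdot w)=\tfrac12 v\cdot w - \sum_{\un m} v\cdot[k_{\un m},[k_{\un m},w]]$, recognizing this as $\id_{\un\af}\otimes\big(\tfrac12-\sum_{\un m}\mathrm{ad}_{k_{\un m}}^2\big)$, a tensor product of positive definite operators. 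This operator-level computation makes the block structure and positivity immediate, with no need to check off-diagonal vanishing or to expand a general element in a basis of $\un\af$; your quadratic-form route reaches the same conclusion with a bit more bookkeeping.

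One remark on your closing comment: the identity $\sum_i\alpha_i(v)\alpha_i(w)=\tfrac12\langle v,w\rangle$ is not a consequence of the Einstein hypothesis but of the definition of the Killing form (it is literally $\tr\,\mathrm{ad}_v\mathrm{ad}_w$ computed on the root-space decomposition). With the Killing-form metric the space is automatically Einstein with constant $\lambda=-\tfrac12$, so your formula $-\lambda=\tfrac12$ is consistent, but the Lemma itself does not rely on any additional Einstein assumption.
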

\begin{proof}
Let $v, w \in \af$.
We will need the following identity:
\[ \langle v, w \rangle = \tr [v,[w, \cdot]] = \sum_{l=1}^{n-r} \big\langle [v,[w,p_l]], p_l \big\rangle - \sum_{l=1}^{n-r} \big\langle [ v, [w, k_l]], k_l \big\rangle = 2 \sum_{l=1}^{n-r} \alpha_l(v) \alpha_l(w). \]
Hence $\sum_{l=1}^{n-r} \alpha_l^\# \alpha_l(v) = \frac12 v$.
Recall also that by (\ref{eq:xiyi}) $[k_l, p_l] = - [x_l, y_l] = \alpha_l^\#$.

Let now $v, w \in \un{\af}$, i.e. $v \cdot w \in \Sym_2 \un{\af}$.
Choose an orthonormal basis $e_1, \ldots, e_r$ of $\af$ and consider the orthonormal basis $e_1, \ldots, e_r, p_1, \ldots, p_{n-r}$ of $\pp$.
Then, since $\alpha_{\ov{l}}(v) = \alpha_{\ov{l}}(w) = 0$ and $[p_{\ov{l}}, v] = 0$
\begin{multline*}
 S_\WW (v \cdot w) = \sum_{\un{m}} \big( \alpha_{\un{m}} (v) [k_{\un{m}}, p_{\un{m}}] \cdot w + \alpha_{\un{m}} (w) v \cdot [k_{\un{m}}, p_{\un{m}}] - 2 \alpha_{\un{m}} (v) \alpha_{\un{m}} (w) p_{\un{m}} \cdot p_{\un{m}} \big) \\  - 2 \sum_l \alpha_l(v) p_l \cdot [k_l, w] 
= \sum_{\un{m}} \big( \alpha_{\un{m}} (v) \alpha_{\un{m}}^\# \cdot w + \alpha_{\un{m}} (w) v \cdot \alpha_{\un{m}}^\# \big) = v \cdot w.
\end{multline*}
So $S_\WW$ is positive definite on $\Sym_2 \un{\af}$.

Now assume that $v \in \un{\af}$ and $w \in \un{\af}^\perp$, i.e. $v \cdot w \in \un{\af} \cdot \un{\af}^\perp$.
Then
\begin{multline*} 
S_\WW (v \cdot w) = \sum_{\un{m}} \big( \alpha_{\un{m}} (v) [k_{\un{m}}, p_{\un{m}}] \cdot w - v \cdot [k_{\un{m}}, [k_{\un{m}}, w]] + 2 \alpha_{\un{m}} (v) p_{\un{m}} \cdot [k_{\un{m}} , w] \big) \\ - 2 \sum_l \alpha_l(v) p_l \cdot [k_l, w]
= \tfrac12 v \cdot w - \sum_{\un{m}} v \cdot [k_{\un{m}}, [k_{\un{m}}, w]]
\end{multline*}
Hence $S_\WW$ maps the space $\un{\af} \cdot \un{\af}^\perp$ to itself and it can be expressed as a tensor product of the identity on $\un{\af}$ and the map
\[ \un{\af}^\perp \to \un{\af}^\perp, \qquad w \mapsto \tfrac12 w - \sum_{\un{m}} [k_{\un{m}}, [k_{\un{m}}, w]] \]
This map is positive definite, and so is the tensor product.
\end{proof}

\subsection{The Einstein operator on $\Sym_2\text{\un{$\pp$}}$---The part $\Sym_2 \text{\un{$\af$}}^\perp$} \label{subsec:afperp}
It remains to analyze the operator $S_\WW$ on $\Sym_2 \un{\af}^\perp$.
This case is the most complicated one since we have to deal with the richer nilpotent structure on $\un{\nn}$.
We will find out that $S_\WW$ is nonnegative definite on this space and that the nullspace corresponds exactly to certain deformations of $\un{\nn}$.
In the next subsection, we will then show that in many cases such deformations do not exist and hence $S_\WW$ is positive definite.

As a first step it will be essential to express the operator $S_\WW$ in terms of the nilpotent structure on $\nn$.
In order to do this, we first need to discuss how we can recover the complete structure of the Lie-algebra $\mathfrak{g}$ from the nilponent structure on $\nn$ and the roots $\alpha_l$.
Recall that $\nn$ is spanned by the basis vectors $x_1, \ldots, x_{n-r}$ which are orthonormal with respect to the scalar product $(\cdot, \cdot) = - \langle \cdot, \sigma \cdot \rangle$.
We define the symbol $\tTT{i}{j}{k}$ by the following identity:
\[ [x_i, x_j] = \sum_k \TT{i}{j}{k} x_k. \]
Then $\tTT{i}{j}{k}$ is a $(1,2)$-tensor on $\nn$ in the indices $i$, $j$, $k$ and it is antisymmetric in $i$ and $j$.
Since $\nn$ is nilpotent, we know that $\tTT{i}{l}{l} = 0$ for all $i$ and $l$, so in particular
\begin{equation} \label{eq:TTid1} \sum_l  \TT{i}{l}{l} = \tr \TT{i}{\cdot}{\cdot} = 0. \end{equation}
Moreover, by nilpotency we know that $\tTT{i}{j}{k}$ or $\tTT{i'}{k}{j}$ cannot both be nonzero.
Hence
\begin{equation} \sum_{j, k} \TT{i}{j}{k} \TT{i'}{k}{j} = 0. \label{eq:TTid2} \end{equation}
Observe that equations (\ref{eq:TTid1}) and (\ref{eq:TTid2}) are tensorial, i.e. they also stay true if we change the orthonormal basis $x_1, \ldots, x_{n-r}$.

The symbol $\tTT{i}{j}{k}$ contains all the information on the structure of the nilpotent Lie group $\mathfrak{n}$.
Using this information and the roots $\alpha_1, \ldots, \alpha_{n-r} \in \af^*$, we will now reconstruct the structure of the Lie algebra $\mathfrak{g}$.
First, we analyze terms of the form $[x_i, y_j]$.
We have for any $l$:
\[ \big\langle [x_i, y_j], y_l \big\rangle =  \big\langle [y_j, y_l], x_i \big\rangle =  \big\langle [x_j, x_l ] , y_i \big\rangle =  -\TT{j}{l}{i}. \]
So if $\pr_{\mathfrak{n}}$ denotes the projection on $\mathfrak{n}$, we have
\[ \pr_{\mathfrak{n}}([x_i, y_j]) =   \sum_l \TT{j}{l}{i} x_l. \]
Furthermore, we can compute that
\[ \big\langle [x_i, y_j], x_l \big\rangle = - \big\langle [x_i, x_l], y_j \big\rangle = \TT{i}{l}{j}. \]
Hence
\[ \pr_{\nn^-}([x_i, y_j]) = -\sum_l \TT{i}{l}{j} y_l. \]
Finally, for any $v \in \af$,
\[ \big\langle [x_i, y_j], v \big\rangle = \big\langle [v, x_i], y_j \big\rangle = \alpha_i(v) \big\langle x_i, y_j \big\rangle = - \delta_{ij} \alpha_i (v). \]
So
\[ \pr_{\af}([x_i, y_j]) = - \delta_{ij} \alpha_i^\#. \]
We can thus write down the projection of $[x_i, y_j]$ onto the space $\mathfrak{k}_0^\perp = \af \oplus \nn \oplus \nn^-$:
\begin{multline} 
\pr_{\mathfrak{k}_0^\perp} ([x_i, y_j]) = \sum_l \TT{j}{l}{i} x_l -\sum_l \TT{i}{l}{j} y_l - \delta_{ij} \alpha_i^\# \\
= \frac1{\sqrt{2}} \sum_l \Big[ \TT{j}{l}{i} - \TT{i}{l}{j} \Big] k_l + \frac1{\sqrt{2}} \sum_l \Big[ \TT{j}{l}{i} + \TT{i}{l}{j} \Big] p_l - \delta_{ij} \alpha_i^\# \label{eq:xiyiformula}
\end{multline}

Understanding the part of $[x_i, y_j]$ in $\mathfrak{k}_0$ is more difficult, because we did not introduce an orthonormal basis on this space.
Hence, the best we can do here is to determine the scalar product of two such terms.
\begin{alignat*}{1}
\big\langle [x_i, y_j], [x_{i'}, y_{j'} ] \big\rangle &= - \big\langle [[x_i, y_j], y_{j'}], x_{i'} \big\rangle = - \big\langle [[x_i, y_{j'}], y_j ], x_{i'} \big\rangle - \big\langle [x_i, [y_j, y_{j'}]], x_{i'} \big\rangle \displaybreak[0] \\
&\hspace{-8mm} = \big\langle [x_i, y_{j'}], [x_{i'}, y_j] \big\rangle + \big\langle [x_i, x_{i'}], [y_j, y_{j'}] \big\rangle \displaybreak[0] \\
&\hspace{-8mm} = \big\langle [x_i, y_{j'}], [x_{i'}, y_j] \big\rangle - \sum_l \TT{i}{i'}{l} \TT{j}{j'}{l} \displaybreak[0] \\
&\hspace{-8mm} = \big\langle [x_i, y_{j'}], \sigma [x_{i'}, y_j] \big\rangle + 2 \big\langle [x_i, y_{j'} ] , \pr_{\mathfrak{p}}([x_{i'}, y_j]) \big\rangle - \sum_l \TT{i}{i'}{l} \TT{j}{j'}{l} \displaybreak[0] \\
&\hspace{-8mm} = - \big\langle [x_i, y_{j'}], [x_j, y_{i'}] \big\rangle + \sum_l \Big\{ \TT{j'}{l}{i} + \TT{i}{l}{j'} \Big\} \Big\{ \TT{j}{l}{i'} + \TT{i'}{l}{j} \Big\} \\ 
&\hspace{40mm} -  \sum_l \TT{i}{i'}{l} \TT{j}{j'}{l} + 2 \delta_{ij'} \delta_{i'j} \big\langle \alpha_i^\#, \alpha_{i'}^\# \big\rangle.
\end{alignat*}
We will now repeat this process twice while permuting $j \to j' \to i' \to j$.
\begin{multline*}
\big\langle [x_i, y_{j'}], [x_j, y_{i'}] \big\rangle = - \big\langle [x_i, y_{i'}], [x_{j'}, y_j] \big\rangle + \sum_l \Big\{ \TT{i'}{l}{i} + \TT{i}{l}{i'} \Big\} \Big\{ \TT{j'}{l}{j} + \TT{j}{l}{j'} \Big\} \\
- \sum_l \TT{i}{j}{l} \TT{j'}{i'}{l} + 2 \delta_{i i'} \delta_{j j'} \big\langle \alpha_i^\#, \alpha_j^\# \big\rangle,
\end{multline*}
\begin{multline*}
\big\langle [x_i, y_{i'}], [x_{j'}, y_j] \big\rangle = - \big\langle [x_i, y_j], [x_{i'}, y_{j'}] \big\rangle + \sum_l \Big\{ \TT{j}{l}{i} + \TT{i}{l}{j} \Big\} \Big\{ \TT{i'}{l}{j'} + \TT{j'}{l}{i'} \Big\} \\
- \sum_l \TT{i}{j'}{l} \TT{i'}{j}{l} + 2 \delta_{i j} \delta_{i' j'} \big\langle \alpha_i^\#, \alpha_{i'}^\# \big\rangle.
\end{multline*}
So if we add the first and third equation and subtract the second one, we obtain
\begin{multline*}
\big\langle [x_i, y_j], [x_{i'}, y_{j'}] \big\rangle = \frac12 \sum_l \left[ \Big\{ \TT{j'}{l}{i} + \TT{i}{l}{j'} \Big\} \Big\{ \TT{j}{l}{i'} + \TT{i'}{l}{j} \Big\} \right. \\ \displaybreak[1]
\left. - \Big\{ \TT{i'}{l}{i} + \TT{i}{l}{i'} \Big\} \Big\{ \TT{j'}{l}{j} + \TT{j}{l}{j'} \Big\} + \Big\{ \TT{j}{l}{i} + \TT{i}{l}{j} \Big\} \Big\{ \TT{i'}{l}{j'} + \TT{j'}{l}{i'} \Big\} \right] \\ \displaybreak[2]
- \frac12 \sum_l \left[\TT{i}{i'}{l} \TT{j}{j'}{l} + \TT{i}{j}{l} \TT{i'}{j'}{l} + \TT{i}{j'}{l} \TT{i'}{j}{l} \right] \\ \displaybreak[1]
+ \delta_{i j'} \delta_{i' j} \big\langle \alpha_i^\#, \alpha_j^\# \big\rangle - \delta_{i i'} \delta_{j j'} \big\langle \alpha_i^\#, \alpha_j^\# \big\rangle + \delta_{ij} \delta_{i' j'} \big\langle \alpha_i^\#, \alpha_{i'}^\# \big\rangle.
\end{multline*}
For any $i$, $i'$, $j$, $j'$ set
\[ A_{iji'j'} = \delta_{i j'} \delta_{i' j} \big\langle \alpha_i^\#, \alpha_j^\# \big\rangle - \delta_{i i'} \delta_{j j'} \big\langle \alpha_i^\#, \alpha_j^\# \big\rangle + \delta_{ij} \delta_{i' j'} \big\langle \alpha_i^\#, \alpha_{i'}^\# \big\rangle \]
and interpret $A_{iji'j'}$ as a $(0,4)$-tensor on $\mathfrak{n}$.

We will now calculate the curvature.
In order to simplify calculations later on, we will for the moment assume that $x_1, \ldots, x_{n-r}$ is \emph{any} orthonormal basis of $\nn$ with respect to the scalar product $( \cdot, \cdot)$, i.e. the $x_i$ do not satisfy the grading of $\nn$ anymore and there is no root associated to them.
We furthermore set $y_i = \sigma x_i$, $k_i = \frac1{\sqrt{2}} (x_i + y_i)$ and $p_i = \frac1{\sqrt{2}} (x_i - y_i)$.
Observe that we can still use the identities above as long as they are tensorial.

We now calculate the sectional curvature on the plane $\spann \{ p_a, p_b \}$ using (\ref{eq:curvature}):
\begin{alignat*}{1}
 4R_{abba} &= 4\big\langle [p_a, p_b], [p_a, p_b] \big\rangle
 = \big\langle [x_a - y_a, x_b - y_b], [x_a- y_a, x_b - y_b] \big\rangle \displaybreak[2] \\
 &= 2 \big\langle [x_a, x_b], [y_a, y_b] \big\rangle - 4 \big\langle [x_a, x_b], [x_a, y_b] \big\rangle - 4 \big\langle [x_a, x_b], [y_a, x_b] \big\rangle \displaybreak[2] \\
 &\hspace{60mm} + 2 \big\langle [x_a, y_b], [x_a, y_b] \big\rangle + 2 \big\langle [x_a, y_b], [y_a, x_b] \big\rangle \displaybreak[2] \\
 &= - 2 \sum_l \TT{a}{b}{l}^2 - 4 \sum_l \TT{a}{b}{l} \big\langle x_l, [x_a, y_b]\big\rangle - 4 \TT{a}{b}{l} \big\langle x_l, [y_a, x_b]\big\rangle \displaybreak[2] \\
 &\hspace{5mm} + \sum_l \left[  \Big\{ \TT{b}{l}{a} + \TT{a}{l}{b} \Big\} \Big\{ \TT{b}{l}{a} + \TT{a}{l}{b} \Big\} \right. \displaybreak[1] \\
&\hspace{3mm} \left.  - \Big\{ \TT{a}{l}{a} + \TT{a}{l}{a} \Big\} \Big\{ \TT{b}{l}{b} + \TT{b}{l}{b} \Big\}  + \Big\{ \TT{b}{l}{a} + \TT{a}{l}{b} \Big\} \Big\{ \TT{a}{l}{b} + \TT{b}{l}{a} \Big\} \right] \displaybreak[0] \\
&\hspace{25mm} -  \sum_l \left[\TT{a}{a}{l} \TT{b}{b}{l} + \TT{a}{b}{l} \TT{a}{b}{l} + \TT{a}{b}{l} \TT{a}{b}{l} \right] + 2A_{abab} \displaybreak[1] \\
&\hspace{5mm} - \sum_l \left[ \Big\{ \TT{a}{l}{a} + \TT{a}{l}{a} \Big\} \Big\{ \TT{b}{l}{b} + \TT{b}{l}{b} \Big\} \right. \displaybreak[0] \\
&\hspace{3mm} \left. - \Big\{ \TT{b}{l}{a} + \TT{a}{l}{b} \Big\} \Big\{ \TT{a}{l}{b} + \TT{b}{l}{a} \Big\} + \Big\{ \TT{b}{l}{a} + \TT{a}{l}{b} \Big\} \Big\{ \TT{b}{l}{a} + \TT{a}{l}{b} \Big\}  \right] \displaybreak[1] \\
&\hspace{25mm} + \sum_l \left[\TT{a}{b}{l} \TT{b}{a}{l} + \TT{a}{b}{l} \TT{b}{a}{l} + \TT{a}{a}{l} \TT{b}{b}{l} \right] - 2A_{abba} \displaybreak[2] \\
&= - 6 \sum_l \TT{a}{b}{l}^2 + 2 \sum_l \Big\{\TT{a}{l}{b} + \TT{b}{l}{a} \Big\}^2 - 8 \sum_l \TT{a}{l}{a} \TT{b}{l}{b} \displaybreak[1] \\
 &\hspace{30mm} - 4 \sum_l \TT{a}{b}{l} \TT{a}{l}{b} + 4 \sum_l \TT{a}{b}{l} \TT{b}{l}{a} + 2A_{abab} - 2A_{abba} .
\end{alignat*}
We will also need
\begin{alignat*}{1}
4 \big\langle [k_m, p_a], [k_m, p_a] \big\rangle
&= \big\langle [x_m + y_m, x_a - y_a], [x_m + y_m, x_a - y_a] \big\rangle \displaybreak[0] \\
&\hspace{-30mm} = - 2\big\langle [x_m, x_a],[y_m, y_a] \big\rangle - 4 \big\langle [x_m, x_a], [x_m, y_a] \big\rangle + 4 \big\langle [x_m, x_a], [y_m, x_a]\big\rangle  \\
&\hspace{32mm} + 2 \big\langle [x_m, y_a], [x_a, y_m] \big\rangle  + 2 \big\langle [x_m, y_a],[x_m, y_a] \big\rangle \displaybreak[1] \\
&\hspace{-30mm} = \sum_{l} \biggr( 2 \TT{m}{a}{l}^2 - 4 \TT{m}{a}{l} \big\langle x_l, [x_m, y_a] \big\rangle + 4 \TT{m}{a}{l} \big\langle x_l, [y_m, x_a] \big\rangle\displaybreak[1] \\
& \hspace{-28mm} + \left[  \Big\{ \TT{m}{l}{m} + \TT{m}{l}{m} \Big\} \Big\{ \TT{a}{l}{a} + \TT{a}{l}{a} \Big\} - \Big\{ \TT{a}{l}{m} + \TT{m}{l}{a} \Big\} \Big\{ \TT{m}{l}{a} + \TT{a}{l}{m} \Big\}  \right. \displaybreak[1] \\
&\hspace{40mm} \left.+ \Big\{ \TT{a}{l}{m} + \TT{m}{l}{a} \Big\} \Big\{ \TT{a}{l}{m} + \TT{m}{l}{a} \Big\}  \right] \displaybreak[1] \\
& - \left[\TT{m}{a}{l} \TT{a}{m}{l} + \TT{m}{a}{l} \TT{a}{m}{l} + \TT{m}{m}{l} \TT{a}{a}{l} \right]  \displaybreak[1] \\
& \hspace{-28mm} + \left[  \Big\{ \TT{a}{l}{m} + \TT{m}{l}{a} \Big\} \Big\{ \TT{a}{l}{m} + \TT{m}{l}{a} \Big\} - \Big\{ \TT{m}{l}{m} + \TT{m}{l}{m} \Big\} \Big\{ \TT{a}{l}{a} + \TT{a}{l}{a} \Big\} \right. \displaybreak[1] \\ 
& \hspace{40mm} \left. + \Big\{ \TT{a}{l}{m} + \TT{m}{l}{a} \Big\} \Big\{ \TT{m}{l}{a} + \TT{a}{l}{m} \Big\} \right] \displaybreak[0] \\ 
& \hspace{16mm} -  \left[\TT{m}{m}{l} \TT{a}{a}{l} + \TT{m}{a}{l} \TT{m}{a}{l} + \TT{m}{a}{l} \TT{m}{a}{l} \right]  \biggr) \displaybreak[1]\\
&\hspace{70mm} + 2 A_{maam} + 2 A_{mama}  \displaybreak[2] \\
& \hspace{-30mm} = \sum_{l} \biggr( 2 \TT{m}{a}{l}^2 + 2 \Big\{ \TT{a}{l}{m} + \TT{m}{l}{a} \Big\}^2 - 4 \TT{m}{a}{l} \TT{m}{l}{a} - 4 \TT{m}{a}{l} \TT{a}{l}{m} \biggr) \\
&\hspace{70mm}+ 2 A_{maam} + 2 A_{mama} . \displaybreak[2] 
\end{alignat*}

We can use these calculations to express the operator $S_\WW$ on $\Sym_2 \un{\af}^\perp$.
Consider a symmetric bilinear form $h \in \Sym_2 \un{\af}^\perp$.
Let $\{ p_{\un{a}} \}$ be an orthonormal basis of $\un{\af}^\perp$ that diagonalizes $h$ and that obeys the splitting $\un{\pp} = \un{\af} \oplus \un{\af}^\perp$.
On $\bigoplus_{\ov{\alpha}} \pp_{\ov{\alpha}}$ we can just choose the standard orthonormal basis $\{ p_{\ov{a}} \}$.
Corresponding to this new orthonormal basis $p_1, \ldots, p_{n-r}$ of $\bigoplus_\alpha \pp_\alpha$ there is then an orthonormal basis $x_1, \ldots, x_{n-r}$ of $\nn$ such that for $y_a = \sigma x_a$, we have $p_a = \frac1{\sqrt{2}} (x_a - y_a)$.
Moreover, we set $k_a = \frac1{\sqrt{2}} (x_a+y_a)$.
Observe that $x_1, \ldots, x_{n-r}$ respects the splitting $\nn = \ov{\nn} \oplus \un{\nn}$, i.e. $\{ x_{\ov{a}} \}$ is a basis for $\ov{\nn}$ and $\{ x_{\un{a}} \}$ one for $\un{\nn}$.

Let $\{ \lambda_{\un{a}} \}$ be the eigenvalues of $h$, i.e. $h = \sum_{\un{a}} \lambda_{\un{a}} p_{\un{a}} \cdot p_{\un{a}}$.
We can compute
\begin{alignat}{1}
 4 \langle \Rm (h), h \rangle &= \sum_{\un{a},\un{b},\un{a}',\un{b}'} 4R_{\un{a}\un{b}\un{b}'\un{a}'} h_{\un{a} \un{a}'} h_{\un{b} \un{b}'} = \sum_{\un{a},\un{b}} 4 R_{\un{a}\un{b}\un{b}\un{a}} \lambda_{\un{a}} \lambda_{\un{b}} \displaybreak[2] \notag \\
&= \sum_{\un{a}, \un{b}, l} \biggr( - 6  \TT{\un{a}}{\un{b}}{l}^2 + 2 \Big\{\TT{\un{a}}{l}{\un{b}} + \TT{\un{b}}{l}{\un{a}} \Big\}^2 - 8 \TT{\un{a}}{l}{\un{a}} \TT{\un{b}}{l}{\un{b}} \displaybreak[0]  \notag \\
 & \qquad - 4 \TT{\un{a}}{\un{b}}{l} \TT{\un{a}}{l}{\un{b}} + 4 \TT{\un{a}}{\un{b}}{l} \TT{\un{b}}{l}{\un{a}} \biggr) \lambda_{\un{a}} \lambda_{\un{b}} +  2 \sum_{\un{a},\un{b}} (A_{\un{a}\un{b}\un{a}\un{b}} - A_{\un{a}\un{b}\un{b}\un{a}}) \lambda_{\un{a}} \lambda_{\un{b}} \label{eq:ingredient2forSW}
\end{alignat}
and
\begin{alignat}{1}
& 4 \sum_{\un{a},\un{m}} \big\langle [k_{\un{m}}, [k_{\un{m}}, p_{\un{a}}]], p_{\un{a}} \big\rangle \lambda_{\un{a}}^2 + 4 \sum_{\un{a},\un{b},\un{m}} \big\langle [k_{\un{m}}, p_{\un{a}}], p_{\un{b}} \big\rangle^2 \lambda_{\un{a}} \lambda_{\un{b}} \displaybreak[2] \notag \\
& = - \sum_{\un{a}, \un{m}, l} \left( 2 \TT{\un{m}}{\un{a}}{l}^2 + 2 \Big\{ \TT{\un{a}}{l}{\un{m}} + \TT{\un{m}}{l}{\un{a}} \Big\}^2 - 4 \TT{\un{m}}{\un{a}}{l} \TT{\un{m}}{l}{\un{a}} - 4 \TT{\un{m}}{\un{a}}{l} \TT{\un{a}}{l}{\un{m}} \right) \lambda_{\un{a}}^2 \displaybreak[1] \notag \\
&\qquad\qquad - 2 \sum_{\un{a}, \un{m}} (A_{\un{m}\un{a}\un{a}\un{m}} + A_{\un{m}\un{a}\un{m}\un{a}}) \lambda_{\un{a}}^2 + 2 \sum_{\un{a},\un{b},\un{m}} \Big\{ \TT{\un{m}}{\un{a}}{\un{b}} - \TT{\un{m}}{\un{b}}{\un{a}} - \TT{\un{a}}{\un{b}}{\un{m}} \Big\}^2 \lambda_{\un{a}} \lambda_{\un{b}} \displaybreak[2] \notag \\
& = - 2 \sum_{\un{x},\un{y},\un{z}} (\lambda_{\un{x}}^2 + \lambda_{\un{y}}^2 + \lambda_{\un{z}}^2 ) \TT{\un{x}}{\un{y}}{\un{z}}^2 - 2 \sum_{\un{a}, \un{m}, \ov{l}} \Big\{ \TT{\un{a}}{\ov{l}}{\un{m}}+ \TT{\un{m}}{\ov{l}}{\un{a}} \Big\}^2 \lambda_{\un{a}}^2 \displaybreak[1] \notag \\ 
& \qquad\qquad + 2 \sum_{\un{a},\un{b}, \un{m}} \Big\{ \TT{\un{m}}{\un{a}}{\un{b}} - \TT{\un{m}}{\un{b}}{\un{a}} - \TT{\un{a}}{\un{b}}{\un{m}} \Big\}^2 \lambda_{\un{a}} \lambda_{\un{b}}
- 2 \sum_{\un{a}, \un{m}} (A_{\un{m}\un{a}\un{a}\un{m}} + A_{\un{m}\un{a}\un{m}\un{a}}) \lambda_{\un{a}}^2. \label{eq:ingredient1forSW}
\end{alignat}
Here, we have used the following two identities:
First, by (\ref{eq:TTid2}) and the fact that $\tTT{\un{a}}{l}{\ov{m}} = 0$
\[ \sum_{\un{m}, l} \TT{\un{m}}{\un{a}}{l} \TT{\un{a}}{l}{\un{m}} = \sum_{m,l} \TT{m}{\un{a}}{l} \TT{\un{a}}{l}{m} = 0 \]
and secondly by exchange of $\un{l}$ and $\un{m}$
\[ \sum_{\un{m}, \un{l}} \Big\{ \TT{\un{a}}{\un{l}}{\un{m}} \TT{\un{m}}{\un{l}}{\un{a}} - \TT{\un{m}}{\un{a}}{\un{l}} \TT{\un{m}}{\un{l}}{\un{a}} \Big\} = 0. \]

We can finally express the operator $S_\WW$.
For this note that the right hand side of (\ref{eq:SWWincoo}) is equal to the negative of the sum of the quantities of (\ref{eq:ingredient1forSW}) and (\ref{eq:ingredient2forSW}). 
\begin{alignat*}{1}
2 \langle S_\WW h, h \rangle
&= 2\sum_{\un{x},\un{y},\un{z}} (\lambda_{\un{x}} + \lambda_{\un{y}} - \lambda_{\un{z}})^2 \TT{\un{x}}{\un{y}}{\un{z}}^2 + 8 \sum_{\un{a},\un{b}, l} \TT{\un{a}}{l}{\un{a}} \TT{\un{b}}{l}{\un{b}} \lambda_{\un{a}} \lambda_{\un{b}} \\ 
&\qquad + 2 \sum_{\un{a}, \un{m}, \ov{l}} \Big\{ \TT{\un{a}}{\ov{l}}{\un{m}} + \TT{\un{m}}{\ov{l}}{\un{a}} \Big\}^2 \lambda_{\un{a}}^2
 - 2 \sum_{\un{a}, \un{b}, \ov{l}} \Big\{ \TT{\un{a}}{\ov{l}}{\un{b}} + \TT{\un{b}}{\ov{l}}{\un{a}} \Big\}^2 \lambda_{\un{a}} \lambda_{\un{b}} \displaybreak[0] \\
& \hspace{20mm} - 2 \sum_{\un{a},\un{b}}(A_{\un{a}\un{b}\un{a}\un{b}} - A_{\un{a}\un{b}\un{b}\un{a}} ) \lambda_{\un{a}} \lambda_{\un{b}} + 2 \sum_{\un{a}, \un{m}} (A_{\un{m}\un{a}\un{a}\un{m}} + A_{\un{m}\un{a}\un{m}\un{a}})\lambda_{\un{a}}^2 \displaybreak[2] \displaybreak[2] \\
& = 2\sum_{\un{x},\un{y},\un{z}} (\lambda_{\un{x}} + \lambda_{\un{y}} - \lambda_{\un{z}})^2 \TT{\un{x}}{\un{y}}{\un{z}}^2 + 8 \sum_l \Big\{ \sum_{\un{a}} \lambda_{\un{a}} \TT{\un{a}}{l}{\un{a}} \Big\}^2 \displaybreak[1] \\
& \hspace{10mm} + \sum_{\un{a}, \un{b}, \ov{l}} \Big\{ \TT{\un{a}}{\ov{l}}{\un{b}} + \TT{\un{b}}{\ov{l}}{\un{a}} \Big\}^2 (\lambda_{\un{a}} - \lambda_{\un{b}} )^2 \\
&\hspace{20mm} - 2 \sum_{\un{a},\un{b}}(A_{\un{a}\un{b}\un{a}\un{b}} - A_{\un{a}\un{b}\un{b}\un{a}} ) \lambda_{\un{a}} \lambda_{\un{b}} + 2 \sum_{\un{a}, \un{m}} (A_{\un{m}\un{a}\un{a}\un{m}} + A_{\un{m}\un{a}\un{m}\un{a}})\lambda_{\un{a}}^2.
\end{alignat*}
We will now show that the last line is always nonnegative.
This will then establish the nonnegativity of $S_\WW$.
To carry out the calculation, we rewrite the last line in tensorial form:
\[ - 2 \sum_{\un{a},\un{a}',\un{b},\un{b}'} (A_{\un{a}\un{b}\un{a}'\un{b}'} - A_{\un{a}\un{b}\un{b}'\un{a}'}) h_{\un{a}\un{a}'} h_{\un{b}\un{b}'} + 2 \sum_{\un{a}, \un{a}', \un{l}, \un{m}} (A_{\un{m}\un{a}\un{a}'\un{m}} + A_{\un{m}\un{a}\un{m}\un{a}'}) h_{\un{a}\un{l}} h_{\un{l}\un{a}'} \]
and we return to the original orthonormal basis $x_1, \ldots, x_{n-r}$, which obeys the nilpotent grading of $\mathfrak{n}$.
Then the expression above becomes
\begin{multline*}
 -2 \sum_{\un{a},\un{b}} ( h_{\un{a}\un{b}} h_{\un{b}\un{a}} - h_{\un{a}\un{a}} h_{\un{b}\un{b}} + h_{\un{a}\un{b}} h_{\un{a}\un{b}} - h_{\un{a}\un{a}} h_{\un{b}\un{b}} + h_{\un{a}\un{b}} h_{\un{b}\un{a}} - h_{\un{a}\un{b}} h_{\un{a}\un{b}} ) \big\langle \alpha_{\un{a}}^\#, \alpha_{\un{b}}^\# \big\rangle + 4 \sum_{\un{a},\un{l}} h_{\un{a}\un{l}}^2 | \alpha_{\un{a}}^\# |^2 \\
 = 4 \Big| \sum_{\un{a}} h_{\un{a}\un{a}} \alpha_{\un{a}}^\# \Big|^2 + 2 \sum_{\un{a},\un{b}} h_{\un{a}\un{b}}^2 | \alpha_{\un{a}}^\# - \alpha_{\un{b}}^\# |^2.
\end{multline*}

So $S_\WW$ is indeed nonnegative definite on $\Sym_2 \un{\af}^\perp$ and the nullspace consists exactly of those $h = \sum_{ab} h_{ab} p_a \cdot p_b$ (for $h_{ab} = h_{ba}$ and the original orthonormal basis $x_1, \ldots, x_{n-r}$, which respects the grading of $\nn$) that satisfy the following \emph{five} identities (\ref{eq:null1})-(\ref{eq:null5}) below
\begin{alignat}{1}
\sum_{\un{i}} h_{\un{a} \un{i}} \TT{\un{i}}{\un{b}}{\un{c}} + \sum_{\un{i}} h_{\un{b}\un{i}} \TT{\un{a}}{\un{i}}{\un{c}} &= \sum_{\un{i}} h_{\un{c}\un{i}} \TT{\un{a}}{\un{b}}{\un{i}} \label{eq:null1} \\
\sum_{\un{i}} h_{\un{i}\un{i}} \alpha_{\un{i}} &= 0 \label{eq:null2} \\
h_{\un{a}\un{b}} &= 0 \qquad \text{if $\alpha_{\un{a}} \not= \alpha_{\un{b}}$} \label{eq:null3} \\
\sum_{\un{i},\un{j}} h_{\un{i}\un{j}} \TT{\un{i}}{a}{\un{j}} &= 0 \label{eq:null4}
\end{alignat}
Observe that condition (\ref{eq:null3}) implies that if $h$ lies in the nullspace of $S_\WW$, then it has block form with respect to the splitting $\bigoplus_{\un{\alpha}} \pp_{\un{\alpha}}$ and hence we can find an orthonormal basis $x_1, \ldots, x_{n-r}$ which both respects the nilpotent grading of $\nn$ and for which the associated orthonormal basis $p_1, \ldots, p_{n-r}$ diagonalizes $h$.
In this basis, we see that identity (\ref{eq:null4}) is redundant.
In the said basis, the fifth identity characterizing the nullspace is
\begin{equation} \label{eq:null5} h_{\un{a} \un{a}} = h_{\un{b}\un{b}} \qquad \text{if} \qquad \TT{\un{a}}{\ov{l}}{\un{b}} + \TT{\un{b}}{\ov{l}}{\un{a}} \not= 0 \qquad \text{for some $\ov{l}$}. \end{equation}

\subsection{Analysis of the nullspace} \label{subsec:nullspace}
Let $h \in \Sym_2 \un{\af}^\perp_\WW$ be a symmetric bilinear form on $\un{\af}^\perp_\WW$.
In the last subsection we found that $h$ lies in the nullspace $\mathcal{N}_\WW \subset \Sym_2 \un{\af}^\perp$ of $S_\WW$ if and only if it satisfies the identities (\ref{eq:null1})-(\ref{eq:null5}).
Set $\af^\perp = \un{\af}^\perp_\CC = \bigoplus_\alpha \pp_\alpha$ and let $\mathcal{N} = \mathcal{N}_\CC \subset \Sym_2 \af^\perp$ be the nullspace corresponding to $\CC$.
In other words, $\mathcal{N}$ is the space of bilinear forms $h \in \Sym_2 \af^\perp$ satisfying
\begin{alignat}{1}
\sum_i h_{ai} \TT{i}{b}{c} + \sum_i h_{bi} \TT{a}{i}{c} &= \sum_i h_{ci} \TT{a}{b}{i} \label{eq:newnull1} \\
\sum_i h_{ii} \alpha_i &= 0 \label{eq:newnull2} \\
h_{ab}& = 0 \qquad \text{if $\alpha_a \not= \alpha_b$} \label{eq:newnull3} 
\end{alignat}
(Recall that identity (\ref{eq:null4}) is redundant.)
In other words, we can say that $\mathcal{N}$ is the space of all $h \in \Sym_2 \af^\perp$ that are in block form with respect to the splitting $\af^\perp = \bigoplus_\alpha \pp_\alpha$ and that satisfy the following two identities if $h = \sum_{a=1}^{n-r} \lambda_a p_a \cdot p_a$ for an orthonormal basis $p_1, \ldots, p_{n-r}$ for which the associated $x_1, \ldots, x_{n-r}$ respect the nilpotent grading of $\nn$:
\begin{alignat}{1}
\lambda_a + \lambda_b &= \lambda_c \qquad \text{if} \qquad \TT{a}{b}{c} \not= 0 \label{eq:newnewnull1} \displaybreak[0] \\
\sum_i \lambda_i \alpha_i &= 0 \label{eq:newnewnull2}
\end{alignat}

\begin{Lemma} \label{Lem:NWWN}
For every wall $\WW \subset \CC$ consider the embedding $\Sym_2 \un{\af}^\perp_\WW \subset \Sym_2 \af^\perp$.
Then $\mathcal{N}_\WW = \mathcal{N} \cap \Sym_2 \un{\af}^\perp_\WW$.
\end{Lemma}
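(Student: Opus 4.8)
The plan is to compare directly the explicit systems cutting out $\mathcal{N}$ and $\mathcal{N}_\WW$, after choosing a convenient basis. Both $\mathcal{N}$ and $\mathcal{N}_\WW$ consist of block-form bilinear forms (with respect to $\af^\perp = \bigoplus_\alpha \pp_\alpha$, resp. $\un{\af}^\perp_\WW = \bigoplus_{\un{\alpha}\in\un{\Delta}^+_\WW}\pp_{\un{\alpha}}$, by (\ref{eq:newnull3}) resp. (\ref{eq:null3})), so for an $h$ in either set, extended by zero on $\ov{\af}^\perp_\WW := \bigoplus_{\ov{\alpha}\in\ov{\Delta}^+_\WW}\pp_{\ov{\alpha}}$, one can fix an orthonormal basis $x_1,\dots,x_{n-r}$ of $\nn$ respecting the grading $\nn = \ov{\nn}_\WW\oplus\un{\nn}_\WW$ whose associated $p_1,\dots,p_{n-r}$ diagonalize $h$, say $h=\sum_a\lambda_a\,p_a\cdot p_a$. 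In this basis the defining relations read: for $\mathcal{N}$, exactly (\ref{eq:newnewnull1}) and (\ref{eq:newnewnull2}); for $\mathcal{N}_\WW$, the relation $\lambda_{\un a}+\lambda_{\un b}=\lambda_{\un c}$ whenever $\tTT{\un a}{\un b}{\un c}\neq 0$ (which is (\ref{eq:null1}) for diagonal $h$), together with (\ref{eq:null2}), (\ref{eq:null5}), and the vacuous (\ref{eq:null4}) (vacuous because $x_{\un i}\in\mathfrak{g}_{\alpha_{\un i}}$ and $[x_{\un i},x_a]\in\mathfrak{g}_{\alpha_{\un i}+\alpha_a}$ with $\alpha_a\neq 0$, so $\tTT{\un i}{a}{\un i}=0$). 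The numerical point that drives everything is that an $h$ supported on $\un{\af}^\perp_\WW$ has $\lambda_{\ov a}=0$ for every overlined index $\ov a$.

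The only structural input I need is a remark on the symbols $\tTT{a}{b}{c}$: since $[x_a,x_b]\in\mathfrak{g}_{\alpha_a+\alpha_b}$, $\tTT{a}{b}{c}\neq 0$ forces $\alpha_c=\alpha_a+\alpha_b$; and because $\un{\Delta}^+_\WW$ is precisely the set of positive roots whose expansion in $\mathcal{B}^+$ has a positive coefficient on some root of $\un{\mathcal{B}}^+_\WW$, and these coefficients add under $+$, the index $c$ is underlined as soon as one of $a,b$ is. Hence $\tTT{a}{b}{c}\neq 0$ can occur only when $a,b,c$ are all overlined, all underlined, or exactly one of $a,b$ is overlined and $c$ is underlined. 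I also record that for fixed $\un a,\un b,\ov l$ at most one of $\tTT{\un a}{\ov l}{\un b}$, $\tTT{\un b}{\ov l}{\un a}$ is nonzero (otherwise $2\alpha_{\ov l}=0$), so the hypothesis of (\ref{eq:null5}) is equivalent to: $\tTT{\un a}{\ov l}{\un b}\neq 0$ or $\tTT{\un b}{\ov l}{\un a}\neq 0$ for some $\ov l$.

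With these facts both inclusions are bookkeeping. For $\mathcal{N}\cap\Sym_2\un{\af}^\perp_\WW\subseteq\mathcal{N}_\WW$: (\ref{eq:null1}) for underlined indices is the restriction of (\ref{eq:newnewnull1}); (\ref{eq:null2}) follows from (\ref{eq:newnewnull2}) and $\lambda_{\ov a}=0$; (\ref{eq:null3}), (\ref{eq:null4}) hold as noted; and if $\tTT{\un a}{\ov l}{\un b}\neq 0$ then (\ref{eq:newnewnull1}) gives $\lambda_{\un a}+\lambda_{\ov l}=\lambda_{\un b}$, hence $\lambda_{\un a}=\lambda_{\un b}$, which is (\ref{eq:null5}). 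Conversely, for $\mathcal{N}_\WW\subseteq\mathcal{N}\cap\Sym_2\un{\af}^\perp_\WW$: (\ref{eq:newnewnull2}) follows from (\ref{eq:null2}) and $\lambda_{\ov a}=0$, and (\ref{eq:newnewnull1}) is verified in each of the three patterns — the all-overlined case is $0+0=0$, the all-underlined case is (\ref{eq:null1}) for diagonal $h$, and the mixed case (say $a=\ov l$, $b=\un b$, $c=\un c$, so one must show $\lambda_{\ov l}+\lambda_{\un b}=\lambda_{\un c}$, i.e. $\lambda_{\un b}=\lambda_{\un c}$) follows from (\ref{eq:null5}) after rewriting $\tTT{\ov l}{\un b}{\un c}=-\tTT{\un b}{\ov l}{\un c}$. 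I expect the only genuine obstacle to be keeping the root-theoretic case distinction airtight — in particular confirming that no further index pattern yields $\tTT{a}{b}{c}\neq 0$ and that the antisymmetry and sign manipulations line up the mixed case of (\ref{eq:newnewnull1}) with (\ref{eq:null5}). As an alternative that bypasses the case analysis, one can extract from the computation of subsection \ref{subsec:afperp} the identity $\langle S_\CC h,h\rangle = \langle S_\WW h,h\rangle + \sum_{\un a,\un b,\ov l}\tTT{\un a}{\ov l}{\un b}^2(\lambda_{\un a}-\lambda_{\un b})^2$ for $h\in\Sym_2\un{\af}^\perp_\WW$ (both summands nonnegative, the second vanishing exactly under (\ref{eq:null5}), which is one of the conditions defining $\mathcal{N}_\WW$); nonnegativity and self-adjointness of $S_\CC$ and $S_\WW$ then give $\mathcal{N}\cap\Sym_2\un{\af}^\perp_\WW=\mathcal{N}_\WW$ at once.
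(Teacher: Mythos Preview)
Your main argument is correct and follows essentially the same route as the paper: choose a diagonalizing graded basis, then verify the two inclusions by checking (\ref{eq:newnewnull1})--(\ref{eq:newnewnull2}) versus (\ref{eq:null1})--(\ref{eq:null5}) through a case split on the index types. Your treatment is in fact more careful than the paper's --- you spell out explicitly why $\tTT{a}{b}{c}\neq 0$ forces $c$ to be underlined whenever one of $a,b$ is (via additivity of the $\un{\mathcal{B}}^+_\WW$--coefficients), and why at most one of $\tTT{\un a}{\ov l}{\un b}$, $\tTT{\un b}{\ov l}{\un a}$ is nonzero, which makes the link between the mixed case of (\ref{eq:newnewnull1}) and condition (\ref{eq:null5}) completely transparent.

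Your alternative at the end is a genuinely different and rather elegant shortcut that the paper does not use. The identity $\langle S_\CC h,h\rangle = \langle S_\WW h,h\rangle + \sum_{\un a,\un b,\ov l}\tTT{\un a}{\ov l}{\un b}^2(\lambda_{\un a}-\lambda_{\un b})^2$ for $h\in\Sym_2\un{\af}^\perp_\WW$ does drop out of the final display of subsection \ref{subsec:afperp} once one sets $\lambda_{\ov a}=0$ (the extra first-sum contributions with one overlined index give $4\sum\tTT{\un a}{\ov l}{\un b}^2(\lambda_{\un a}-\lambda_{\un b})^2$, the third sum subtracts $2\sum$ of the same after expanding the square and using that the cross terms vanish, and the other pieces agree). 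Since both $S_\CC$ and $S_\WW$ are self-adjoint and nonnegative on the relevant subspaces, vanishing of $\langle S_\CC h,h\rangle$ is equivalent to vanishing of both summands, and the second summand vanishes precisely under (\ref{eq:null5}), which is already one of the $\mathcal{N}_\WW$--conditions; this yields both inclusions without any case analysis. The trade-off is that this route leans on the full computation of subsection \ref{subsec:afperp}, whereas the direct verification needs only the defining equations and elementary root bookkeeping.
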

\begin{proof}
Let $h \in \mathcal{N}_\WW$ and interpret $h$ as a symmetric bilinear form on $\af^\perp$.
Then identy (\ref{eq:newnewnull2}) is obviously satisfied as well as (\ref{eq:newnewnull1}) in the case in which $a$, $b$, $c$ are of type $\un{a}$, $\un{b}$, $\un{c}$.
If $c$ is of type $\ov{c}$, then $a$ and $b$ must be of $\ov{a}$ and $\ov{b}$ to guarantee $\tTT{a}{b}{c} \not= 0$, but in this case both sides vanish.
If $a$ is of type $\ov{a}$ and $c$ of type $\un{c}$, then $b$ must be of type $\un{b}$.
So since one of the expressions $\tTT{\un{b}}{\ov{a}}{\un{c}}$ and $\tTT{\un{c}}{\ov{a}}{\un{b}}$ has to vanish, we can use (\ref{eq:null5}) to conclude (\ref{eq:newnewnull1}).
The same is true reversing the roles of $a$ and $b$.

Let now on the other hand $h \in \mathcal{N} \cap \Sym_2 \un{\af}^\perp_\WW$ and consider the diagonalizing basis $p_1, \ldots, p_{n-r}$.
Since (\ref{eq:null1})-(\ref{eq:null3}) are trivially satisfied and (\ref{eq:null4}) is redundant, we only need to establish (\ref{eq:null5}).
This follows from identity (\ref{eq:newnewnull1}) for $\ov{b} = \ov{l}$.
\end{proof}

We will now analyze the nullspace $\mathcal{N}$.
By the following Lemma, we can restrict our analysis to \emph{irreducible} symmetric spaces.
\begin{Lemma} \label{Lem:deRham}
Assume that $\mathfrak{g}$ has a de Rham decomposition $\mathfrak{g}_1 \oplus \ldots \oplus \mathfrak{g}_m$.
Then $\mathcal{N} = \mathcal{N}_1 \oplus \ldots \oplus \mathcal{N}_m$ where $\mathcal{N}_i$ is the nullspace corresponding to $\mathfrak{g}_i$.
\end{Lemma}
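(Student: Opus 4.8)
The plan is to exploit the fact that the de Rham decomposition $\mathfrak{g} = \mathfrak{g}^1 \oplus \ldots \oplus \mathfrak{g}^m$ (with $[\mathfrak{g}^i, \mathfrak{g}^j] = 0$ for $i \neq j$) is compatible with every structure entering the definition of $\mathcal{N}$. First I would record that it induces orthogonal splittings $\pp = \pp^1 \oplus \ldots \oplus \pp^m$, $\af = \af^1 \oplus \ldots \oplus \af^m$ and a decomposition of the nilpotent algebra $\nn = \nn^1 \oplus \ldots \oplus \nn^m$ into pairwise commuting ideals. Choosing the generic vector defining $\Delta^+$ with a generic component in each $\af^i$, the positive roots split as a disjoint union $\Delta^+ = \Delta^{+,1} \dotcup \ldots \dotcup \Delta^{+,m}$, where each $\alpha \in \Delta^{+,i}$ vanishes on $\af^j$ for $j \neq i$ and $\mathfrak{g}_\alpha \subset \nn^i$. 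Hence $\af^\perp = \bigoplus_\alpha \pp_\alpha$ splits as $(\af^1)^\perp \oplus \ldots \oplus (\af^m)^\perp$, an orthonormal basis $x_1,\ldots,x_{n-r}$ of $\nn$ respecting the nilpotent grading may be chosen to also respect this factor splitting (within each root space $\pp_\alpha$ one diagonalizes the corresponding block of $h$), and --- the one observation doing all the work --- the structure constants $\tTT{a}{b}{c}$ vanish unless $x_a, x_b, x_c$ all lie in the same factor $\nn^i$, in which case they coincide with the structure constants of the nilpotent algebra of $M_i$.

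With this in place I would verify $\mathcal{N} \subset \mathcal{N}^1 \oplus \ldots \oplus \mathcal{N}^m$ as follows. Let $h \in \mathcal{N}$. By the block-form condition (\ref{eq:newnull3}), $h$ has no component in $\pp_\alpha \cdot \pp_\beta$ with $\alpha \neq \beta$; since nonzero roots from different factors lie in linearly independent subspaces of $\af^*$ and hence are automatically unequal, $h$ has no cross-factor component and decomposes as $h = h^1 + \ldots + h^m$ with $h^i \in \Sym_2 (\af^i)^\perp$. Each $h^i$ is again in block form, so it remains to check conditions (\ref{eq:newnewnull1}) and (\ref{eq:newnewnull2}) for $h^i$ in a basis respecting the grading of $\nn^i$ and diagonalizing $h^i$. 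Condition (\ref{eq:newnewnull1}) for $h^i$ only involves triples with $\tTT{a}{b}{c} \neq 0$ internal to factor $i$, and for those it is precisely the corresponding instance of (\ref{eq:newnewnull1}) for $h$. For (\ref{eq:newnewnull2}), decompose $\af^* = (\af^1)^* \oplus \ldots \oplus (\af^m)^*$; then $\sum_a \lambda_a \alpha_a = \sum_i \big(\sum_{a \text{ in factor } i} \lambda_a \alpha_a\big)$ is a sum of vectors lying in the distinct summands, hence vanishes if and only if each inner sum does, giving (\ref{eq:newnewnull2}) for each $h^i$. Thus $h^i \in \mathcal{N}^i$.

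The reverse inclusion is the same bookkeeping read backwards: for $h^i \in \mathcal{N}^i \subset \Sym_2 (\af^i)^\perp \subset \Sym_2 \af^\perp$, the sum $h = h^1 + \ldots + h^m$ has no cross-factor components, and since each $h^i$ is in block form within its factor so is $h$; condition (\ref{eq:newnewnull1}) for $h$ again concerns only structure constants internal to a single factor and so reduces to the conditions for the $h^i$; and (\ref{eq:newnewnull2}) for $h$ is just the sum over $i$ of the conditions (\ref{eq:newnewnull2}) for the $h^i$, which lie in orthogonal subspaces of $\af^*$. Directness of $\mathcal{N}^1 \oplus \ldots \oplus \mathcal{N}^m$ inside $\Sym_2 \af^\perp$ is immediate since the $\Sym_2(\af^i)^\perp$ are linearly independent. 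I do not anticipate any genuine obstacle here; the only step requiring a little care is the handling of (\ref{eq:newnewnull2}), where one must use that the roots of different de Rham factors span linearly independent (indeed orthogonal) subspaces of $\af^*$, so that a vanishing total sum forces each factor's partial sum to vanish separately.
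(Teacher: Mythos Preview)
Your proposal is correct and follows the same approach as the paper's proof, which is extremely terse: it observes that by (\ref{eq:newnull3}) every $h \in \mathcal{N}$ is in block form with respect to the de Rham splitting $\af^\perp = (\af^1)^\perp \oplus \ldots \oplus (\af^m)^\perp$, and declares the other direction ``clear''. You have supplied exactly the details the paper omits --- in particular the verification that each block satisfies (\ref{eq:newnewnull1}) and (\ref{eq:newnewnull2}) separately, the latter requiring the observation that roots of different factors span linearly independent subspaces of $\af^*$.
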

\begin{proof}
By (\ref{eq:newnull3}) every $h \in \mathcal{N}$ takes block form with respect to the (coarse) splitting $\af^\perp = \af^\perp_1 \oplus \ldots \af^\perp_m$ coming from the de Rham decomposition.
The other direction is clear.
\end{proof}

\begin{Lemma} \label{Lem:lambdaalambdab}
Consider an $h \in \mathcal{N}$ and choose a diagonalizing orthonormal basis $x_1, \ldots, x_{n-r}$ as above. \\
Assume that for two indices $a, b$ we have $\alpha_a = \alpha_b = \alpha_0$ and that there is a representation
\begin{equation} \label{eq:xirep} x_a = \sum_{u,v} A_{uv} [x_u, x_v] + \sum_{u,v} B_{uv} [x_u, y_v] \end{equation}
such that $B_{uv} = 0$ whenever $\alpha_v = \alpha_0$.
Then $\lambda_a = \lambda_b$
\end{Lemma}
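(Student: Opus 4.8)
The plan is to encode the weights $\lambda_i$ as a derivation of $\nn$ and to read the representation (\ref{eq:xirep}) off in the induced grading. I would work in the orthonormal basis $x_1,\dots,x_{n-r}$ of $\nn$ that respects the nilpotent grading and whose associated $p_i$ diagonalize $h$, so that $h=\sum_c\lambda_c\,p_c\cdot p_c$; in this basis the identities (\ref{eq:newnewnull1}), (\ref{eq:newnewnull2}) defining $\mathcal N$ read $(\lambda_i+\lambda_j-\lambda_k)\tTT{i}{j}{k}=0$ for all $i,j,k$ and $\sum_i\lambda_i\alpha_i=0$. The first of these says precisely that $D\colon x_i\mapsto\lambda_i x_i$ is a derivation of $\nn$, since $D[x_i,x_j]=\sum_k\tTT{i}{j}{k}\lambda_k x_k=(\lambda_i+\lambda_j)[x_i,x_j]$. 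Hence $\nn$ splits into joint eigenspaces $\nn_{\alpha,\mu}$ of the root grading and of $D$, with $[\nn_{\alpha,\mu},\nn_{\beta,\nu}]\subseteq\nn_{\alpha+\beta,\mu+\nu}$; in particular $x_a\in\nn_{\alpha_0,\lambda_a}$ and $x_b\in\nn_{\alpha_0,\lambda_b}$. Using the formula $\pr_\nn([x_u,y_v])=\sum_l\tTT{v}{l}{u}x_l$ from Subsection \ref{subsec:afperp} together with (\ref{eq:newnewnull1}) applied to $\tTT{v}{l}{u}$, one gets likewise $\pr_\nn([x_u,y_v])\in\nn_{\alpha_u-\alpha_v,\lambda_u-\lambda_v}$.

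Next I would project (\ref{eq:xirep}) onto $\nn$, which is legitimate since $x_a\in\nn$:
\[
x_a=\sum_{u,v}A_{uv}[x_u,x_v]+\sum_{u,v}B_{uv}\,\pr_\nn([x_u,y_v]).
\]
The right-hand side is a sum of $D$-homogeneous vectors, so all of its weight components other than $\lambda_a$ cancel. Pairing this identity with $y_b$ in the Killing form and using $\langle x_i,y_j\rangle=-\delta_{ij}$, $\langle[x_u,x_v],y_b\rangle=-\tTT{u}{v}{b}$, $\langle\pr_\nn([x_u,y_v]),y_b\rangle=-\tTT{v}{b}{u}$ (all from the computations of Subsection \ref{subsec:afperp}, cf.\ (\ref{eq:xiyiformula})) yields $\delta_{ab}=\sum_{u,v}A_{uv}\tTT{u}{v}{b}+\sum_{u,v}B_{uv}\tTT{v}{b}{u}$; pairing instead $Dx_a=\lambda_a x_a$ against $y_b$ and using (\ref{eq:newnewnull1}) termwise ($\tTT{u}{v}{b}\neq0\Rightarrow\lambda_u+\lambda_v=\lambda_b$, $\tTT{v}{b}{u}\neq0\Rightarrow\lambda_u-\lambda_v=\lambda_b$) gives $\lambda_a\delta_{ab}=\lambda_b\delta_{ab}$. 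Thus $\lambda_a=\lambda_b$ as soon as the representation of $x_a$ \emph{detects} the index $b$, i.e.\ as soon as some $\tTT{u}{v}{b}$ with $A_{uv}\neq0$ or some $\tTT{v}{b}{u}$ with $B_{uv}\neq0$ is nonzero.

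The real work is to show that this can always be arranged, equivalently that $D$ restricts to a scalar on the whole root space $\mathfrak{g}_{\alpha_0}$, so that $\lambda_b=\lambda_a$. For this I would argue that the set of $x\in\mathfrak{g}_{\alpha_0}$ admitting a representation of the form (\ref{eq:xirep}) with $B_{uv}=0$ whenever $\alpha_v=\alpha_0$ is a nonzero $\Ad(K_0)$-invariant subspace and is in fact all of $\mathfrak{g}_{\alpha_0}$ — using the $\mathfrak{sl}_2$-identity $[\mathfrak{g}_{\alpha_0+\gamma},\mathfrak{g}_{-\gamma}]=\mathfrak{g}_{\alpha_0}$ for positive roots $\gamma\neq\alpha_0$ with $\alpha_0+\gamma$ a root, together, if necessary, with an induction on root height for the terms coming from $[\nn,\nn]$. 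Then $x_b$ also admits such a representation and a symmetric application of the pairing above forces $\lambda_a=\lambda_b$. I expect this passage from ``$x_a$ is representable'' to ``$\mathfrak{g}_{\alpha_0}$ is $D$-homogeneous'' to be the crux of the argument; the hypothesis $B_{uv}=0$ for $\alpha_v=\alpha_0$ is exactly what makes it nonvacuous, since it rules out terms $[x_u,y_v]$ with $\alpha_v=\alpha_0$ — in particular the ``trivial'' representation $\mathfrak{g}_{\alpha_0}=[\mathfrak{g}_{2\alpha_0},\mathfrak{g}_{-\alpha_0}]$ available when $2\alpha_0$ is a root, which is precisely the case (e.g.\ $\IC\IH^n$) in which the conclusion of the lemma fails.
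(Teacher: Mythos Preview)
Your setup---the derivation $D$ on $\nn$ and the bigrading $\nn_{\alpha,\mu}$---is exactly what the paper uses, and your observation that $\pr_\nn([x_u,y_v])\in\nn_{\alpha_u-\alpha_v,\lambda_u-\lambda_v}$ is correct. But the pairing argument collapses: for $a\neq b$ the identity you derive is $\lambda_a\cdot 0=\lambda_b\cdot 0$, because $\langle x_a,y_b\rangle=-\delta_{ab}=0$. Your sentence ``$\lambda_a=\lambda_b$ as soon as the representation of $x_a$ detects the index $b$'' does not follow: even if several individual terms $A_{uv}\tTT{u}{v}{b}$ or $B_{uv}\tTT{v}{b}{u}$ are nonzero, their sum is $\delta_{ab}=0$, and multiplying each by the common factor $\lambda_b$ still gives $0$. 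So nothing is learned about $\lambda_a$ versus $\lambda_b$. Your fallback in the last paragraph---arguing that every vector of $\mathfrak{g}_{\alpha_0}$ admits such a representation---is the content of the \emph{next} lemma in the paper, not this one, and even if granted it would not rescue the pairing argument (applying it to $x_b$ and pairing with $y_a$ again gives $0=0$).

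The missing idea is that one needs a genuinely \emph{nonvanishing} quantity that simultaneously lives in two $D$-weight spaces. The paper produces one by looking at the triple bracket $[[x_a,y_b],x_a]$. On the one hand, since $\alpha_a=\alpha_b$ one has $[x_a,y_b]\in\mathfrak{k}_0$, and a short Jacobi computation shows $[[x_a,y_b],x_a]=[[x_a,x_b],y_a]+|\alpha_0^\#|^2 x_b\in\mathfrak{g}_{\alpha_0,\lambda_b}$; pairing with $y_b$ shows it is nonzero. On the other hand, first purify the representation (\ref{eq:xirep}) so that $A_{uv}\neq0$ only when $\lambda_u+\lambda_v=\lambda_a$ and $B_{uv}\neq0$ only when $\lambda_u-\lambda_v=\lambda_a$ (the spurious terms cancel anyway), then substitute and expand $[[x_a,y_b],x_a]$ via the Jacobi identity into eight iterated brackets. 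The hypothesis $B_{uv}=0$ for $\alpha_v=\alpha_0$ is used precisely here: it guarantees that in terms like $[[x_a,y_v],[x_u,y_b]]$ the inner bracket $[x_a,y_v]$ never lands in $\mathfrak{g}_0$, so every intermediate bracket stays inside $\nn\oplus\nn^-$ where the bigrading is under control. One then reads off $[[x_a,y_b],x_a]\in\mathfrak{g}_{\alpha_0,2\lambda_a-\lambda_b}$, and since the element is nonzero, $2\lambda_a-\lambda_b=\lambda_b$.
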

\begin{proof}
Using the subspaces
\begin{alignat*}{1}
 \mathfrak{g}_{\alpha, \lambda} &= \spann \{ x_i \;\; : \;\; \alpha_i = \alpha, \lambda_i = \lambda \} \\
 \mathfrak{g}_{-\alpha, -\lambda} &= \spann \{ y_i \;\; : \;\; \alpha_i = \alpha, \lambda_i = \lambda \} = \sigma \mathfrak{g}_{\alpha, \lambda},
\end{alignat*}
we obtain refined splittings
\[ \nn = \bigoplus_{\alpha \in \Delta^+, \lambda} \mathfrak{g}_{\alpha, \lambda} \qquad \text{and} \qquad \nn^- = \bigoplus_{\alpha \in \Delta^+, \lambda} \mathfrak{g}_{-\alpha, -\lambda}. \]
By (\ref{eq:newnewnull1}) and (\ref{eq:xiyiformula}), we conclude
\begin{alignat*}{1}
\big[ \mathfrak{g}_{\alpha, \lambda}, \mathfrak{g}_{\alpha', \lambda'} \big] &\subset \mathfrak{g}_{\alpha + \alpha', \lambda + \lambda'} \\
\big[ \mathfrak{g}_{\alpha, \lambda}, \mathfrak{g}_{-\alpha', -\lambda'} \big] &\subset \mathfrak{g}_{\alpha - \alpha', \lambda - \lambda'} \qquad \text{if $\alpha \not= \alpha'$}.
\end{alignat*}
Consider the representation (\ref{eq:xirep}) of $x_a$ and observe that $[x_u, x_v] \in \mathfrak{g}_{\alpha_u + \alpha_v, \lambda_u + \lambda_v}$ and $[x_u, y_v] \in \mathfrak{g}_{\alpha_u - \alpha_v, \lambda_u - \lambda_v}$.
So if we set $A_{uv} = 0$ whenever $\alpha_u + \alpha_v \not= \alpha_0$ or $\lambda_u + \lambda_v \not= \lambda_a$ as well as $B_{uv} = 0$ whenever $\alpha_u - \alpha_v \not= \alpha_0$ or $\lambda_u - \lambda_v \not= \lambda_a$, representation (\ref{eq:xirep}) continues to hold.
We will assume this property from now on.

Now observe that by (\ref{eq:xiyiformula}), we have $[x_a, y_b] \in \mathfrak{k}_0$ and hence $[x_a, y_b] = \sigma [x_a, y_b] = [y_a, x_b]$.
We can therefore compute using (\ref{eq:xiyi})
\[ [[x_a, y_b], x_a] = [[y_a, x_b], x_a]  = [[x_a, x_b], y_a] + [[y_a, x_a], x_b] = [[x_a, x_b], y_a] + |\alpha_0^\#|^2 x_b \]
By looking at the right hand side, we conclude that $[[x_a, y_b], x_a] \in \mathfrak{g}_{\alpha_0, \lambda_b}$.
Moreover, this expression does not vanish, since taking the scalar product with $y_b$ yields
\[ \big\langle [[x_a, y_b], x_a], y_b \big\rangle = \big\langle [x_a, x_b], [y_a, y_b] \big\rangle - |\alpha_0^\#|^2 < 0. \]

We will now show that also $[[x_a, y_b], x_a] \in \mathfrak{g}_{\alpha_0, 2 \lambda_a - \lambda_b}$.
This will then imply $\lambda_a = \lambda_b$.
Using the representation (\ref{eq:xirep}), we find
\begin{alignat*}{1}
[[x_a,y_b],x_a] = &\sum_{u,v} A_{uv} [[[x_u,x_v],y_b],x_a] + \sum_{u,v} B_{uv} [[[x_u,y_v],y_b],x_a] \\
= &\sum_{u,v} A_{uv} \big( [[[y_b, x_v], x_u], x_a] + [[[x_u, y_b], x_v], x_a] \big) \\
&+ \sum_{u,v} B_{uv} \big( [[[y_b,y_v],x_u], x_a] + [[[x_u, y_b], y_v], x_a] \big) \displaybreak[1] \\
 = & \sum_{u,v} A_{uv} \big( [[x_a,x_u],[y_b,x_v]] + [[[y_b,x_v],x_a], x_u] \\
 & \hspace{50mm} + [[x_a, x_v], [x_u, y_b]] + [[[x_u,y_b],x_a],x_v] \big) \\
 & +  \sum_{u,v} B_{uv} \big( [[x_a, x_u],[y_b, y_v]] + [[[y_b,y_v],x_a],x_u] \displaybreak[1] \\
 & \hspace{50mm} + [[x_a,y_v],[x_u, y_b]] + [[[x_u,y_b],x_a],y_v] \big)
\end{alignat*}
This implies $[[x_a, y_b], x_a] \in \mathfrak{g}_{\alpha_0, 2 \lambda_a - \lambda_b}$ since none of the successive Lie brackets lie in $\mathfrak{g}_0$.
Note here that for the seventh term, we have used the property that $B_{uv} = 0$ if $\alpha_v = \alpha_0$.
\end{proof}

\begin{Lemma} \label{Lem:hasrep}
Assume that $\mathfrak{g}$ is the Lie algebra of an irreducible symmetric space.
If its rank is greater than $1$, then every $x_a$ has a representation (\ref{eq:xirep}). \\
If its rank is equal to $1$, then $\Delta = \{ - \alpha', 0, \alpha' \}$ or $\Delta = \{ - 2\alpha' , -\alpha', 0, \alpha', 2 \alpha' \}$ and every $x_a \in \mathfrak{g}_{2\alpha'}$ has a representation (\ref{eq:xirep}).
\end{Lemma}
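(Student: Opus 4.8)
The plan is to deduce the whole statement from a single Jacobi--identity computation, preceded by a short search for the right simple root. Write $\alpha_0=\alpha_a$. The key claim is: if there is a simple root $\beta$ with $\beta\neq\alpha_0$ and $\langle\alpha_0,\beta\rangle\neq 0$, then $x_a$ admits a representation (\ref{eq:xirep}). To prove this, let $x_b$ be the basis vector with $\alpha_b=\beta$, so that $[x_b,y_b]=-\beta^\#$ by (\ref{eq:xiyi}), and write
\[ \langle\alpha_0,\beta\rangle\, x_a = -[x_a,\beta^\#] = [x_a,[x_b,y_b]] = [[x_a,x_b],y_b] + [x_b,[x_a,y_b]]. \]
Now $[x_a,x_b]\in\mathfrak{g}_{\alpha_0+\beta}$ lies in the span of the basis vectors $x_u$ with $\alpha_u=\alpha_0+\beta$ (and is $0$ if $\alpha_0+\beta\notin\Delta$), so $[[x_a,x_b],y_b]$ is a combination of terms $[x_u,y_b]$, all admissible in (\ref{eq:xirep}) since $\alpha_b=\beta\neq\alpha_0$. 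Likewise $[x_a,y_b]\in\mathfrak{g}_{\alpha_0-\beta}$; since $\beta\neq\alpha_0$ this weight is nonzero, so the space is either $\{0\}$, or spanned by basis vectors $x_v$ of $\mathfrak{n}$ (when $\alpha_0-\beta\in\Delta^+$), or by basis vectors $y_w=\sigma x_w$ of $\mathfrak{n}^-$ (when $\beta-\alpha_0\in\Delta^+$); correspondingly $[x_b,[x_a,y_b]]$ is a combination of terms $[x_b,x_v]$ or of terms $[x_b,y_w]$. In the last case $\alpha_w=\beta-\alpha_0$, which cannot equal $\alpha_0$ because a simple root is never twice a root. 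Hence every term occurring is admissible (the only constraint in (\ref{eq:xirep}) being $B_{uv}=0$ when $\alpha_v=\alpha_0$), and dividing by $\langle\alpha_0,\beta\rangle$ yields the desired representation of $x_a$.

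It remains to produce such a $\beta$. If $\alpha_0$ is \emph{not} simple, then $\langle\alpha_0,\alpha_0\rangle=\sum_j c_j\langle\alpha_0,\beta_j\rangle\neq 0$ forces $\langle\alpha_0,\beta_j\rangle\neq 0$ for some simple $\beta_j$, and $\beta_j\neq\alpha_0$ since $\alpha_0$ is not simple; so $\beta=\beta_j$ works in any rank. This already settles the rank-$1$ assertion: a rank-$1$ restricted root system consists of multiples of a fixed $\alpha'$ and, by the classification of such systems, equals $\{\pm\alpha'\}$ or $\{\pm\alpha',\pm 2\alpha'\}$, so we may write $\Delta=\{-2\alpha',-\alpha',0,\alpha',2\alpha'\}$ with the convention that $\mathfrak{g}_{2\alpha'}$ may be $\{0\}$; if $\mathfrak{g}_{2\alpha'}\neq 0$ then $2\alpha'$ is a non-simple positive root and the construction above applies to every $x_a\in\mathfrak{g}_{2\alpha'}$ (here $\alpha_0+\beta=3\alpha'\notin\Delta$, so in fact only the $[x_b,x_v]$-terms survive and $B=0$), while if $\mathfrak{g}_{2\alpha'}=\{0\}$ there is nothing to prove. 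If instead $\alpha_0=\beta_i$ is simple and $\rank M\geq 2$, then by irreducibility the restricted Dynkin diagram is connected with at least two vertices, so some simple $\beta_j$ is joined to $\beta_i$; then $\langle\beta_i,\beta_j\rangle\neq 0$ (and in fact $\beta_i-\beta_j$ is not a root, so the $[x_a,y_b]$-term vanishes), and $\beta=\beta_j$ works. Since a positive root is either simple or not, this proves the first assertion.

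The computation is immediate, so the only real work is the case analysis for the existence of $\beta$, and in particular the bookkeeping that the auxiliary weight $\alpha_0-\beta$ never produces an inadmissible term --- it is never $0$ (guaranteed by $\beta\neq\alpha_0$) and, in the sub-case $\beta-\alpha_0\in\Delta^+$, never equals $\alpha_0$ (guaranteed because no simple root is twice a root). The remaining ingredients --- $\langle\beta_i,\beta_j\rangle\neq 0$ for adjacent simple roots, connectedness of the restricted Dynkin diagram in the irreducible case, and the classification of rank-$1$ restricted root systems --- are all standard structure theory (see e.g.\ \cite{Hel}, \cite{Ebe}). I expect no serious obstacle here; the one point requiring care is tracking the constraint $\alpha_v\neq\alpha_0$ across the two sub-cases for $[x_a,y_b]$.
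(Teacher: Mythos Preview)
Your argument is correct, and it is genuinely different from the paper's. The paper argues by contradiction: it lets $V\subset\mathfrak{g}_{\alpha_0}$ be the span of all admissible expressions, picks $x\in\mathfrak{g}_{\alpha_0}$ orthogonal to $V$, and deduces from the resulting bracket vanishing that $\alpha_0^\#$ is orthogonal to every $\beta^\#$ with $\beta\notin\{\pm\alpha_0,\pm 2\alpha_0,0\}$, contradicting irreducibility in rank $\geq 2$; in rank $1$ it simply quotes $\mathfrak{g}_{2\alpha'}=[\mathfrak{g}_{\alpha'},\mathfrak{g}_{\alpha'}]$.

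Your approach is constructive: you exhibit the representation directly from the single Jacobi identity $\langle\alpha_0^\#,\beta^\#\rangle\,x_a=[[x_a,x_b],y_b]+[x_b,[x_a,y_b]]$ for a well-chosen simple root $\beta$. The case analysis verifying the constraint $\alpha_v\neq\alpha_0$ is clean, and the observation that a simple root is never twice a positive root (so $\beta\neq 2\alpha_0$) is exactly what is needed. Your rank-$1$ argument in fact \emph{proves} the identity $\mathfrak{g}_{2\alpha'}=[\mathfrak{g}_{\alpha'},\mathfrak{g}_{\alpha'}]$ that the paper merely quotes, since with $\beta=\alpha'$ the $3\alpha'$-term vanishes and $[x_a,y_b]\in\mathfrak{g}_{\alpha'}$. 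The paper's argument has the conceptual advantage of showing precisely why the obstruction is a reducibility phenomenon; yours has the advantage of being explicit and avoiding the orthogonality machinery entirely.
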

\begin{proof}
Set $\alpha_0 = \alpha_a$ and consider the following subspace of $\mathfrak{g}_{\alpha_0}$:
\[ V = \Big\{ \sum_{u,v} A_{uv} [x_u, x_v] + \sum_{u,v} B_{uv} [x_u, y_v] \;\; : \;\; \text{$B_{uv} = 0$ if $\alpha_v = \alpha_0$} \Big\} \cap \mathfrak{g}_{\alpha_0}. \]
Assume that $V \not= \mathfrak{g}_{\alpha_0}$.
Then there is an $x \in \mathfrak{g}_{\alpha_0}$ such that for $y = \sigma x$ we have
\[ \big\langle [x_u, x_v], y \big\rangle = 0 \quad \text{for all $u, v$} \qquad \text{and} \qquad \big\langle [x_u,y_v], y \big\rangle = 0 \quad \text{if $\alpha_v \not= \alpha_0$}. \]
This implies that 
\begin{alignat*}{1}
 [x_u, y] \quad &\text{has no component in $\mathfrak{g}_{-\alpha_v}$ for all $u, v$,} \\
 [x_u, y] \quad &\text{has no component in $\mathfrak{g}_{\alpha_v}$ if $\alpha_v \not= \alpha_0$,} \\
 [y_v, y] \quad &\text{has no component in $\mathfrak{g}_{-\alpha_u}$ if $\alpha_v \not= \alpha_0$.}
\end{alignat*}
Hence, we conclude that
\[ [\mathfrak{g}_\beta, y] = 0 \qquad \text{if $\beta \in \Delta \setminus \{ - 2\alpha_0, -\alpha_0, 0, \alpha_0, 2\alpha_0 \}$}. \]
Applying $\sigma$ yields $[\mathfrak{g}_\beta, x] = 0$ for the same $\beta$'s.
So we also have 
\[ 0 = [\mathfrak{g}_\beta, [y,x]] = [\mathfrak{g}_\beta, \alpha^\#]. \]
This implies that $\langle \alpha^\#, \beta^\# \rangle = 0$ for all $\beta \in \Delta \setminus \{ - 2\alpha_0 , -\alpha_0 , 0, \alpha_0, 2\alpha_0 \}$.
In the higher rank case this contradicts the irreducibility of $\mathfrak{g}$.

In the rank $1$ case, the Lemma follows from the fact that $\mathfrak{g}_{2\alpha'} = [\mathfrak{g}_{\alpha'}, \mathfrak{g}_{\alpha'}]$.
\end{proof}

We will now completely analyze the case in which $\mathfrak{g}$ is the Lie algebra of a rank $1$ symmetric space $M$.
The only possibilities here are real, complex, quaternionic and octonionic hyperbolic space:
\[ \IR \IH^n, \quad \IC \IH^{2n}, \quad \IH \IH^{4n}, \quad \IO \IH^{16} \]
where $n \geq 2$.
The symbols $\IH$ and $\IO$ denote the division algebras of the quaternions and the octonions.
We left out the spaces $\IC \IH^2$ and $\IH \IH^4$ since they are isometric to $\IR \IH^2$ resp. $\IR \IH^4$.
Observe that octonionic hyperbolic space only exists in dimension $16$.

Obviously, $\af$ has dimension $1$.
The set of positive roots $\Delta^+$ consists of a single root $\alpha$ in the real case and two roots $\alpha, 2\alpha$ in the other cases.
We can model the algebraic structure of the root spaces in the following way (see e.g. \cite[\S 19]{Mos}): 
Let $\IK = \IR, \IC, \IH$ or $\IO$ depending on which space we look at.
Denote by $\im \IK = \{ v \in \IK \; : \; \ov{v} = - v \}$ the imaginary subspace.
Observe that $\dim \im \IK = \dim \IK - 1$.
In the case $\IK = \IO$ let $n = 2$.
We have the identifications
\begin{equation} \label{eq:KImK} \mathfrak{g}_{\alpha} = \IK^{n-1}, \qquad \mathfrak{g}_{2\alpha} = \im \IK. \end{equation}
For $v, w \in \mathfrak{g}_{\alpha} = \IK^{n-1}$ set
\[ (v,w) = \ov{v}_1 w_1 + \ldots + \ov{v}_{n-1} w_{n-1}. \]
Then we can describe the Lie algebra structure on $\mathfrak{n} = \mathfrak{g}_{\alpha} \oplus \mathfrak{g}_{2\alpha}$ by
\[ [v,w] = 2 \im (v,w). \]

\begin{Lemma} \label{Lem:Nrank1}
If $\mathfrak{g}$ is the Lie algebra of a rank $1$ symmetric space $M$, then we can describe the nullspace $\mathcal{N}$ as follows $(n \geq 2)$
\begin{enumerate}[(1)]
\item If $M = \IR \IH^n$, then $\af^\perp \cong \IR^{n-1}$ and $\mathcal{N} = \{ h \in \Sym_2 \IR^{n-1} \;\; : \;\; \tr h = 0 \}$.
\item If $M = \IC \IH^{2n}$, then $\af^\perp \cong \IC^{n-1} \oplus \IR$.
View $\IC^{n-1}$ as $\IR$-vector space and let the endomorphism $J$ denote multiplication by $i$. \\
Then $\mathcal{N} = \{ h \in \Sym_2 \IC^{n-1} \;\; : \;\;  J h + h J = 0 \}$.
\item If $M = \IH \IH^{4n}$ or $M = \IO \IH^{16}$, then $\mathcal{N} = \{ 0 \}$.
\end{enumerate}
\end{Lemma}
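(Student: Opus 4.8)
The plan is to read everything off the explicit model of $\nn=\mathfrak{g}_\alpha\oplus\mathfrak{g}_{2\alpha}$ in (\ref{eq:KImK}) and feed it into the two defining relations (\ref{eq:newnewnull1}), (\ref{eq:newnewnull2}) of $\mathcal{N}$, after a preliminary normalization coming from Lemmas \ref{Lem:hasrep} and \ref{Lem:lambdaalambdab}. First I would fix the structure: $\af^\perp=\mathfrak{g}_\alpha\oplus\mathfrak{g}_{2\alpha}$ with $\mathfrak{g}_\alpha=\IK^{n-1}$, $\mathfrak{g}_{2\alpha}=\im\IK$ and $[v,w]=2\im(v,w)$ for $v,w\in\mathfrak{g}_\alpha$, while $[\mathfrak{g}_\alpha,\mathfrak{g}_{2\alpha}]=[\mathfrak{g}_{2\alpha},\mathfrak{g}_{2\alpha}]=0$. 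Hence the only nonzero $\tTT{a}{b}{c}$ have $x_a,x_b\in\mathfrak{g}_\alpha$ and $x_c\in\mathfrak{g}_{2\alpha}$. Given $h\in\mathcal{N}$, relation (\ref{eq:newnull3}) says $h$ is block diagonal for $\mathfrak{g}_\alpha\oplus\mathfrak{g}_{2\alpha}$; since $\mathfrak{g}_{2\alpha}=[\mathfrak{g}_\alpha,\mathfrak{g}_\alpha]$, every $x_c\in\mathfrak{g}_{2\alpha}$ has a representation (\ref{eq:xirep}) with no $B$-terms by Lemma \ref{Lem:hasrep}, so Lemma \ref{Lem:lambdaalambdab} forces $h|_{\mathfrak{g}_{2\alpha}}=\mu\cdot\mathrm{id}$ for a scalar $\mu$. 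Choosing a graded orthonormal eigenbasis of $H:=h|_{\mathfrak{g}_\alpha}$, write $V_t$ for its eigenspaces.

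Next I would reformulate (\ref{eq:newnewnull1}): for $v\in V_t$, $w\in V_s$ with $t+s\neq\mu$ it forces $[v,w]=0$, and one checks directly that $[v,w]=0$ is equivalent to $w\perp v\cdot\im\IK$. Since the $V_s$ span $\mathfrak{g}_\alpha$, this says $v\cdot\im\IK\subseteq V_{\mu-t}$ for every eigenvalue $t$. Writing $J_u$ for the orthogonal complex structures on $\mathfrak{g}_\alpha$ given by multiplication by the imaginary units of $\IK$, so that $v\cdot\im\IK$ is the real span of the vectors $J_u v$, this becomes $J_u V_t\subseteq V_{\mu-t}$.

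Then I would go through the four cases. For $\IR\IH^n$, $\im\IK=0$, so $\nn$ is abelian, (\ref{eq:newnewnull1}) is vacuous, and (\ref{eq:newnewnull2}) reduces to $\tr h=0$; conversely every traceless $h$ lies in $\mathcal{N}$, giving (1). For $\IC\IH^{2n}$ there is one complex structure $J$ and $\mathfrak{g}_{2\alpha}\cong\IR$; the reformulation becomes the operator identity $HJ+JH=\mu J$, hence $JHJ=H-\mu\,\mathrm{id}$, and taking traces gives $\tr H=(n-1)\mu$; comparing with $\tr H=-2\mu$ from (\ref{eq:newnewnull2}) forces $(n+1)\mu=0$, so $\mu=0$, whence $HJ+JH=0$ and $h$ vanishes on $\mathfrak{g}_{2\alpha}$, which is exactly (2) (the converse inclusion being immediate, such $h$ being automatically traceless). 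For $\IH\IH^{4n}$ I would use the quaternion relation $J_3=\pm J_1J_2$: from $J_uV_t\subseteq V_{\mu-t}$ we get $J_3V_t\subseteq J_1(J_2V_t)\subseteq J_1V_{\mu-t}\subseteq V_t$ and also $J_3V_t\subseteq V_{\mu-t}$, so $J_3V_t\subseteq V_t\cap V_{\mu-t}=0$ whenever $t\neq\mu/2$; injectivity of $J_3$ then gives $V_t=0$ for $t\neq\mu/2$, i.e. $H=\tfrac{\mu}{2}\mathrm{id}$. Plugging $H=\tfrac\mu2\mathrm{id}$ and $h|_{\mathfrak{g}_{2\alpha}}=\mu\,\mathrm{id}$ into (\ref{eq:newnewnull2}) makes a strictly positive multiple of $\mu$ vanish, so $\mu=0$ and $h=0$. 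Finally for $\IO\IH^{16}$, where $n=2$ and $\mathfrak{g}_\alpha=\IO$, I would argue directly with the division algebra property: if $v\in V_t$ and $w\in V_s$ are nonzero with $t+s\neq\mu$ then $w\perp v\cdot\im\IO$ and $w\perp v$, hence $w\perp v\IO=\IO$, so $w=0$; this forbids two distinct eigenvalues with sum $\neq\mu$ and, taking $s=t$, eigenspaces of dimension $\geq 2$ at eigenvalues $t\neq\mu/2$, and a short case distinction again yields $H=\tfrac\mu2\mathrm{id}$ and then $\mu=0$, $h=0$, proving (3).

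The main obstacle will be the linear algebra over the non-commutative, and in the octonionic case non-associative, division algebras: making precise the correspondence between the bracket $[v,w]=2\im(v,w)$ and the multiplication operators $J_u$ (with the correct normalizations and signs) and carrying out the $\IO\IH^{16}$ case without the clean identity $J_3=\pm J_1J_2$. The remaining ingredients — the block structure, the reduction $h|_{\mathfrak{g}_{2\alpha}}=\mu\,\mathrm{id}$ via Lemmas \ref{Lem:hasrep} and \ref{Lem:lambdaalambdab}, the trace identities, and the converse inclusions — are routine.
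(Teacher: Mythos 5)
Your proposal is correct and follows the same overall strategy as the paper: the explicit model $\nn=\mathfrak{g}_\alpha\oplus\mathfrak{g}_{2\alpha}$ with $[v,w]=2\im(v,w)$, the reduction of the defining relations to the block conditions, the identification $h|_{\mathfrak{g}_{2\alpha}}=\mu\,\id$ via Lemmas \ref{Lem:lambdaalambdab} and \ref{Lem:hasrep}, the complex structures $J_u$ with $HJ_u+J_uH=\mu J_u$, and the trace identity $\tr h^0+2\tr h^1=0$. The real and complex cases are identical in substance (your trace bookkeeping $\tr H=(n-1)\mu=-2\mu$ is in fact cleaner than the paper's). Where you genuinely diverge is in the ordering and in the octonionic case: the paper first kills the scalar $\mu$ by the trace identity in all of $\IC,\IH,\IO$, obtaining the clean anticommutation $h^0J_u=-J_uh^0$, and then uses the composition identities $J_1J_2=J_3$ (quaternions) and $J_1\cdots J_6=J_7$ (octonions, verified from a multiplication table) to force $h^0=0$; you instead run the eigenspace pairing $J_uV_t\subseteq V_{\mu-t}$ first to conclude $H=\tfrac{\mu}{2}\id$ and only then invoke the trace, and for $\IO\IH^{16}$ you replace the seven-fold composition identity by the division-algebra observation that $w\perp v$ and $w\perp v\cdot\im\IO$ force $w=0$. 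The latter is a real simplification (no multiplication table), though it is special to $n=2$ where $\mathfrak{g}_\alpha=\IO$; for $\IH\IH^{4n}$ you correctly fall back on $J_3=\pm J_1J_2$, so your case distinction is complete. The only points to nail down are the ones you already flag: the precise normalization identifying $\spann\{J_uv\}$ with $v\cdot\im\IK$, and the sign in $J_1J_2=\pm J_3$, neither of which affects the argument.
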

\begin{proof}
Let $\IK = \IR, \IC, \IH$ or $\IO$ and recall the identifications (\ref{eq:KImK}).
We will furthermore identify $\nn$ with $\af^\perp$ via the map $x \mapsto \frac1{\sqrt{2}}( x - \sigma x)$ and sometimes view symmetric bilinear forms on $\af^\perp$ as endomorphisms on $\nn$.
Then a symmetric bilinear form $h$ lies in $\mathcal{N}$ if and only if
\begin{enumerate}[(i)]
\item $h = h^0 + h^1$ for symmetric bilinear forms $h^0$, $h^1$ on $\mathfrak{g}_\alpha = \IK^{n-1}$ resp. $\mathfrak{g}_{2\alpha} = \im \IK$ (see (\ref{eq:newnull3})).
\item For all $v, w \in \IK^{n-1}$, we have $[h^0(v),w] + [v, h^0(w)] = h^1([v,w])$ (see (\ref{eq:newnull1})).
\item $\tr h^0 + 2 \tr h^1 = 0$ (see \ref{eq:newnull2}).
\end{enumerate}
Hence, the case $\IK = \IR$ is settled.

Now assume that $\im \IK$ is nontrivial.
By Lemmas \ref{Lem:lambdaalambdab} and \ref{Lem:hasrep}, we conclude that $h^1 = \lambda \id$ for some $\lambda$.
Let $x_1, \ldots, x_{d-1}$ be an orthonormal basis of $\mathfrak{g}_{2\alpha} = \im \IK$ and set $y_i = x_i$.
For each $i = 1, \ldots, d-1$ define an endomorphism $J_i \in \End \mathfrak{g}_\alpha$ by
\[ \big\langle J_i v, w \big\rangle = \big\langle \im (v, w), y_i \big\rangle. \]
The $J_i$ are antisymmetric and satisfy $J_i^2 = -1$ (in the case $\IK = \IC$, we have $J_1 = J$).
So condition (ii) reads
\[ J_i h^0 + h^0 J_i = \lambda J_i \qquad \text{for all $i = 1, \ldots, d-1$.} \]
We conclude that $2 \tr h^0 =  d (n-1) \lambda$ and hence by condition (iii) it follows that $0 = \tr h^0 + 2 \tr h^1 = \frac12 d (n-1) \lambda + (d-1) \lambda = (\frac12 d n + \frac12 d - 1) \lambda$ and thus $\lambda = 0$.
This establishes the case $\IK = \IC$.

In the case $\IK = \IH$ we can choose $x_1, x_2, x_3 \in \im \IH$ such that $x_1 x_2 =  x_3$ and hence $J_1 J_2 = J_3$.
Then
\[ h^0 J_3 = h^0 J_1 J_2 = - J_1 h^0 J_2 = J_1 J_2 h^0 = J_3 h^0. \]
Together with $J_3 h^0 + h^0 J_3 = 0$ this yields $h^0 = 0$.

Finally using a multiplication table, we see that in the case $\IK = \IO$ we can choose $x_1, \ldots, x_7 \in \im \IO$ such that $J_1 J_2 J_3 J_4 J_5 J_6 = J_7$.
Hence with
\[ h^0 J_7 = h^0 (J_1 \cdots J_6) = - J_1 h^0 (J_2 \cdots J_6) = \ldots = (J_1 \cdots J_6) h^0 = J_7 h^0 \]
and $h^0 J_7 + J_7 h^0 = 0$, we conclude $h^0 = 0$.
\end{proof}

\subsection{Conclusion} \label{subsec:lambdaWWconclusion}
We can finally give a proof of Proposition \ref{Prop:lambdaWWpos}:
\begin{proof}[Proof of Proposition \ref{Prop:lambdaWWpos}]
The nonnegativity of $\lambda_\WW$ follows from Lemmas \ref{Lem:partovpunp}, \ref{Lem:ovpunp}, \ref{Lem:Sym2ovp}, \ref{Lem:partsont} and the calculations of subsection \ref{subsec:afperp}.
In the case $M = \IH^n$ or $M = \IC \IH^{2n}$, the Proposition follows from Lemmas \ref{Lem:Bochnerformula} and \ref{Lem:Nrank1}.

In order to show $\lambda_0 > 0$, in the case in which $M$ does not contain any hyperbolic or complex hyperbolic factor, we only need to show that $\mathcal{N}_\WW = \{ 0 \}$.
By Lemma \ref{Lem:NWWN}, it suffices to show $\mathcal{N} = \{ 0 \}$ and by Lemma \ref{Lem:deRham}, we can assume that $M$ is irreducible.
For the rank $1$ case we use Lemma \ref{Lem:Nrank1} and the case $M = \IH^2$ is clear (see Example B in subsection \ref{subsec:heatkerIntro}).

Assume now that $M$ is irreducible and of higher rank and let $h = \sum_{a=1}^{n-r} \lambda_a p_a \cdot p_a \in \mathcal{N}$.
Lemmas \ref{Lem:lambdaalambdab} and \ref{Lem:hasrep} yield that $\lambda_a = \lambda_b$ whenever $\alpha_a = \alpha_b$.
Hence the eigenvalue $\lambda_a$ depends only on the root $\alpha_a$.
Consider the simple roots $\beta_1, \ldots, \beta_r$ of $\Delta^+$ and let $\lambda'_1, \ldots, \lambda'_r$ be the corresponding eigenvalues.

We now show that $\mathfrak{n}$ is generated by the linear subspace
\[ \bigoplus_{i=1}^r \mathfrak{g}_{\beta_i}. \]
Let $\alpha_a \in \Delta^+$ be a positive root.
Since $\alpha_a$ can be expressed as a linear combination of the $\beta_i$ with \emph{positive} coefficients, there must be an $i \in \{ 1, \ldots, r \}$ such that $\langle \alpha_a^\#, \beta_i^\# \rangle > 0$ and such that $\alpha_a - \beta_i \in \Delta^+$.
Consider the vectors $x_a \in \mathfrak{g}_\alpha$, $y_a \in \mathfrak{g}_{-\alpha}$, $x_i \in \mathfrak{g}_{\beta_i}$, $y_i \in \mathfrak{g}_{-\beta_i}$ and recall that $y_a = \sigma x_a$, $y_i = \sigma x_i$ and $[x_a, y_a] = - \alpha_a^\#$, $[x_i, y_i] = - \beta_i^\#$.
Then similarly as in the proof of Lemma \ref{Lem:Bochnerformula}
\[
 0 < \langle \alpha_a^\#, \beta_i^\# \rangle = \big\langle [x_a, y_a], [x_i, y_i] \big\rangle 
 = \big\langle [x_a, y_i], [x_i, y_a] \big\rangle + \big\langle [x_a, x_i], [y_a, y_i] \big\rangle
\] 
Since $\langle [x_a, x_i], [y_a, y_i] \rangle =  \langle [x_a, x_i], \sigma [y_a, y_i] \rangle \leq 0$, we must have
\[ 0 < \big\langle [x_a, y_i], [x_i, y_a] \big\rangle = - \big\langle y_a, [x_i, [x_a, y_i]] \big\rangle. \]
So $x_a \in [\mathfrak{g}_{\alpha_a - \beta_i}, \mathfrak{g}_{\beta_i}]$.
We find $\mathfrak{g}_{\alpha_a} = [\mathfrak{g}_{\alpha_a - \beta_i}, \mathfrak{g}_{\beta_i}]$.
This proves the claim.

By (\ref{eq:newnewnull1}) we find that whenever $\alpha = \sum_{i=1}^r k_i \beta_i$ we have $\lambda_a = \sum_{i=1}^r k_i \lambda_i$.
So there is an element $v \in \af$ such that $\lambda_a = \alpha_a(v)$ for all $a$.
By (\ref{eq:newnewnull2}) and the first identity in the proof of Lemma \ref{Lem:partsont} we conclude
\[ 0 = \sum_{a=1}^{n-r} \alpha_a(v) \alpha_a^\# = \tfrac12 v \]
and hence $h = 0$.
This proves the desired result.
\end{proof}

\section{Proofs of the main theorems} \label{sec:proofs}
\subsection{Introduction} \label{sec:proofsPrelim}
In this section, we will prove the stability results Theorems \ref{Thm:mainA} and \ref{Thm:mainB}.
Consider a solution $(g_t)_{t \in [0,T)}$ to Ricci deTurck flow (\ref{eq:RdTflow}).
Recall from subsection \ref{subsec:RdTflow} that we can write the evolution equation for $h_t = g_t - \ov{g}$ as
\begin{equation} \label{eq:flowequation}
\partial_t h_t + L h_t = Q_t = R_t + \nabla^* S_t
\end{equation}
where
\[  |Q_t| \leq C (|\nabla h_t|^2 + |h_t| |\nabla^2 h_t|), \quad |R_t| \leq C | \nabla h_t|^2, \quad |S_t| \leq C |h_t| |\nabla h_t|.  \]
Let $k_t \in C^\infty( M \times M ; \Sym_2 T^*M \boxtimes (\Sym_2 T^*M)^*)$ be the kernel of the Einstein operator $L$, i.e.
\[ \partial_t k_t(\cdot, x_1) = - L k_t(\cdot, x_1) \qquad \text{and} \qquad k_t(\cdot, x_1) \xrightarrow{t \to 0} \delta_{x_1}  \id_{\Sym_2 T^*_{x_1} M}.  \]
For $(x_1,t_1) \in M \times [0,T)$ and $0 \leq t_0 < t_1$, we obtain by convolution
\begin{alignat}{1}
 h(x_1,t_1) &= \int_M k_{t_1-t_0} (x_1, x) h_{t_0}(x) dx + \int_{t_0}^{t_1} \int_M k_{t_1-t}(x_1, x) Q_t(x) dx dt \notag \\
&= \int_M k_{t_1-t_0} (x_1, x) h_{t_0}(x) dx \notag \\
&\hspace{10mm}+ \int_{t_0}^{t_1} \int_M \big( k_{t_1-t}(x_1, x) R_t(x) + \nabla k_{t_1 - t} (x_1, x) S_t (x) \big) dx dt. \label{eq:convoleq}
\end{alignat}
We will frequently make use of this identity.

In the next subsection, we prove Theorem \ref{Thm:mainA}.
In order to establish Theorem \ref{Thm:mainB}, we first derive some more precise short-time estimates in subsection \ref{subsec:sec6shorttimest}.
Then, we present a trick involving the geometry of negative sectional curvature to obtain a good estimate on the linearized equation in subsection \ref{subsec:hypgeometry}.
Finally, we prove Theorem \ref{Thm:mainB} in subsection \ref{subsec:ThmmainB}.

\subsection{Proof of Theorem \ref{Thm:mainA}}
\begin{proof}[Proof of Theorem \ref{Thm:mainA}]
Observe that by passing to its universal cover, we can always assume $M$ to be simply connected.

By Theorem \ref{Thm:dectrianglegenrank} and Proposition \ref{Prop:lambdaWWpos}, we know that there are constants $\lambda > 0$ and $C < \infty$ such that we have the following bound on the heat kernel
\[ \Vert k_t(x_1, \cdot) \Vert_{L^1(M)} \leq C e^{-\lambda t} \qquad \text{for all $x_1 \in M$ and $t > 0$}. \]

Let $\varepsilon_0 > 0$ be a small constant which we will determine in the course of the proof and define $T_{\max}$ to be the maximum over all $T$ such that Ricci deTurck flow $h_t$ starting from $h_0 = g_0 - \ov{g}$ exists on $[0,T)$ and satisfies $\Vert h_t \Vert_{L^\infty(M)} < \varepsilon_0$ everywhere.
In the following, we will show that for sufficiently small $\varepsilon_0$, we have $\Vert h_t \Vert_{L^\infty(M)} \leq C_1 \varepsilon e^{-\lambda t}$ where $\varepsilon$ is the constant that controls $\Vert h_0 \Vert_{L^\infty(M)}$.
Hence, if $\varepsilon$ is small enough, we conclude that $\Vert h_t \Vert_{L^\infty(M)} < \varepsilon_0 / 2$ on $[0,T_{\max})$ and Proposition \ref{Prop:shortex} yields  $T_{\max} = \infty$.

Now set for every $t_1 \in [0, T_{\max})$
\[ Z_{t_1} = \max_{t \in [0,t_1]} e^{\lambda t} \Vert h_t \Vert_{L^\infty(M)} \]
Again, by Proposition \ref{Prop:shortex}, we find that if we choose $\varepsilon$ small enough, we have $T_{\max} > \tau_{s.e.}$ and $Z_{\tau_{s.e.}} \leq C \varepsilon$.
By Corollary \ref{Cor:Shi}, we conclude that for sufficiently small $\varepsilon_0$, we have for all $t \in [\tau_{s.e.}, T_{\max})$
\[ \Vert \nabla^m h_t \Vert_{L^\infty(M)} \leq C_m Z_t  e^{-\lambda t}\qquad \Longrightarrow \qquad \Vert Q_t \Vert_{L^\infty(M)} \leq C Z^2_t e^{-2\lambda t}. \]
We now use (\ref{eq:convoleq}) for $t_1 > t_0 = \tau_{s.e.}$:
\[ h_{t_1}(x_1) = \int_M k_{t_1 - \tau_{s.e.}} (x_1, x) h_{\tau_{s.e.}} (x) dx + \int_{\tau_{s.e.}}^{t_1} \int_{M} k_{t_1-t}(x_1,x) Q_t(x) dx dt \]
to obtain the estimate
\[ |h_{t_1}|(x_1) \leq C \varepsilon e^{-\lambda t_1} + C Z_{t_1}^2 \int_{\tau_{s.e.}}^{t_1} e^{-\lambda (t_1-t)} e^{- 2 \lambda t} dt \leq C \varepsilon e^{-\lambda t_1} + C Z_{t_1}^2 e^{-\lambda t_1}. \]
We conclude that there is a constant $C_0 < \infty$ such that
\[ Z_{\tau_{s.e.}} \leq C_0 \varepsilon \qquad \text{and} \qquad Z_t \leq C_0 (\varepsilon + Z_t^2) \qquad \text{for all $t \in [\tau_{s.e.},T_{\max})$}. \]
Now assume $\varepsilon < (2 C_0)^{-2}$.
Observe that since $Z_t$ is continuous in $t$, either $Z_t \leq 2 C_0 \varepsilon$ holds for all times $t \in [\tau_{s.e.}, T_{\max})$ or there is a time $t \in [\tau_{s.e.}, T_{\max})$ with $Z_t = 2 C_0 \varepsilon$.
However, the latter case immediately gives a contradiction:
\[ 2 C_0 \varepsilon = Z_t \leq C_0 (\varepsilon + Z_t^2) = C_0 ( \varepsilon + 4 C_0^2 \varepsilon^2) < 2 C_0 \varepsilon. \]
This implies the claim for $C_1 = 2 C_0$.
\end{proof}

\subsection{Short-time estimates} \label{subsec:sec6shorttimest}
In this subsection, we establish some analytical facts that are needed later in the proof of Theorem \ref{Thm:mainB}.
Our main result will be Lemma \ref{Lem:h1h2splitting}, which states that a small perturbation that has a representation as in the assumption of Theorem \ref{Thm:mainB}, will continue having such a representation for a small times and there are suitable a priori derivative estimates.
We will partially make use of methods developed in \cite{KL}.

\begin{Lemma} \label{Lem:locestimate}
There is an $\varepsilon_0 > 0$ and constants $\varepsilon_m > 0$ such that:
Let $r_1 < 1$, $t_1 = r_1^2$, $x_1 \in M$ and assume that $(h_t)_{t \in [0,t_1]}$ is a solution to  (\ref{eq:flowequation}) on $B_{2r_1} (x_1)$.
Then if $|h_t| < \varepsilon_0$ everywhere,
\begin{multline*}
r_1^{-\frac12(n+2)} \Vert \nabla h \Vert_{L^2(B_{r_1}(x_1) \times [0,r_1^2])} \leq C r_1^{-1-\frac12 (n+2)} \Vert h \Vert_{L^2(B_{2r_1}(x_1) \times [0,r_1^2])} \\
 \qquad\qquad\qquad + C r_1^{-1 - \frac12 n} \Vert h_0 \Vert_{L^2(B_{2r_1}(x_1))}.
\end{multline*}
Moreover, if $|h_t| < \varepsilon_m$ everywhere, we have for all $(x,t) \in B_{r_1}(x_1) \times [\frac12 t_1, t_1]$
\[ | \nabla^m h| (x, t) \leq C_m r_1^{-m-\frac12 (n+2)} \Vert h \Vert_{L^2(B_{2r_1}(x_1) \times [0,r_1^2])}. \]
\end{Lemma}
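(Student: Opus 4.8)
The plan is to reduce both inequalities to the unit scale by parabolic rescaling, to prove the first by a Caccioppoli-type energy estimate, and to prove the second by first upgrading the $L^2$-bound to an $L^\infty$-bound on $h$ and then invoking Proposition \ref{Prop:Shi}. First I would replace $h_t$ by $\hat h_t(x) = r_1^{-2} h_{r_1^2 t}(r_1 x)$ in $r_1^{-1}$-rescaled normal coordinates centred at $x_1$; since the pointwise norm $|h|$ is scale invariant, the smallness assumptions persist, and after rescaling the curvature of $(M,\ov g)$, the zeroth order term $2R$ of $L = -\triangle - 2R$, and all coefficients of $Q_t$ in the form required by Proposition \ref{Prop:Shi} are bounded by a universal constant. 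Hence it suffices to treat $r_1 = t_1 = 1$, after which the stated $r_1$-weights are recovered by undoing the rescaling.

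For the first estimate I would run a standard parabolic energy argument. Choose a spatial cutoff $\chi$ supported in $B_2(x_1)$, equal to $1$ on a slightly smaller ball, with $|\nabla\chi| \le C$, pair the equation $\partial_t h + Lh = R_t + \nabla^* S_t$ with $\chi^2 h$, and integrate over $B_2(x_1) \times [0,t]$. The term $\int \chi^2 \langle \partial_t h, h\rangle$ produces $\tfrac12 \partial_t \int \chi^2 |h|^2$, the term $-\int \chi^2 \langle \triangle h, h\rangle$ produces $\int \chi^2 |\nabla h|^2$ plus a cross term absorbable by Cauchy--Schwarz, and the curvature term $-2\int\chi^2\langle Rh,h\rangle$ is bounded by $C\int\chi^2|h|^2$. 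On the right-hand side one has $\int\chi^2\langle R_t,h\rangle \le C\int\chi^2|\nabla h|^2|h| \le C\varepsilon_0\int\chi^2|\nabla h|^2$ since $|h| < \varepsilon_0$, and, integrating by parts once more, $\int\chi^2\langle\nabla^*S_t,h\rangle = \int\langle S_t, \nabla(\chi^2 h)\rangle \le C\varepsilon_0\int\chi^2|\nabla h|^2 + C\int|\nabla\chi|^2|h|^2$, using $|S_t| \le C|h||\nabla h|$ and Young's inequality. Here it is essential that $Q_t$ be written in the divergence form $R_t + \nabla^* S_t$, so that the contribution of $S_t$ costs only one derivative of $h$. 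Choosing $\varepsilon_0$ small enough to absorb the $\varepsilon_0 \int \chi^2 |\nabla h|^2$ terms into the left-hand side and integrating in time from $0$ to $1$ gives $\int_0^1 \int \chi^2 |\nabla h|^2 \le C \int_0^1 \int |\nabla\chi|^2 |h|^2 + C\int_{B_2(x_1)} \chi^2 |h_0|^2$, which is the first inequality.

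For the second estimate I would argue in two steps. Since $|h| < \varepsilon_m$ on the whole cylinder, Corollary \ref{Cor:Shi}, applied on balls around points of $B_1(x_1)$ and using the smallness of $h$ on $B_2(x_1)\times[0,1]$, already gives $|\nabla h| \le C t^{-1/2}\Vert h\Vert_{L^\infty}$ and $|\nabla^2 h| \le C t^{-1}\Vert h\Vert_{L^\infty}$ on interior cylinders, so the nonlinearity satisfies $|Q_t| \le C(|\nabla h|^2 + |h||\nabla^2 h|) \le C\varepsilon_m t^{-1}\Vert h\Vert_{L^\infty}$. Thus, up to a bounded potential and this absorbable source term, $|h|^2$ is a subsolution of the scalar heat equation, and a Moser iteration over a sequence of nested cylinders — in which the constant $C\varepsilon_m$ in front of the $\Vert h\Vert_{L^\infty}^2$-source is taken $< \tfrac12$ and absorbed through the iteration — yields $\Vert h\Vert_{L^\infty(B_{3/2}(x_1)\times[\tfrac14,1])} \le C\Vert h\Vert_{L^2(B_2(x_1)\times[0,1])}$. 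Having a genuine $L^\infty$-smallness bound for $h$ on this intermediate cylinder, I would then apply Proposition \ref{Prop:Shi} (equivalently Corollary \ref{Cor:Shi}) with $H = \Vert h\Vert_{L^\infty}$ and the hypothesis $|Q| \le C(|\nabla h|^2 + |h||\nabla^2 h|)$ to obtain $\Vert\nabla^m h\Vert_{L^\infty(B_1(x_1)\times[\tfrac12,1])} \le C_m H$; combining the two steps gives the assertion.

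The main obstacle, and essentially the only place where the structure of the equation enters, is the treatment of the quadratic terms $R_t$ and $\nabla^* S_t$: in both the energy estimate and the Moser iteration these must be absorbed rather than propagated. This works only because in each such term one factor is either $h$ itself — hence bounded by $\varepsilon_0$ or $\varepsilon_m$ — or, via the a priori estimates of Corollary \ref{Cor:Shi}, a covariant derivative of $h$ already controlled by $\Vert h\Vert_{L^\infty}$; the smallness constants can then be chosen to beat the fixed constants produced by parabolic regularity. The divergence structure $Q_t = R_t + \nabla^* S_t$, with $|S_t| \le C|h||\nabla h|$, is precisely what renders the formally second-order quadratic contribution harmless in the weak formulation.
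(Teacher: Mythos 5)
Your proof of the first inequality is essentially the paper's: the same Caccioppoli estimate with a cutoff $\eta$ adapted to $B_{2r_1}(x_1)$, the same exploitation of the divergence structure $Q_t=R_t+\nabla^*S_t$ to pay only one derivative for the $S_t$-term, and the same absorption of the $\varepsilon_0\int\eta^2|\nabla h|^2$ contributions. (The paper works directly at scale $r_1$ with $|\nabla\eta|\le Cr_1^{-1}$ rather than rescaling, but that is cosmetic; note also that, as in your version, the gradient term is really controlled only on the smaller ball where the cutoff equals $1$.) For the second inequality you take a genuinely different route. The paper stays entirely in $L^2$: it differentiates the equation $m$ times, bounds the commutator $[\nabla^*\nabla,\nabla^m]h$ and the differentiated nonlinearity $Q_t^{(m)}$ pointwise by $C\sum_i r_1^{-m-2+i}|\nabla^i h|$ using the smallness $|\nabla^i h|\le C\varepsilon_m r_1^{-i}$ from Corollary \ref{Cor:Shi}, runs nested, time-weighted energy estimates to get $\Vert\nabla^m h_{t'}\Vert_{L^2}\le C_m r_1^{-m-1}\Vert h\Vert_{L^2}$ by induction, and only at the very end converts to $L^\infty$ by Sobolev embedding for large $m$. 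You instead first upgrade $L^2\to L^\infty$ for $h$ itself by Moser iteration and then invoke Proposition \ref{Prop:Shi} with $H=\Vert h\Vert_{L^\infty}$. Both are legitimate; the paper's version avoids Moser iteration at the cost of an induction on $m$, yours localizes all the nonlinear difficulty into a single De Giorgi--Moser step.

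That step is the one place where your write-up does not yet close. As stated, you bound the source by $|Q_t|\le C\varepsilon_m t^{-1}\Vert h\Vert_{L^\infty}$ with $\Vert h\Vert_{L^\infty}$ taken over the \emph{full} cylinder $B_2(x_1)\times[0,1]$. A source of that form cannot be ``absorbed'' into $\Vert h\Vert_{L^\infty}$ of the \emph{smaller} target cylinder: Moser iteration would only yield $\Vert h\Vert_{L^\infty(\mathrm{small})}\le C\Vert h\Vert_{L^2}+C\varepsilon_m\Vert h\Vert_{L^\infty(\mathrm{large})}$, and the last term is not controlled by the left-hand side. To make the absorption legitimate you must localize Corollary \ref{Cor:Shi}: for $(x,t)$ in a cylinder $Q_\rho$ with $t\ge\frac14$, bound $|\nabla h|(x,t)$ and $|\nabla^2h|(x,t)$ by $C(\rho'-\rho)^{-2}\Vert h\Vert_{L^\infty(Q_{\rho'})}$ for a slightly larger interior cylinder $Q_{\rho'}$, so that the estimate takes the form $f(\rho)\le C(\rho'-\rho)^{-\theta}\Vert h\Vert_{L^2}+C\varepsilon_m f(\rho')$ for $f(\rho)=\Vert h\Vert_{L^\infty(Q_\rho)}$, and then apply the standard iteration lemma over nested cylinders (which requires $\varepsilon_m$ small relative to the geometric growth $2^{k\theta}$ of the constants, not merely $<\frac12$). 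Keeping all intermediate cylinders inside $\{t\ge\frac14\}$ also disposes of the non-integrable $t^{-1}$ singularity at $t=0$, which your formulation otherwise drags into the first step of the iteration. With these adjustments your argument goes through and yields the same conclusion as the paper's.
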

\begin{proof}
As for the first estimate consider a cutoff function $\eta \in C^\infty(M)$ that is equal to $1$ on $B_{r_1}(x_1)$, vanishes outside $B_{2r_1}(x_1)$ and satisfies $|\nabla \eta| \leq C r_1^{-1}$.
We use $\partial_t h_t + \nabla^* \nabla h_t = R_t + \nabla^* S_t$ and $|R_t| \leq C |\nabla h_t|^2$, $|S_t| \leq C |h_t| |\nabla h_t|$ to carry out the following computation (integration will always be over $B_{2r_1}(x_1)$ and $|h_t| < \varepsilon_0$ is assumed to be sufficiently small)
\begin{alignat*}{1}
 & \tfrac12 \partial_t \int \eta^2 |h_t|^2 + \int \eta^2 |\nabla h_t|^2 \\
& \hspace{10mm} \leq \int \eta^2 |R_t| |h_t| + \int \eta^2 |S_t| |\nabla h_t| + 2 \int \eta |\nabla \eta| |h_t| |\nabla h_t| + 2 \int \eta |\nabla \eta| |S_t| |h_t| \\
& \hspace{10mm} \leq \int \eta^2 |R_t| |h_t| + \int \eta^2 |S_t|^2 + \tfrac14 \int \eta^2 |\nabla h_t|^2 + 4 \int |\nabla \eta|^2 |h_t|^2 \\ 
& \hspace{20mm} + \tfrac14 \int \eta^2 |\nabla h_t|^2 +  \int \eta^2 |S_t|^2 + \int |\nabla \eta|^2 |h_t|^2 \\
& \hspace{10mm} \leq \tfrac34 \int \eta^2 |\nabla h_t|^2 + 5 \int |\nabla \eta|^2 |h_t|^2 
\end{alignat*}
Hence
\[ \int_0^{t_1} \int_{B_{2r_1}(x_1)} \eta^2 |\nabla h_t|^2 \leq  C r_1^{-2} \int_0^{t_1} \int_{B_{2r_1}(x_1)} |h_t|^2 + 2 \int_{B_{2r_1}(x_1)} \eta^2 |h_0|^2. \]
This establishes the first inequality.

In order to prove the second inequality, we first choose constants $\rho_m = 1 + 2^{-m}$ and $\tau_m = \frac12 - 2^{-m-2}$ for $m \geq 0$.
Observe that $1 < \rho_m \leq 2$ decreases and $\frac14 \leq \tau_m < \frac12$ increases in $m$.
For the following fix $m \geq 0$ and consider a cutoff function $\eta \in C^\infty(M)$ that is equal to $1$ on $B_{\rho_{m+1} r_1}(x_1)$, vanishes outside $B_{\rho_m r_1}(x_1)$ and satisfies $|\nabla \eta| < C_m r_1^{-1}$.
If we differentiate (\ref{eq:flowequation}) $m$ times, we obtain
\[ \partial_t \big( \nabla^m h_t \big) + \nabla^* \nabla \big( \nabla^m h_t \big) = \big[ \nabla^* \nabla, \nabla^m \big] h_t + Q_t^{(m)} \]
where
\[ Q_t^{(m)} = \sum_{\substack{i_1 + \ldots + i_k = m+2, \;\;  k \geq 2, \\ i_1 \geq 0, \;\; i_2, \ldots, i_k \geq 1}} \nabla^{i_1} h_t * \ldots * \nabla^{i_k} h_t. \]
Now observe that by Corollary \ref{Cor:Shi} and the bound $|h_t| < \varepsilon_{m}$, we have $| \nabla^i h | \leq C'_i \varepsilon_{m} r_1^{-i}$ for all $i \leq m + 2$ on $M \times [\tau_m t_1, t_1]$ if $\varepsilon_{m}$ is sufficiently small.
Hence, we can bound the two extra terms as follows
\begin{alignat*}{1}
 |Q_t^{(m)}| &\leq C_m \sum_{i=0}^{m+1} r_1^{-m-2+i} | \nabla^i h_t |, \\
 \big| \big[ \nabla^* \nabla, \nabla^m \big] h_t \big| &\leq C_m \sum_{i=0}^m | \nabla^i h_t| \leq C_m \sum_{i=0}^m r_1^{-m-2+i} | \nabla^i h_t|.
\end{alignat*}
Hence similarly as before
\begin{alignat*}{1}
& \tfrac12 \partial_t \int \eta^2 | \nabla^m h_t |^2 + \int \eta^2 | \nabla^{m+1} h_t|^2 \\
& \hspace{5mm} \leq \int \eta^2 |Q_t^{(m)}| |\nabla^m h_t| + \int \eta^2 \big| [ \nabla^* \nabla, \nabla^m ] h_t \big| | \nabla^m h_t | + 2 \int \eta | \nabla \eta | |\nabla^{m+1} h_t| |\nabla^m h_t| \\
& \hspace{5mm} \leq C_m \sum_{i=0}^m r_1^{-m-2+i} \int \eta^2 | \nabla^i h_t | | \nabla^m h_t| + C_m r_1^{-1} \int \eta |\nabla^{m+1} h_t| |\nabla^m h_t|  \\
& \hspace{5mm} \leq C_m \sum_{i=0}^m r_1^{-2m-2+2i} \int_{B_{\rho_m r_1}(x_1)} | \nabla^i h_t |^2 + \tfrac12 \int \eta^2 | \nabla^{m+1} h_t|^2.
\end{alignat*}
So
\[ \partial_t \int \eta^2 | \nabla^m h_t |^2 + \int \eta^2 | \nabla^{m+1} h_t|^2 \leq C_m \sum_{i=0}^m r_1^{-2m-2+2i} \int_{B_{\rho_m r_1}(x_1)} | \nabla^i h_t |^2. \]
We now multiply this inequality by $t/t_1 - \tau_{m+\frac12}$ and integrate it first from $\tau_{m+\frac12} t_1$ to some $t' \in [\frac12 t_1, t_1]$ and then from $\tau_{m+\frac12} t_1$ to $t_1$ to find
\begin{alignat*}{1}
 \Vert \nabla^{m} h_{t'} \Vert_{L^2(B_{\rho_{m+1} r_1}(x_1))} &\leq C_m \sum_{i=0}^m r_1^{-m-1+i} \Vert \nabla^i h \Vert_{L^2(B_{\rho_i r_1}(x_1) \times [\tau_i t_1, t_1])}, \\
 \Vert \nabla^{m+1} h \Vert_{L^2(B_{\rho_{m+1} r_1}(x_1) \times [\tau_{m+1} t_1, t_1])} &\leq C_m \sum_{i=0}^m r_1^{-m-1+i} \Vert \nabla^i h \Vert_{L^2(B_{\rho_i r_1}(x_1) \times [\tau_i t_1, t_1])}.
\end{alignat*}
Hence, by induction
\[  \Vert \nabla^m h_{t'} \Vert_{L^2(B_{\rho_{m+1} r_1}(x_1))} \leq C_m r_1^{-m-1} \Vert h \Vert_{L^2(B_{2r_1}(x_1) \times [0,t_1])} \]
and Sobolev embedding for large $m$ yields the desired result.
\end{proof}

In the following let $\sigma_0^2 = \tau_{s.e.}$ where $\tau_{s.e.}$ is the constant from Proposition \ref{Prop:shortex}.
\begin{Lemma} \label{Lem:h1h2splitting}
There are constants $A_m < \infty$ and $\varepsilon_m > 0$ such that for all $a \geq 0$, $b_1, b_2 \geq 0$, $\sigma \leq \sigma_0$, $m \geq 0$ and $q \geq 2$ we have: \\
Let $(h_t)_{t \in [0, \sigma^2]}$ be a solution to (\ref{eq:flowequation}) and assume that $|h_0| < \varepsilon_m$ and $h_0 = h_0^1 + h_0^2$ with $|h_0^1|, |h_0^2| < \varepsilon_m$ and
\[ |h_0^1| \leq \frac{b_1}{r+1+a}, \qquad \bigg( \int_M |h_0^2|^q \bigg)^{1/q} \leq b_2. \]
Then, there are continuous families $(h^1_t)_{t \in [0,\sigma^2]}, (h^2_t)_{t \in [0,\sigma^2]}$ with $h_t = h^1_t + h^2_t$ such that for $m=0$ and all $t \in [0, \sigma^2]$ or $m \geq 1$ and all $t \in [\frac12 \sigma_0^2, \sigma^2]$ we have $|\nabla^m h^1_t|, |\nabla^m h^2_t| < A_m \varepsilon_m$ and
\[ |\nabla^m h^1_t| \leq \frac{A_m b_1}{r+1+a}, \qquad \bigg( \int_M |\nabla^m h^2_t|^q \bigg)^{1/q} \leq A_m b_2. \]
Moreover, for all $t \in [\frac12 \sigma_0^2, \sigma^2]$ we have $| \nabla^m h^2_t | \leq A_m b_2$.
\end{Lemma}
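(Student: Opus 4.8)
The plan is to construct the splitting $h_t=h_t^1+h_t^2$ by a Duhamel iteration built on the heat kernel $k_t$ of the Einstein operator $L$, carrying two scale-invariant norms simultaneously — a weighted sup-norm $\sup_x(r(x)+1+a)\,|\cdot|(x)$ for the ``$h^1$-part'' and an $L^q$-norm for the ``$h^2$-part'' — in the spirit of \cite{KL}. First I would invoke Proposition \ref{Prop:shortex}: for $\eps_m$ small the solution $(h_t)$ of (\ref{eq:flowequation}) with $h_0=h_0^1+h_0^2$ exists (uniquely, $L^\infty$-boundedly) on $[0,\sigma_0^2]$ with $\|h_t\|_{L^\infty(M)}\le C\eps_m$, and after shrinking $\eps_m$ Corollary \ref{Cor:Shi} and Lemma \ref{Lem:locestimate} give the interior estimates $|\nabla^j h_t|\le C_j t^{-j/2}\|h\|_{L^\infty}$, hence $|\nabla^j h_t|\le C_j\eps_m$ on $[\tfrac12\sigma_0^2,\sigma^2]$ for $j\le m+2$. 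Next, writing $h_s=h_s^1+h_s^2$ and expanding $Q_s=R_s+\nabla^*S_s$ accordingly, split $R_s=R_s^{(1)}+R_s^{(2)}$, $S_s=S_s^{(1)}+S_s^{(2)}$, where the superscript $(2)$ collects exactly the terms all of whose factors are $h^2$ or its derivatives, and $(1)$ the rest (each carrying a factor of $h^1$ or $\nabla h^1$), and define $h^1,h^2$ as the solution of the coupled system
\[ h_t^1 = k_t*h_0^1 + \int_0^t\big(k_{t-s}*R_s^{(1)} + \nabla k_{t-s}*S_s^{(1)}\big)\,ds, \]
\[ h_t^2 = k_t*h_0^2 + \int_0^t\big(k_{t-s}*R_s^{(2)} + \nabla k_{t-s}*S_s^{(2)}\big)\,ds, \]
using $k_{t-s}*\nabla^*S=\nabla k_{t-s}*S$ as in (\ref{eq:convoleq}). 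Adding the two lines and comparing with (\ref{eq:convoleq}) shows $h_t^1+h_t^2$ solves (\ref{eq:flowequation}) with the right initial data, so by the uniqueness in Proposition \ref{Prop:shortex} it equals $h_t$; the system itself is solved by a standard contraction argument, convergent once $\eps_m$ is small.

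For the weighted bound on $h^1$ the key geometric input is that the weight $w=\frac{1}{r+1+a}$ is slowly varying, $|\nabla w|\le w$ and $|\nabla^2 w|\le Cw$ away from the basepoint, together with the homogeneity of $M$, which makes the Gaussian bounds of Proposition \ref{Prop:CLY} for $|\nabla^j k_t|$ uniform in the center point. Splitting $\big|(k_t*h_0^1)(x)\big|\le\int_M|k_t(x,y)|\,|h_0^1(y)|\,dy$ into the region $d(x,y)\le\tfrac12(r(x)+1+a)$, where $w(y)\le 2w(x)$ and the Gaussian has total mass $\le C$, and its complement, where for $t\le\sigma_0^2$ the Gaussian is $\le Ce^{-c(r(x)+1+a)^2/t}\le Cw(x)$, yields $|(k_t*h_0^1)(x)|\le\frac{Cb_1}{r(x)+1+a}$. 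The Duhamel terms are treated the same way: each summand of $R_s^{(1)},S_s^{(1)}$ has one factor bounded by the iterated quantity $\frac{A_0b_1}{r+1+a}$ and the others controlled by $\|h\|_{L^\infty}$ and Step~1, so $|R_s^{(1)}|+|S_s^{(1)}|\le C\eps_m\cdot(\text{interior-bounded})\cdot w$, and convolution against $|k_{t-s}|$ (resp.\ $|\nabla k_{t-s}|$, contributing an extra integrable factor $(t-s)^{-1/2}$) preserves the weight by the same near/far decomposition; the factor $\eps_m$ then closes the estimate.

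For the $L^q$ bound on $h^2$, Young's inequality with $\|k_t(x,\cdot)\|_{L^1(M)}\le C$ and $\|\nabla^j k_t(x,\cdot)\|_{L^1(M)}\le C_j t^{-j/2}$ (Proposition \ref{Prop:CLY}) gives $\|k_t*h_0^2\|_{L^q}\le Cb_2$. For the Duhamel terms one uses the divergence form: since $|S_s^{(2)}|\le C|h_s^2|\,|\nabla h_s^2|\le C\eps_m s^{-1/2}\cdot(\text{iterated }L^q\text{ norm of }h^2)$, we get $\big\|\int_0^t\nabla k_{t-s}*S_s^{(2)}\,ds\big\|_{L^q}\le\int_0^tC(t-s)^{-1/2}\cdot C\eps_m s^{-1/2}b_2\,ds\le C\eps_m b_2$, a convergent Beta integral. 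The genuinely quadratic term $R_s^{(2)}\sim|\nabla h_s^2|^2$ — for which this scaling is only borderline — is instead estimated through the localized $L^2$ (Carleson-type) control of $\nabla h$ provided by the first inequality of Lemma \ref{Lem:locestimate}, decomposing $k_{t-s}(x,y)$ into dyadic annuli $2^j\sqrt{t-s}\le d(x,y)<2^{j+1}\sqrt{t-s}$ and summing the Gaussian weights; again the total is $\le C\eps_m b_2$. Choosing $\eps_m$ small completes the $L^q$ bound. Finally, differentiating the integral equations and applying Corollary \ref{Cor:Shi} and Lemma \ref{Lem:locestimate} to $h^1,h^2$ on $[\tfrac12\sigma_0^2,\sigma^2]$ upgrades the $m=0$ bounds to $|\nabla^m h_t^1|\le\frac{A_mb_1}{r+1+a}$ and (via the local $L^q\to L^\infty$ estimate of Lemma \ref{Lem:locestimate}) to the pointwise $|\nabla^m h_t^2|\le A_mb_2$; the bounds $|\nabla^m h_t^i|<A_m\eps_m$ come from feeding $\|h_0^i\|_{L^\infty}\le\eps_m$ through the same estimates, with $\|k_t\|_{L^1}\le C$ in place of the weighted resp.\ $L^q$ kernel bounds.

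The main obstacle is the step just described for $h^2$: the quadratic term $|\nabla h_s^2|^2$ scales like $s^{-1}$, so a naive convolution estimate produces a logarithmically divergent time integral near $s=0$. This is exactly why the decomposition $Q=R+\nabla^*S$ is indispensable — the worse part $S$ is only linear in $\nabla h$ and can shed a half-derivative onto the kernel — and why the surviving $R$-contribution must be controlled through the scale-invariant local energy estimate of Lemma \ref{Lem:locestimate} rather than pointwise. A more conceptual, but technically milder, difficulty is verifying that convolution against the \emph{non-scalar} kernel $k_t$ preserves the slowly-varying weight $\frac{1}{r+1+a}$ up to a constant; this rests on the uniform Gaussian bounds of Proposition \ref{Prop:CLY} (using that $M$ is homogeneous) together with the near/far splitting above.
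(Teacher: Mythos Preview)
Your approach differs substantially from the paper's. You set up a coupled Duhamel system for $(h^1_t,h^2_t)$ and run a contraction in a weighted sup-norm and an $L^q$-norm simultaneously; the paper never maintains a splitting during the evolution. Instead it proves a \emph{scalar} pointwise bound $|h_t|\le u+v+w$ on $M\times[0,\sigma^2]$, where $u=\frac{B_1 b_1}{r+1+a}$, $\Vert v\Vert_{L^q}\le B_2 b_2$, and $w$ is a constant. Using (\ref{eq:convoleq}) for the full $h$, the local estimates of Lemma~\ref{Lem:locestimate} applied to $h$ (not to $h^i$), and a parabolic near/far decomposition of the domain of integration, it shows the improved bound $|h_t|\le u+\tilde v+\tfrac12 w$; the $L^q$-control of $\tilde v$ comes from the Hardy--Littlewood maximal inequality --- this is the device that replaces your ``Carleson-type'' handling of the quadratic term. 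Iterating drives $w\to 0$. Only afterward is the splitting $h_t=h_t^1+h_t^2$ manufactured, locally from the second inequality of Lemma~\ref{Lem:locestimate} and globally via a partition of unity.

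Your plan has two soft spots which the paper's route sidesteps. First, the interior estimates you invoke at the end (Corollary~\ref{Cor:Shi}, Lemma~\ref{Lem:locestimate}) are stated for solutions of (\ref{eq:flowequation}) or (\ref{eq:linRdT}); neither $h^1$ nor $h^2$ solves such an equation, since the sources $Q^{(i)}$ contain cross-terms in both components, so you cannot apply those results to $h^i$ as written. Second, your treatment of $R^{(2)}\sim|\nabla h^2|^2$ is the crux but is only gestured at: to close the $L^q$-estimate you need scale-invariant local $L^2$-control of $\nabla h^2$, yet Lemma~\ref{Lem:locestimate} supplies this for $h$, not for $h^2$ separately. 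A genuine Koch--Lamm setup would carry Carleson-type norms on each component and derive them from the Duhamel representation of $h^i$ --- real additional work, not a standard contraction. The paper's maximal-function argument bypasses all of this by working only with $|h_t|$ until the very last step.
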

\begin{proof}
Let $B_1, B_2$ be positive numbers, which will be determined later and assume that all $\varepsilon_m$ are bounded by some constant $\varepsilon_0 > 0$.
By Proposition \ref{Prop:shortex}, we have $|h_t| < C \varepsilon_0$ on $M \times [0, \sigma^2]$ for some $C$.
Hence, there is some \emph{constant} $w \leq C \varepsilon_0$ such that 
\begin{equation} \label{eq:uvw}
|h_t| \leq u + v + w
\end{equation}
for all $t \in [0, \sigma^2]$, where $u, v \in C^\infty(M)$ are nonnegative scalar functions with
\begin{equation} \label{eq:uestvest}
 u = \frac{B_1 b_1}{r+1+a}, \qquad \Vert v \Vert_{L^q(M)} \leq B_2 b_2.
\end{equation}
Suppose that $w$ is chosen almost minimal with this property.
In the following we will show that we can rechoose $v$ such that (\ref{eq:uvw}) even holds for $\frac12 w$.
Hence, by induction it holds for $w = 0$.

Consider some $0 < r_1 \leq \sigma$, set $t_1 = r_1^2$ and for any $x \in M$
\begin{multline*}
 H_{r_1} (x) := r_1^{- \frac1q(n+2)} \Vert h \Vert_{L^q(B_{2r_1}(x) \times [0,t_1])} + r_1^{-\frac1q n} \Vert h_0 \Vert_{L^q(B_{2r_1}(x))} \\
 \leq C u(x) + C w + 2 r_1^{-\frac{n}q } \Vert v \Vert_{L^q(B_{2r_1}(x))}.
\end{multline*}
Observe, that also $H_{r_1}(x) \leq C \varepsilon_0$.
By Lemma \ref{Lem:locestimate} we have the following estimates (for $(x',t') \in B_{r_1}(x) \times [\frac12 t_1, t_1]$ and $m$ not too large)
\begin{alignat*}{1}
 r_1^{-\frac12 (n+2)} \Vert \nabla h \Vert_{L^2(B_{r_1}(x) \times [0, t_1])} &\leq C r_1^{-1-\frac12 (n+2)} \Vert h \Vert_{L^2(B_{2r_1}(x) \times [0,t_1])} \\
 & \qquad + C r_1^{-1 - \frac12 n} \Vert h_0 \Vert_{L^2 ( B_{2 r_1} (x))}  
  \leq C r_1^{-1} H_{r_1}(x), \\
 | \nabla^m h | (x',t') &\leq C_m r_1^{-m-\frac12 (n+2)} \Vert h \Vert_{L^2(B_{2r_1}(x) \times [0,t_1])} \leq C_m r_1^{-m} H_{r_1}(x) . 
\end{alignat*}
From this we obtain estimates on $R = \nabla h * \nabla h$ (again for $(x', t') \in B_{r_1}(x) \times [\frac12 t_1, t_1]$)
\begin{alignat*}{1}
 r_1^{-(n+2)} \Vert R \Vert_{L^1(B_{r_1}(x) \times [0,t_1])} &\leq C r_1^{ - 2} H^2_{r_1}(x) \leq C r_1^{-2} \varepsilon_0 H_{r_1}(x), \\
 |R|(x',t') &\leq C r_1^{ - 2} H^2_{r_1}(x) \leq C r_1^{-2} \varepsilon_0 H_{r_1}(x) .
\end{alignat*}
as well as on $S = h * \nabla h$
\begin{alignat*}{1}
 r_1^{-(n+2)} \Vert S \Vert_{L^1(B_{r_1}(x) \times [0,t_1])} \leq r_1^{-\frac12(n+2)} \Vert S \Vert_{L^2(B_{r_1}(x) \times [0,t_1])} &\leq C r_1^{ - 1} \varepsilon_0 H_{r_1}(x). \\
 |S|(x',t') &\leq C r_1^{-1} \varepsilon_0  H_{r_1}(x).
\end{alignat*}
We now use (\ref{eq:convoleq}) for $x_1 \in M$ and $t_0 = 0$
\begin{multline*}
 h(x_1,t_1) = \int_M k_{t_1}(x_1, x) h(x,0) dx \\ 
 + \int_0^{t_1} \int_M \big( k_{t_1-t}(x_1, x) R (x, t) + \nabla k_{t_1-t}(x_1,x) S (x, t) \big) dx dt.
\end{multline*}
We can bound the first integral $\int_M$ using the fact that $|k_{t_1}|(x_1,x) \leq C \Phi_{r_1} (x_1,x)$ where $\Phi_{r_1} (x_1,x) = r_1^{-n} \exp(-\frac18 r_1^{-2} d^2(x_1,x))$ (see Proposition \ref{Prop:CLY})
\begin{multline*}
 \big| { \textstyle \int_M} \big| \leq \int_M \frac{C b_1 \Phi_{r_1} (x_1,x) dx}{r(x)+1+a} + C \int_M \Phi_{r_1} (x_1, x) |h_0^2|(x) dx \\\leq \frac{C b_1}{r(x_1) + 1 + a} + C \int_M \Phi_{r_1} (x_1, x) |h_0^2|(x) dx.
\end{multline*}
As for the second integral, we split the domain of integration $M \times [0,t_1]$ into two parts: $\Omega = B_{r_1} (x_1) \times [\frac12 t_1, t_1]$ and its complement.
For the integral over $\Omega$, we use the pointwise bounds on $R$ and $S$ as well as the fact that by Proposition \ref{Prop:CLY}
\begin{alignat*}{1}
 \int_{B_{r_1} (x_1) \times [0, \frac12 t_1]} | k_t| (x_1,x)  dx dt &\leq C r_1^2, \\
 \int_{B_{r_1} (x_1) \times [0, \frac12 t_1]} |\nabla k_t| (x_1,x)  dx dt &\leq C r_1
\end{alignat*}
to conclude
\[ \big| {\textstyle \int_{\Omega}} \big| \leq C \varepsilon_0 H_{r_1}(x_1) \leq C \varepsilon_0 u(x_1) + C \varepsilon_0 w + C \varepsilon_0 r_1^{-\frac{n}{q}} \Vert v \Vert_{L^q(B_{2r_1}(x_1))}. \]
On $M \times [0,t_1] \setminus \Omega$, we use the fact that by Proposition \ref{Prop:CLY} we have the bounds $|k_{t_1-t}|(x_1,x) < C \Phi_{r_1}(x_1,x)$ and $|\nabla k_{t_1-t}|(x_1,x) < C r_1^{-1} \Phi_{r_1} (x_1,x)$ to conclude
\[ \big| {\textstyle \int_{M \times [0,t_1] \setminus \Omega}} \big| 
 \leq C  \int_M \Phi_{r_1}(x_1, x) \bigg( \int_0^{t_1} \big( |R(x,t)| + r_1^{-1} |S (x,t) | \big) dt \bigg) dx. \]
Note that for any $x, y \in M$ with $d (x, y) < r_1$ we have
\[ \Phi_{r_1} (x_1, x) \leq C \Phi_{2 r_1} (x_1, y) . \]
So by Fubini's Theorem
\begin{alignat*}{1}
 \big| {\textstyle \int_{M \times [0,t_1] \setminus \Omega}} \big| 
 &\leq C \int_M  r_1^{-n}  \bigg( \int_{ B_{r_1} (x) } \Phi_{2 r_1}(x_1, y)  \\
 & \qquad\qquad\qquad\qquad \cdot \bigg( \int_0^{t_1} \big( |R(x,t)| + r_1^{-1} |S (x,t) | \big) dt \bigg) dy \bigg) dx \displaybreak[1] \\
 &\leq C r_1^{-n} \int_M    \bigg( \int_{ B_{r_1} (y) } \Phi_{2 r_1}(x_1, y)  \\
& \qquad\qquad\qquad\qquad \cdot \bigg( \int_0^{t_1} \big( |R(x,t)| + r_1^{-1} |S (x,t) | \big) dt \bigg) dx \bigg) dy \displaybreak[1] \\
 &\leq C r_1^{-n} \int_M   \Phi_{2r_1}(x_1, y)  \bigg( \int_{B_{r_1} (y) \times [0, t_1]}  \big( |R| + r_1^{-1} |S | \big)  \bigg) dy \\
  & \leq C \varepsilon_0 \int_M \Phi_{2 r_1}(x_1, x)  \cdot H_{r_1}(x) dx \displaybreak[1] \\
& \leq C \varepsilon_0 u(x_1) + C \varepsilon_0 w + C \varepsilon_0 r_1^{-\frac{n}{q}} \int_M \Phi_{2r_1} (x_1, x)  \Vert v \Vert_{L^q (B_{2r_1}(x))} dx.
\end{alignat*}
Hence for some $C_1$
\[ |h|(x_1,t_1) \leq \frac{C_1 (1+ \varepsilon_0 B_1) b_1}{r(x_1) + 1 + a} + C_1 \varepsilon_0 w + \tilde v_{r_1} (x_1). \]
where using $\widehat v_{r_1}(x) = r_1^{-\frac{n}{q}} \Vert v \Vert_{L^q(B_{2r_1}(x))}$
\[ \tilde v_{r_1} (x_1) = C \int_M \Phi_{2 r_1}(x_1,x) \big( |h_0^2|(x) + \varepsilon_0 \widehat v_{r_1} (x) \big) dx + C \varepsilon_0 \widehat{v}_{r_1}(x_1). \]
Set $\tilde v = \sup_{0 < r_1 < \sigma} \tilde v_{r_1}$.
Denote by $\mathcal{M}_{\sigma}$ the Hardy-Littlewood maximal operator up to scale $\sigma$, i.e. for any nonnegative function $f \in C^\infty(M)$, we set 
\[ (\mathcal{M}_{\sigma} f)(x_1) = \sup_{0< r_1 < \sigma} \frac1{\vol B_{r_1}(x_1)} \int_{B_{r_1}(x_1)} f(x) dx. \]
Using this operator, we can write
\begin{multline*}
 \int_M \Phi_{2 r_1}(x_1, x)  f(x) dx 
  = \int_{M \setminus B_{\sigma}(x_1)} \Phi_{2 r_1} (x_1, x) f(x) dx \\
  + \int_{B_{\sigma}(x_1)} (2 r_1)^{-n} e^{- \frac{\sigma^2}{32 r_1^2} } f(x) dx 
 - \int_0^{ \sigma} \frac{d}{dr'} \Big((2 r_1)^{-n} e^{-\frac{(r')^2}{32r_1^2} } \Big) \bigg( \int_{B_{r'}(x_1)} f(x) dx \bigg) dr' .
\end{multline*}
Note that for all $x \in M \setminus B_\sigma (x_1)$ we have $\Phi_{2 r_1} (x_1, x) \leq C \Phi_{2\sigma} (x_1, x)$.
We also have $(2r_1)^{-n} \exp ( - \frac{\sigma^2}{32 r_1^2} ) \leq C \sigma^{-n} \leq C \Phi_{2 \sigma} (x_1, x)$ for any $x \in B_\sigma (x_1)$.
Lastly, the derivative under the last integral sign is non-positive.
So we obtain
\begin{multline*}
 \int_M \Phi_{2r_1}(x_1, x) f(x) dx 
 \leq C \int_M \Phi_{2\sigma} (x_1, x) f(x) dx \\
\qquad\qquad\qquad\qquad\qquad - \big( \mathcal{M}_\sigma f \big)(x_1) \int_{0}^{\sigma}  \frac{d}{dr'} \Big( (2r_1)^{-n} e^{-\frac{(r')^2}{32r_1^2} } \Big)  \bigg( \int_{B_{r'}(x_1)} dx \bigg) dr' \\
 \leq  C \int_M \Phi_{2\sigma} (x_1, x)  f(x) dx + C  \big( \mathcal{M}_\sigma f \big)(x_1)  .
\end{multline*}
Hence
\begin{multline*}
 \tilde v(x_1) \leq \big(\mathcal{M}_{\sigma} ( |h_0^2| + \varepsilon_0 \widehat v_{r_1} ) \big)(x_1) \\
 + C \int_M \Phi_{2 \sigma}(x_1,x)  \big( |h_0^2|(x) + \varepsilon_0 v(x) \big) dx + C \varepsilon_0 \widehat v_{r_1} (x_1).
\end{multline*}
By the Hardy-Littlewood maximal inequality (cf \cite{SW}) and Young's inequality there is some $C_2$ (which is independent of $q$) such that
\begin{multline*}
 \Vert \tilde v \Vert_{L^q(M)} \leq C \Vert h_0^2 \Vert_{L^q(M)} + C \varepsilon_0  \Vert \widehat v_{r_1} \Vert_{L^q(M)} \\
 \leq C_2 \Vert h_0^2 \Vert_{L^q(M)} + C_2 \varepsilon_0  \Vert v \Vert_{L^q(M)} 
 \leq C_2 \big( 1 + \varepsilon_0 B_2 \big) b_2.
\end{multline*}
Now choose $B_1 = 2 C_1$ and $B_2 = 2 C_2$.
Then we can choose $\varepsilon_0$ small enough such that $C_1(1+\varepsilon_0 B_1) \leq B_1$, $C_2 (1+\varepsilon_0 B_2) \leq B_2$ and $C_0 \varepsilon_0 < \frac12$.
We find that $|h_t| \leq u + \tilde v + \tilde w$ with $\tilde w = \frac12 w$ and $\Vert \tilde v \Vert_{L^q(M)} \leq B_2 b_2$.
Iterating this argument shows that we can find $u, v \in C^\infty(M)$ satisfying (\ref{eq:uestvest}) and (\ref{eq:uvw}) for $w=0$.

Now consider such $u$ and $v$ and recall that for every $x \in M$ and $(x',t') \in B_{\sigma_0} (x) \times [\frac12 \sigma_0^2, \sigma^2]$ we have
\[ |\nabla^m h|(x',t') \leq C_m \sigma_0^{-m} H_{\sigma_0} (x) \leq C_m \sigma_0^{-m} \big( u(x)+ \Vert v \Vert_{L^q(B_{2\sigma_0}(x))} \big) \]
and recall that by Corollary \ref{Cor:Shi} we have $|\nabla^m h_t| < C_m \varepsilon_m$ for $t \in [\frac12 \sigma_0^2, \sigma^2]$.
So on $B_{\sigma_0}(x) \times [\frac12 \sigma_0^2, \sigma^2]$, we can find a splitting $h_t = h^1_t + h^2_t$ such that $|\nabla^m h_t^1|, |\nabla^m h_t^2| < C_m \varepsilon_m$ and
\[ | \nabla^m h^1_t | \leq \frac{C_m b_1}{r+a+1}, \qquad  | \nabla^m h^2_t | \leq C_m \Vert v \Vert_{L^q(B_{2\sigma_0}(x))} \leq C_m B_2 b_2. \]
Using a suitable partition of unity, we can glue those splittings together.
Since these estimates are uniform in time, we can extend this splitting to the time interval $[0, \sigma^2]$ such that the zero order bounds hold on $[0, \frac12 \sigma_0^2]$.
\end{proof}

\subsection{Hyperbolic geometry and bounds on the linear equation} \label{subsec:hypgeometry}
We need the following elementary observation.
\begin{figure}[t]
\caption{The cases $d < 0$ and $d > 0$.}
\vspace{-5mm}
\begin{center}
\begin{picture}(0,0)%
\hspace{2mm}\includegraphics[width=14cm]{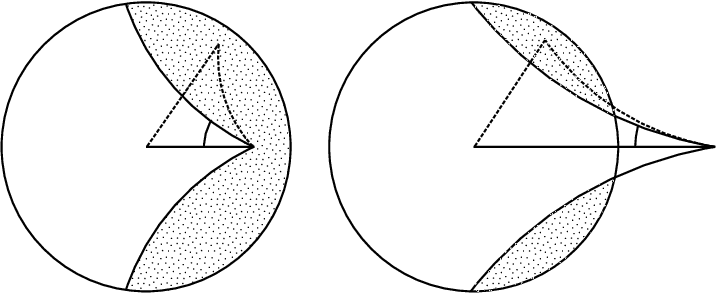}%
\end{picture}%
\setlength{\unitlength}{2863sp}%
\begingroup\makeatletter\ifx\SetFigFont\undefined%
\gdef\SetFigFont#1#2#3#4#5{%
  \reset@font\fontsize{#1}{#2pt}%
  \fontfamily{#3}\fontseries{#4}\fontshape{#5}%
  \selectfont}%
\fi\endgroup%
\begin{picture}(11195,4100)(518,-5050)
\put(2250,-3400){\makebox(0,0)[lb]{\smash{{\SetFigFont{12}{14.4}{\familydefault}{\mddefault}{\updefault}$x_0$}}}}
\put(6450,-3400){\makebox(0,0)[lb]{\smash{{\SetFigFont{12}{14.4}{\familydefault}{\mddefault}{\updefault}$x_0$}}}}
\put(3750,-3400){\makebox(0,0)[lb]{\smash{{\SetFigFont{12}{14.4}{\familydefault}{\mddefault}{\updefault}$x_1$}}}}
\put(9550,-3400){\makebox(0,0)[lb]{\smash{{\SetFigFont{12}{14.4}{\familydefault}{\mddefault}{\updefault}$x_1$}}}}
\put(7220,-1850){\makebox(0,0)[lb]{\smash{{\SetFigFont{12}{14.4}{\familydefault}{\mddefault}{\updefault}$x'$}}}}
\put(3180,-1900){\makebox(0,0)[lb]{\smash{{\SetFigFont{12}{14.4}{\familydefault}{\mddefault}{\updefault}$x'$}}}}
\put(2650,-3400){\makebox(0,0)[lb]{\smash{{\SetFigFont{12}{14.4}{\familydefault}{\mddefault}{\updefault}$r_0+d$}}}}
\put(7500,-3400){\makebox(0,0)[lb]{\smash{{\SetFigFont{12}{14.4}{\familydefault}{\mddefault}{\updefault}$r_0$}}}}
\put(8500,-3400){\makebox(0,0)[lb]{\smash{{\SetFigFont{12}{14.4}{\familydefault}{\mddefault}{\updefault}$d$}}}}
\put(8780,-3130){\makebox(0,0)[lb]{\smash{{\SetFigFont{12}{14.4}{\familydefault}{\mddefault}{\updefault}$\alpha$}}}}
\put(3300,-3130){\makebox(0,0)[lb]{\smash{{\SetFigFont{12}{14.4}{\familydefault}{\mddefault}{\updefault}$\alpha$}}}}
\put(1100,-3300){\makebox(0,0)[lb]{\smash{{\SetFigFont{12}{14.4}{\familydefault}{\mddefault}{\updefault}$S_{v,\alpha}$}}}}
\put(5400,-3300){\makebox(0,0)[lb]{\smash{{\SetFigFont{12}{14.4}{\familydefault}{\mddefault}{\updefault}$S_{v,\alpha}$}}}}
\put(2500,-2750){\makebox(0,0)[lb]{\smash{{\SetFigFont{12}{14.4}{\familydefault}{\mddefault}{\updefault}$r'_0$}}}}
\put(6700,-2500){\makebox(0,0)[lb]{\smash{{\SetFigFont{12}{14.4}{\familydefault}{\mddefault}{\updefault}$r'_0$}}}}
\put(3550,-2550){\makebox(0,0)[lb]{\smash{{\SetFigFont{12}{14.4}{\familydefault}{\mddefault}{\updefault}$a'$}}}}
\put(7900,-2200){\makebox(0,0)[lb]{\smash{{\SetFigFont{12}{14.4}{\familydefault}{\mddefault}{\updefault}$a'$}}}}
\end{picture}%
\end{center}
\end{figure}

\begin{Lemma} \label{Lem:hyperbolicgeometry}
Let $M = \IH^n$ or $\IC \IH^{2n}$.
There are constants $C < \infty$ and $\mu > 0$ such that: \\
Consider two distinct points $x_0, x_1 \in M$ and let $r_0 > 0$, $0 < \alpha < \frac{\pi}2$.
Let $v \in T_{x_1} M$ be the vector pointing towards $x_0$ and define the sector
\[ S_{v, \alpha} = \{ \exp_{x_1} (u) \; \; : \;\; u \in T_{x_1} M, \; \sphericalangle_{x_1} (u,v) \leq \alpha \}. \]
Then for $d = d(x_0, x_1) - r_0$ we have
\[ \vol \big( B_{r_0}(x_0) \setminus S_{v, \alpha} \big) \leq C e^{-\mu d} \alpha^{- 2(n-1)}. \]
\end{Lemma}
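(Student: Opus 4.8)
The plan is to combine a little hyperbolic trigonometry with a volume estimate: I will show that \emph{every} point of $B_{r_0}(x_0)\setminus S_{v,\alpha}$ is in fact extremely close to $x_1$ — within distance roughly $-2\log\alpha-d$ — and then simply bound the volume of a metric ball of that radius around $x_1$. The geometric content is entirely in the first step; the second is just the exponential volume growth of the space.

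First I would do $M=\IH^n$, normalized to constant curvature $-1$. Fix $x'\in B_{r_0}(x_0)\setminus S_{v,\alpha}$ and set $a=d(x_1,x')$, $b=d(x_0,x_1)=r_0+d$, $c=d(x_0,x')\le r_0$, and let $\gamma$ be the angle of the geodesic triangle $x_0x_1x'$ at $x_1$; this is exactly $\sphericalangle_{x_1}(\overrightarrow{x_1x'},v)$, so $\gamma>\alpha$ because $x'$ lies outside the sector. The hyperbolic law of cosines gives $\cosh c=\cosh a\cosh b-\cos\gamma\,\sinh a\sinh b$, and using $\cos\gamma\le\cos\alpha$ together with $\cosh a\cosh b-\sinh a\sinh b=\cosh(b-a)$ this turns into $\cosh r_0\ge\cosh(b-a)+(1-\cos\alpha)\sinh a\sinh b$. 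Since $1-\cos\alpha$ is comparable to $\alpha^2$ on $(0,\tfrac\pi2)$ and, for $b\ge1$, $\sinh b$ is comparable to $e^{r_0+d}$, the factor $e^{r_0}$ cancels and I get $\sinh a\le C\alpha^{-2}e^{-d}$, hence $d(x_1,x')\le R:=\log\!\big(C\alpha^{-2}e^{-d}+1\big)$. Therefore $B_{r_0}(x_0)\setminus S_{v,\alpha}\subseteq B_R(x_1)$, and $\vol\big(B_{r_0}(x_0)\setminus S_{v,\alpha}\big)\le\vol B_R(x_1)\le Ce^{(n-1)R}\le C'\alpha^{-2(n-1)}e^{-(n-1)d}$, which is the claim with $\mu=n-1$. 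The degenerate case $d(x_0,x_1)<1$ is treated separately: then $B_{r_0}(x_0)$ has volume bounded by a universal constant while $\alpha^{-2(n-1)}e^{-\mu d}$ is bounded below (as $\alpha<\tfrac\pi2$), so the estimate holds after enlarging $C$.

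For $M=\IC\IH^{2n}$ the curvature is not constant, but it is still bounded above by a negative constant $-\kappa^2$, so the space is CAT($-\kappa^2$). I would then replace the law of cosines by the hinge comparison: the side $c$ opposite the angle $\gamma$ at $x_1$ is at least the length $\tilde c$ of the opposite side of the comparison triangle in the model space of curvature $-\kappa^2$ built from the hinge of legs $a,b$ and angle $\gamma$, and $\tilde c$ obeys that model's law of cosines. Running the same manipulation with factors of $\kappa$ inserted again yields a bound $d(x_1,x')\le R$ with $R=O(\log(1/\alpha))-d+O(1)$, and estimating $\vol B_R(x_1)$ by the exponential volume growth of $\IC\IH^{2n}$ gives the asserted estimate for a suitable $\mu>0$ and a suitable power of $\alpha^{-1}$; a finer analysis of the lens $B_R(x_1)\cap B_{r_0}(x_0)$ would only improve the constants.

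The one genuinely delicate point is extracting the exponential decay in $d$. The constraint $c\le r_0$ by itself only yields $|b-a|\le r_0$, which is useless; the decay comes entirely from the extra term $(1-\cos\alpha)\sinh a\sinh b$, and it is crucial there that $\sinh b$ is exponentially large in $r_0+d$ rather than merely in $d$. Once this mechanism is isolated, the remaining ingredients — the polynomial factor in $\alpha^{-1}$ produced automatically by the volume growth, and the comparison step in the complex hyperbolic case — are routine.
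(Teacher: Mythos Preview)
Your approach is essentially the paper's: both show the containment $B_{r_0}(x_0)\setminus S_{v,\alpha}\subset B_R(x_1)$ with $\sinh R\le C\alpha^{-2}e^{-d}$ via the hyperbolic law of cosines (the paper rescales to sectional curvature $\le -1$ and uses $\IH^2$-comparison uniformly for both spaces, you do $\IH^n$ exactly and then invoke CAT$(-\kappa^2)$ for $\IC\IH^{2n}$), and then bound $\vol B_R(x_1)$ by exponential volume growth.

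There is one slip in your degenerate case $b=d(x_0,x_1)<1$: the assertion that $\vol B_{r_0}(x_0)$ is bounded by a universal constant is false, since $r_0$ can be arbitrarily large. The fix is immediate --- in this situation $d=b-r_0<1-r_0$, so $e^{-(n-1)d}\ge e^{(n-1)(r_0-1)}$, which already dominates $\vol B_{r_0}(x_0)\le Ce^{(n-1)r_0}$. The paper avoids the case split altogether by writing $\cosh r_0=\cosh b\cosh d-\sinh b\sinh d$ and subtracting this from the law-of-cosines inequality; after dividing by $\cosh b$ and using $\tanh b\le 1$ one arrives directly at $(1-\cos\alpha)\sinh a\le e^{-d}$ with no restriction on $b$.
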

\begin{proof}
By rescaling we can assume that the sectional curvatures are $\leq -1$.
We will then show the volume estimate for $\mu = n-1$.

Choose $a$ such that $\sinh a = e^{-d} (1-\cos \alpha)^{-1} \leq C e^{-d} \alpha^{-2}$.
In the following, we will show that 
\[ B_{r_0}(x_0) \setminus S_{v, \alpha} \subset B_{a} (x_1). \]
Since $\vol B_a (x_1) \leq C (\sinh a)^{n-1}$, this will give us the desired estimate.

Consider a point $x' \in B_{r_0}(x_0) \setminus S_{v, \alpha}$.
Let $a' = d (x_1, x')$, $r_0' = d(x_0, x')$, $u \in T_{x_1} M$ such that $\exp_{x_1}(u) = x'$ and $\alpha' = \sphericalangle_{x_1} (u,v) > \alpha$.
By the triangle inequality we have $a' \geq d$.
Consider a comparison triangle $\triangle \ov{x}_0 \ov{x}_1 \ov{x}'$ for the triangle $\triangle x_0 x_1 x'$ in $\IH^2$ and let $\ov{\alpha}'$ be the angle at $\ov{x}_1$.
By triangle comparison, we have $\ov{\alpha}' \geq \alpha' \geq \alpha$ and hence by the law of cosines in $\IH^2$
\[ \cosh r_0' \geq \cosh (r_0+d) \cosh a' - \sinh (r_0+d) \sinh a' \cos \alpha. \]
Moreover, since
\[ \cosh r_0' \leq \cosh r_0 = \cosh(r_0 + d) \cosh d - \sinh (r_0+d) \sinh d, \]
we conclude
\[ \tanh (r_0+d) \big( \sinh a' \cos \alpha - \sinh d \big)  \geq \cosh a' - \cosh d. \]
Observe that since $a' \geq d$, either the right hand side is positive or $d < 0$ and hence the left hand side is positive.
So
\[ \sinh a' \cos \alpha - \sinh d \geq \cosh a' - \cosh d. \]
This implies
\[ \sinh a' (1-\cos \alpha) \leq e^{-d} \]
and hence $\sinh a' \leq \sinh a$, which establishes the claim.
\end{proof}

\begin{Lemma} \label{Lem:linearoneoverr}
Let $M = \IH^n$, $(n \geq 3)$ or $\IC \IH^{2n}$, $(n \geq 2)$ choose a basepoint $x_0 \in M$ and consider the radial distance function $r = d(\cdot, x_0)$.
For every $w > 0$ there is a constant $C = C(w) < \infty$ such that: \\
Assume that $h \in C^\infty(M;\Sym_2 T^*M)$ and that
\[ |h|(x) < \frac1{(r(x)+1+a)^w} \]
for some $a \geq 0$.
Then for all $x_1 \in M$ and $r_1 = r(x_1)$ and $t \geq 0$
\[ \int_M |k_t|(x_1,x) |h|(x)  dx  < \frac{C}{(r_1+1+a+ t)^w}. \]
\end{Lemma}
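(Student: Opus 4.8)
The plan is to estimate $\int_M |k_t|(x_1,x)\,|h|(x)\,dx$ by decomposing $M$ according to the distance $r(x)=d(x_0,x)$ and using different bounds on $k_t$ in the resulting regions. Throughout write $D=r_1+1+a+t$; one may assume $D\ge 10$, since otherwise $|h|<(1+a)^{-w}\le 1$ and $\int_M|k_t|(x_1,\cdot)\,|h|\le \|k_t(x_1,\cdot)\|_{L^1(M)}\le C$ — finite because $\lambda_0=\min_{\WW\subset\CC}\lambda_\WW=0$ here by Proposition \ref{Prop:lambdaWWpos}, so Theorem \ref{Thm:dectrianglegenrank} gives a uniform $L^1$-bound — is already $\le C\,10^{w}D^{-w}$. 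I will use four facts about the (spherical) kernel $k_t$: (a) $\|k_t(x_1,\cdot)\|_{L^1(M)}\le C$ for all $t>0$; (b) $|k_t|(x_1,x)\le C(\vol B_\rho(x_1))^{-1}\le Ce^{-\mu\rho}$ for $\rho:=d(x_1,x)\ge 1$ (Theorem \ref{Thm:Asymptriangle} with $\lambda_0=0$, $\mu>0$ the volume-growth exponent of $M$); (c) the short-time bound $|k_t|(x_1,x)\le C t^{-n/2}e^{-\rho^2/5t}$ for $0<t\le 1$ (Proposition \ref{Prop:CLY}); and (d) $\|k_t\|_{L^2(M)}\le Ce^{-\lambda_B t}$ for $t\ge 1$ with $\lambda_B=\lambda_{\{0\}}>0$ (Proposition \ref{Prop:lambdaWWpos}), whence by the semigroup identity $k_t=k_{t/2}*k_{t/2}$ also $\|k_t(x_1,\cdot)\|_{L^\infty(M)}\le Ce^{-\lambda_B t}$ for $t\ge 1$. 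Since $r(x)\ge|r_1-\rho|$ along any ray from $x_1$, one has $|h|(x)\le(1+a+|r_1-d(x_1,x)|)^{-w}$ everywhere. Now split $\int_M=\int_U+\int_V$ with $U=\{r(x)+1+a\ge D/2\}$ and $V=B_{r_0}(x_0)$, $r_0:=D/2-1-a=\tfrac12(r_1+t-1-a)$ (if $r_0\le 0$ then $V=\emptyset$). On $U$ one has $|h|\le(D/2)^{-w}$, so $\int_U\le 2^{w}D^{-w}\|k_t\|_{L^1}\le CD^{-w}$ by (a); all the work is in $\int_V=\int_{B_{r_0}(x_0)}|k_t|(x_1,\cdot)\,|h|$, noting that every $x\in V$ has $\rho\ge r_1-r_0=\tfrac12(r_1+1+a-t)=:d$ and $|h|<1$.

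The case $t\le 1$ is immediate from (c): then $D\asymp r_1+1+a$; on $\{\rho\le\tfrac12 r_1\}$ one has $r(x)\ge\tfrac12 r_1$, so $|h|\le(r_1/2)^{-w}\le CD^{-w}$ and $\int_{\rho\le r_1/2}|k_t|\le C$, while on $\{\rho>\tfrac12 r_1\}$ the factor $e^{-\rho^2/5t}\le e^{-r_1^2/20t}$ beats the volume growth $e^{\mu\rho}$ after extracting a polynomial factor $r_1^{-w}$ (using $t\le 1$), and the bounded-$r_1$ subcase is trivial. So assume $t\ge 1$, and consider \textbf{Regime (B):} $d\ge C_*\log D$ for a suitable $C_*=C_*(n,w)$ — geometrically, $t$ is small enough (up to a logarithm) that $k_t$ is still concentrated far from $x_0$, and here I use negative curvature via Lemma \ref{Lem:hyperbolicgeometry}. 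Apply it with basepoint $x_0$, radius $r_0$, and $v\in T_{x_1}M$ pointing towards $x_0$: $V=(V\cap S_{v,\alpha})\cup(V\setminus S_{v,\alpha})$ with $\vol(V\setminus S_{v,\alpha})\le Ce^{-\mu d}\alpha^{-2(n-1)}$, for $\alpha\in(0,\tfrac\pi2)$ to be chosen. On $V\setminus S_{v,\alpha}$ use $|h|<1$ and $|k_t|\le Ce^{-\mu d}$ (valid as $\rho\ge d\ge 1$) to get $\le Ce^{-2\mu d}\alpha^{-2(n-1)}$. On the thin cone $V\cap S_{v,\alpha}$ parametrize by $(\rho,\omega)$, with angular range of measure $\lesssim\alpha^{n-1}$ and volume element $\lesssim e^{\mu\rho}\,d\rho\,d\omega$; combining $|k_t|\le Ce^{-\mu\rho}$ with $|h|(x)\le(1+a+|r_1-\rho|)^{-w}$ and $\rho\in[r_1-r_0,r_1+r_0]$ gives $\int_{V\cap S_{v,\alpha}}|k_t||h|\lesssim\alpha^{n-1}\int_0^{r_0}(1+a+s)^{-w}\,ds\lesssim\alpha^{n-1}\psi(D)$, where $\psi(D)\le C_w\max(1,D^{1-w})\log D$. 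Choosing $\alpha^{n-1}\sim D^{-\min(w,1)}/\psi(D)$ makes the cone term $\le\tfrac12CD^{-w}$; then $Ce^{-2\mu d}\alpha^{-2(n-1)}$ is a fixed power of $D$ times $e^{-2\mu d}$, and is $\le\tfrac12CD^{-w}$ precisely because $d\ge C_*\log D$. Hence $\int_V\le CD^{-w}$ in Regime (B).

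\textbf{Regime (A):} the complement $d<C_*\log D$, i.e.\ $t\gtrsim r_1+1+a$ up to a logarithm, so $D\asymp t$ and $r_0\lesssim t$. Now the curvature trick is unavailable, and one exploits instead that $k_t$ has dispersed: its mass has moved ballistically to distance $\sim t$ from $x_1$, while $B_{r_0}(x_0)$ lies behind this front. I would split $V$ into $\{\rho\le r_0-r_1-1\}$ — on which the sphere $\partial B_\rho(x_1)$ lies entirely inside $B_{r_0}(x_0)$, so the mass it carries is $\le Ce^{-\lambda_B t}e^{\mu\rho}$ by (d) — and $\{\rho>r_0-r_1-1\}$, on which $r(x)\gtrsim r_0-r_1\sim t$ so $|h|\lesssim t^{-w}$ while $\int|k_t|\le C$ by (a); the second piece contributes $\le Ct^{-w}\le CD^{-w}$, and for the first one integrates the per-sphere bound (using also (b) and, where needed, the Gaussian bound (c) propagated through $k_t=k_1*k_{t-1}$) against the volume factor $e^{\mu r_0}$. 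Dovetailing Regimes (A) and (B), and keeping track of which of $r_1$, $1+a$, $t$ dominates $D$, then yields the claim.

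The hard part is Regime (A): the three a priori bounds on $k_t$ — the $L^1$-mass bound, the $t$-uniform pointwise bound $|k_t|\le C(\vol B_\rho(x_1))^{-1}$, and the $L^2$/$L^\infty$ decay at rate $\lambda_B$ — do not individually detect that for large $t$ the kernel is exponentially small on the (possibly radius $\sim t$) ball $B_{r_0}(x_0)$; they must be combined, and the combination closes exactly because for these spaces $\lambda_B$ is large relative to the volume-growth rate $\mu$ (this is what fails for $\IH^2$, where $\lambda_B=0$, consistently with its exclusion), with the borderline case relying on the semigroup-propagated Gaussian decay of Proposition \ref{Prop:CLY}. The other, easier but still delicate, point is the choice of the cone opening $\alpha$ in Lemma \ref{Lem:hyperbolicgeometry}: it must be a small negative power of $D$, large enough that the thin cone $V\cap S_{v,\alpha}$ — on which $|h|$ is only bounded by $1$ near $x_0$ — contributes $\lesssim D^{-w}$, yet small enough that the exponentially thin complement $V\setminus S_{v,\alpha}$ stays negligible, which is exactly the constraint $d\gtrsim\log D$ delimiting Regime (B).
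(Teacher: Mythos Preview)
Your Regime (B) argument is sound in spirit (though the specific choice $\alpha^{n-1}\sim D^{-\min(w,1)}/\psi(D)$ gives the cone term only $\lesssim D^{-1}$ when $w>1$; you need $\alpha^{n-1}\sim D^{-w}$ there, and $C_*$ correspondingly larger --- a minor fix). The genuine gap is in Regime (A).

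First, the claim that on $\{\rho>r_0-r_1-1\}\cap V$ one has $r(x)\gtrsim r_0-r_1$ is simply false: take $r_1=r_0/2$ and $x=x_0$; then $\rho=r_1=r_0/2>r_0/2-1=r_0-r_1-1$, yet $r(x_0)=0$. Second, and more seriously, on the piece $\{\rho\le r_0-r_1-1\}$ your bound via the $L^\infty$-estimate $|k_t|\le Ce^{-\lambda_B t}$ times volume $\sim e^{\mu(r_0-r_1)}$ gives $Ce^{-\lambda_B t+\mu(r_0-r_1)}$. In the worst case $r_1=a=0$ one has $r_0-r_1\approx t/2$, so you need $\lambda_B>\mu/2$. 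This is not established anywhere in the paper --- Proposition~\ref{Prop:lambdaWWpos} only gives $\lambda_B>0$ --- and in fact the Bochner formula of Lemma~\ref{Lem:Bochnerformula} for $\IH^n$ with curvature $-1$ yields only $\lambda_B\ge n-2$ while $\mu=n-1$, so your inequality fails precisely for $\IH^3$. Switching to Cauchy--Schwarz would relax the requirement to $\lambda_B>\mu/4$, which happens to hold for $\IH^n$, $n\ge3$, but still needs separate verification for $\IC\IH^{2n}$ and is not the route the paper takes.

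The paper avoids the regime split entirely by two changes. First, the ball radius is taken as $r_2=\tfrac12 r_1-\tfrac12(1+a)+\tfrac{\lambda_B}{2\mu}t$, with the coefficient of $t$ adapted to the ratio $\lambda_B/\mu$; this guarantees both $r_2+1+a\gtrsim r_1+1+a+t$ (so the $L^1$-estimate on $M\setminus B_{r_2}(x_0)$ already gives the claim there) and that $e^{\mu(r_2-r_1)}$ never outruns $e^{-2\lambda_B t}$, regardless of how small $\lambda_B$ is. Second, on $B_{r_2}(x_0)\setminus S_{v,\alpha}$ the paper uses Cauchy--Schwarz with $\|k_t\|_{L^2}\le Ce^{-\lambda_B t}$ and the volume bound from Lemma~\ref{Lem:hyperbolicgeometry}, while on the cone it uses sphericality of $k_t$ to get $\|k_t\|_{L^1(S_{v,\alpha})}\le C\alpha^{n-1}$ and the crude bound $|h|\le(1+a)^{-w}$. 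With the exponential choice $\alpha=\exp\bigl(-\tfrac{\mu}{8(n-1)}r_1-\tfrac{\lambda_B}{4(n-1)}t\bigr)$ both pieces are exponentially small in $r_1$ and $t$ simultaneously, so the cone argument works uniformly for all $t>1$ and any $\lambda_B>0$, with no case distinction needed.
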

\begin{proof}
For small times $t \leq 1$, the estimate follows with the help of Proposition \ref{Prop:CLY}.
So assume that $t > 1$.

Recall $\lambda_B > 0$ from subsection \ref{subsec:heatkerIntro}.
If $r_1 + \frac{\lambda_B}\mu t \leq 1+a$, then we find by the $L^1$-boundedness of $k_t$ (cf. Theorem \ref{Thm:dectrianglerank1} and Proposition \ref{Prop:lambdaWWpos})
\[ \int_M |k_t|(x_1,x) |h|(x) dx \leq \frac{C}{(1+a)^w} \leq \frac{C'}{(r_1+ 1 + a + t)^w}. \]
Assume from now on $r_1 + \frac{\lambda_B}\mu t > 1+a$ and hence $r_2 := \frac12 r_1 - \frac12 (1+a) + \frac{\lambda_B}{2\mu} t > 0$.
We can then bound
\[ \int_{M \setminus B_{r_2} (x_0)}  |k_t|(x_1,x) |h| (x) dx \leq \frac{C}{(r_2 + 1 + a)^w} \leq \frac{C'}{(r_1 + 1 + a + t)^w} \]
and hence, it remains to bound the integral on $B_{r_2} (x_0)$.
Set $\alpha = \exp ( - \frac{\mu}{8(n-1)} r_1 - \frac{\lambda_B}{4(n-1)} t )$.
Let $v \in T_{x_1} M$ be the vector that points in the direction of $x_0$ and consider the sector $S_{v, \alpha}$.
By Lemma \ref{Lem:hyperbolicgeometry}, we have
\[ \vol ( B_{r_2} (x_0) \setminus S_{v, \alpha}) \leq C e^{\mu (r_2 - r_1)} \alpha^{-2(n-1)}. \]
So by Cauchy-Schwarz and the bound $\Vert k_t \Vert_{L^2(M)} \leq C e^{-\lambda_B t}$ (cf. (\ref{eq:M2}) in the proof of Theorem \ref{Thm:dectrianglerank1})
\begin{multline*}
 \int_{B_{r_2} (x_0) \setminus S_{v, \alpha}} |k_t|(x_1,x) |h| (x) dx \leq C e^{\frac{\mu}2 (r_2 - r_1)} \alpha^{-(n-1)} e^{-\lambda_B t} \\
= C \exp \big( - \tfrac{\mu}8 r_1 - \tfrac{\mu}4 (1+a) - \tfrac{\lambda_B}2 t \big) \leq \frac{C}{(r_1 + 1 + a + t)^w}.
\end{multline*}
In order to bound the integral on the remaining part $B_{r_2} (x_0) \cap S_{v, \alpha}$, we use the fact that $\Vert k_t \Vert_{L^1(S_{v, \alpha})} \leq C \alpha^{n-1}$ (observe that $k_t$ is spherical and that the set of angles pointing into the sector $S_{v,\alpha}$ at $x_1$ has measure $\sim \alpha^{n-1}$):
\[  \int_{B_{r_2} (x_0) \cap S_{v, \alpha}} |k_t|(x_1,x) |h| (x) dx \leq C \alpha^{n-1} \frac1{(1+a)^w} \leq \frac{C}{(r_1 + 1 + a + t)^w}. \qedhere \]
\end{proof}

\subsection{Proof of Theorem \ref{Thm:mainB}} \label{subsec:ThmmainB}
We will need the following linear estimate:

\begin{Lemma} \label{Lem:Lqdecay}
Assume that $2 \leq q < \infty$ and let $h_0 \in C^\infty(M; \Sym_2 T^*M)$ such that $\Vert h_0 \Vert_{L^q(M)} < \infty$.
Consider $h_t(x) = \int_M k_t(x,x') h_0(x') dx'$, the solution of $\partial_t h_t = - L h_t$.
Then, for $\lambda = \frac2q \lambda_B > 0$ we have
\[ \Vert h_t \Vert_{L^q(M)} \leq C e^{-\lambda t} \Vert h_0 \Vert_{L^q(M)}. \]
\end{Lemma}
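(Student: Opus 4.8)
The plan is to interpolate between the two endpoint estimates $q=2$ and $q=\infty$ that are already available from the preceding results. For $q=2$ the Bochner formula $-L = D^*D + \lambda_B$ on $M$ (see Lemma \ref{Lem:Bochnerformula} and the discussion in subsection \ref{subsec:heatkerIntro}) gives, by the same Stokes computation as in the proof of Theorem \ref{Thm:dectrianglerank1},
\[ \tfrac{d}{dt} \Vert h_t \Vert_{L^2(M)}^2 = -2\lambda_B \Vert h_t \Vert_{L^2(M)}^2 - 2\Vert D h_t \Vert_{L^2(M)}^2 \leq -2\lambda_B \Vert h_t \Vert_{L^2(M)}^2, \]
hence $\Vert h_t \Vert_{L^2(M)} \leq e^{-\lambda_B t} \Vert h_0 \Vert_{L^2(M)}$. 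For $q=\infty$ the maximum principle applied to $|h_t|$ via Kato's inequality $\partial_t |h_t| \leq \triangle |h_t|$ (the zero order term $A$ of Lemma \ref{Lem:Bochnerformula} is nonnegative definite, so it only helps) gives $\Vert h_t \Vert_{L^\infty(M)} \leq \Vert h_0 \Vert_{L^\infty(M)}$, i.e. the bound with decay rate $0 = \tfrac{2}{\infty}\lambda_B$.

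First I would make this rigorous by working with the convolution operator $T_t : h_0 \mapsto \int_M k_t(\cdot, x') h_0(x') dx'$ directly. From the $q=\infty$ bound together with self-adjointness of $k_t$ one gets $\Vert T_t \Vert_{L^1 \to L^1} \leq 1$, and the $q=2$ bound is $\Vert T_t \Vert_{L^2 \to L^2} \leq e^{-\lambda_B t}$. The Riesz--Thoren interpolation theorem then yields, for $\tfrac1q = \tfrac{1-\theta}{1} + \tfrac{\theta}{2}$, i.e. $\theta = \tfrac2q$,
\[ \Vert T_t \Vert_{L^q \to L^q} \leq \Vert T_t \Vert_{L^1\to L^1}^{1-\theta} \Vert T_t \Vert_{L^2\to L^2}^{\theta} \leq e^{-\theta \lambda_B t} = e^{-\frac2q \lambda_B t}, \]
which is exactly the claimed estimate with $C=1$. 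One subtlety is that Riesz--Thoren is stated for scalar $L^p$ spaces, whereas here $T_t$ acts on sections of $\Sym_2 T^*M$; but since $|T_t h_0|(x) \leq \int_M |k_t|(x,x')\,|h_0|(x')\,dx'$ pointwise (again by Kato), it suffices to bound the \emph{scalar} positive kernel operator $\widehat{T}_t$ with kernel $|k_t|$, and the two endpoint bounds above hold verbatim for $\widehat T_t$ (the $L^2$ bound for $\widehat T_t$ follows from $\Vert h_t \Vert_{L^2} \leq e^{-\lambda_B t}\Vert h_0\Vert_{L^2}$ applied to nonnegative $h_0$ after noting $\Vert \widehat T_t f \Vert_{L^2} = \sup \langle \widehat T_t f, g\rangle \leq \sup \langle f, \widehat T_t g\rangle$ over nonnegative $g$ of unit norm). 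Then Riesz--Thoren applies to $\widehat T_t$ on scalar $L^q$.

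The main obstacle, such as it is, is purely bookkeeping: checking that the $L^1\to L^1$ and $L^2\to L^2$ operator norm bounds genuinely transfer to the positive scalar kernel $|k_t|$ and justifying the use of the maximum principle on the noncompact complete manifold $M$ (which is legitimate here because $M$ has bounded geometry and $k_t$ together with its derivatives decays like $\exp(-r^2/(4+\delta)t)$ by Proposition \ref{Prop:CLY}, so all integrations by parts and the maximum principle argument are justified). Once these are in place the interpolation step is immediate and the constant can even be taken to be $1$.
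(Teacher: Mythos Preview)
Your overall strategy---interpolation between an $L^2$ endpoint coming from the Bochner formula and an endpoint with no decay---is exactly the paper's approach (the paper cites Marcinkiewicz rather than Riesz--Thorin, but that is immaterial). However, there is a genuine error in the execution.

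You interpolate between $L^1$ and $L^2$. With $\tfrac1q = \tfrac{1-\theta}{1} + \tfrac{\theta}{2}$ one has $\tfrac1q = 1 - \tfrac{\theta}{2}$, so $\theta = 2 - \tfrac{2}{q}$, \emph{not} $\tfrac{2}{q}$; and for $q>2$ this forces $\theta>1$, which is outside the interpolation range. In other words, interpolating $L^1$ and $L^2$ only produces exponents $1\le q\le 2$, the wrong interval. The correct pair is $L^2$ and $L^\infty$: writing $\tfrac1q = \tfrac{1-\theta}{2} + \tfrac{\theta}{\infty}$ gives $1-\theta = \tfrac{2}{q}$ and hence $\Vert T_t\Vert_{L^q\to L^q} \le e^{-(1-\theta)\lambda_B t} = e^{-\frac{2}{q}\lambda_B t}$, which is the claim. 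This is precisely what the paper does (using the $L^1$-boundedness of $k_t$ for the $L^\infty\to L^\infty$ bound, equivalently your maximum-principle argument).

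A second, independent problem is your reduction to the scalar kernel $\widehat T_t$ with kernel $|k_t|$. The $L^2$ decay $e^{-\lambda_B t}$ is a genuinely vector-bundle phenomenon---it comes from the curvature term in the Bochner identity---and there is no reason it should transfer to the scalar operator $\widehat T_t$. Your justification (``applied to nonnegative $h_0$'') does not make sense for sections of $\Sym_2 T^*M$, and in general $\Vert \widehat T_t\Vert_{L^2\to L^2}$ can be strictly larger than $\Vert T_t\Vert_{L^2\to L^2}$. Fortunately this detour is unnecessary: interpolation (Riesz--Thorin via complex interpolation, or Marcinkiewicz) applies directly to linear operators on $L^q$-sections of a Hermitian vector bundle, so you can work with $T_t$ itself throughout.
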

\begin{proof}
By the $L^1$-boundedness of the heat kernel, the inequality is true for $q = \infty$ with $\lambda = 0$ and by the Bochner formula (cf (\ref{eq:M2})), it holds for $q = 2$ and $\lambda = \lambda_B$.
Hence, by the Marcinkiewicz interpolation theorem, it holds for any $2 \leq q < \infty$ with $\lambda = \frac2q \lambda_B$.
\end{proof}

\begin{proof}[Proof of Theorem \ref{Thm:mainB}]
Observe first that the Theorem is more general for larger $q$.
Hence, we can assume $q \geq 2$.

Let $\varepsilon_0 > 0$ be a small constant, which we will determine in the course of the proof.
As in the proof of Theorem \ref{Thm:mainA}, let $T_{\max}$ be the maximum over all $T$ such that Ricci deTurck flow $h_t$ exists on $[0,T)$ and satisfies $|h_t| < \varepsilon_0$ everywhere.
We will show that for sufficiently small $\varepsilon$ (independent of $T_{\max}$) we even have $|h_t| < \varepsilon_0/2$ and hence $T_{\max} = \infty$.
Recall that by Proposition \ref{Prop:shortex} we have $T_{\max} > \tau_{s.e.}$.

Let $\varepsilon_1, \varepsilon_2 > 0$ be constants whose value we will fix at the end of the proof.
By Lemma \ref{Lem:h1h2splitting} (applied successively to the time intervals $[0,\tau_{s.e.}], [\frac12 \tau_{s.e.}, \frac32 \tau_{s.e.}], \linebreak[1] \ldots$), we conclude that for every $t \in [0, T_{\max})$ there is a splitting $h_t = h^1_t + h^2_t$ with
\[ h_t^1 \leq \frac{\varepsilon_1 Y'_t}{r+1+t}, \qquad \Big( \int_M |h_t^2|^q \Big)^{1/q} \leq \varepsilon_2 Y'_t e^{-\frac{\lambda}2 t} \]
for some $Y'_t < \infty$ (here $\lambda$ is the constant from Lemma \ref{Lem:Lqdecay}).
For every $t \in [0, T_{\max})$ let $Y_t$ be the infimum over all possible $Y'_t$ for all splittings $h_t = h_t^1 + h_t^2$ and set $Z_t = \max_{t' \in [0,t]} Y_{t'}$.

By Lemma \ref{Lem:h1h2splitting} we have $Z_{\tau_{s.e.}} \leq C_0$.
Applying Lemma \ref{Lem:h1h2splitting} at positive times, we find that for any $t_1, t_2 \in [0,T_{\max})$, we have $Z_{t_2} \leq C_1 Z_{t_1}$ whenever $t_2 < t_1 + \tau_{s.e.}$.
Moreover, we conclude that for times $[\tau_{s.e.}, T_{\max})$ we can rechoose $h^1_t$ and $h^2_t$ piecewise continuously in time such that $h_t = h^1_t + h^2_t$ and for $m = 0,1,2$ we have $|\nabla^m h^1_t|, |\nabla^m h^2_t| < A \varepsilon_0$ and
\[ |\nabla^m h^1_t| \leq \frac{A \varepsilon_1 Z_t}{r + 1 + t}, \qquad \sup_M |\nabla^m h_t^2| + \Big( \int_M |\nabla^m h^2_t|^q \Big)^{1/q} \leq A \varepsilon_2 Z_t e^{-\frac{\lambda}2 t}. \]
Hence, since  $|Q_t| \leq C ( |\nabla h_t|^2 + |h_t| |\nabla^2 h_t| )$, we find that $Q_t = Q^1_t + Q^2_t$ with
\[ |Q^1_t| \leq \frac{C \varepsilon_1^2 Z_t^2}{(r + 1 + t)^2}, \qquad \Big( \int_M |Q^2_t|^q \Big)^{1/q} \leq C  \varepsilon_2^2 Z_t e^{-\frac{\lambda}2 t}. \]

Let now $t_1 \in [\tau_{s.e.},T_{\max})$, $x_1 \in M$ and $r_1 = r(x_1)$ and recall from (\ref{eq:convoleq}) that
\[ h_{t_1}(x_1) = \int_M k_{t_1 - \tau_{s.e.}} (x_1, x) h_{\tau_{s.e.}} (x) dx + \int_{\tau_{s.e.}}^{t_1} \int_{M} k_{t_1-t}(x_1,x) Q_t(x) dx dt \]
Hence, $h_{t_1} = \tilde h_{t_1}^1 + \tilde h_{t_1}^2$, where ($i=1,2$)
\[ \tilde h_{t_1}^i (x_1) = \int_M k_{t_1-\tau_{s.e.}} (x_1,x) h^i_{\tau_{s.e.}} (x) dx + \int_{\tau_{s.e.}}^{t_1} \int_M k_{t_1-t} (x_1,x) Q^i_t (x) dx dt. \]
We will estimate $\tilde h^1_{t_1}(x_1)$ and $\tilde h^2_{t_1} (x_1)$.

Observe first that by Lemma \ref{Lem:linearoneoverr} for $w=1$
\[ \Big| \int_M k_{t_1 - \tau_{s.e.}} (x_1, x) h^1_{\tau_{s.e.}}(x) dx \Big| \leq \frac{C \varepsilon_1}{r_1 + 1 + t_1} \]
and for $w=2$ and $t \in [\tau_{s.e.}, t_1]$
\[ \Big| \int_M k_{t_1-t}(x_1,x) Q^1_t(x) dx \Big| \leq \frac{C \varepsilon_1^2 Z_t^2}{(r_1 + 1 + t_1)^2} \leq \frac{C \varepsilon_1^2 Z_t^2}{t_1 (r_1 + 1 + t_1)}. \]
Hence
\[ |\tilde h^1_{t_1}| (x_1)  \leq \frac{C_2 ( \varepsilon_1 + \varepsilon_1^2 Z_{t_1}^2)}{r_1+1+t_1} . \]

Secondly, by Lemma \ref{Lem:Lqdecay} we find
\begin{multline*}
 \Big( \int_M |\tilde h^2_{t_1}|^q  \Big)^{1/q} \leq C \varepsilon_2 e^{- \lambda t_1} + C_3 \varepsilon_2^2 Z_{t_1} \int_{\sigma^2}^{t_1} e^{- \lambda (t_1-t)} e^{- \frac{\lambda}2 t} dt \\
\leq (C_3 \varepsilon_2 + C_4(q) \varepsilon_2^2 Z_{t_1} ) e^{-\frac{\lambda}2 t_1}.
\end{multline*}
Here $C_4(q)$ depends on $\lambda$ and hence on $q$.
By the minimality of $Y_{t_1}$ we conclude
\[ Y_{t_1} \leq \max \{ C_2 (1 + \varepsilon_1 Z_{t_1}^2), C_3 + C_4(q) \varepsilon_2 Z_{t_1} \}. \]
Let $C_5 = \max \{ C_0, C_2, C_3 \}$ and observe that
\[
 Z_{\tau_{s.e.}} \leq C_5 \qquad \text{and} \qquad Z_{t} \leq \max \{ C_5 ( 1+ \varepsilon_1 Z_{t}^2), C_5 + C_4(q) \varepsilon_2 Z_{t} \}.
\]

Now set $\varepsilon_1 = (2 C_1 C_5)^{-2}$ and $\varepsilon_2(q) = (2 C_1 C_4(q))^{-1}$.
If $Z_t \leq 2 C_5$ did not hold for all $t \in [\tau_{s.e.}, T_{\max})$, then there must be a jump, i.e. two times $t_1 < t_2$ with $t_2 - t_1 < \tau_{s.e.}$ such that $Z_{t_1} \leq 2 C_5$, but $Z_{t_2} > 2 C_5$.
By the fact that $Z_{t_2} \leq C_1 Z_{t_1} \leq 2 C_1 C_5$, we find
\[ 2 C_5 < Z_{t_2} \leq \max \{ C_5 ( 1 + \varepsilon_1 (2 C_1 C_5)^2), C_5 + C_4(q) \varepsilon_2 (2 C_1 C_5) \} = 2C_5, \]
a contradiction.
Hence, we have $Z_t \leq 2 C_5$ for all $t \in [\tau_{s.e.}, T_{\max})$ and the claim follows.
\end{proof}


\begin{thebibliography}{xxxxx}
\bibitem[Bal]{Bal}{W. Ballmann, ``Symmetric Spaces'', 1999, http://www.math.uni-bonn.de/people/hwbllmnn/notes.html}
\bibitem[Bam1]{BamDehn}{R. Bamler, ``Construction of Einstein metrics by generalized Dehn filling'', J. Eur. Math. Soc. (JEMS) 14 (2012), no. 3, 887-909}
\bibitem[Bam2]{BamCusps}{R. Bamler, ``Stability of hyperbolic manifolds with cusps under Ricci flow'', Adv. Math. 263 (2014), 412-467}
\bibitem[Bam3]{Bam-phd}{R. Bamler, ``Stability of Einstein Metrics of Negative Curvature'', dissertation, Princeton University, (2011)}
\bibitem[Biq]{Biq}{O. Biquard, ``Asymptotically symmetric Einstein metrics'', vol. 13, SMF/AMS Texts and Monographs (Providence, RI: American Mathematical Society, 2006).}
\bibitem[Bes]{Bes}{A. L. Besse, ``Einstein manifolds'', (Springer-Verlag New York, 1987).}
\bibitem[Car]{Car}{G. Carron, ``Estim\'ees des noyaux de Green et de la chaleur sur les espaces sym\'etriques. Analysis \& PDE'', 3(2), 197-205, (2010)}
\bibitem[CLY]{CLY}{S.Y. Cheng, P. Li, S.-T. Yau, ``On the upper estimate of the heat kernel of a complete Riemannian manifold'', Amer. J. Math. 103 (1981), no. 5, 1021-1063.}
\bibitem[DeT]{DeT}{D. DeTurck, ``Deforming metrics in the direction of their Ricci tensors'', Journal of Differential Geometry 18, no. 1 (1983): 157-162.}
\bibitem[Ebe]{Ebe}{P. Eberlein, ``Geometry of nonpositively curved manifolds'', Chicago Lectures in Mathematics. University of Chicago Press, Chicago, IL (1996).}
\bibitem[GL]{GL}{C.R. Graham, J.M. Lee, ``Einstein metrics with prescribed conformal infinity on the ball'', Adv. Math. 87 (1991), no. 2, 186-225}
\bibitem[Hel]{Hel}{S. Helgason, ``Differential geometry, Lie groups, and symmetric spaces'', American Mathematical Society (2001).}
\bibitem[KL]{KL}{H. Koch, T. Lamm, ``Geometric flows with rough initial data'', Asian J. Math. 16 (2012), no. 2, 209-235}
\bibitem[Kna]{Kna}{A. Knapp, ``Representation theory of semisimple groups'', Princeton Landmarks in Mathematics, Princeton University Press, Princeton, NJ, 2001}
\bibitem[LSU]{LS}{O. A. Lady\v{z}henskaia , V. A. Solonnikov, and N. N. Ural'ceva, ``Linear and quasi-linear equations of parabolic type'', (Amer Mathematical Society, 1968).}
\bibitem[LY]{LY}{H. Li, H. Yin, ``On stability of the hyperbolic space form under the normalized Ricci flow'', Int. Math. Res. Not. IMRN 2010, no. 15, 2903-2924}
\bibitem[Mos]{Mos}{G. D. Mostow, ``Strong rigidity of locally symmetric spaces'', (Princeton, N.J.: Princeton University Press, 1973).}
\bibitem[Shi]{Shi}{W.-X. Shi, ``Deforming the metric on complete Riemannian manifolds'', Journal of Differential Geometry 30, no. 1 (1989): 223-301.}
\bibitem[SSS1]{SSS1}{O. Schn\"urer, F. Schulze, M. Simon, ``Stability of Euclidean space under Ricci flow'', Communications in Analysis and Geometry 16, no. 1 (2008): 127-158.}
\bibitem[SSS2]{SSS}{O. C. Schn\"urer, F. Schulze, M. Simon, ``Stability of hyperbolic space under Ricci flow'', Comm. Anal. Geom. 19 (2011), no. 5, 1023-1047}
\bibitem[SW]{SW}{E. Stein, G. Weiss, ``Introduction to Fourier analysis on Euclidean spaces'', Princeton Mathematical Series, No. 32. Princeton University Press, Princeton, N.J. (1971)}
\bibitem[Roe]{Roe}{J. Roe, Elliptic Operators, ``Topology and Asymptotic Methods'' (Chapman \& Hall/CRC, 1998)}
\bibitem[Ye]{Ye}{R. Ye, ``Ricci flow, Einstein metrics and space forms'', Trans. Amer. Math. Soc. 338 (1993), no. 2, 871-896.}
\end{thebibliography}
\end{document}